\theoremstyle{plain}
\newtheorem{definition}[equation]{Definition}
\newtheorem{corollary}[equation]{Corollary}
\newtheorem{lemma}[equation]{Lemma}
\newtheorem{proposition}[equation]{Proposition}
\newtheorem{theorem}[equation]{Theorem}
\newtheorem{notation}{Notation}
\newtheorem{remark}[equation]{Remark}
\numberwithin{equation}{subsection}
\newtheorem{conjecture}{Conjecture}
\title{Deformations of holomorphic Poisson maps}
\author{Chunghoon Kim}
\email{ckim042@gmail.com}            
\begin{document}

\maketitle

\begin{abstract}
In this paper, we study deformations of holomorphic Poisson maps which extend Horikawa's series of papers \cite{Hor73},\cite{Hor74}, and \cite{Hor76} on deformations of holomorphic maps in the context of holomorphic Poisson deformations. In appendices, we present deformations of Poisson morphisms in the language of functors of Artin rings which is the algebraic version of deformations of holomorphic Poisson maps. We identity first-order deformations and obstructions.

\end{abstract}

\tableofcontents

\section{Introduction}

In this paper, we study deformations of holomorphic Poisson maps which extend Horikawa's series of papers \cite{Hor73},\cite{Hor74}, and \cite{Hor76} on deformations of holomorphic maps in the context of holomorphic Poisson deformations. We review deformation theory of holomorphic maps presented in \cite{Hor73},\cite{Hor74}, and \cite{Hor76} and explain how the theory can be extended in the context of holomorphic Poisson deformations.

 Let us review deformation theory of holomorphic maps presented in \cite{Hor73} and \cite{Hor74}. Horikawa defined a concept of a family of holomorphic maps of deformations of a holomorphic map $f:X\to Y$ of complex manifolds on the basis of Kodaira-Spencer's deformation theory. He studied two notions of a family of holomorphic maps as follows. Let $M$ and $S$ be two complex manifolds. 
 \begin{enumerate}
\item A family of holomorphic maps consists of a family $\{X_t|t\in M\}$ plus a collection $\{f_t:X_t\to Y|t\in M\}$ $(Y$ being fixed). \label{pp1}
\item A family of holomorphic maps consists of a family $\{X_t|t\in M\}$, a holomorphic map $s:M\to S$ and a collection $\{f_t:X_t\to Y_{s(t)}|t\in M\}$ ( $\mathcal{Y}=\{Y_{t'}\}_{t'\in S}\to S$ being fixed).\label{pp2}
\end{enumerate}
For each case, he studied infinitesimal deformations and proved theorem of completeness and existence. Let us consider the case $(\ref{pp1})$. For the precise statements, we recall the definitions of a family of holomorphic maps, and completeness of a family as follows.
\begin{definition}\label{pp10}
By a family of holomorphic maps into a compact complex manifold $Y$, we mean a collection $(\mathcal{X},\Phi, p,M)$ of complex manifold $\mathcal{X}$, a complex manifold $M$, and a holomorphic map $\Phi:\mathcal{X}\to \mathcal{Y}=Y\times M$, and $p:\mathcal{X}\to M$ with following properties:
\begin{enumerate}
\item $p$ is a surjective smooth proper holomorphic map.
\item $q\circ \Phi=p$, where $q:\mathcal{Y}\to M$ is the projection onto the second factor.
\end{enumerate}
Two families $(\mathcal{X}, \Phi,p, M)$ and $(\mathcal{X}',\Phi',p', M')$ of holomorphic maps into $Y$ are said to be equivalent if there exist holomorphic isomorphisms $\Psi:\mathcal{X}\to \mathcal{X}'$ and $\phi:M\to M'$ such the the following diagram is commutative
\begin{center}
$\begin{CD}
\mathcal{X}@>\Psi>> \mathcal{X}'\\
@V\Phi VV @VV\Phi'V\\
Y\times M@>id\times \phi>> Y\times M'
\end{CD}$
\end{center}
If $(\mathcal{X}, \Phi,p,M)$ is a family of holomorphic maps into $Y$, and if $h:N\to M$ is a holomorphic map, we can define the family $(\mathcal{X}',\Phi',p',N)$ induced by $h$ as follows:
\begin{enumerate}
\item $\mathcal{X}'=\mathcal{X}\times_M N$,
\item $\Phi'=\Phi\times id:\mathcal{X}'\to ( Y\times M)\times_M N=Y\times N,$
\item $p'=p_N:\mathcal{X}'\to N$.
\end{enumerate}
\end{definition}

\begin{definition}\label{pp11}
A family $(\mathcal{X},\Phi, p,M)$ of holomorphic maps into $Y$ is complete at $0\in M$ if, for any family $(\mathcal{X}',\Phi',p', N)$ such that $\Phi_{0'}':X_{0'}'\to Y$ is equivalent to $\Phi_0:X_0 \to Y$ for a point $0'\in N$, there exists a holomorphic map $h$ of a neighborhood $U$ of $0'$ in $N$ into $M$ with $h(0')=0$ such that the restriction of $(\mathcal{X}',\Phi', p', N)$ on $U$ is equivalent to the family induced by $h$ from $(\mathcal{X},\Phi, p, M)$.
\end{definition}

Let $(\mathcal{X},\Phi,p,M)$ be a family of holomorphic maps into $Y$, $0\in M$, $X=X_0$ and $f=\Phi_0:X\to Y$. Let $F:\Theta_X\to f^*\Theta_Y$ be the canonical homomorphism where $\Theta_X$ (resp. $\Theta_Y$) is the sheaf of germs of holomorphic vector fields on $X$ (resp. $Y$). Then we have an exact sequence of sheaves
\begin{align*}
0\to \Theta_{X/Y}\to \Theta_X\xrightarrow{F}f^*\Theta_Y\xrightarrow{P} \mathcal{N}_f\to 0
\end{align*}
where $\Theta_{X/Y}=ker(F:\Theta_X\to f^*\Theta_Y)$, and $\mathcal{N}_f=coker(F:\Theta_X\to f^*\Theta_Y)$.

Let $\mathcal{U}$ be a finite Stein covering of $X$. We set
\begin{align*}
D_{X/Y}=\frac{\{(\tau,\rho)\in C^0(\mathcal{U}, f^*\Theta_Y^\bullet)\oplus \mathcal{C}^1(\mathcal{U},\Theta_X)|\delta \tau=F\rho, \delta \rho=0\}}{\{(Fg, \delta g )|g\in C^0(\mathcal{U},\Theta_X)\}}
\end{align*}

Then infinitesimal deformations of $f:X\to Y$ in the family $(\mathcal{X},\Phi,p,M)$ are encoded in $D_{X/Y}$, and we can define the characteristic map
\begin{align*}
\tau:T_0(M)\to D_{X/Y}.
\end{align*}
We note that $D_{X,Y}\cong H^0(X, \mathcal{N}_f)$ when $f$ is non-degenerate (i.e $F$ is injective). In \cite{Hor73}, Horikawa proved theorem of completeness and existence for a non-degenerate holomorphic map, and in \cite{Hor74}, he proved theorem of completeness and existence for general case as follows.

\begin{theorem}[Theorem of completeness for deformations of holomorphic maps]\label{pp3}
Let $(\mathcal{X}, \Phi, p,M)$ be a family of holomorphic maps into $Y, 0\in M, X=X_0$, and $f=\Phi_0:X\to Y$. If the characteristic map $\tau:T_0(M)\to D_{X/Y}$ is surjective, then the family is complete at $0$.
\end{theorem}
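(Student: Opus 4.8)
The plan is to follow Horikawa's method of power series construction, adapting Kodaira--Spencer--Kuranishi techniques to the relative setting of maps. Given an arbitrary family $(\mathcal{X}',\Phi',p',N)$ with $\Phi'_{0'}$ equivalent to $f=\Phi_0$, I want to produce a holomorphic map $h$ from a neighborhood of $0'$ to $M$ realizing $(\mathcal{X}',\Phi',p',N)$ (restricted) as the pullback. First I would set up the cohomological bookkeeping: cover $X$ by a finite Stein covering $\mathcal{U}=\{U_i\}$, and describe the family $(\mathcal{X},\Phi,p,M)$ near $X_0$ by transition data, namely holomorphic functions giving the gluing of the $U_i\times(\text{polydisc})$ together with the maps into $Y$; differentiating in the base parameter recovers the characteristic map $\tau:T_0(M)\to D_{X/Y}$. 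Do the same for $(\mathcal{X}',\Phi',p',N)$ to get its characteristic map $\tau':T_{0'}(N)\to D_{X/Y}$. Since $\tau$ is surjective, after shrinking and a linear change of coordinates on $M$ I may assume there is a linear map $\ell:T_{0'}(N)\to T_0(M)$ with $\tau\circ\ell=\tau'$; this will be the first-order term of the desired $h$.

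Next I would construct $h$ by successive approximation: write $h=\sum_{\nu\ge 1}h_\nu$ as a formal power series in the coordinates $s=(s_1,\dots,s_r)$ on $N$, with $h_1=\ell$, and try to solve the equation asserting that the pullback family $h^*(\mathcal{X},\Phi,p,M)$ agrees with $(\mathcal{X}',\Phi',p',N)$ up to equivalence. At each order $\nu$, comparing transition data produces a $1$-cocycle-type obstruction living in the group governing infinitesimal automorphisms/equivalences — concretely an element built from $C^0(\mathcal{U},f^*\Theta_Y^\bullet)\oplus C^1(\mathcal{U},\Theta_X)$ — whose vanishing in the relevant cohomology is needed to extend $h$ to order $\nu$. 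The surjectivity of $\tau$ (equivalently, that $D_{X/Y}$ surjects onto the space where the obstructions a priori live, via the completeness hypothesis feeding into the formal solvability) is exactly what lets me choose $h_\nu$ at each stage; one shows by induction that the order-$\nu$ discrepancy, which is automatically a cocycle because the lower-order terms already match, can be killed by adjusting $h_\nu$ together with an equivalence (a $0$-cochain $g_\nu\in C^0(\mathcal{U},\Theta_X)$ and a correction in $C^0(\mathcal{U},f^*\Theta_Y)$). This is the standard ``obstruction is a coboundary once you have surjectivity of the characteristic map'' step, carried out in the double-complex $D_{X/Y}$.

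Having produced a formal solution, the final and genuinely hard part is \emph{convergence}: one must show the formally constructed power series $h=\sum h_\nu$ actually converges in a neighborhood of $0'$. Following Horikawa (who follows Kodaira--Spencer), I would introduce a majorant series — the power series $A(t)=\frac{b}{16c}\sum_{\nu\ge 1}\frac{c^\nu}{\nu^2}t^\nu$ trick, or the analogous device used in \cite{Hor73} — and prove by induction that each $h_\nu$ is dominated coefficientwise by the corresponding term of an explicitly convergent series. This requires careful estimates on the solution operators for the $\delta$-equations on the Stein covering (bounded linear right inverses to the Čech coboundary, with controlled norms) and on the nonlinear terms coming from composing transition functions; the estimates must be uniform as one passes down the refinement. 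The bookkeeping here is the bulk of the work, but it is structurally identical to the convergence proof in the map case, so I would cite the mechanism from \cite{Hor73},\cite{Hor74} and only indicate the modifications, chiefly that the pair $(\tau,\rho)$ structure of $D_{X/Y}$ forces one to run the majorant argument simultaneously on both components. Once convergence is established, the convergent $h$ gives an honest holomorphic map realizing the required equivalence, and shrinking $N$ to the domain of convergence completes the proof.

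I expect the main obstacle to be precisely this convergence estimate: ensuring that the choices of $h_\nu$ made in the cohomological step can be made with good enough bounds, i.e.\ that the right inverse to $\delta$ and the implicit-function-type solution of the order-$\nu$ equation do not blow up, and that the presence of the two-term complex $f^*\Theta_Y^\bullet$ (rather than a single sheaf) does not break the majorant induction. A secondary technical point is handling the degenerate locus where $F$ fails to be injective, so that $D_{X/Y}$ is genuinely the hypercohomology-type object rather than $H^0(X,\mathcal{N}_f)$; there the equivalence must be tracked through $\Theta_{X/Y}$ as well, but this is exactly the generality Horikawa already treats in \cite{Hor74}.
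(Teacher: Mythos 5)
Your proposal follows essentially the same route as Horikawa's argument, which is what this paper quotes and extends (its proof of the Poisson analogue, Theorem \ref{ii1}, is exactly this scheme): one inductively constructs the base map $t(t')$ together with the fiberwise equivalence $g_i(z_i,t')$, interprets the order-$\mu$ discrepancy $(\{\gamma_{i|\mu}\},\{\Gamma_{ij|\mu}\})$ as a class in $D_{X/Y}$ that surjectivity of $\tau$ lets one absorb into $\tau(t_\mu)$ plus a coboundary $(Fg_{i|\mu},\delta g_{i|\mu})$, and then proves convergence with the majorant $A(t)$ and a bounded-solution lemma. The only points worth stressing, both implicit in your sketch, are that a single $0$-cochain $g_{i|\mu}$ and a single tangent vector $t_\mu$ must kill both components of the discrepancy simultaneously (this is what lying in the image of $\tau$ modulo $D_{X/Y}$-coboundaries means), and that the majorant induction must bound the fiber maps $g_{i|\mu}$ as well as $t_\mu$, not just the base series.
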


\begin{theorem}[Theorem of existence for deformations of holomorphic maps]\label{pp4}
Let $f:X\to Y$ be a holomorphic map of a compact complex manifold $X$ into a complex manifold $Y$. Assume that the canonical homomorphism $F:\Theta_X\to f^*\Theta_Y$ satisfies the following conditions:
\begin{enumerate}
\item $F: H^1(X,\Theta_X)\to H^1(X, f^*\Theta_Y)$ is surjective,
\item $F:H^2(X,\Theta_X)\to H^2(X, f^*\Theta_Y)$ is injective.
\end{enumerate}
Then there exist a family $(\mathcal{X}, \Phi, p, M)$ of holomorphic maps into $Y$ and a point $0\in M$ such that 
\begin{enumerate}
\item $X=p^{-1}(0)$ and $\Phi_0:X\to Y$ coincides with $f:X\to Y$.
\item $\tau:T_0(M)\to D_{X/Y}$ is bijective.
\end{enumerate}
\end{theorem}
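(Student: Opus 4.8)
The plan is to follow the power-series method of Kodaira--Spencer and Horikawa, applied to the two-term complex $L^\bullet=[\Theta_X\xrightarrow{F}f^*\Theta_Y]$ (placed in degrees $0$ and $1$), whose first hypercohomology is $D_{X/Y}$. The first step is to translate the hypotheses: the stupid filtration of $L^\bullet$ gives a long exact sequence
\[
\cdots\to H^{i-1}(X,f^*\Theta_Y)\to\mathbb{H}^i(L^\bullet)\to H^i(X,\Theta_X)\xrightarrow{F}H^i(X,f^*\Theta_Y)\to\mathbb{H}^{i+1}(L^\bullet)\to\cdots,
\]
so that hypothesis (1) says $\operatorname{coker}(F\colon H^1\to H^1)=0$ and hypothesis (2) says $\ker(F\colon H^2\to H^2)=0$; together these force $\mathbb{H}^2(L^\bullet)=0$. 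Thus the deformation problem for the pair $(X,f)$ is unobstructed, and the family to be constructed will live over an open ball $M\subset\mathbb{C}^m$ with $m=\dim_{\mathbb C}D_{X/Y}=\dim_{\mathbb C}\mathbb{H}^1(L^\bullet)$.

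Next, fix Hermitian metrics on $X$, on $\Theta_X$ and on $f^*\Theta_Y$, resolve $L^\bullet$ by Dolbeault complexes, and form the total complex with pieces $A^{0,q}(X,\Theta_X)\oplus A^{0,q-1}(X,f^*\Theta_Y)$ and differential $D(\varphi,\sigma)=(\bar\partial\varphi,\;F\varphi-\bar\partial\sigma)$. The associated Laplacian $\square=DD^*+D^*D$ is elliptic, giving a Green operator $G$, a harmonic projector $H$, the identity $I=H+DD^*G+D^*DG$, a degree-$1$ harmonic space of dimension $m$, and a vanishing degree-$2$ harmonic space. The actual deformation equations are the Maurer--Cartan equation for $\xi(t)=(\varphi(t),\sigma(t))$ in degree $1$: the Kodaira--Spencer integrability equation $\bar\partial\varphi(t)=\tfrac12[\varphi(t),\varphi(t)]$ for the complex structure $X_t$, coupled with the requirement that the deformed map $f_t\colon X_t\to Y$ be holomorphic, which---after pulling back local coordinates on $Y$ by $f$---becomes a nonlinear first-order equation for $\sigma(t)$. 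Schematically this reads $D\xi(t)=R(\xi(t))$, where $R$ gathers all terms of order $\ge 2$ and is built from the brackets making $L^\bullet$ a (curved) differential graded Lie algebra, so that $DR(\xi)$ vanishes modulo $R$ by the Jacobi identity.

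With this in place, the formal solution is built order by order. Choosing harmonic representatives $\eta_1,\dots,\eta_m$ of a basis of $D_{X/Y}$, one seeks $\xi(t)=\sum_j\eta_j t_j+\sum_{|I|\ge 2}\xi_I t^I$; collecting the order-$\nu$ terms gives $D\xi_\nu=\Psi_\nu$, where $\Psi_\nu$ is a polynomial in the $\xi_I$ with $|I|<\nu$. An induction using the Jacobi identity shows $\Psi_\nu$ is $D$-closed, and since $\mathbb{H}^2(L^\bullet)=0$ and the degree-$2$ harmonic space vanishes one may simply set $\xi_\nu=D^*G\Psi_\nu$; no obstruction equations arise, which is why the parameter space is an entire ball. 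Convergence of $\xi(t)$ for small $|t|$ is then proved by the Kodaira--Nirenberg--Spencer majorant technique: Hölder (or Sobolev) a priori estimates of the form $\|D^*Gu\|_{k+1}\le c\,\|u\|_k$ together with a multiplicative estimate for the brackets of $L^\bullet$, compared against a majorant series $A(t)=b\sum_{\nu\ge 1}(t_1+\cdots+t_m)^\nu/\nu^2$ of Kodaira--Spencer type. I expect this convergence step to be the main obstacle, since the estimates must be run simultaneously for the Beltrami part $\varphi(t)$ and for the section-valued part $\sigma(t)$ on the pulled-back bundle $f^*\Theta_Y$, and $f$ is permitted to be degenerate, so there is no shortcut through $H^0(X,\mathcal N_f)$.

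Finally, one assembles the family. For $t\in M$ the integrable tensor $\varphi(t)$ determines, via Newlander--Nirenberg, a compact complex manifold $X_t$, and these patch together into a surjective smooth proper holomorphic map $p\colon\mathcal X\to M$ with $p^{-1}(0)=X$; the convergent $\sigma(t)$ yields a holomorphic map $\Phi\colon\mathcal X\to Y\times M$ with $q\circ\Phi=p$ and $\Phi_0=f$, so $(\mathcal X,\Phi,p,M)$ is a family of holomorphic maps into $Y$ in the sense of Definition~\ref{pp10}, proving (1). By construction the characteristic map sends $\partial/\partial t_j|_0$ to the class of $\eta_j$ in $D_{X/Y}$, so $\tau\colon T_0(M)\to D_{X/Y}$ is surjective; since $\dim_{\mathbb C}M=m=\dim_{\mathbb C}D_{X/Y}$, it is an isomorphism, which is (2).
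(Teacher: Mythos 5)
Your overall strategy -- an order-by-order power-series construction in which the two cohomological hypotheses kill the obstructions, followed by harmonic theory and a Kodaira--Spencer majorant series for convergence -- is the same route as Horikawa's, which is exactly the method this paper transplants to the Poisson setting in Theorem \ref{hh1}. However, the two steps where the actual work is done are asserted rather than proved. The packaging of the problem as a single Maurer--Cartan equation $D\xi=R(\xi)$ for a ``(curved) differential graded Lie algebra'' structure on the cone of $F$ is not justified: $f^*\Theta_Y$ carries no Lie bracket, so the cone is not a DGLA, and, more importantly, the holomorphicity of the deformed map is glued by the \emph{nonlinear} transition functions $g_{ij}$ of $Y$. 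Consequently the order-$\mu$ obstruction is not a priori a global element of $A^{0,2}(\Theta_X)\oplus A^{0,1}(f^*\Theta_Y)$ but a mixed \v{C}ech--Dolbeault triple $(\xi_\mu,E_{i|\mu},\Gamma_{ij|\mu})$, and one must verify by direct computation the compatibilities (the classical analogues of $(\ref{gg7})$--$(\ref{gg16})$) and then split $\Gamma_{ij|\mu}=\Gamma_{i|\mu}-\Gamma_{j|\mu}$ by fineness of the sheaf before the vanishing of $\mathbb{H}^2$ of the cone can be invoked; likewise the passage from the \v{C}ech description of $D_{X/Y}$ to harmonic representatives (the analogue of Lemma \ref{kk10}) is needed for the statement that $\tau$ sends $\partial/\partial t_j$ to the chosen basis. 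Writing ``$DR(\xi)$ vanishes modulo $R$ by the Jacobi identity'' skips precisely the lemma in which the proof lives; the split use of the hypotheses (injectivity on $H^2$ to solve the $\Theta_X$-part, surjectivity on $H^1$ to then correct the map part, as in $(\ref{gg51})$--$(\ref{gg56})$) only becomes the single condition $\mathbb{H}^2(L^\bullet)=0$ after that reduction.

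The second gap is the convergence step. Asserting the standard elliptic estimate for $D^*G$ together with an algebra estimate on the nonlinear terms does not, as stated, close the induction for the coupled system: Horikawa's proof hinges on a specific key lemma for the pair of operators (the $\bar{\partial}$-equation on $\Theta_X$ coupled with the equation on $f^*\Theta_Y$), with the characteristic mismatch of H\"older indices, which is exactly the statement this paper can only quote in conjectural form in subsection \ref{pp40}; the lack of its Poisson analogue is the very reason Theorems \ref{pp43} and \ref{hh1} are stated conditionally on convergence. For the classical theorem you may of course cite \cite{Hor73} for that lemma, but the proposal as written treats the hardest step of the argument as routine, so as a self-contained proof it is incomplete precisely where completeness matters.
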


In section $\ref{section2}$, we extend the concept of a family of holomorphic maps, and prove an analogue of theorem of completeness (Theorem \ref{pp3}) and an analogue of theorem of existence (Theorem \ref{pp4}) in the context of holomorphic Poisson deformations. A holomorphic Poisson manifold $X$ is a complex manifold whose structure sheaf is a sheaf of Poisson algebras\footnote{We refer to \cite{Lau13} for general information on Poisson geometry}. A holomorphic Poisson structure is encoded in a holomorphic section (a holomorphic bivector field) $\Lambda\in H^0(X,\wedge^2 \Theta_X)$ with $[\Lambda,\Lambda]=0$, where the bracket $[-,-]$ is the Schouten bracket on $X$. In the sequel a holomorphic Poisson manifold will be denoted by $(X,\Lambda)$. Let $(X,\Lambda)$ and $(Y,\Pi)$ be two holomorphic Poisson manifolds. Then a holomorphic map $f:(X,\Lambda)\to (Y,\Pi)$ is called a holomorphic Poisson map if $f_* \Lambda_{x}=\Pi_{f(x)}$ for each $x\in X$, where $f_*:\wedge^2 \Theta_{X,x}\to \wedge^2 \Theta_{Y,f(x)}$ is the canonical map.

 It is natural to consider two notions of a family of holomorphic Poisson maps as analogues of case $(\ref{pp1})$ and $(\ref{pp2})$ as above in the context holomorphic Poisson deformations as follows. Let $M$ and $S$ be two complex manifolds.
\begin{enumerate}
\item A family of holomorphic Poisson maps consists of a family $\{(X_t,\Lambda_t)|t\in M\}$ plus a collection $\{f_t:(X_t,\Lambda_t)\to (Y,\Pi_0)|t\in M\}$ $((Y,\Pi_0)$ being fixed)
\item a family of holomorphic Poisson maps consists of a family $\{(X_t,\Lambda_t)|t\in M\}$, a holomorphic map $s:M\to S$ and a collection $\{f_t:(X_t,\Lambda_t)\to (Y_{s(t)},\Pi_{s(t)})|t\in M\}$ ($(\mathcal{Y},\Pi)=\{(Y_{t'},\Pi_{t'})\}_{t'\in S} \to S$ begin fixed.)\label{pp25}
\end{enumerate}
The case $(1)$ is the main topic of section $\ref{section2}$. We extend Definition $\ref{pp10}$ and Definition $\ref{pp11}$ to define concepts of a family of holomorphic Poisson maps, and completeness of a family (see Definition \ref{ll35}).

\begin{definition}
By a Poisson analytic family of holomorphic Poisson maps into a compact holomorphic Poisson manifold $(Y,\Pi_0)$, we mean a collection $(\mathcal{X},\Lambda,\Phi, p,M)$ of holomorphic Poisson manifold $(\mathcal{X},\Lambda)$, a complex manifold $M$, and a holomorphic Poisson map $\Phi:(\mathcal{X},\Lambda)\to (\mathcal{Y},\Pi_0)=(Y\times M,\Pi_0)$, and $p:(\mathcal{X},\Lambda)\to M$ with following properties:
\begin{enumerate}
\item $p$ is a surjective smooth proper holomorphic map, which makes $(\mathcal{X},\Lambda,p,M)$ a Poisson analytic family in the sense of \cite{Kim15}.
\item $q\circ \Phi=p$, where $q:(\mathcal{Y},\Pi_0)\to M$ is the projection onto the second factor which is the trivial Poisson analytic family over $M$.
\end{enumerate}
Two families $(\mathcal{X},\Lambda, \Phi, p, M)$ and $(\mathcal{X}',\Lambda', \Phi', p', M')$ of holomorphic Poisson maps into $(Y,\Pi_0)$ are said to be equivalent if there exist a holomorphic Poisson isomorphism $\Psi:(\mathcal{X},\Lambda)\to (\mathcal{X}',\Lambda')$ and a holomorphic isomorphism $\phi:M\to M'$ such the the following diagram is commutative
\begin{center}
$\begin{CD}
(\mathcal{X},\Lambda)@>\Psi>> (\mathcal{X}',\Lambda')\\
@V\Phi VV @VV\Phi'V\\
(Y\times M,\Pi_0)@>id\times \phi>> (Y\times M',\Pi_0)
\end{CD}$
\end{center}
If $(\mathcal{X},\Lambda, \Phi,p,M)$ is a family of holomorphic Poisson maps into $(Y,\Pi_0)$, and if $h:N\to M$ is a holomorphic map, we can define the family $(\mathcal{X}', \Lambda',\Phi',p',N)$ induced by $h$ as follows:
\begin{enumerate}
\item $(\mathcal{X}',\Lambda')=(\mathcal{X},\Lambda)\times_M N$ which is the induced Poisson analytic family by $h$ $($see \cite{Kim15}$)$.
\item $\Phi'=\Phi\times id:(\mathcal{X}',\Lambda')\to ( (Y\times M)\times_M N, \Pi_0)=(Y\times N, \Pi_0)$.
\item $p'=p_N:(\mathcal{X}',\Lambda')\to N$.
\end{enumerate}
\end{definition}

\begin{definition}
A family $(\mathcal{X},\Lambda,\Phi, p,M)$ of holomorphic Poisson maps into $(Y,\Pi_0)$ is complete at $0\in M$ if, for any family $(\mathcal{X}',\Lambda',\Phi',p', N)$ such that $\Phi_{0'}':(X_{0'}',\Lambda_{0'}')\to (Y,\Pi_0)$ is equivalent to $\Phi_0:(X_0,\Lambda_0)\to (Y,\Pi_0)$ for a point $0'\in N$, there exists a holomorphic map $h$ of a neighborhood $U$ of $0'$ in $N$ into $M$ with $h(0')=0$ such that the restriction of $(\mathcal{X}',\Lambda',\Phi', p', N)$ on $U$ is equivalent to the family induced by $h$ from $(\mathcal{X},\Lambda, \Phi, p, M)$.
\end{definition}

Let $(\mathcal{X},\Lambda, \Phi, p,M)$ be a family of holomorphic Poisson maps into $(Y,\Pi_0),0\in M,(X,\Lambda_0)=(X_0,\Lambda_0)$ and $f=\Phi_0:(X,\Lambda_0)\to (Y,\Pi_0)$. Then there is a complex of sheaves associated with $f$ on $X$ (see Appendix \ref{appendix1}):
\begin{align*}
f^*\Theta_Y^\bullet: f^* \Theta_Y\xrightarrow{\pi} \wedge^2 f^* \Theta_Y \xrightarrow{\pi}\wedge^3 f^* \Theta_Y\xrightarrow{\pi}\cdots
\end{align*}
such that the following diagram commutes
\begin{center}
$\begin{CD}
\Theta_X^\bullet :@. \Theta_X@>[\Lambda_0,-]>> \wedge^2 \Theta_X@>[\Lambda_0,-]>> \wedge^3 \Theta_X@>[\Lambda_0,-]>>\cdots\\
@.@VFVV @VFVV @VFVV\\
f^*\Theta_Y^\bullet:@.f^*\Theta_Y@> \pi >>  \wedge^2 f^*  \Theta_Y @> \pi >> \wedge^3 f^* \Theta_Y@> \pi >> \cdots
\end{CD}$
\end{center}
where $F:\Theta_X^\bullet\to f^*\Theta_Y^\bullet$ is the canonical homomorphism. Then we have an exact sequence of complex of sheaves
\begin{align*}
0\to \Theta_{X/Y}^\bullet \to \Theta_X^\bullet \xrightarrow{F} f^*\Theta_Y^\bullet \xrightarrow{P} \mathcal{N}_f^\bullet \to 0
\end{align*}
\begin{center}
$\begin{CD}
@.\cdots @. \cdots @.\cdots @. \cdots @.\\
@. @AAA @A[\Lambda_0,-]AA @A\pi AA @AAA\\
0@>>> \Theta_{X/Y}^3@>>> \wedge^3 \Theta_X@>F>> \wedge^3 f^*\Theta_Y@>P>> \mathcal{N}_f^3@>>> 0\\
@. @AAA @A[\Lambda_0,-]AA @A\pi AA @AAA\\
0@>>> \Theta_{X/Y}^2@>>> \wedge^2 \Theta_X@>F>> \wedge^2 f^*\Theta_Y@>P>> \mathcal{N}_f^2@>>> 0\\
@. @AAA @A[\Lambda_0,-]AA @A\pi AA @AAA\\
0 @>>> \Theta_{X/Y}@>>> \Theta_X@>F>> f^*\Theta_Y@>P>> \mathcal{N}_f@>>> 0
\end{CD}$
\end{center}
where $\Theta_{X/Y}^\bullet:=ker(\Theta_X^\bullet \xrightarrow{F} f^*\Theta_Y^\bullet)$ and $\mathcal{N}_f^\bullet:=coker(\Theta_X^\bullet\xrightarrow{F} f^*\Theta_Y)$.

Let $\mathcal{U}$ be a Stein covering of $X$. We set
\begin{align*}
PD_{(X,\Lambda_0)/(Y,\Pi_0)}=\frac{\{(\tau,\rho,\lambda)\in C^0(\mathcal{U}, f^*\Theta_Y^\bullet)\oplus \mathcal{C}^1(\mathcal{U},\Theta_X)\oplus C^0(\mathcal{U},\wedge^2 \Theta_X)|\frac{-\delta \tau=F\rho,\pi(\tau)=F\lambda,}{\delta\rho=0,\delta \lambda+[\Lambda_0,\rho]=0, [\Lambda_0,\lambda]=0}\}}{\{(Fg, -\delta g ,[\Lambda_0,g])|g\in C^0(\mathcal{U},\Theta_X)\}}
\end{align*}
Then infinitesimal deformations of $f:(X,\Lambda_0)\to (Y,\Pi_0)$ in the family $(\mathcal{X},\Lambda, \Phi,p,M)$ are encoded in $PD_{(X,\Lambda_0)/(Y,\Pi_0)}$, and we can define the characteristic map (see subsection \ref{jj1})
\begin{align*}
\tau:T_0(M)\to PD_{(X,\Lambda_0)/(Y,\Pi_0)}.
\end{align*}
We note that $PD_{(X,\Lambda_0)/(Y,\Pi_0)}\cong \mathbb{H}^0(X, \mathcal{N}_f^\bullet)$ when $f$ is non-degenerate.  We prove theorem of completeness (see Theorem \ref{ii1}) for deformations of holomorphic maps as follows.  

\begin{theorem}[Theorem of completeness for deformations of holomorphic Poisson maps]
Let $(\mathcal{X},\Lambda, \Phi, p,M)$ be a family of holomorphic Poisson maps into $(Y,\Pi_0), 0\in M, (X,\Lambda_0)=(X_0,\Lambda_0)$, and $f=\Phi_0:(X,\Lambda_0)\to (Y,\Pi_0)$. If the characteristic map $\tau:T_0(M)\to PD_{(X,\Lambda_0)/(Y,\Pi_0)}$ is surjective, then the family is complete at $0$.
\end{theorem}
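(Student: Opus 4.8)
The plan is to adapt Horikawa's proof of Theorem \ref{pp3} in \cite{Hor74} to the Poisson category, carrying the extra bivector data through the argument. As in the classical case, completeness is proved by constructing the comparison map $h$ together with the comparison isomorphism as formal power series in the base coordinates of $N$, and then proving convergence by a majorant estimate on a finite Stein covering; the Poisson constraint $[\Lambda,\Lambda]=0$ enters as an additional nonlinear equation coupled to the others through the Schouten bracket.

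First I would normalize the situation: given a family $(\mathcal{X}',\Lambda',\Phi',p',N)$ with $0'\in N$ such that $\Phi_{0'}':(X_{0'}',\Lambda_{0'}')\to(Y,\Pi_0)$ is equivalent to $f$, apply that equivalence so that the central fibre literally coincides with $f:(X,\Lambda_0)\to(Y,\Pi_0)$. Fix a finite Stein covering $\mathcal{U}=\{U_i\}$ of $X$ and, shrinking a polydisc neighborhood of $0'$ with coordinates $t=(t_1,\dots,t_m)$, describe the $N$-family by \v{C}ech data over $\mathcal{U}$: coordinate transformations $f_{ij}(z_j,t)$ gluing the fibres $X_t'$, local representations of the Poisson map $\Phi'$ into $Y$, and local bivector fields representing $\Lambda'$, all reducing at $t=0'$ to the data of $f$ and $\Lambda_0$. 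Expanding in powers of $t$, the degree-one part of this data assembles into an element of $PD_{(X,\Lambda_0)/(Y,\Pi_0)}$, which is the image of $T_{0'}(N)$ under the characteristic map of the $N$-family. Since $\tau:T_0(M)\to PD_{(X,\Lambda_0)/(Y,\Pi_0)}$ is surjective, one can choose a linear map $T_{0'}(N)\to T_0(M)$, hence the linear term of a candidate $h:U\to M$, so that the first-order deformations agree; the residual difference is then a coboundary of the mapping cone of $F:\Theta_X^\bullet\to f^*\Theta_Y^\bullet$, which supplies the degree-one terms of the comparison isomorphism $\Psi$ together with the bivector data needed to make it Poisson.

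Then I would run the induction on the order $\mu$ in $t$. Assume $h$, the transition functions of $\Psi$, and the auxiliary bivector terms have been chosen so that the restriction of the $N$-family and the family induced by $h$ from $(\mathcal{X},\Lambda,\Phi,p,M)$ agree modulo degree $\mu+1$. Collecting the degree-$(\mu+1)$ terms of the three defining equations --- the cocycle relation for the transition functions, the Poisson-map relation, and $[\Lambda',\Lambda']=0$ --- produces a triple $(\tau_{\mu+1},\rho_{\mu+1},\lambda_{\mu+1})$ built from the already-constructed lower-order data and satisfying the cocycle conditions in the definition of $PD_{(X,\Lambda_0)/(Y,\Pi_0)}$, hence representing a class there. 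Surjectivity of $\tau$ lets us pick $h_{\mu+1}\in T_0(M)$ killing that class, after which the remainder is a genuine coboundary of the cone and gives the degree-$(\mu+1)$ terms of $\Psi$ and of the bivector correction. In the degenerate case one argues, following \cite{Hor74}, through the four-term exact sequence $0\to\Theta_{X/Y}^\bullet\to\Theta_X^\bullet\xrightarrow{F}f^*\Theta_Y^\bullet\xrightarrow{P}\mathcal{N}_f^\bullet\to0$, using $\Theta_{X/Y}^\bullet$ to absorb the ambiguity in $\Psi$; in the non-degenerate case everything takes place in $\mathbb{H}^\bullet(X,\mathcal{N}_f^\bullet)$. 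The new feature relative to \cite{Hor74} is that the equations for $\Psi$ and for $\lambda$ are not independent --- they are linked by $[\Lambda_0,-]$ --- so at each order they must be solved together as one equation in the total complex of the cone rather than separately.

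Finally I would prove convergence. Equip $C^q(\mathcal{U},-)$ with sup-norms over a shrinking sequence of Stein coverings and use bounded right-inverses (homotopy operators) for the total differential of the cone of $F:\Theta_X^\bullet\to f^*\Theta_Y^\bullet$; these exist by Cartan's Theorem B together with finiteness of the relevant cohomology, exactly as in the \v{C}ech-theoretic estimates of \cite{Hor73} and \cite{Kim15}. Choosing the increments $h_{\mu+1},\Psi_{\mu+1},\lambda_{\mu+1}$ via these operators makes them satisfy uniform bounds, and one then dominates the formal solution by a convergent majorant series of the standard Kodaira--Spencer--Horikawa type, so that $h$, $\Psi$ and the bivector correction converge on a neighborhood $U$ of $0'$; restricted to $U$ this is the desired equivalence, proving completeness at $0$. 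The step I expect to be the main obstacle is this last one: because the bracket terms coming from $[\Lambda,\Lambda]=0$ enter the recursion quadratically and are coupled to the map and gluing data, the bulk of the work is producing a bounded right-inverse to the total differential of the cone and running the majorant estimate uniformly in the shrinking coverings --- the analogue, in the presence of the bivector, of the hardest part of \cite{Hor74}.
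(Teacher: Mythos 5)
Your proposal matches the paper's proof in all essentials: after normalizing so the central fibre coincides with $f$, one solves the gluing equations, the map equation $\Phi_i(g_i,t(t'))=\Phi_i'$, and the bivector-compatibility equation order by order in $t'$, observing that the degree-$\mu$ error triple $(\{\gamma_{i|\mu}\},\{\Gamma_{ij|\mu}\},\{\lambda_{i|\mu}\})$ satisfies the cocycle conditions and hence defines a class in $PD_{(X,\Lambda_0)/(Y,\Pi_0)}$, which surjectivity of $\tau$ lets one cancel up to a coboundary supplying $t_\mu$ and $g_{i|\mu}$, and convergence then follows from the standard Horikawa--Kodaira majorant series. The only cosmetic differences are that the paper proves the required uniform bound on solutions of the coboundary equations by the usual compactness/contradiction argument rather than by invoking homotopy operators, and it does not split into degenerate and non-degenerate cases, working directly with $PD_{(X,\Lambda_0)/(Y,\Pi_0)}$ throughout.
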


We also prove theorem of existence for deformations of holomorphic Poisson maps under the assumption that the formal power series constructed in the proof converge.

\begin{theorem}[Theorem of existence for deformations of holomorphic Poisson maps]\label{pp43}
Let $f:(X,\Lambda_0)\to (Y,\Pi_0)$ be a holomorphic Poisson map of a compact holomorphic Poisson manifold $(X,\Lambda_0)$ into a holomorphic Poisson manifold $(Y,\Pi_0)$. Assume that the canonical homomorphism $F:\Theta_X^\bullet\to f^*\Theta_Y^\bullet$ satisfies the following conditions:
\begin{enumerate}
\item $F:\mathbb{H}^1(X,\Theta_X^\bullet)\to \mathbb{H}^1(X, f^*\Theta_Y^\bullet)$ is surjective,
\item $F:\mathbb{H}^2(X,\Theta_X^\bullet)\to \mathbb{H}^2(X, f^*\Theta_Y^\bullet)$ is injective.
\end{enumerate}
Then if the formal power series $(\ref{qq32})$ constructed in the proof converge, then  there exist a family $(\mathcal{X}, \Lambda, \Phi, p, M)$ of holomorphic Poisson maps into $(Y,\Pi_0)$ and a point $0\in M$ such that 
\begin{enumerate}
\item $(X,\Lambda_0)=p^{-1}(0)$ and $\Phi_0:(X,\Lambda_0)\to (Y, \Pi_0)$ coincides with $f:(X,\Lambda_0)\to (Y,\Pi_0)$.
\item $\tau:T_0(M)\to PD_{(X,\Lambda_0)/(Y,\Pi_0)}$ is bijective.
\end{enumerate}
\end{theorem}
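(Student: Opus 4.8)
The plan is to adapt the Kodaira--Spencer--Horikawa construction behind Theorem \ref{pp4} to the Poisson setting, the role of the two-term complex $[\Theta_X\to f^*\Theta_Y]$ being played by the mapping fibre of $F\colon\Theta_X^\bullet\to f^*\Theta_Y^\bullet$. First I would package the deformation data into a single graded object: let $\mathcal{L}^\bullet$ denote the mapping fibre of $F$, so that from its \v Cech description one reads off $PD_{(X,\Lambda_0)/(Y,\Pi_0)}\cong \mathbb{H}^1(X,\mathcal{L}^\bullet)$ — a triple $(\tau,\rho,\lambda)$ as in the definition of $PD$ being exactly a $1$-cocycle of $\mathcal{L}^\bullet$: $(\rho,\lambda)$ is a total cocycle of $\Theta_X^\bullet$ and $\tau$ is the null-homotopy of its image under $F$ — and so that there is a hypercohomology long exact sequence
\begin{equation*}
\cdots\to\mathbb{H}^n(X,\mathcal{L}^\bullet)\to\mathbb{H}^n(X,\Theta_X^\bullet)\xrightarrow{F}\mathbb{H}^n(X,f^*\Theta_Y^\bullet)\to\mathbb{H}^{n+1}(X,\mathcal{L}^\bullet)\to\cdots .
\end{equation*}
Hypotheses (1) and (2) then give $\mathbb{H}^2(X,\mathcal{L}^\bullet)=0$: surjectivity of $F$ on $\mathbb{H}^1$ makes $\mathbb{H}^1(X,f^*\Theta_Y^\bullet)\to\mathbb{H}^2(X,\mathcal{L}^\bullet)$ zero, so $\mathbb{H}^2(X,\mathcal{L}^\bullet)\hookrightarrow\mathbb{H}^2(X,\Theta_X^\bullet)$ is injective; injectivity of $F$ on $\mathbb{H}^2$ forces the image of that injection to be $\ker(F)=0$. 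Thus the obstruction space vanishes. (In the non-degenerate case $\mathcal{L}^\bullet\simeq\mathcal{N}_f^\bullet[-1]$, which recovers $PD\cong\mathbb{H}^0(X,\mathcal{N}_f^\bullet)$.)

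Next I would run the formal construction. Pick a basis $\beta_1,\dots,\beta_m$ of $\mathbb{H}^1(X,\mathcal{L}^\bullet)$, represented by triples $(\tau_i,\rho_i,\lambda_i)$, let $t=(t_1,\dots,t_m)$, and look for a power series $\varphi(t)=\sum_{\mu\ge1}\varphi_\mu$ with $\varphi_\mu$ homogeneous of degree $\mu$ in $t$ and $\varphi_1=\sum_i t_i\beta_i$, whose components simultaneously encode: an integrable family of holomorphic Poisson structures $\Lambda_t$ on the underlying differentiable family (the Poisson deformation equation of \cite{Kim15}), the holomorphy of $\Phi_t\colon X_t\to Y$, and the Poisson compatibility $(\Phi_t)_*\Lambda_t=\Pi_0$. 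These conditions assemble into one Maurer--Cartan--type equation $\mathfrak{D}\varphi+\tfrac12[\varphi,\varphi]=0$ in a DGLA (built from a fine resolution of $\mathcal{L}^\bullet$) whose cohomology in degree $n$ is $\mathbb{H}^n(X,\mathcal{L}^\bullet)$. Solving recursively: if $\varphi_1,\dots,\varphi_\mu$ solve the equation modulo degree $\mu+1$, the degree-$(\mu+1)$ part $\mathrm{Ob}_{\mu+1}$ of the left-hand side depends only on $\varphi_1,\dots,\varphi_\mu$, is a $2$-cocycle of $\mathcal{L}^\bullet$ by the graded Jacobi identity, hence a coboundary since $\mathbb{H}^2(X,\mathcal{L}^\bullet)=0$; choosing $\varphi_{\mu+1}$ with $\mathfrak{D}\varphi_{\mu+1}=-\mathrm{Ob}_{\mu+1}$ continues the induction. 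This produces the formal power series $(\ref{qq32})$.

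Granting, as in the statement, that this series converges on a polydisc $M\subset\mathbb{C}^m$, it defines a genuine holomorphic Poisson manifold $(\mathcal{X},\Lambda)$ with $p\colon\mathcal{X}\to M$ a Poisson analytic family in the sense of \cite{Kim15} and a genuine holomorphic Poisson map $\Phi\colon(\mathcal{X},\Lambda)\to(Y\times M,\Pi_0)$ over $M$ with $\Phi_0=f$; this is conclusion (1). For conclusion (2), since $\varphi_1=\sum_i t_i\beta_i$ the characteristic map of subsection \ref{jj1} sends $\partial/\partial t_i\mapsto\beta_i$, so $\tau\colon T_0(M)\to PD_{(X,\Lambda_0)/(Y,\Pi_0)}$ is onto, and as $\dim M=m=\dim PD_{(X,\Lambda_0)/(Y,\Pi_0)}$ it is an isomorphism; equivalently, vanishing of obstructions makes the Kuranishi space smooth of the expected dimension.

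The main obstacle is the bookkeeping of the second step: correctly assembling the three kinds of first-order data — deformation of $\Lambda_0$ (the Schouten-bracket terms $[\Lambda_0,-]$), deformation of the complex structure of $X$, and deformation of the map into the fixed $(Y,\Pi_0)$ (the ``$F$''-terms) — into one complex whose $\mathbb{H}^1$ is $PD_{(X,\Lambda_0)/(Y,\Pi_0)}$ and whose $\mathbb{H}^2$ receives the obstruction, and then verifying at each stage that $\mathrm{Ob}_{\mu+1}$ really is a $2$-cocycle. The delicate point is the compatibility between the Poisson bracket terms and the map-deformation terms: one must check that the Bianchi/Jacobi-type identities hold in the combined DGLA, so that the vanishing $\mathbb{H}^2(X,\mathcal{L}^\bullet)=0$ can actually be invoked. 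Once this formal apparatus is in place, the remainder is the standard Kodaira--Spencer power-series argument transcribed from Horikawa's proof of Theorem \ref{pp4}, with \cite{Kim15} supplying the Poisson-analytic part of the construction.
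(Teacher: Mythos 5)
Your homological repackaging is correct and matches how the paper actually uses the hypotheses: a triple $(\tau,\rho,\lambda)$ is indeed a $1$-cocycle of the mapping fibre of $F$, the exact sequence you write is the one in Lemma 2.0.5 of the paper, and conditions (1) and (2) are exactly the vanishing of $\mathbb{H}^2$ of that fibre, which is how the unnamed lemma following Lemma \ref{ii86} produces $\phi_\mu,\Lambda_\mu,\Phi_{i|\mu}$ (first injectivity on $\mathbb{H}^2$ kills $(\xi_\mu,\psi_\mu,\eta_\mu)$, then surjectivity on $\mathbb{H}^1$ adjusts $(E'_\mu,\lambda'_\mu)$). The first-order step (basis of $PD_{(X,\Lambda_0)/(Y,\Pi_0)}$ in Dolbeault form, via Lemma \ref{kk10}) and the final bijectivity of $\tau$ are also the same as in the paper.

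The genuine gap is in the middle of your argument, and it is precisely the part the paper spends most of its proof on. You assert that the conditions $(\ref{ee3})$--$(\ref{ee2})$ assemble into a single Maurer--Cartan equation $\mathfrak{D}\varphi+\tfrac12[\varphi,\varphi]=0$ in a DGLA whose cohomology is $\mathbb{H}^\bullet(X,\mathcal{L}^\bullet)$, and that the order-$(\mu+1)$ obstruction is a $2$-cocycle ``by the graded Jacobi identity.'' No such DGLA is constructed in the paper or available off the shelf: while the $\Theta_X^\bullet$-part $(\ref{gg50})$ is a genuine MC equation for $L=\bar\partial+[\Lambda_0,-]$, the map equations $(\ref{ee113})$, $(\ref{ee114})$, $(\ref{ee2})$ are \emph{not} quadratic in the unknowns --- they involve the full nonlinear compositions $g_{ij}(\Phi_j)$ and $\Pi^i_{pq}(\Phi_i)$ --- so the obstruction terms $E_{i|\mu},\Gamma_{ij|\mu},\lambda_{i|\mu}$ of $(\ref{gg4})$--$(\ref{gg6})$ are not $\tfrac12[\varphi,\varphi]$ for any evident bracket, and their cocycle property in the fibre complex is not a formal consequence of a Jacobi identity. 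It is exactly the content of Lemma \ref{ii76}, i.e.\ the identities $(\ref{gg7})$--$(\ref{gg16})$, whose mixed relations $\bar\partial E_{i|\mu}=F\xi_\mu$, $\pi(E_{i|\mu})+\bar\partial\lambda_{i|\mu}=F\psi_\mu$, $\pi(\lambda_{i|\mu})=F\eta_\mu$, $\lambda_{i|\mu}-\lambda_{j|\mu}=\pi(\Gamma_{ij|\mu})$ require the page-long Poisson computations in the paper; one also needs Lemma \ref{ii86}, showing that solving the linearized system $(\ref{gg51})$--$(\ref{gg56})$ at each order is equivalent to the congruences $(\ref{gg30})_\mu$--$(\ref{gg35})_\mu$. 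You flag this as ``the delicate point'' but supply neither the homotopy-algebraic structure nor the identities, so the inductive step --- the heart of the theorem --- is not actually justified in your proposal; either carry out the verifications of Lemma \ref{ii76} and Lemma \ref{ii86} directly (as the paper does), or construct the governing $L_\infty$/DGLA structure whose existence you are invoking.
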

 The author could not touch the convergence problem (see subsection \ref{pp40}). It seems that we can formally apply Horikawa's method in his proof of Theorem \ref{pp4} in the context of holomorphic Poisson deformations. But the author believes that we need a deep understanding of harmonic theories on the operators $L=\bar{\partial}+[\Lambda_0,-]$ on $\Theta_X^\bullet$ and $L_\pi=\bar{\partial}+\pi$ on $f^*\Theta_Y^\bullet$.

Next let us review the case $(\ref{pp2})$ in Horikawa's deformation theory presented in \cite{Hor74} and explain how the theory can be extended in the context of holomorphic Poisson deformations. Horikawa defined a notion of deformations of holomorphic maps into a complex analytic family and proved theorem of completeness and existence. For the precise statements, we recall the definition of a family of holomorphic maps into a family.
\begin{definition}  \label{pp30}
By a family of holomorphic maps into a complex analytic family $(\mathcal{Y},q,S)$, we mean a collection $(\mathcal{X},\Phi, p, M,s)$ where $(\mathcal{X}, p,M)$ is a complex analytic family, $\Phi:\mathcal{X}\to \mathcal{Y}$ is a holomorphic map, and $s:M\to S$ such that $s\circ p=q\circ \Phi$. Two families $(\mathcal{X}, \Phi,p,M,s)$ and $(\mathcal{X}',\Phi',p',M',s')$ are said to be equivalent if there exist holomorphic isomorphisms $g:\mathcal{X}\to \mathcal{X}'$, and $h:M\to M'$ such that the following diagram commutes
\[\xymatrix{
\mathcal{X} \ar[dd]^p \ar[drr]^g \ar[rrrr]^\Phi & & && \mathcal{Y} \ar[dd]^q\\
& & \mathcal{X}' \ar[dd]^{p'}   \ar[rru]^{\Phi'} \\
M  \ar[drr]^h \ar[rrrr]^s  & & & &S\\      
& &M' \ar[rru]^{s'} &       \\
}\]
\end{definition}
We can define induced families and the concept of completeness in a similar way as in Definition \ref{pp10}, and Definition \ref{pp11}.

Let $(\mathcal{X},\Phi, p,M,s)$ be a family of holomorphic maps into $(\mathcal{Y},q,S), 0\in M, X=X_0$ and let $\tilde{f}:X\to \mathcal{Y}$ be the restriction of $\Phi$ to $X$. Let $0^*=s(0), Y=Y_{0^*}$, and let $f:X\to Y$ be the holomorphic map induced by $\Phi$. Let $\tilde{F}:\Theta_X\to \tilde{f}^*\Theta_\mathcal{Y}$ be the canonical homomorphism induced by $\tilde{f}$.

Let $\mathcal{U}$ be a Stein covering of $X$, then we have
\begin{align*}
D_{X/\mathcal{Y}}=\frac{\{(\tilde{\tau},\rho) \in C^0(\mathcal{U},\tilde{f}^*\Theta_{\mathcal{Y}})\oplus \mathcal{C}^1(\mathcal{U},\Theta_X)|\delta \tilde{\tau}=\tilde{F}\rho, \delta\rho=0\}}{\{(\tilde{F}g,\delta g)|g\in C^0(\mathcal{U}, \Theta_X)\}}
\end{align*}
Then infinitesimal deformations of $f:X\to Y$ in the family $(\mathcal{X},\Phi, p,M,s)$ are encoded in $D_{X/\mathcal{Y}}$, and we have the characteristic map
\begin{align*}
\tau:T_0(M)\to D_{X/\mathcal{Y}}.
\end{align*}
In \cite{Hor74}, Horikawa proved theorem of completeness and existence for deformations of holomorphic maps into a family as follows.

\begin{theorem}[Theorem of completeness]
Let $(\mathcal{X},\Phi,p, M, s)$ be a family of holomorphic maps into a family $(\mathcal{Y},q,S),0\in M, X=X_0$, and let $\tilde{f}:X\to \mathcal{Y}$ be the restriction of $\Phi$ to $X$. If the characteristic map $\tau:T_0(M)\to D_{X/\mathcal{Y}}$ is surjective, then the family is complete at $0$.
\end{theorem}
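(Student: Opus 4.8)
The plan is to run Horikawa's proof of Theorem~\ref{pp3} in the relative situation, replacing the sheaf $f^*\Theta_Y$ throughout by $\tilde f^*\Theta_{\mathcal{Y}}$ (so that the $S$-directions, carried by the locally split sequence $0\to\tilde f^*\Theta_{\mathcal{Y}/S}\to\tilde f^*\Theta_{\mathcal{Y}}\to\tilde f^*q^*\Theta_S\to 0$, are automatically incorporated) and the group $D_{X/Y}$ by $D_{X/\mathcal{Y}}$. After shrinking $N$ to a polydisc with coordinates $t=(t_1,\dots,t_m)$ centered at $0'$ and using the hypothesized equivalence $\Phi'_{0'}\sim\tilde f$ to identify $X'_{0'}$ with $X$, the goal is to construct: (i) a holomorphic map $h\colon N\to M$ with $h(0')=0$ and $s\circ h=s'$; and (ii) an equivalence between $(\mathcal{X}',\Phi',p',N,s')$ and the family induced from $(\mathcal{X},\Phi,p,M,s)$ by $h$. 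Both objects will be built as power series in $t$ by solving the matching equations order by order, and then shown to converge.

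First I would handle the linear term. The first-order part of $\mathcal{X}'$ at $0'$ yields the characteristic map $\tau'\colon T_{0'}(N)\to D_{X/\mathcal{Y}}$ of the family $\mathcal{X}'$, and any $h$ realizing (ii) must satisfy $\tau\circ dh_0=\tau'$, where $\tau\colon T_0(M)\to D_{X/\mathcal{Y}}$ is the characteristic map of $\mathcal{X}$. Since $\tau$ is surjective by hypothesis, a linear map $dh_0$ with $\tau\circ dh_0=\tau'$ exists; fixing such a choice determines the linear part $h_1$ of $h$, and, because a class in $D_{X/\mathcal{Y}}$ comes equipped with the $1$-cochain of vector fields implementing the matching, also the linear part of the isomorphism in (ii). One checks simultaneously that $ds_0\circ dh_0=ds'_0$, using that the $\tilde f^*q^*\Theta_S$-component of $D_{X/\mathcal{Y}}$ computes the derivatives of $s$ and $s'$.

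For the induction, assume $h$ and the isomorphism have been constructed modulo $(t)^{\nu+1}$ so that the $h$-induced family agrees with $(\mathcal{X}',\Phi',p',N,s')$ through order $\nu$. Expanding the matching condition to order $\nu+1$ produces, after the standard rearrangement, a linear equation whose unknowns are the homogeneous degree-$(\nu+1)$ increment $h_{\nu+1}$ of $h$ and a degree-$(\nu+1)$ increment of the $0$-cochains $(\tilde\tau,\rho)$ that define the isomorphism; its inhomogeneous term is a $1$-cocycle for the complex underlying $D_{X/\mathcal{Y}}$, assembled from the data already constructed and from the Taylor coefficients of $\mathcal{X}'$. Its class in $D_{X/\mathcal{Y}}$ must equal $\tau(h_{\nu+1})$ for a suitable $h_{\nu+1}$, which is possible exactly because $\tau$ is surjective; after subtracting such a $\tau(h_{\nu+1})$ the remainder is a coboundary $(\tilde F g,\delta g)$, and one recovers the cochain increments by solving the associated $\bar\partial$- and $\delta$-equations on the compact manifold $X$ with bounded homotopy operators (harmonic theory), so as to obtain quantitative estimates. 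This produces $h$ and the isomorphism as formal power series, with $s\circ h=s'$ to all orders.

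Finally I would prove convergence by the majorant method of Kodaira--Spencer: the recursion combined with the a priori estimates for the chosen bounded operators admits a convergent power-series majorant of the usual type, so that $h$ and the transition cochains converge on a neighbourhood $U$ of $0'$; their holomorphic limits realize the required equivalence, whence $(\mathcal{X},\Phi,p,M,s)$ is complete at $0$. The main obstacle, exactly as in \cite{Hor74}, is to set up the order-by-order matching so that the obstruction genuinely lands in $D_{X/\mathcal{Y}}$ --- making surjectivity of $\tau$ precisely the condition needed --- while keeping $s\circ h=s'$ automatically satisfied, and then to push the harmonic estimates through the resulting two-step inductive scheme (first kill the $D_{X/\mathcal{Y}}$-class by adjusting $h_{\nu+1}$, then solve a coboundary equation for the isomorphism).
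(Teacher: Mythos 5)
Your proposal is essentially sound, but it takes a different route from the one the paper (following Horikawa, \cite{Hor74} p.655--656; see the proof of Theorem \ref{ii5} for the Poisson analogue) actually uses. The paper does not redo the order-by-order construction in the relative setting at all: it packages the data as $\Psi=(\Phi,p)\colon\mathcal{X}\to\mathcal{Y}\times M$, observes that this is a family of holomorphic maps of $X$ into the \emph{fixed} complex manifold $\mathcal{Y}$ in the sense of Definition \ref{pp10} (compactness of the target is not needed there), and notes that the characteristic space of this family at $0$ is precisely $D_{X/\mathcal{Y}}$; the surjectivity hypothesis then lets one quote the fixed-target completeness theorem (Theorem \ref{pp3}, resp.\ Theorem \ref{ii1}) verbatim to get $g$ and $h$, and the compatibility $s\circ h=s'$ falls out of the commutative diagram, since $q\circ\Phi'=s'\circ p'$ and $q\circ\Phi\circ g=s\circ h\circ p'$ with $p'$ surjective. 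What the reduction buys is that the entire inductive scheme, the obstruction bookkeeping in $D_{X/\mathcal{Y}}$, the condition $s\circ h=s'$, and the convergence are inherited at once, with no new estimates. Your direct approach reproves the same thing by unfolding that reduction: the matching equations you describe are exactly the fixed-target equations written in the coordinates $(w_i,s)$ of $\mathcal{Y}$, with the $q^*\Theta_S$-components of the cochains encoding $s'-s\circ h$, so it works, but two of your choices are heavier than necessary: (i) convergence in the completeness theorem is handled by the elementary normal-families estimate (Lemma 2.3 of \cite{Hor73}, the analogue of the lemma giving $|g_i|,|t|\le K_{10}\|\sigma\|$ in the proof of Theorem \ref{ii1}) rather than harmonic theory and bounded homotopy operators, which are only needed for the existence theorems; and (ii) the claim that $s\circ h=s'$ is ``automatically satisfied'' does require the explicit observation about the $S$-components of the obstruction cochains, which in the reduction argument is not an extra verification but a formal consequence of the diagram.
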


\begin{theorem}[Theorem of existence for deformations of holomorphic maps into a family]\label{qq22}
Let $f:X\to Y$ be a holomorphic map of compact complex manifolds. Let $(\mathcal{Y}, q,S)$ be a complex analytic family, $0^*\in S, Y=Y_{0^*}$, and let $\rho':T_{0^*}(S)\to H^1(Y,\Theta_Y)$ be the Kodaira-Spencer map of $(\mathcal{Y},q,S)$. Assume that
\begin{enumerate}
\item $H^1(X,f^* \Theta_Y)$ is generated by the image of $F:H^1(X,\Theta_X  )  \to H^1(X, f^*\Theta_Y)$ and the image of $f^*\circ \rho': T_{0^*}(S)\to H^1(X, f^* \Theta_Y)$.
\item $F: H^2(X, \Theta_X)\to H^2(X, f^*\Theta_Y)$ is injective.
\end{enumerate}
Then there exists a family $(\mathcal{X},\Phi,p, M,s)$ of holomorphic maps into $(\mathcal{Y},q,S)$ and a point $0\in M$ such that
\begin{enumerate}
\item $s(0)=0^*,X=p^{-1}(0)$ and $\Phi_0$ coincides with $f$,
\item $\tau: T_0(M)\to D_{X/Y}$ is bijective.
\end{enumerate}

\end{theorem}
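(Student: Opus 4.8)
The plan is to imitate Horikawa's proof of the existence theorem in case $(1)$ (Theorem~\ref{pp4}), which is a form of the Kodaira--Spencer--Kuranishi construction, carrying along the extra bookkeeping needed to track the base $S$. Since the statement is local on $S$ near $0^{*}$ and $Y=Y_{0^{*}}$ is compact, I would first replace $S$ by a small polydisc around $0^{*}$, and fix a finite Stein covering $\mathcal{U}$ of $X$. The next step is linear algebra: pulling back the relative tangent sequence $0\to\Theta_{\mathcal{Y}/S}\to\Theta_{\mathcal{Y}}\to q^{*}\Theta_{S}\to 0$ along $\tilde f\colon X\to\mathcal{Y}$ and using that $\tilde f$ factors through $Y=Y_{0^{*}}$, so that $q\circ\tilde f$ is the constant map to $0^{*}$, yields an exact sequence of sheaves on $X$
\[
0\to f^{*}\Theta_{Y}\to \tilde f^{*}\Theta_{\mathcal{Y}}\to \underline{T_{0^{*}}(S)}\to 0
\]
with $\underline{T_{0^{*}}(S)}$ the trivial bundle; hence an exact sequence of two--term complexes (each with $\Theta_{X}$ in degree $0$)
\[
0\to[\Theta_{X}\xrightarrow{F}f^{*}\Theta_{Y}]\to[\Theta_{X}\xrightarrow{\tilde F}\tilde f^{*}\Theta_{\mathcal{Y}}]\to \underline{T_{0^{*}}(S)}[-1]\to 0,
\]
and a long exact sequence of hypercohomology
\[
D_{X/Y}\to D_{X/\mathcal{Y}}\xrightarrow{\ \sigma\ } T_{0^{*}}(S)\xrightarrow{\ \partial\ }\mathbb{H}^{2}\big([\Theta_{X}\xrightarrow{F}f^{*}\Theta_{Y}]\big)\to\mathbb{H}^{2}\big([\Theta_{X}\xrightarrow{\tilde F}\tilde f^{*}\Theta_{\mathcal{Y}}]\big)\to\cdots,
\]
in which $\partial$ is $f^{*}\circ\rho'$ followed by the natural map $H^{1}(X,f^{*}\Theta_{Y})\to\mathbb{H}^{2}([\Theta_{X}\xrightarrow{F}f^{*}\Theta_{Y}])$. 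Assumption $(2)$ identifies $\mathbb{H}^{2}([\Theta_{X}\xrightarrow{F}f^{*}\Theta_{Y}])$ with the quotient $H^{1}(X,f^{*}\Theta_{Y})/F\,H^{1}(X,\Theta_{X})$, and assumption $(1)$ then says precisely that $\partial$ is surjective onto this quotient. This is the cohomological input that will make the obstructions vanish.

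For the construction, set $m=\dim_{\mathbb{C}}D_{X/\mathcal{Y}}$, pick cocycles $(\tilde\tau_{i},\rho_{i})$ representing a basis $\eta_{1},\dots,\eta_{m}$ of $D_{X/\mathcal{Y}}$, and let $M$ be a polydisc in $\mathbb{C}^{m}$ with coordinates $t=(t_{1},\dots,t_{m})$ and origin $0$. I would construct, as formal power series in $t$ without constant term: a $1$--cochain $\rho(t)\in C^{1}(\mathcal{U},\Theta_{X})$ encoding the transition automorphisms of a deformation $\{X_{t}\}$ of $X$ (equivalently Kuranishi data $\varphi(t)$ with $\bar\partial\varphi=\tfrac{1}{2}[\varphi,\varphi]$); a power series $s(t)$ valued in the chosen chart of $S$ with $s(0)=0^{*}$; and a $0$--cochain $\tilde\tau(t)\in C^{0}(\mathcal{U},\tilde f^{*}\Theta_{\mathcal{Y}})$ of local coordinate expressions of the deformed maps $\Phi(t)\colon X_{t}\to\mathcal{Y}$; all required to satisfy the cocycle and integrability relations that say $(\mathcal{X},\Phi,p,M,s)$ is a family of holomorphic maps into $(\mathcal{Y},q,S)$, with linear part $\sum_{i}t_{i}(\tilde\tau_{i},\rho_{i})$. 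These are solved by induction on degree: granted a solution modulo degree $k$, the degree--$k$ part of the system is a linear equation in the degree--$k$ unknowns whose right--hand side is a total $2$--cocycle for $[\Theta_{X}\xrightarrow{\tilde F}\tilde f^{*}\Theta_{\mathcal{Y}}]$ built out of lower degrees; one chooses the degree--$k$ term of $s(t)$ freely (this is unobstructed, since the given family $\mathcal{Y}\to S$ already supplies the prescribed deformation of the targets), and a diagram chase through the hypercohomology sequence above, using the surjectivity of $\partial$ and assumption $(2)$, shows that after this choice the residual cocycle is a coboundary, so the degree--$k$ unknowns exist. By construction $\tau\colon T_{0}(M)\to D_{X/\mathcal{Y}}$ carries $\partial/\partial t_{i}$ to $\eta_{i}$, hence is bijective.

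The remaining step, convergence, is where the real work lies and is the main obstacle. Following Kodaira--Spencer--Nirenberg and Horikawa's adaptation to maps in \cite{Hor73,Hor74}, I would fix H\"older norms on the coordinate patches, introduce Green's operators for the elliptic complex whose cohomology computes the relevant hypercohomology (assembled from $\bar\partial$ on $X$ and the bundle map $\tilde F$), and rewrite the recursion as a fixed--point equation of the schematic form $\psi=\mathbb{L}(\psi)+\mathsf{Q}(\psi)$ with $\mathsf{Q}$ quadratic in $\psi$, so that the formal solution is dominated by an explicit convergent majorant series. Two points require care, both already present in Horikawa's treatment of maps: $\Phi$ is a map rather than a section, so one must work with local coordinate representatives glued by the transition relations (or pass to graphs) to obtain Banach spaces; and the series $s(t)$ must be kept inside the fixed chart of $S$. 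Once convergence holds, the power series define honest complex manifolds $\mathcal{X},M$ and honest holomorphic maps $p,\Phi,s$, and one checks $s\circ p=q\circ\Phi$, $X=p^{-1}(0)$, $\Phi_{0}=f$, $s(0)=0^{*}$ and the axioms of Definition~\ref{pp30}. The obstruction--vanishing in the formal step is routine once the exact sequences above are set up; and in contrast with the Poisson existence theorem (Theorem~\ref{pp43}), where convergence is left open, here no auxiliary differential such as $[\Lambda_{0},-]$ enters the harmonic theory, so the estimates go through as in \cite{Hor74}.
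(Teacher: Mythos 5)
Your sketch follows essentially the same route as Horikawa's proof, which this paper quotes and then mirrors in its own proof of the Poisson analogue (Theorem \ref{ii80}): take $M$ a polydisc of dimension $\dim D_{X/\mathcal{Y}}$, construct by induction on degree the power series for the deformation of $X$, the local expressions of the map, and the parameter $s(t)$, kill the degree-$\mu$ obstructions using hypotheses (1) and (2), and then prove convergence by Horikawa's elliptic/majorant estimates; your reformulation via the long exact hypercohomology sequence of the two-term complexes $[\Theta_X\to f^*\Theta_Y]\to[\Theta_X\to\tilde f^*\Theta_{\mathcal{Y}}]$ is just a repackaging of the same obstruction calculus (compare the paper's Lemmas \ref{kk22}, \ref{kk23}, \ref{ll3}).

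One correction: the degree-$\mu$ term $s_\mu$ of $s(t)$ is \emph{not} chosen freely. It is one of the unknowns solved for at each inductive step: after hypothesis (2) kills the class $[\xi_\mu]\in H^2(X,\Theta_X)$ and yields $\phi_\mu'$ with $\bar{\partial}\phi_\mu'=-\xi_\mu$, the residual $\bar{\partial}$-closed class $[E_\mu'+F\phi_\mu']\in H^1(X,f^*\Theta_Y)$ must be decomposed, using hypothesis (1), as $F[x_\mu]-\sum_v s_\mu^v\,f^*\rho'\!\left(\tfrac{\partial}{\partial s^v}\right)$ modulo exact terms; the coefficients $s_\mu^v$ are exactly what hypothesis (1) provides (this is the role of $h_\mu$ in Lemma \ref{ll3}). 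If $s_\mu$ were fixed arbitrarily in advance, the residual cocycle would in general not be a coboundary and the induction would break precisely at the point where assumption (1) is needed; with this adjustment your argument agrees with the paper's.
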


In section \ref{section3}, we extend the concept of a family of holomorphic maps into a family in the context of holomorphic Poisson deformations. In other words, the situation (\ref{pp25}) is the main topic of section \ref{section3}. We extend Definition \ref{pp30} to define a concept of a family of holomorphic Poisson maps into a Poisson analytic family (see Definition \ref{pp27}).

\begin{definition}
By a Poisson analytic family of holomorphic Poisson maps into a Poisson analytic family $(\mathcal{Y},\Pi,q,S)$, we mean a collection $(\mathcal{X},\Lambda,\Phi, p, M,s)$ where $(\mathcal{X},\Lambda, p,M)$ is a Poisson analytic family, $\Phi:(\mathcal{X},\Lambda)\to (\mathcal{Y},\Pi)$ is a holomorphic Poisson map, and $s:M\to S$ such that $s\circ p=q\circ \Phi$. Two families $(\mathcal{X},\Lambda, \Phi,p,M,s)$ and $(\mathcal{X}',\Lambda',M',s')$ are said to be equivalent if there exist a holomorphic Poisson isomorphism $g:(\mathcal{X},\Lambda)\to (\mathcal{X}',\Lambda')$ and an isomorphism $h:M\to M'$ such that the following diagram commutes
\[\xymatrix{
(\mathcal{X},\Lambda) \ar[dd]^p \ar[drr]^g \ar[rrrr]^\Phi & & && ( \mathcal{Y},\Pi) \ar[dd]^q\\
& & (\mathcal{X}',\Lambda') \ar[dd]^{p'}   \ar[rru]^{\Phi'} \\
M  \ar[drr]^h \ar[rrrr]^s  & & & &S\\      
& &M' \ar[rru]^{s'} &       \\
}\]
\end{definition}
We can define induced families and the concept of completeness (see Definition \ref{pp27} and Definition \ref{pp28}). 

Let $(\mathcal{X}, \Lambda, \Phi, p, M, s)$ be a Poisson analytic family of holomorphic Poisson maps into $(\mathcal{Y},\Pi, q, S)$, $0\in M$, $(X,\Lambda_0)=(X_0,\Lambda_0)$, and let $\tilde{f}:(X,\Lambda_0)\to (\mathcal{Y},\Pi)$ be the restriction of $\Phi$ to $(X,\Lambda_0)$. Let $\tilde{F}:\Theta_X^\bullet \to \tilde{f}^* \Theta_{\mathcal{Y}}^\bullet$ be the canonical homomorphism. Let $0^*=s(0),(Y,\Pi_0)=(Y_{0^*},\Pi_{0^*})$, and let $f:(X,\Lambda_0)\to (Y,\Pi_0)$ be the holomorphic Poisson map induced by $\Phi$. If $\mathcal{U}=\{U_i\}$ is a Stein covering of $X$, then we have
\begin{align*}
PD_{(X,\Lambda_0)/(\mathcal{Y},\Pi)}=\frac{\{(\tilde{\tau},\rho,\lambda) \in C^0(\mathcal{U},\tilde{f}^*\Theta_{\mathcal{Y}})\oplus\mathcal{C}^1(\mathcal{U},\Theta_X)\oplus C^0(\mathcal{U},\wedge^2 \Theta_X)|\frac{-\delta \tilde{\tau}=\tilde{F}\rho, \pi( \tilde{\tau} ) =\tilde{F}\lambda}{\delta\rho=0, \delta \lambda + [\Lambda_0, \rho]=0, [\Lambda_0,\lambda]=0 }\}}{\{(\tilde{F}g,-\delta g,[\Lambda_0, g])|g\in C^0(\mathcal{U}, \Theta_X)\}}
\end{align*}
Then infinitesimal deformations of $f:(X,\Lambda_0)\to (Y,\Pi_0)$ in the family $(\mathcal{X},\Lambda, \Phi, p, M,s)$ are encoded in $PD_{(X,\Lambda_0)/(\mathcal{Y},\Pi)}$, and we can define the characteristic map (see subsection \ref{qq20})
\begin{align*}
\tau:T_0(M)\to PD_{(X,\Lambda_0)/(\mathcal{Y},\Pi)}.
\end{align*}

We prove theorem of completeness (see Theorem \ref{ii5}) for deformations of holomorphic Poisson maps into a Poisson analytic family as follows.

\begin{theorem}[Theorem of completeness for deformations of holomorphic Poisson maps into a family]
Let $(\mathcal{X},\Lambda,\Phi,p, M, s)$ be a family of holomorphic Poisson maps into a family $(\mathcal{Y},\Pi, q,S),0\in M, (X,\Lambda_0)=(X_0,\Lambda_0)$, and let $\tilde{f}:(X,\Lambda_0)\to (\mathcal{Y},\Pi)$ be the restriction of $\Phi$ to $(X,\Lambda_0)$. If the characteristic map $\tau:T_0(M)\to PD_{(X,\Lambda_0)/(\mathcal{Y},\Pi)}$ is surjective, then the family is complete at $0$.
\end{theorem}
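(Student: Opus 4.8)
The plan is to adapt Horikawa's proof of the theorem of completeness for families of holomorphic maps into a family (\cite{Hor74}) to the Poisson setting, carrying the holomorphic bivector data through every step so that the additional Poisson compatibility conditions built into $PD_{(X,\Lambda_0)/(\mathcal{Y},\Pi)}$ are preserved throughout. I would first choose a finite Stein covering $\mathcal{U}'=\{U'_i\}$ of $\mathcal{X}'$ adapted to $p'$, together with fibre coordinates, so that the Poisson analytic family $(\mathcal{X}',\Lambda',p',N)$ is described by transition functions and a bivector field $\Lambda'$ in the sense of \cite{Kim15}, the holomorphic Poisson map $\Phi'$ is given in these coordinates by holomorphic functions with values in charts of $\mathcal{Y}$, and the base map $s':N\to S$ by holomorphic functions; all of this data is then expanded as a Taylor series in coordinates $t=(t_1,\dots,t_n)$ on $N$ centred at $0'$, and after shrinking $N$ I identify the central fibres and the restrictions $\Phi'_{0'},\Phi_0$ by means of the given equivalence (so in particular $s'(0')=s(0)=0^*$). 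Constructing the desired $h$ then amounts to producing a holomorphic map $h:(N,0')\to(M,0)$ with $s\circ h=s'$ near $0'$, together with a holomorphic Poisson isomorphism $g$ over $h$ intertwining $\Phi$ and $\Phi'$, and I would build $h=\sum_{\mu\ge1}h_\mu$ and $g=\mathrm{id}+\sum_{\mu\ge1}g_\mu$ as formal power series in $t$ (with homogeneous parts $h_\mu,g_\mu$), proving convergence at the end.

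For the linear term, the primed family has its own characteristic map $\tau':T_{0'}(N)\to PD_{(X,\Lambda_0)/(\mathcal{Y},\Pi)}$; since $\tau:T_0(M)\to PD_{(X,\Lambda_0)/(\mathcal{Y},\Pi)}$ is surjective by hypothesis, I may choose a linear map $(dh)_{0'}:T_{0'}(N)\to T_0(M)$ with $\tau\circ(dh)_{0'}=\tau'$. This matches the two families to first order, including the direction of $S$, which is recorded inside $\tilde f^*\Theta_{\mathcal{Y}}^\bullet$ via $dq$; in particular $s\circ h=s'$ holds to first order.

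The inductive step is the heart of the argument. Suppose $h$ and $g$ have been determined modulo degree $\mu$ in $t$ so that all compatibilities hold to that order. The degree-$\mu$ discrepancies in the transition functions, in $\Phi'$, in $\Lambda'$ and in $s'$ assemble into a triple $\Gamma_\mu=(\tilde\tau_\mu,\rho_\mu,\lambda_\mu)$ of \v{C}ech cochains of the same types as those representing classes in $PD_{(X,\Lambda_0)/(\mathcal{Y},\Pi)}$. Using that $(\mathcal{X}',\Lambda',p',N)$ is a genuine Poisson analytic family, that $\Phi'$ is a genuine holomorphic Poisson map, and that $s'\circ p'=q\circ\Phi'$, one checks that $\Gamma_\mu$ satisfies
\begin{align*}
&-\delta\tilde\tau_\mu=\tilde F\rho_\mu,\qquad \pi(\tilde\tau_\mu)=\tilde F\lambda_\mu,\qquad [\Lambda_0,\lambda_\mu]=0,\\
&\delta\rho_\mu=0,\qquad \delta\lambda_\mu+[\Lambda_0,\rho_\mu]=0,
\end{align*}
so it defines a class $[\Gamma_\mu]\in PD_{(X,\Lambda_0)/(\mathcal{Y},\Pi)}$. (Note that the obstruction lives in $PD_{(X,\Lambda_0)/(\mathcal{Y},\Pi)}$ itself, the analogue of $\mathbb{H}^0(X,\mathcal{N}_f^\bullet)$, and not in a group one degree higher, precisely because we are matching two given families rather than constructing one.) Surjectivity of $\tau$ now produces $\xi_\mu\in T_0(M)$ with $\tau(\xi_\mu)=[\Gamma_\mu]$; choosing $h_\mu$ so as to realise $\xi_\mu$ removes this class, and the leftover coboundary $(\tilde Fg_\mu,-\delta g_\mu,[\Lambda_0,g_\mu])$ is absorbed by a suitable choice of $g_\mu$, after the usual splitting of \v{C}ech cochains over a Stein covering. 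This closes the induction and yields the formal solution, with the homogeneous parts controlled in \v{C}ech norm in the standard way.

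The main obstacle is twofold. First, one must verify carefully that $\Gamma_\mu$ really is a cocycle for the three-term system defining $PD_{(X,\Lambda_0)/(\mathcal{Y},\Pi)}$ — that is, that the Poisson and integrability conditions $\Phi'_*\Lambda'=\Pi$ and $[\Lambda',\Lambda']=0$ propagate at each order in $t$, the potential nonlinear terms cancelling against the induction hypothesis; this is the genuinely new point relative to \cite{Hor74} and is where the Jacobi and Schouten-bracket identities do the work. Second, one must establish convergence of the formal series $h$ and $g$; here I would adapt the elliptic a priori estimates and the majorant-series technique of Kodaira--Spencer as used by Horikawa, the essential simplification being that — in contrast with the existence theorems (e.g.\ Theorem \ref{pp43}) — we are comparing two \emph{given} families, so only bounded right inverses to the coboundary operators $\delta$ resolving the complexes $\Theta_X^\bullet$ and $\tilde f^*\Theta_{\mathcal{Y}}^\bullet$ are needed, not harmonic projection for the operators $L=\bar\partial+[\Lambda_0,-]$ and $L_\pi=\bar\partial+\pi$; the required estimates are therefore available and the argument goes through essentially as in the non-Poisson case.
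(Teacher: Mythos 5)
Your strategy is sound, but it is not the route the paper takes, and it is considerably more laborious. The paper's proof of Theorem \ref{ii5} is a short formal reduction: setting $\Psi=(\Phi,p):(\mathcal{X},\Lambda)\to(\mathcal{Y}\times M,\Pi)$ (and $\Psi'=(\Phi',p')$ for the competing family) turns a family of Poisson maps into the family $(\mathcal{Y},\Pi,q,S)$ into a family of Poisson maps into the \emph{fixed} Poisson manifold $(\mathcal{Y},\Pi)$ in the sense of Definition \ref{ll35}; since $PD_{(X,\Lambda_0)/(\mathcal{Y},\Pi)}$ is by construction the $PD$-group of $\tilde f:(X,\Lambda_0)\to(\mathcal{Y},\Pi)$, the characteristic map of this auxiliary family at $0$ is exactly the given $\tau$, so Theorem \ref{ii1} applies verbatim and produces $h:M'\to M$ and the Poisson map $g$ over $h$; the relation $s\circ h=s'$ then falls out of the resulting commutative diagram because $s\circ p=q\circ\Phi$ and $s'\circ p'=q\circ\Phi'$. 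What you propose is instead to redo the whole Horikawa induction (power-series construction of $h_\mu$, $g_\mu$, cocycle verification for the triple $(\tilde\tau_\mu,\rho_\mu,\lambda_\mu)$, absorption of the coboundary, majorant estimates) with target $\mathcal{Y}$ — which is essentially re-proving Theorem \ref{ii1} with $\tilde f^*\Theta_{\mathcal{Y}}^\bullet$ in place of $f^*\Theta_Y^\bullet$; this works, since the proof of Theorem \ref{ii1} only uses compactness of $X$ and a finite Stein cover, not compactness of the target, and your key observation — that in the completeness (as opposed to existence) theorem no harmonic theory for $L=\bar\partial+[\Lambda_0,-]$ or $L_\pi$ is needed, only bounded solvability of the linear \v{C}ech equations at each step — is exactly the distinction the paper relies on. The paper's reduction buys brevity and avoids repeating the convergence analysis; your direct argument buys self-containedness but duplicates the hardest parts (the Poisson cocycle identities of the lemma $(\ref{jj10})$--$(\ref{jj14})$ and the convergence lemma). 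One small correction to your final paragraph: the convergence in the paper's Theorem \ref{ii1} is not obtained from elliptic a priori estimates but from the majorant series $A(t)$ together with a norm lemma ($\iota(\sigma)\le K_{10}\lVert\sigma\rVert$) proved by a contradiction/normal-families argument over the fixed finite Stein covering; that is the estimate you actually need, and it is available in your setting as well.
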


We prove theorem of existence (see Theorem \ref{ii80}) for deformations of holomorphic Poisson maps into a Poisson analytic family under the assumption that the formal power series constructed in the proof converge as in Theorem \ref{pp43}.

\begin{theorem}[Theorem of existence for deformations of holomorphic Poisson maps into a family]\label{qq21}
Let $f:(X,\Lambda_0)\to (Y, \Pi_0)$ be a holomorphic Poisson map of compact holomorphic Poisson manifolds. Let $(\mathcal{Y},\Pi, q,S)$ be a Poisson analytic family, $0^*\in S, (Y,\Pi_0)=(Y_{0^*},\Pi_{0^*})$, and let $\rho':T_{0^*}(S)\to \mathbb{H}^1(Y,\Theta_Y^\bullet)$ be the Poisson Kodaira-Spencer map of $(\mathcal{Y},\Pi,q,S)$ $($see \cite{Kim15}$)$. Assume that
\begin{enumerate}
\item $\mathbb{H}^1(X,f^* \Theta_Y^\bullet)$ is generated by the image of $F:\mathbb{H}^1(X,\Theta_X^\bullet  )  \to \mathbb{H}^1(X, f^*\Theta_Y^\bullet)$ and the image of $f^*\circ \rho': T_{0^*}(S)\to \mathbb{H}^1(X, f^* \Theta_Y^\bullet )$.
\item $F: \mathbb{H}^2(X, \Theta_X^\bullet)\to \mathbb{H}^2(X, f^*\Theta_Y^\bullet)$ is injective.
\end{enumerate}
If the formal power series $(\ref{qq50})$ constructed in the proof converge, then there exists a family $(\mathcal{X},\Lambda,\Phi,p, M,s)$ of holomorphic Poisson maps into $(\mathcal{Y},\Pi,q,S)$ and a point $0\in M$ such that
\begin{enumerate}
\item $s(0)=0^*,(X,\Lambda_0)=p^{-1}(0)$ and $\Phi_0$ coincides with $f$,
\item $\tau: T_0(M)\to PD_{(X,\Lambda_0)/(\mathcal{Y},\Pi)}$ is bijective.
\end{enumerate}

\end{theorem}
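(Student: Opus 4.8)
The plan is to combine the modifications already made in the proof of Theorem~\ref{pp43} (passing from ordinary cohomology to hypercohomology of the truncated Poisson complexes $\Theta_X^\bullet$, $f^*\Theta_Y^\bullet$, $\Theta_Y^\bullet$ and adjoining the deforming Poisson bivector to the list of unknowns) with Horikawa's family-version argument for Theorem~\ref{qq22}. Concretely, fix a finite Stein covering $\mathcal{U}=\{U_i\}$ of $X$ compatible with local coordinate systems, together with Stein coverings of $Y$ and of $S$; put $m=\dim_{\mathbb C} PD_{(X,\Lambda_0)/(\mathcal{Y},\Pi)}$ and choose cocycles $\beta_\nu=(\tilde\tau_\nu,\rho_\nu,\lambda_\nu)$, $1\le\nu\le m$, representing a basis of $PD_{(X,\Lambda_0)/(\mathcal{Y},\Pi)}$; these are the intended linear terms in the parameter $t=(t_1,\dots,t_m)$. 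One then seeks, as formal power series in $t$, transition and coordinate data defining a formal Poisson analytic family $(\mathcal{X},\Lambda,p,M)$ deforming $(X,\Lambda_0)$ with $\Lambda=\Lambda_0+\sum_{|\nu|\ge1}\Lambda_\nu t^\nu$; a formal holomorphic map $s(t)\colon(M,0)\to(S,0^*)$; and a formal holomorphic Poisson map $\Phi(t)\colon(\mathcal{X},\Lambda)\to(\mathcal{Y},\Pi)$ lying over $s(t)$ with $\Phi_0=f$. These data must satisfy the integrability of the deformed complex structure on $\mathcal X$ together with $[\Lambda,\Lambda]=0$ (the Maurer--Cartan equation governing holomorphic Poisson deformations of $(X,\Lambda_0)$), the holomorphy of $\Phi$ together with the Poisson condition $\Phi_*\Lambda=\Pi\circ s$, and the compatibility $q\circ\Phi=s\circ p$. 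The given family $(\mathcal{Y},\Pi,q,S)$ enters as prescribed convergent data, and its Poisson Kodaira--Spencer map $\rho'\colon T_{0^*}(S)\to\mathbb H^1(Y,\Theta_Y^\bullet)$ records the directions in which the target is permitted to move.

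The construction proceeds by induction on the degree in $t$, the case of degree one being the statement that the $\beta_\nu$ are cocycles in $PD_{(X,\Lambda_0)/(\mathcal{Y},\Pi)}$. Suppose all the data have been determined modulo degree $k$ so as to satisfy the equations above to that order. Extracting the degree-$k$ part of the equations yields an inhomogeneous linear system whose right-hand side is assembled from the already-constructed lower-order terms; by the induction hypothesis together with the Jacobi identity for the Schouten bracket and the chain-rule identities, this right-hand side is a cocycle, and its class is an obstruction lying in a hypercohomology group $\mathbb H^2$ of the combined source--map--target complex, which fits into exact sequences relating it to $\mathbb H^2(X,\Theta_X^\bullet)$, $\mathbb H^1(X,f^*\Theta_Y^\bullet)$, $\mathbb H^2(X,f^*\Theta_Y^\bullet)$ and $\mathbb H^1(Y,\Theta_Y^\bullet)$. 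The obstruction is removed in stages. First, since the family $(\mathcal Y,\Pi,q,S)$ genuinely exists, the degree-$k$ term of $s(t)$ can be chosen — via $\rho'$ — so as to cancel the component of the obstruction coming from moving the target. Next, hypothesis~(1), that $\mathbb H^1(X,f^*\Theta_Y^\bullet)$ is generated by the image of $F$ and the image of $f^*\circ\rho'$, permits a further adjustment of the degree-$k$ terms of $\Phi$ and $s$ after which the residual obstruction in $\mathbb H^2(X,f^*\Theta_Y^\bullet)$ lies in the image of $F\colon\mathbb H^2(X,\Theta_X^\bullet)\to\mathbb H^2(X,f^*\Theta_Y^\bullet)$, i.e. comes from an obstruction class for the Poisson deformation of the source alone. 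Finally, hypothesis~(2), the injectivity of this $F$, forces that source obstruction class to vanish; hence the obstruction vanishes and the degree-$k$ coefficients can be chosen. At this step one must also check that $\Lambda_k$ can be selected so that $[\Lambda,\Lambda]=0$ and $\Phi_*\Lambda=\Pi\circ s$ persist to degree $k$ — this is precisely where the Poisson structure of the complexes $\Theta_X^\bullet$, $f^*\Theta_Y^\bullet$ (the operators $[\Lambda_0,-]$ and $\pi$) is used, mirroring the corresponding step in the proof of Theorem~\ref{pp43}.

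It remains to read off the conclusions. Conclusion~(1) — $s(0)=0^*$, $(X,\Lambda_0)=p^{-1}(0)$, $\Phi_0=f$ — holds by construction. For conclusion~(2), one checks that, after a harmless change of coordinates on $M$, the linear term of the construction equals $\sum_{\nu=1}^m\beta_\nu t_\nu$, so that the characteristic map $\tau\colon T_0(M)\to PD_{(X,\Lambda_0)/(\mathcal{Y},\Pi)}$ carries $\partial/\partial t_\nu$ to the class of $\beta_\nu$ and is therefore bijective (cf. subsection~\ref{qq20}). Since the convergence of the formal power series $(\ref{qq50})$ so produced is assumed in the statement, this yields the desired analytic family and finishes the proof. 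The main obstacle is the bookkeeping that the degree-$k$ right-hand side is genuinely a cocycle in the combined complex and that the staged cancellation respects simultaneously the integrability equation, the condition $[\Lambda,\Lambda]=0$, and the Poisson-map condition $\Phi_*\Lambda=\Pi\circ s$ — that is, that every adjustment stays inside the Maurer--Cartan locus of the Poisson complexes; as the author remarks for Theorem~\ref{pp43}, actually settling convergence (which is hypothesized away here) would in addition require a harmonic theory for $L=\bar\partial+[\Lambda_0,-]$ on $\Theta_X^\bullet$ and $L_\pi=\bar\partial+\pi$ on $f^*\Theta_Y^\bullet$.
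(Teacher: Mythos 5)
Your overall skeleton is the paper's: a Horikawa/Kodaira--Spencer power-series construction of $(\phi(t),\Lambda(t),\Phi_i(z_i,t),h(t))$ with the degree-one term built from a basis of $PD_{(X,\Lambda_0)/(\mathcal{Y},\Pi)}$ (via the Dolbeault description of Lemma \ref{kk23}), induction on the degree in $t$, convergence of $(\ref{qq50})$ assumed, and bijectivity of $\tau$ read off from the linear term using Lemmas \ref{kk22} and \ref{kk23}. That is exactly how Theorem \ref{ii80} is proved, as a variant of Theorem \ref{hh1}.

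However, the one piece of logic you actually spell out -- the staged cancellation of the degree-$k$ obstruction -- is not valid as written and misassigns the roles of the two hypotheses. You say the residual obstruction in $\mathbb{H}^2(X,f^*\Theta_Y^\bullet)$ lies in the image of $F$ and that injectivity of $F$ then ``forces'' the source obstruction class to vanish; but injectivity alone forces nothing: to conclude that the class of $(\xi_\mu,\psi_\mu,\eta_\mu)$ in $\mathbb{H}^2(X,\Theta_X^\bullet)$ vanishes one must first know that its $F$-image is zero. In the paper this is precisely what the map-level error data provide (Lemma \ref{ll1}): splitting the \v{C}ech cocycle $\Gamma_{ij|\mu}=\Gamma_{i|\mu}-\Gamma_{j|\mu}$ (possible since $\mathcal{A}^{0,0}(f^*\Theta_Y)$ is fine) yields global $E'_\mu,\lambda'_\mu$ with $\bar{\partial}E'_\mu=F\xi_\mu$, $\pi(E'_\mu)+\bar{\partial}\lambda'_\mu=F\psi_\mu$, $\pi(\lambda'_\mu)=F\eta_\mu$, i.e.\ $F$ of the source obstruction is $L_\pi$-exact; only then does hypothesis (2) give $\phi'_\mu,\Lambda'_\mu$ killing it. Hypothesis (1), by contrast, acts at the level of $\mathbb{H}^1(X,f^*\Theta_Y^\bullet)$, not $\mathbb{H}^2$: the $L_\pi$-closed pair $(E'_\mu+F\phi'_\mu,\lambda'_\mu+F\Lambda'_\mu)$ is written as $F(x_\mu,y_\mu)-\sum_v h_\mu^v(\tilde{a}_v,\tilde{b}_v)$ plus an $L_\pi$-exact term, which determines $\Phi'_\mu$ and the degree-$\mu$ term $h_\mu$ of the map to $S$ simultaneously (there is no separate earlier stage where $s(t)$ is adjusted via $\rho'$), and the corrections $x_\mu,y_\mu$ are then absorbed into $\phi_\mu,\Lambda_\mu$ (Lemma \ref{ll3}). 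Supplying the identities of Lemma \ref{ll1} and the equivalences of Lemma \ref{ll2} -- the ``bookkeeping'' you defer -- is therefore not a routine afterthought but the step on which the whole cancellation rests; with it, your argument becomes the paper's.
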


In \cite{Hor74}, Horikawa studied a stability problem of a holomorphic map over a complex manifold. Let $\{Y_t\}_{t\in M}$ be a family of deformations of complex manifolds with $Y=Y_0, 0\in M$, and let $f:X\to Y$ be a holomorphic map of a compact complex manifold $X$ into a complex manifold $Y$. The concept of stability lies in the question whether we can embed $X$ into a family $\{X_t\}_{t\in M}$ in such a manner that $f$ extends to a family of holomorphic maps $f_t:X_t\to Y_t,t\in M$. Horikawa proved the following theorem as a corollary of Theorem \ref{qq22}.
\begin{theorem}[Theorem of stability for holomorphic maps over complex manifolds] \label{ll10}
Let $f:X\to Y$ be a holomorphic map of a compact complex manifold $X$ into a complex manifold $Y$. Assume that
\begin{enumerate}
\item $F:H^1(X,\Theta_X)\to H^1(X, f^*\Theta_Y)$ is surjective,
\item $F:H^2(X,\Theta_X)\to H^2(X, f^*\Theta_Y)$ is injective.
\end{enumerate}
Then for any family $q:\mathcal{Y}\to M$ of complex manifolds such that $Y=q^{-1}(0)$ for some point $0\in M$, there exist
\begin{enumerate}
\item an open neighborhood $N$ of $0$,
\item a family $p:\mathcal{X}\to N$ of deformations of $X=p^{-1}(0)$,
\item a holomorphic map $\Phi:\mathcal{X}\to \mathcal{Y}|_N$ which induces $f$ over $0\in N$.
\end{enumerate}
\end{theorem}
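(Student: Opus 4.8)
The plan is to imitate Horikawa's deduction of his stability theorem for holomorphic maps from the existence theorem: apply Theorem \ref{qq22} to the given family, and then show that the base of the family of maps it produces maps submersively onto a neighborhood of $0$ in $M$, so that the family can be pulled back over $M$ itself. Concretely, take $(\mathcal{Y},q,S):=(\mathcal{Y},q,M)$, $0^{*}:=0$, $Y:=Y_{0}=q^{-1}(0)$. Hypothesis (2) of Theorem \ref{qq22} is exactly hypothesis (2) here, and hypothesis (1) of Theorem \ref{qq22} asks that $H^{1}(X,f^{*}\Theta_{Y})$ be generated by the images of $F:H^{1}(X,\Theta_{X})\to H^{1}(X,f^{*}\Theta_{Y})$ and of $f^{*}\circ\rho'$; this holds a fortiori, since hypothesis (1) here says the first image is already all of $H^{1}(X,f^{*}\Theta_{Y})$. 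Thus Theorem \ref{qq22} yields a family $(\mathcal{X},\Phi,p,M',s)$ of holomorphic maps into $(\mathcal{Y},q,M)$ and a point $0\in M'$ with $s(0)=0$, $X=p^{-1}(0)$, $\Phi_{0}=f$, and with the characteristic map $\tau:T_{0}(M')\to D_{X/\mathcal{Y}}$ bijective.

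The heart of the matter is to prove that $ds_{0}:T_{0}(M')\to T_{0}(M)$ is surjective. I would use that $\tilde{f}$ factors as $X\xrightarrow{f}Y\hookrightarrow\mathcal{Y}$ and that the normal bundle of the fiber $Y$ in $\mathcal{Y}$ is the trivial bundle $\mathcal{O}_{Y}\otimes T_{0}(M)$; pulling back $0\to\Theta_{Y}\to\Theta_{\mathcal{Y}}|_{Y}\to N_{Y/\mathcal{Y}}\to 0$ to $X$ gives a short exact sequence of two-term complexes
\[
0\longrightarrow[\Theta_{X}\xrightarrow{F}f^{*}\Theta_{Y}]\longrightarrow[\Theta_{X}\xrightarrow{\tilde{F}}\tilde{f}^{*}\Theta_{\mathcal{Y}}]\longrightarrow[\,0\to\mathcal{O}_{X}\otimes T_{0}(M)\,]\longrightarrow0 .
\]
Taking hypercohomology (and assuming, as usual, that $X$ is connected, so $H^{0}(X,\mathcal{O}_{X})=\mathbb{C}$) produces an exact sequence
\[
D_{X/Y}\longrightarrow D_{X/\mathcal{Y}}\xrightarrow{\partial}T_{0}(M)\xrightarrow{\delta}\mathbb{H}^{2}\bigl([\Theta_{X}\xrightarrow{F}f^{*}\Theta_{Y}]\bigr).
\]
Unwinding the \v{C}ech formulas one checks that $\partial\circ\tau=ds_{0}$ --- the $T_{0}(M)$-component of the infinitesimal deformation of $\tilde{f}$ attached to a tangent vector of $M'$ is precisely its image under $ds_{0}$ --- and that $\delta$ equals the composite $T_{0}(M)\xrightarrow{f^{*}\circ\rho'}H^{1}(X,f^{*}\Theta_{Y})\xrightarrow{j}\mathbb{H}^{2}([\Theta_{X}\xrightarrow{F}f^{*}\Theta_{Y}])$, where $j$ is the natural map in the long exact sequence of $[\Theta_{X}\xrightarrow{F}f^{*}\Theta_{Y}]$ whose kernel is the image of $F:H^{1}(X,\Theta_{X})\to H^{1}(X,f^{*}\Theta_{Y})$. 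By hypothesis (1) that image is all of $H^{1}(X,f^{*}\Theta_{Y})$, so $j=0$, hence $\delta=0$, hence $\partial$ is surjective; since $\tau$ is bijective, $ds_{0}=\partial\circ\tau$ is surjective. I expect this step to be the main obstacle: conceptually the content is transparent --- the only obstruction to spreading the base $M'$ out over $M$ is the combination of the non-surjectivity of $F$ on $H^{1}$ and the Kodaira--Spencer class of $\mathcal{Y}$, which hypothesis (1) annihilates --- but the identifications $\partial\circ\tau=ds_{0}$ and $\delta=j\circ f^{*}\circ\rho'$ require careful bookkeeping with the cocycle descriptions of the characteristic map and of the connecting homomorphisms.

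Finally, since $s$ is holomorphic and $ds_{0}$ is surjective, $s$ is a submersion near $0$, so after shrinking there exist a neighborhood $N$ of $0$ in $M$ and a holomorphic section $\sigma:N\to M'$ with $\sigma(0)=0$ and $s\circ\sigma=\mathrm{id}_{N}$. Put $\mathcal{X}':=\mathcal{X}\times_{M'}N$ with $p':\mathcal{X}'\to N$ the projection, and let $\Phi':\mathcal{X}'\to\mathcal{Y}$ be $\Phi$ composed with the projection $\mathcal{X}'\to\mathcal{X}$. From $q\circ\Phi=s\circ p$ and $s\circ\sigma=\mathrm{id}_{N}$ one gets $q\circ\Phi'=p'$, so $\Phi'$ takes values in $\mathcal{Y}|_{N}$; and over $0\in N$ one has $p'^{-1}(0)=p^{-1}(\sigma(0))=X$ and $\Phi'_{0}=\Phi_{\sigma(0)}=\Phi_{0}=f$. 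Thus $N$, $p':\mathcal{X}'\to N$ and $\Phi':\mathcal{X}'\to\mathcal{Y}|_{N}$ have the required properties.
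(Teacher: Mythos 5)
Your argument is correct, but it takes a genuinely different route from the one this paper (and Horikawa) uses. The paper's proof of the analogous statement (Theorem \ref{stability}, following \cite{Hor74} pp.~660--661) does not invoke the existence theorem as a black box: it re-runs the inductive power-series construction from the proof of the existence theorem (Theorem \ref{ii80}) with the base map forced to be $h(t)=t$ from the start, using hypothesis (1) at order $1$ to produce $\Phi_{i|1},\phi_1$ with $h_1=t$, and using (1) and (2) at each order $\mu\ge 2$ to solve the recursion with $h_\mu=0$ (Lemma \ref{ll3} with $h_\mu=0$); this directly yields a family over a neighborhood $N\subset M$ with $s=\mathrm{id}$, so no submersivity or section argument is needed. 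You instead apply Theorem \ref{qq22} as stated, and then prove $ds_{0'}$ is surjective via the exact sequence of the two-term complexes $[\Theta_X\to f^*\Theta_Y]\subset[\Theta_X\to\tilde f^*\Theta_{\mathcal Y}]$ with quotient $[0\to\mathcal O_X\otimes T_0(M)]$, identifying $\partial\circ\tau=ds_{0'}$ and the connecting map with $j\circ f^*\circ\rho'$, which vanishes because hypothesis (1) kills $j$; then you pull back along a local section of $s$. Both identifications you need are correct and are in fact the classical counterpart of the $\theta^v$-decomposition in Lemma \ref{kk22} (cf.\ \cite{Hor74} p.~655), so your bookkeeping step is consistent with the paper's machinery and does go through. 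What each approach buys: yours is softer and modular (no new estimates, convergence inherited from Theorem \ref{qq22}), at the price of the cocycle identifications, the assumption $H^0(X,\mathcal O_X)=\mathbb C$ ($X$ connected), and staying within the hypotheses under which Theorem \ref{qq22} is stated (compact $Y$); the paper's direct construction avoids all of that and, in the Poisson setting it actually treats, keeps the (unresolved) convergence question attached to a single explicit power series, which is why the author proceeds that way.
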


Let us consider the above arguments in the context of holomorphic Poisson deformations and extend the concept of stability in terms of  a holomorphic Poisson map over a holomorphic Poisson manifold. Let $\{(Y_t, \Pi_t)\}_{t\in M}$ be a family of deformations of holomorphic Poisson manifolds with $(Y,\Pi_0)=(Y_i,\Pi_0),0\in M$, and let $f:(X,\Lambda_0)\to (Y,\Pi_0)$ be a holomorphic Poisson map of a compact holomorphic Poisson manifold $(X,\Lambda_0)$ into a holomorphic Poisson manifold $(Y,\Pi_0)$. It is natural to ask if we can embed $(X,\Lambda_0)$ into a family $\{(X_t,\Lambda_t)\}_{t\in M}$ in such a manner that $f$ extends to a family of holomorphic Poisson maps $f_t:(X_t,\Lambda_t)\to (Y_t,\Pi_t),t\in M$.

In section \ref{section4}, we prove an analogue of Theorem \ref{ll10} as a corollary of Theorem \ref{qq21} so that the author could not touch convergence problem. We prove (see Theorem \ref{stability})

\begin{theorem}[Theorem of stability of holomorphic Poisson maps]
Let $f:(X,\Lambda_0)\to (Y,\Pi_0)$ be a holomorphic Poisson map of compact holomorphic Poisson manifolds, where $X$ is compact. Assume that
\begin{enumerate}
\item $F:\mathbb{H}^1(X, \Theta_X^\bullet)\to \mathbb{H}^1(X, f^*\Theta_Y^\bullet)$ is surjective.
\item $F:\mathbb{H}^2(X, \Theta_X^\bullet)\to \mathbb{H}^2(X, f^*\Theta_Y^\bullet)$ is injective.
\end{enumerate}
Then for any family $q:(\mathcal{Y}, \Pi)\to M$ of holomorphic Poisson manifolds such that $(Y,\Pi_0)=q^{-1}(0)$ for some point $0\in M$, if the formal power series $(\ref{ll4})$ constructed in the proof converge, there exist
\begin{enumerate}
\item an open neighborhood $N$ of $0$,
\item a family $p:(\mathcal{X},\Lambda)\to N$ of deformations of $(X,\Lambda_0)=p^{-1}(0)$,
\item a holomorphic Poisson map $p:(\mathcal{X},\Lambda)\to (\mathcal{Y}|_N, \Pi|_N)$ over $N$ which induces $f$ over $0\in N$.
\end{enumerate}
\end{theorem}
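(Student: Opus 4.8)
The plan is to derive the theorem from the existence theorem for deformations of holomorphic Poisson maps into a Poisson analytic family, Theorem~\ref{qq21}, in exactly the way Horikawa obtains Theorem~\ref{ll10} from Theorem~\ref{qq22}, and then to cut the resulting family down to one over a neighbourhood of $0$ by the implicit function theorem.

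First, apply Theorem~\ref{qq21} with the Poisson analytic family $(\mathcal{Y},\Pi,q,S):=(\mathcal{Y},\Pi,q,M)$, the point $0^*:=0$ (so that $(Y,\Pi_0)=q^{-1}(0)$), and $\rho'\colon T_0(M)\to\mathbb{H}^1(Y,\Theta_Y^\bullet)$ the Poisson Kodaira--Spencer map of $q$. Its two hypotheses hold: condition~$(2)$ there, the injectivity of $F\colon\mathbb{H}^2(X,\Theta_X^\bullet)\to\mathbb{H}^2(X,f^*\Theta_Y^\bullet)$, is hypothesis~$(2)$ of the present theorem verbatim; and condition~$(1)$ there, that $\mathbb{H}^1(X,f^*\Theta_Y^\bullet)$ be generated by the images of $F\colon\mathbb{H}^1(X,\Theta_X^\bullet)\to\mathbb{H}^1(X,f^*\Theta_Y^\bullet)$ and of $f^*\circ\rho'$, follows a fortiori from hypothesis~$(1)$ of the present theorem, the surjectivity of $F$ on $\mathbb{H}^1$. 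Granting the convergence of the formal power series constructed in the course of applying Theorem~\ref{qq21}, we obtain a Poisson analytic family of holomorphic Poisson maps $(\mathcal{X},\Lambda,\Phi,p,M',s)$ into $(\mathcal{Y},\Pi,q,M)$ and a point $0'\in M'$ with $s(0')=0$, $(X,\Lambda_0)=p^{-1}(0')$, with $\Phi_{0'}$ inducing $f$, and with $\tau\colon T_{0'}(M')\to PD_{(X,\Lambda_0)/(\mathcal{Y},\Pi)}$ bijective.

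The key step is to show that the differential $ds\colon T_{0'}(M')\to T_0(M)$ of $s$ at $0'$ is surjective; once this is done the theorem follows quickly. Since $q\circ\tilde f\colon X\to M$ is the constant map $0$, the morphism $\tilde f$ factors through the fibre $Y=q^{-1}(0)$, so that $\tilde f^*\Theta_{\mathcal{Y}/M}=f^*\Theta_Y$; pulling the relative tangent sequence of $q$ back along $\tilde f$ yields a short exact sequence of complexes of sheaves on $X$
\[
0\longrightarrow f^*\Theta_Y^\bullet\longrightarrow\tilde f^*\Theta_{\mathcal{Y}}^\bullet\longrightarrow\mathcal{Q}^\bullet\longrightarrow 0,
\]
whose degree-zero part is $\tilde f^*q^*\Theta_M=\mathcal{O}_X\otimes T_0(M)$, so that $\mathbb{H}^0(X,\mathcal{Q}^\bullet)\cong T_0(M)$ since $X$ is compact (using that $\Pi$ is a relative Poisson structure over $M$, so the differential of $\mathcal{Q}^\bullet$ kills the constant sections of its degree-zero term). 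From the description of $PD_{(X,\Lambda_0)/(\mathcal{Y},\Pi)}$ and the quotient map $\tilde f^*\Theta_{\mathcal{Y}}^\bullet\to\mathcal{Q}^\bullet$ one gets a homomorphism $\sigma\colon PD_{(X,\Lambda_0)/(\mathcal{Y},\Pi)}\to\mathbb{H}^0(X,\mathcal{Q}^\bullet)=T_0(M)$, sending the class of $(\tilde\tau,\rho,\lambda)$ to the global section obtained from the image of $\tilde\tau$ in $\mathcal{O}_X\otimes T_0(M)$; this is well defined because $-\delta\tilde\tau=\tilde F\rho$ and $\tilde Fg$ are vertical and hence map to zero in $\mathcal{Q}^\bullet$, and one checks that $\sigma\circ\tau=ds$, both sides computing the derivative of $s$ from the infinitesimal data. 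To see $\sigma$ surjective, take $v\in T_0(M)$ and lift it over the Stein sets of $\mathcal{U}$ to a $0$-cochain $\tilde\tau$ of $\tilde f^*\Theta_{\mathcal{Y}}$ with image $v$; then $-\delta\tilde\tau$ and $\pi(\tilde\tau)$ come from cochains of $f^*\Theta_Y^\bullet$ that assemble into a degree-one hypercocycle representing (up to sign) $f^*\circ\rho'(v)\in\mathbb{H}^1(X,f^*\Theta_Y^\bullet)$, the connecting homomorphism of the displayed sequence applied to $v$. By hypothesis~$(1)$, $F\colon\mathbb{H}^1(X,\Theta_X^\bullet)\to\mathbb{H}^1(X,f^*\Theta_Y^\bullet)$ is surjective, so this class is the image of some class $[(\rho,\lambda)]$; correcting $\tilde\tau$ by the $f^*\Theta_Y$-valued cochain this prescribes (which, being vertical, does not alter $\sigma$) yields a representative $(\tilde\tau,\rho,\lambda)$ of a class in $PD_{(X,\Lambda_0)/(\mathcal{Y},\Pi)}$ with $\sigma(\tilde\tau,\rho,\lambda)=v$. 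Hence $\sigma$, and with it $ds=\sigma\circ\tau$, is surjective.

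Finally, since $ds$ is surjective at $0'$, after shrinking $M'$ the map $s$ becomes a submersion near $0'$, so by the implicit function theorem there are an open neighbourhood $N$ of $0$ in $M$ and a holomorphic section $j\colon N\to M'$ of $s$ with $j(0)=0'$. Pulling the family $(\mathcal{X},\Lambda,\Phi,p,M',s)$ back along $j$ gives a Poisson analytic family of holomorphic Poisson maps whose base map to $M$ is the identity of $N$ --- that is, a family $p\colon(\mathcal{X},\Lambda)\to N$ of deformations of $(X,\Lambda_0)=p^{-1}(0)$ together with a holomorphic Poisson map $\Phi\colon(\mathcal{X},\Lambda)\to(\mathcal{Y}|_N,\Pi|_N)$ over $N$ inducing $f$ over $0$ --- which is precisely the data $(1)$, $(2)$, $(3)$. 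I expect the main obstacle to be the heart of the key step: constructing $\mathcal{Q}^\bullet$ with the correct differential, verifying $\sigma\circ\tau=ds$, and identifying the connecting homomorphism of the displayed sequence with $f^*\circ\rho'$ in the Poisson hypercohomology formalism. This forces one to carry the extra component $\lambda$ --- the genuinely Poisson datum, constrained by $\delta\lambda+[\Lambda_0,\rho]=0$, $[\Lambda_0,\lambda]=0$ and $\pi(\tilde\tau)=\tilde F\lambda$ --- consistently through the \v{C}ech double complexes that compute $PD_{(X,\Lambda_0)/(\mathcal{Y},\Pi)}$ and the hypercohomology groups $\mathbb{H}^\bullet(X,-)$. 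As throughout the paper, the convergence of the relevant power series is assumed rather than established.
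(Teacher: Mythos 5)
Your proposal is correct, but it follows a genuinely different route from the paper's own proof of Theorem \ref{stability}. The paper does not use Theorem \ref{qq21} as a black box: it re-runs the inductive power-series construction from the proof of Theorem \ref{ii80} over the given base $M$ itself, imposing $h(t)=t$ from the outset (so $h_1^\lambda=t_\lambda$ and $h_\mu=0$ for $\mu\ge 2$), and uses the surjectivity of $F$ on $\mathbb{H}^1$ at each order to solve the equations of Lemma \ref{ll3} without ever correcting $h$ --- exactly Horikawa's derivation of his stability theorem. Your route --- invoke Theorem \ref{qq21} with $S=M$, prove $ds$ surjective at $0'$ by composing the bijective characteristic map with a projection $\sigma\colon PD_{(X,\Lambda_0)/(\mathcal{Y},\Pi)}\to T_0(M)$, then section $s$ by the implicit function theorem and pull the family back --- is sound; in fact your $\sigma$ and the identification of the connecting homomorphism of $0\to f^*\Theta_Y^\bullet\to\tilde f^*\Theta_{\mathcal Y}^\bullet\to\mathcal{Q}^\bullet\to 0$ with $f^*\circ\rho'$ are already encoded in the paper's Lemma \ref{kk22} (the $(\theta^v)$-component of a class), so you could bypass the construction of $\mathcal{Q}^\bullet$ by quoting that lemma, and surjectivity of $\sigma$ is precisely the observation that any $(\theta^v)$ can be completed to a class $(\tau,\rho,\lambda,(\theta^v))$ once $F$ is surjective on $\mathbb{H}^1$. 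What each approach buys: yours is shorter and makes transparent how hypothesis (1) strengthens condition (1) of Theorem \ref{qq21}, but the convergence you must assume is that of the series $(\ref{qq50})$ constructed over the auxiliary base $M'$ of dimension $\dim PD_{(X,\Lambda_0)/(\mathcal{Y},\Pi)}$, whereas the theorem as stated is conditioned on the series $(\ref{ll4})$ with $h(t)=t$; since both convergence statements are left open in the paper this only shifts the conditional hypothesis, but it is not literally the same one. The paper's direct construction produces exactly the series $(\ref{ll4})$ named in the statement and lands on a family over a neighbourhood $N\subset M$ immediately, with no submersion or section step.
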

In Appendix \ref{appendix3}, we also  prove theorem of stability of a Poisson morphism over a nonsingular Poisson variety by algebraic methods in the case of infinitesimal deformations over local aritinian $k$-algebras (see Theorem \ref{mm1}).

In \cite{Hor74}, Horikawa studied deformations of a composition of holomorphic maps and proved the following two theorems.

\begin{theorem}\label{ll15}
Let $f:X\to Y,g:Y\to Z$ and $h=g\circ f$ be holomorphic maps of complex manifolds. We assume that
\begin{enumerate}
\item $X$ and $Y$ are compact,
\item $g$ is non-degenerate and the canonical homomorphism $f^*G:f^*\Theta_Y\to h^*\Theta_Z$ is injective, where $G$ denote the homomorphism $\Theta_Y\to g^*\Theta_Z$,
\item there exist a family $(\mathcal{Y},\Psi, q, N)$ of holomorphic maps into $Z$ and a point $0'\in N$ such that $Y=q^{-1}(0')$ and such that $\Psi$ induces $g$ on $Y$,
\item the composition $f^*\circ \tau:T_{0'}(N)\to H^0(X, f^*\mathcal{N}_{g})$ is surjective, where $\tau:T_{0'}(N)\to H^0(X, f^* \mathcal{N}_{g})$ is the characteristic map of $(\mathcal{Y}, \Psi, q, N)$ at $0'$, and $f^*:H^0(Y, \mathcal{N}_{g})\to H^0(X, f^*\mathcal{N}_{g})$ is the pull-back homomorphism.
\end{enumerate}
Let $(\mathcal{X},\Upsilon,p, M)$ be a family of holomorphic maps into $Z$ and let $0$ be a point on $M$ such that $X=p^{-1}(0)$ and that $\Upsilon$ induces $h$ on $X$. Then there exist
\begin{enumerate}
\item an open neighborhood $M'$ of $0$,
\item a holomorphic map $s:M'\to N$,
\item a holomorphic map $\Phi:\mathcal{X}|_{M'}\to \mathcal{Y}$, 
such that $s(0)=0'$, $\Phi$ induces $f$ on $X$, and the diagram
\end{enumerate}
\[\xymatrix{
\mathcal{X}|_{M'} \ar[dd]^{p|_{M'}} \ar[drr]^{\Phi} \ar[rrrr]^{\Upsilon|_{M'}} & & && Z\times N \ar[dd]^{pr_2}\\
& & \mathcal{Y} \ar[dd]^{q}   \ar[rru]^{\Psi} \\
M'  \ar[drr]^s \ar[rrrr]^s  & & & &N\\      
& &N \ar[rru]^{id} &       \\
}\]
commutes.

\end{theorem}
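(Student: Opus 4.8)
The plan is to prove Theorem \ref{ll15} by the Kodaira--Spencer--Kuranishi--Horikawa power series method, in the spirit of the proof of Theorem \ref{qq22}.

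First I would reduce the statement to a formal problem. Choose local coordinates $t=(t_1,\dots,t_m)$ on a neighborhood of $0$ in $M$ and $w=(w_1,\dots,w_n)$ on a neighborhood of $0'$ in $N$. Represent the family $p:\mathcal X\to M$ over this neighborhood by a finite collection of coordinate patches $\{\mathcal U_j\}$ covering $X$, with fibre coordinates $z_j$ and transition relations $z_j=f_{jk}(z_k,t)$, and represent $\mathcal Y\to N$ similarly by patches $\{\mathcal V_\alpha\}$ with fibre coordinates $\zeta_\alpha$ and transitions $\zeta_\alpha=g_{\alpha\beta}(\zeta_\beta,w)$; write $\Upsilon$ in coordinates as maps $\Upsilon_j(z_j,t)$ into $Z$ with $\Upsilon_j(z_j,0)=h_j(z_j)$, and $\Psi$ as $\Psi_\alpha(\zeta_\alpha,w)$ with $\Psi_\alpha(\zeta_\alpha,0)=g_\alpha(\zeta_\alpha)$. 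I seek a holomorphic $s(t)$ with $s(0)=0'$ and holomorphic local maps $\Phi_{\alpha j}(z_j,t)$ with $\Phi_{\alpha j}(z_j,0)=f_{\alpha j}(z_j)$ solving (i) the patching identity $\Phi_{\alpha j}(f_{jk}(z_k,t),t)=g_{\alpha\beta}(\Phi_{\beta k}(z_k,t),s(t))$, which makes the $\Phi_{\alpha j}$ glue to a holomorphic $\Phi:\mathcal X|_{M'}\to\mathcal Y$ over $s$, and (ii) the composition constraint $\Psi_\alpha(\Phi_{\alpha j}(z_j,t),s(t))=\Upsilon_j(z_j,t)$, which is precisely the commutativity of the outer square of the diagram, the remaining faces being automatic from the shapes of $\Phi$ and $s$. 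Expand $s=\sum_{\nu\ge1}s^{(\nu)}$ and $\Phi_{\alpha j}=f_{\alpha j}+\sum_{\nu\ge1}\Phi_{\alpha j}^{(\nu)}$ into parts homogeneous of degree $\nu$ in $t$.

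The core is the inductive step: assuming $s$ and $\Phi$ have been determined modulo degree $\nu$ so that (i), (ii) hold modulo degree $\nu$, solve for the degree-$\nu$ parts. Linearizing, the unknowns are $s^{(\nu)}\in T_{0'}(N)$ and a $0$-cochain $\Phi^{(\nu)}=\{\Phi_{\alpha j}^{(\nu)}\}$ valued in $f^*\Theta_Y$, together with $1$-cochain adjustments valued in $\Theta_X$ arising from the patching of $\mathcal X$; the linearized equations are governed by the canonical exact sequence $0\to\Theta_{X/Y}\to\Theta_X\xrightarrow{F}f^*\Theta_Y\xrightarrow{P}\mathcal N_f\to0$ for $f$ together with the analogous sequence for $h$. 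Here hypothesis (2) — $g$ non-degenerate and $f^*G$ injective — makes $0\to f^*\Theta_Y\xrightarrow{f^*G}h^*\Theta_Z\to f^*\mathcal N_g\to0$ exact, and since $f^*G$ injective forces $\Theta_{X/Z}=\Theta_{X/Y}$, it yields the short exact sequence of normal sheaves $0\to\mathcal N_f\to\mathcal N_h\to f^*\mathcal N_g\to0$, whose long cohomology sequence is the bookkeeping device. The obstruction to solving at order $\nu$ a priori lies in the degree-one cohomology group controlling deformations of $f$ into $\mathcal Y$; I would argue it splits into two pieces. The piece mapping to the obstruction for deforming $h$ into $Z$ vanishes because the family $(\mathcal X,\Upsilon,p,M)$ is given, i.e.\ $\Upsilon$ already supplies the required degree-$\nu$ primitive at the level of $Z$. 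The residual piece is then pulled back from $H^0(X,f^*\mathcal N_g)$ via the connecting homomorphism, and hypothesis (4) — surjectivity of $f^*\circ\tau:T_{0'}(N)\to H^0(X,f^*\mathcal N_g)$ — lets me choose $s^{(\nu)}$ so that the degree-$\nu$ variation of the base cancels it; afterwards injectivity of $f^*G$ determines the component of $\Phi^{(\nu)}$ along $\mathrm{im}(f^*G)$, and the remaining freedom (a cocycle valued in $f^*\Theta_Y$ together with the $\Theta_X$-adjustments) is used to satisfy the patching equation (i) at order $\nu$. This produces the formal solution to all orders. Finally, since $X$ and $Y$ are compact, the same elliptic estimates and harmonic-theory arguments Horikawa uses in \cite{Hor74} for Theorem \ref{qq22} (passing to a majorant series and using the operators attached to $\bar\partial$ on the relevant bundles over $X$) show the formal series for $s(t)$ and for the $\Phi_{\alpha j}(z_j,t)$ converge on a neighborhood $M'$ of $0$; restricting and reassembling gives $p|_{M'}:\mathcal X|_{M'}\to M'$, the map $s:M'\to N$ with $s(0)=0'$, and $\Phi:\mathcal X|_{M'}\to\mathcal Y$ inducing $f$ over $0$, with $q\circ\Phi=s\circ p$ and $\Psi\circ\Phi$ agreeing with $(\mathrm{id}_Z\times s)\circ\Upsilon|_{M'}$, so the diagram commutes. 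The main obstacle is the inductive step: organizing the coupled patching and composition equations so that the obstruction decomposes as described, and checking that the part not absorbed by hypothesis (4) is exactly the part trivialized by the presence of $\Upsilon$; the convergence, though technical, is handled just as in Horikawa's treatment of Theorem \ref{qq22}.
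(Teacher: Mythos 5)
Your overall skeleton --- expand $s$ and $\Phi$ in powers of $t$, solve the patching identity and the composition constraint order by order, use the exact sequence $0\to f^*\Theta_Y\xrightarrow{f^*G}h^*\Theta_Z\to f^*\mathcal{N}_g\to 0$ together with hypothesis (4) to absorb the obstruction, then prove convergence --- is the same as Horikawa's argument, which is the one the paper extends in the proof of Theorem \ref{ll17}. But the heart of the induction is mis-organized, and as written the step would not go through. There are no $\Theta_X$-valued unknowns or ``adjustments arising from the patching of $\mathcal X$'': the family $\mathcal X$ and its transition functions $\phi_{ij}(z_j,t)$ are fixed data, so at order $\nu$ the only unknowns are $s^{(\nu)}\in T_{0'}(N)$ and the $0$-cochain $\Phi^{(\nu)}=\{\Phi^{(\nu)}_i\}$ with values in $f^*\Theta_Y$; correspondingly no degree-one cohomology group enters anywhere, and the obstruction analysis lives entirely in $H^0$. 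The step you leave vague (``I would argue it splits into two pieces'') is exactly the content of the cocycle identities of Lemma \ref{qq60} (in their classical form): the order-$\nu$ patching error $\Gamma_{ij|\nu}\in\Gamma(U_{ij},f^*\Theta_Y)$ satisfies $\Gamma_{jk|\nu}-\Gamma_{ik|\nu}+\Gamma_{ij|\nu}=0$, and the composition error $\gamma_{i|\nu}\in\Gamma(U_i,h^*\Theta_Z)$ satisfies $\gamma_{i|\nu}-\gamma_{j|\nu}=(f^*G)\Gamma_{ij|\nu}$, so the images of the $\gamma_{i|\nu}$ in $f^*\mathcal{N}_g$ glue to a global section. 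Hypothesis (4) then supplies $s^{(\nu)}$ so that $\{\gamma_{i|\nu}\}$ and $\{\sum_v s^{(\nu)}_v f^*\tau'_{vi}\}$ have the same image in $H^0(X,f^*\mathcal{N}_g)$, and exactness of the sequence above gives the lift $\Phi^{(\nu)}_i$ solving the linearized composition equation (the analogue of (\ref{cc2})).

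Your plan to use ``the remaining freedom \dots to satisfy the patching equation (i)'' inverts the logic and would fail: once $\Phi^{(\nu)}$ is chosen as above there is no freedom left to spend, and none is needed --- the patching equation (the analogue of (\ref{cc1})) holds automatically, because applying $f^*G$ to both sides gives $(f^*G)\Gamma_{ij|\nu}=\gamma_{i|\nu}-\gamma_{j|\nu}=(f^*G)\bigl(\Phi^{(\nu)}_j-\Phi^{(\nu)}_i+\sum_v s^{(\nu)}_v f^*\rho'_{vij}\bigr)$ and $f^*G$ is injective by hypothesis (2). If you instead tried to satisfy (i) by an independent choice, you would be led to an $H^1$-type condition on $f^*\Theta_Y$ that is not among the hypotheses. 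Finally, convergence here is not obtained from elliptic estimates or harmonic theory (that machinery belongs to the existence theorems such as Theorem \ref{qq22}); since $\mathcal X$ and $\Upsilon$ are given, the unknowns $s(t)$ and $\Phi_i(z_i,t)$ are purely holomorphic \u{C}ech data, and the series converge by the elementary majorant argument used for the completeness theorem (Theorem \ref{ii1}): using compactness of $X$ and $Y$ one shows, by a normal-families contradiction argument, that solutions $(\Phi^{(\nu)},s^{(\nu)})$ can be chosen with sup-norms bounded by a fixed constant times the norm of the error terms, and then dominates the series by the standard power series $A(t)$.
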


\begin{theorem}\label{ll16}
Let $f:X,\to Y, g:Y\to Z$, and $h=g\circ f$ be holomorphic maps of complex manifolds. Let $p:\mathcal{X}\to M, q:\mathcal{Y}\to M$ and $\pi:\mathcal{Z} \to M$ be families of complex manifolds such that $X=p^{-1}(0), Y=q^{-1}(0)$ and $Z=\pi^{-1}(0)$ for some point $0\in M$. Let $\Phi:\mathcal{X} \to \mathcal{Y}$ and $\Upsilon: \mathcal{X} \to \mathcal{Z} $ be holomorphic maps over $M$ which induces $f$ and $h$ over $0\in M$, respectively. Assume that
\begin{enumerate}
\item $p$ and $q$ are proper.
\item $f^*:H^0(Y, g^*\Theta_Z)\to H^0(X,h^*\Theta_Z)$ is surjective.
\item $f^*: H^1(Y, g^*\Theta_Z)\to H^1(X, h^*\Theta_Z)$ is injective.
\end{enumerate}
Then there exists an open neighborhood $N$ of $0$, and a holomorphic map $\Psi:\mathcal{Y}|_N\to \mathcal{Z}$ over $N$ such that $\Upsilon|_N=\Psi\circ (\Phi|_N)$.
\end{theorem}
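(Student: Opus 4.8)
The plan is to construct $\Psi$ over a small polydisc $N\ni 0$ in $M$ as a convergent power series in the base coordinates $t=(t_1,\dots,t_m)$ of $M$, by the successive approximation method of Kodaira--Spencer in the form developed by Horikawa in \cite{Hor74}. First I would fix locally finite coverings of $\mathcal{X}$, $\mathcal{Y}$, $\mathcal{Z}$ by coordinate polycylinders of the shape $($fibre polydisc$)\times($base polydisc$)$, mutually compatible with $f,g,h,\Phi,\Upsilon$, and write $\Phi,\Upsilon$ out in these coordinates. Giving a holomorphic map $\Psi:\mathcal{Y}|_N\to\mathcal{Z}|_N$ over $N$ which restricts to $g$ over $0$ amounts to giving local holomorphic expressions $\Psi_j(w_j,t)$ with $\Psi_j(w_j,0)=g(w_j)$ subject to two systems of equations: (i) compatibility with the transition functions of $\mathcal{Y}\to N$ and of $\mathcal{Z}\to N$, so that the $\Psi_j$ patch to a global map; and (ii) the factorization identity $\Upsilon=\Psi\circ\Phi$. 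I would then determine, by induction on $\mu\ge 0$, the homogeneous $t$-Taylor part of each $\Psi_j$ of degree $\mu$, the degree-$0$ part being $g$.

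For the inductive step, suppose $\Psi^{(\mu)}$ solves (i) and (ii) \emph{modulo $t^{\mu+1}$}. Its failure to satisfy (i) at order $\mu+1$ is recorded, in the usual way, by a $1$-cochain $e\in C^1(\mathcal{U}_Y,g^*\Theta_Z)$, where $\mathcal{U}_Y$ is the covering of $Y=q^{-1}(0)$ induced by the chosen covering of $\mathcal{Y}$; the cocycle condition $\delta e=0$ comes from the associativity of the transition functions, so $[e]\in H^1(Y,g^*\Theta_Z)$. Its failure to satisfy (ii) at order $\mu+1$ is recorded by a $0$-cochain $o\in C^0(\mathcal{U}_X,h^*\Theta_Z)$ (the leading Taylor coefficient, in the fibre coordinates of $\mathcal{Z}$, of $\Upsilon-\Psi^{(\mu)}\circ\Phi$), where $\mathcal{U}_X$ is a covering of $X=p^{-1}(0)$ subordinate to $f^{-1}\mathcal{U}_Y$ and we use $h^*\Theta_Z=f^*g^*\Theta_Z$. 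Because $\Upsilon$ and $\Phi$ are honest global maps whereas $\Psi^{(\mu)}\circ\Phi$ fails to glue by exactly the pullback of $e$, one obtains a compatibility relation of the shape $\delta o=f^\sharp e$ (with suitable sign conventions), where $f^\sharp:C^\bullet(\mathcal{U}_Y,g^*\Theta_Z)\to C^\bullet(\mathcal{U}_X,h^*\Theta_Z)$ is the pullback of cochains inducing $f^*$ on cohomology. Correcting $\Psi^{(\mu)}$ by a $t^{\mu+1}$-multiple of a $0$-cochain $\psi\in C^0(\mathcal{U}_Y,g^*\Theta_Z)$ replaces $e$ by $e-\delta\psi$ and $o$ by $o-f^\sharp\psi$, so the step is completed once I produce $\psi$ with $\delta\psi=e$ and $f^\sharp\psi=o$.

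This is where the hypotheses enter. From $\delta o=f^\sharp e$ we see $f^\sharp e$ is a coboundary on $X$, so $f^*[e]=0$ in $H^1(X,h^*\Theta_Z)$; by hypothesis (3) $f^*$ is injective on $H^1$, hence $[e]=0$, i.e.\ $e=\delta\xi$ for some $\xi\in C^0(\mathcal{U}_Y,g^*\Theta_Z)$. Then $o-f^\sharp\xi\in C^0(\mathcal{U}_X,h^*\Theta_Z)$ satisfies $\delta(o-f^\sharp\xi)=\delta o-f^\sharp\delta\xi=f^\sharp e-f^\sharp e=0$, so it is a global section of $h^*\Theta_Z$ on $X$; by hypothesis (2) $f^*$ is surjective on $H^0$, so there is $\eta\in H^0(Y,g^*\Theta_Z)$ with $f^*\eta=o-f^\sharp\xi$. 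Taking $\psi:=\xi+\eta$ gives $\delta\psi=\delta\xi=e$ and $f^\sharp\psi=f^\sharp\xi+f^\sharp\eta=o$, as required; this completes the induction and yields a formal solution $\widehat{\Psi}$.

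Finally I would prove that $\widehat{\Psi}$ converges on some neighbourhood $N$ of $0$. Here I would run Horikawa's convergence argument from \cite{Hor74}: using that $p$ and $q$ are proper (so the fibres are compact and the relevant cohomology groups are finite-dimensional), put Hermitian metrics on the locally free sheaves over the fibres, introduce the associated harmonic projectors and Green's operators depending smoothly on the parameter, choose at each stage the canonical (minimal-norm, harmonic-corrected) representatives $\xi$ and $\eta$ above, and derive the elliptic a priori estimates that bound the degree-$(\mu+1)$ correction in terms of the lower-order data; a majorant power series then dominates $\widehat{\Psi}$ term by term and forces convergence on a polydisc. The conceptual core is the short cohomological step of the previous paragraph, which is precisely the analogue of conditions (1),(2) in Theorems \ref{pp43} and \ref{qq21}; the technically heaviest and most delicate part---the one that in the Poisson setting of this paper forces the author to \emph{assume} convergence---is the elliptic/majorant estimate, which in the present classical statement is supplied by the Kodaira--Spencer--Nirenberg--Horikawa machinery.
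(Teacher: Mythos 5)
Your proposal is correct, and its core is the same argument the paper uses (the paper proves this statement only in its Poisson incarnation, Theorem \ref{ll18}, by extending Horikawa's proof in \cite{Hor74} p.664--665, but the non-Poisson skeleton is identical): one constructs $\Psi_i(w_i,t)$ degree by degree in $t$, records the order-$\mu$ failure of patching as a $1$-cochain (your $e$, the paper's $\Gamma_{ij|\mu}$) and the failure of $\Upsilon=\Psi\circ\Phi$ as a $0$-cochain on $X$ (your $o$, the paper's $-\gamma_{i|\mu}$), verifies $\delta e=0$ and the compatibility $\delta o=f^\sharp e$ (the paper's (\ref{mm63}) and (\ref{mm66})), and then uses hypothesis (3) to write $e=\delta\xi$ and hypothesis (2) to adjust by a global section $\eta$ with $f^*\eta=o-f^\sharp\xi$, exactly as in the paper's final lemma producing $\Psi_{i|\mu}=\Psi'_{i|\mu}+\chi_\mu$. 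The one place you diverge is the convergence step: you propose harmonic projectors, Green's operators and elliptic a priori estimates, whereas Horikawa's convergence argument for this theorem --- and the one the paper invokes ("by a similar argument as in the proof of Theorem \ref{ii1}") --- is the elementary one: a bounded-solution lemma asserting that the equations $\Gamma_{ij}=\psi_j-\psi_i$, $-\gamma_i=f^*\psi_i$ admit solutions with $\sup$-norm at most $K\lVert(\gamma,\Gamma)\rVert$ (proved by a contradiction/normal-families argument, where properness of $p$ and $q$ gives compact fibres and finite-dimensionality), combined with the majorant series $A(t)$. This distinction is not cosmetic: the elementary method needs no harmonic theory at all, which is precisely why the paper can prove the Poisson analogue \ref{ll18} (and \ref{ll17}, \ref{ii1}) with full convergence, while the harmonic-theoretic machinery you describe is what is required only for the existence theorems (\ref{hh1}, \ref{ii80}) and is the missing ingredient there in the Poisson setting. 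So your cohomological induction is exactly right; for the convergence you should replace the elliptic-estimate plan by the norm-lemma-plus-majorant argument (or justify the harmonic approach independently, since your $\xi,\eta$ are \v{C}ech cochains of holomorphic sections, not Dolbeault forms, and would first have to be converted before Green's operators apply).
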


In section $\ref{section5}$, we prove an analogue of Theorem \ref{ll15}, and an analogue of Theorem \ref{ll16} (see Theorem \ref{ll17} and Theorem \ref{ll18})

\begin{theorem}
Let $f:(X,\Lambda_0)\to (Y,\Pi_0), g:(Y,\Pi_0)\to (Z, \Omega_0)$ and $h=g\circ f$. We assume that
\begin{enumerate}
\item $X$ and $Y$ are compact. 
\item $g$ is non-degenerate and the canonical homomorphism $f^* G:f^*\Theta_Y^\bullet \to h^* \Theta_Z^\bullet$ is injective, where $G$ denotes the injective homomorphism $\Theta_Y^\bullet \to g^*\Theta_Z^\bullet$ $($see Appendix \ref{appendix1}$)$.
\item there exist a family $(\mathcal{Y},\Pi, \Psi,q, N)$ of holomorphic Poisson maps into $(Z, \Omega_0)$ and a point $0'\in N$ such that $(Y,\Pi_0)=q^{-1}(0')$ and such that $\Psi$ induces $g$ on $(Y,\Pi_0)$.
\item the composition $f^*\circ \tau:T_{0'}(N)\to\mathbb{H}^0(X, f^*\mathcal{N}_g^\bullet)$ is surjective, where $\tau:T_{0'}(N)\to \mathbb{H}^0(Y, \mathcal{N}_g^\bullet)$ is the characteristic map of $(\mathcal{Y}, \Pi, \Psi, q, N)$ at $0'$, and $f^*:\mathbb{H}^0(Y,\mathcal{N}_g^\bullet)\to \mathbb{H}^0(X, f^*\mathcal{N}_g^\bullet)$ is the pullback homomorphism.
\end{enumerate}
Let $(\mathcal{X},\Lambda, \Upsilon, p, M)$ be a family of holomorphic Poisson maps into $(Z,\Omega_0)$ and let $0$ be a point in $M$ such that $(X,\Lambda_0)=p^{-1}(0)$ and that $\Upsilon$ induces $h$ on $X$. Then there exist 
\begin{enumerate}
\item an open neighborhood $M'$ of $0$,
\item a holomorphic map $s:M'\to N$,
\item a holomorphic Poisson map $\Phi:(\mathcal{X}|_{M'},\Lambda|_{M'})\to (\mathcal{Y},\Pi)$,
\end{enumerate}
such that $s(0)=0'$, $\Phi$ induces $f$ on $(X,\Lambda_0)$, and the diagram
\[\xymatrix{
(\mathcal{X}|_{M'},\Lambda|_{M'}) \ar[dd]^{p|_{M'}} \ar[drr]^{\Phi} \ar[rrrr]^{\Upsilon|_{M'}} & & && (Z\times N , \Omega_0) \ar[dd]^{pr_2}\\
& & ( \mathcal{Y} , \Pi) \ar[dd]^{q}   \ar[rru]^{\Psi} \\
M'  \ar[drr]^s \ar[rrrr]^s  & & & &N\\      
& &N \ar[rru]^{id} &       \\
}\]
commutes.
\end{theorem}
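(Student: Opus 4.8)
The plan is to run Horikawa's proof of Theorem~\ref{ll15} with the sheaves $\Theta_X,f^*\Theta_Y,h^*\Theta_Z$ replaced by the complexes $\Theta_X^\bullet, f^*\Theta_Y^\bullet, h^*\Theta_Z^\bullet$ of Appendix~\ref{appendix1} and ordinary cohomology replaced by hypercohomology, carrying along the bivector-compatibility data that already enters the definition of $PD_{(X,\Lambda_0)/(\mathcal Y,\Pi)}$. Fix coordinates $t=(t_1,\dots,t_m)$ on $M$ centered at $0$, a finite Stein covering $\mathcal U=\{U_i\}$ of $X$, and -- after shrinking $M$ -- a Stein covering of $\mathcal X$ by $\mathcal U_i=p^{-1}(U_i)$ over which $p$ and $\Upsilon:(\mathcal X,\Lambda)\to(Z\times M,\Omega_0)$ are given explicitly in coordinates. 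We look for $s=s(t)$ and local holomorphic representatives $\Phi_i:(\mathcal U_i,\Lambda)\to(\mathcal Y,\Pi)$ of the desired map, expanded as formal power series in $t$, such that $\Phi_i=\Phi_j$ on overlaps, $q\circ\Phi_i=s\circ p$, $\Psi\circ\Phi_i=\Upsilon|_{\mathcal U_i}$, each $\Phi_i$ is a Poisson map, and $\Phi_i|_{t=0}=f$ on $U_i$.

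The construction proceeds by induction on the order $\mu$. Suppose $s_{\mu-1}$ and $\{\Phi_{i,\mu-1}\}$ solve all requirements modulo $t^\mu$. The order-$\mu$ freedom in $\Phi$ is a \v{C}ech cochain with values in $f^*\Theta_Y$ together with a bivector term, i.e.\ a triple of exactly the type $(\tilde\tau,\rho,\lambda)$ occurring in $PD_{(X,\Lambda_0)/(\mathcal Y,\Pi)}$, and the three failures to be repaired -- global gluing of $\Phi$, the Poisson identity, and the constraint $\Psi\circ\Phi=\Upsilon$ -- assemble into one obstruction class $o_\mu\in\mathbb H^1(X,f^*\Theta_Y^\bullet)$ computed from the \v{C}ech double complex of $f^*\Theta_Y^\bullet$. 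Hypotheses (1) and (2) ($g$ non-degenerate, $f^*G$ injective) give on the compact $X$ the short exact sequence of complexes
\begin{align*}
0\longrightarrow f^*\Theta_Y^\bullet\xrightarrow{\ f^*G\ }h^*\Theta_Z^\bullet\longrightarrow f^*\mathcal N_g^\bullet\longrightarrow 0,
\end{align*}
hence a long exact hypercohomology sequence containing
\begin{align*}
\mathbb H^0(X,f^*\mathcal N_g^\bullet)\xrightarrow{\ \partial\ }\mathbb H^1(X,f^*\Theta_Y^\bullet)\xrightarrow{\ f^*G\ }\mathbb H^1(X,h^*\Theta_Z^\bullet).
\end{align*}
The image $(f^*G)(o_\mu)$ is the obstruction to extending $h=g\circ f$, as a Poisson map into $(Z,\Omega_0)$, by one more order; it vanishes because the given family $(\mathcal X,\Lambda,\Upsilon,p,M)$ already extends $h$ -- absorbing it costs only an order-$\mu$ modification of the trivialization of $\mathcal X$ over the eventual neighborhood $M'$. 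Hence $o_\mu=\partial(c)$ for some $c\in\mathbb H^0(X,f^*\mathcal N_g^\bullet)$, and by hypothesis (4) one has $c=(f^*\circ\tau)(v)$ for some $v\in T_{0'}(N)$; taking $v$ (times the appropriate monomial) as the order-$\mu$ term of $s$ kills $o_\mu$. One then extends $\{\Phi_i\}$ to order $\mu$, simultaneously meeting the Poisson-compatibility equations because $F$ intertwines $[\Lambda_0,-]$ with $\pi$ (the commuting diagram of Appendix~\ref{appendix1}), just as the bivector part of $PD$ is treated in the proofs of Theorems~\ref{ii1} and~\ref{ii5}.

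With formal power series $s$, $\{\Phi_i\}$ in hand, it remains to pass to convergence. Since the source family $(\mathcal X,\Lambda,p,M)$ and the target family $(\mathcal Y,\Pi,q,N)$ are already given as genuine analytic families, one runs Horikawa's majorant estimate from the proof of Theorem~\ref{ll15} inside them -- adapted to the operators $L=\bar\partial+[\Lambda_0,-]$ on $\Theta_X^\bullet$ and $L_\pi=\bar\partial+\pi$ on $f^*\Theta_Y^\bullet$ -- to conclude that $s$ and $\Phi=\{\Phi_i\}$ converge on a small neighborhood $M'$ of $0$. The glued map $\Phi:(\mathcal X|_{M'},\Lambda|_{M'})\to(\mathcal Y,\Pi)$ then satisfies $q\circ\Phi=s\circ p|_{M'}$, $\Psi\circ\Phi=\Upsilon|_{M'}$, $s(0)=0'$ and $\Phi_0=f$, which is the assertion.

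I expect the main obstacle to be the inductive obstruction calculus itself: organizing the \v{C}ech--complex double complex so that gluing, Poisson compatibility, and the composition constraint $\Psi\circ\Phi=\Upsilon$ are solved \emph{simultaneously} at each order, and verifying cleanly that the $(Z,\Omega_0)$-side obstruction $(f^*G)(o_\mu)$ vanishes by virtue of the given family $(\mathcal X,\Lambda,\Upsilon,p,M)$; this hypercohomology bookkeeping is genuinely heavier than Horikawa's sheaf-level version. The convergence step is the secondary difficulty; as the author notes for Theorems~\ref{pp43} and~\ref{qq21}, a satisfactory harmonic theory for $L$ and $L_\pi$ would settle it at once, but because the present construction lives inside pre-existing convergent families one may hope to dispense with the convergence hypothesis here.
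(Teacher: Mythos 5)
Your overall strategy (power--series induction in $t$, the exact sequence $0\to f^*\Theta_Y^\bullet\xrightarrow{f^*G}h^*\Theta_Z^\bullet\to f^*\mathcal N_g^\bullet\to 0$, hypothesis (4) to choose the order-$\mu$ term of $s$) is the same as the paper's, but the key inductive step has a genuine gap in how the obstruction is located and used. In the paper the unknowns at order $\mu$ are a $0$-cochain $\Phi_{i|\mu}\in\Gamma(U_i,f^*\Theta_Y)$ and $s_\mu\in\mathbb C^{r'}$, subject to the three \emph{exact} equations $(\ref{cc1})$--$(\ref{cc3})$; the relevant obstruction is not a class $o_\mu\in\mathbb H^1(X,f^*\Theta_Y^\bullet)$ but the composition defect $\gamma_{i|\mu}$ (failure of $\Psi\circ\Phi=\Upsilon$), which by the direct computations $(\ref{cc7})$, $(\ref{cc8})$ of Lemma \ref{qq60} defines a class in $\mathbb H^0(X,f^*\mathcal N_g^\bullet)$. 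Hypothesis (4) is used to match this class \emph{exactly as a section} by $\sum_v s_\mu^v f^*\tau'_{vi}$, and then injectivity of $f^*G$ (hypothesis (2)) determines $\Phi_{i|\mu}$ uniquely as the $f^*G$-preimage in $(\ref{cc2})$ and \emph{forces} the gluing equation $(\ref{cc1})$ and the Poisson equation $(\ref{cc3})$ a posteriori. Your route --- kill $o_\mu$ in $\mathbb H^1(X,f^*\Theta_Y^\bullet)$ via the connecting map and then ``extend $\Phi$, simultaneously meeting the Poisson-compatibility equations'' --- only controls the problem up to cohomology: after solving the gluing and Poisson conditions modulo coboundaries, the residual failure of $\Psi\circ\Phi=\Upsilon$ is a global section of $h^*\Theta_Z^\bullet$ which need not lie in $f^*G\bigl(\mathbb H^0(X,f^*\Theta_Y^\bullet)\bigr)$, and nothing in your argument removes it; this is precisely the ``simultaneous solving'' issue you flag as the main obstacle but do not resolve. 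Moreover, your justification that $(f^*G)(o_\mu)$ vanishes ``because the family $(\mathcal X,\Lambda,\Upsilon,p,M)$ already extends $h$, absorbing it costs only an order-$\mu$ modification of the trivialization of $\mathcal X$'' is not the correct mechanism: the coordinates and trivialization of the given source family are fixed throughout (re-gauging them would disturb the previously solved orders), and the vanishing is instead the explicit statement that the cochain $\gamma_{i|\mu}$ bounds $(f^*G)$ of the cocycle $(\Gamma_{ij|\mu},\lambda_{i|\mu})$, proved by computation in Lemma \ref{qq60}.

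On convergence, your hope is correct but your proposed tool is not needed: since the unknowns are holomorphic \v Cech data over a fixed covering (no $\bar\partial$-equations are solved, unlike in Theorems \ref{hh1} and \ref{ii80}), the paper proves convergence by the elementary majorant/norm-estimate method of Theorem \ref{ii1}, and accordingly Theorem \ref{ll17} carries no convergence hypothesis; no harmonic theory for $L=\bar\partial+[\Lambda_0,-]$ or $L_\pi=\bar\partial+\pi$ enters at this point.
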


\begin{theorem}
Let $f:(X,\Lambda_0)\to (Y, \Pi_0), g:(Y,\Pi_0)\to (Z, \Omega_0)$, and $h=g\circ f$ be holomorphic Poisson maps of holomorphic Poisson manifolds. Let $p:(\mathcal{X},\Lambda)\to M, q:(\mathcal{Y},\Pi)\to M$ and $\pi:(\mathcal{Z},\Omega)\to M$ be families of holomorphic Poisson manifolds such that $(X,\Lambda_0)=p^{-1}(0), (Y,\Pi_0)=q^{-1}(0)$ and $(Z, \Omega_0)=\pi^{-1}(0)$ for some point $0\in M$. Let $\Phi:(\mathcal{X},\Lambda)\to (\mathcal{Y}, \Pi)$ and $\Upsilon:(\mathcal{X}, \Lambda)\to (\mathcal{Z}, \Omega)$ be holomorphic Poisson maps over $M$ which induces $f$ and $h$ over $0\in M$, respectively. Assume that
\begin{enumerate}
\item $p$ and $q$ are proper.
\item $f^*:\mathbb{H}^0(Y, g^*\Theta_Z^\bullet)\to \mathbb{H}^0(X,h^*\Theta_Z^\bullet)$ is surjective $($see Appendix \ref{appendix1}$)$.
\item $f^*:\mathbb{H}^1(Y, g^*\Theta_Z^\bullet)\to \mathbb{H}^1(X, h^*\Theta_Z^\bullet)$ is injective.
\end{enumerate}
Then there exists an open neighborhood $N$ of $0$, and a holomorphic Poisson map $\Psi:(\mathcal{Y}|_N, \Pi|_N)\to (\mathcal{Z}, \Omega)$ over $N$ such that $\Upsilon|_N=\Psi\circ (\Phi|_N)$.
\end{theorem}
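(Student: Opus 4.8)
The plan is to adapt Horikawa's proof of the non-Poisson composition theorem (Theorem~\ref{ll16}) to the present setting, replacing the coherent sheaves $g^{*}\Theta_{Z}$, $h^{*}\Theta_{Z}$ by the complexes $g^{*}\Theta_{Z}^{\bullet}$, $h^{*}\Theta_{Z}^{\bullet}$ of Appendix~\ref{appendix1} and ordinary cohomology by hypercohomology, and to produce the Poisson lift $\Psi$ as a convergent power series in a local coordinate $t=(t_{1},\dots,t_{m})$ of $M$ centred at $0$. Fix a finite Stein covering of $Y$ extending to a sufficiently fine coordinate covering $\{V_{j}\}$ of $\mathcal{Y}$ over a polydisc $N_{0}\ni 0$, and a similar covering of $X$ and $\mathcal{X}$. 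Since the underlying map $\pi\colon\mathcal{Z}\to M$ of the family $(\mathcal{Z},\Omega)$ is a submersion, $g$ extends over each $V_{j}$ to a holomorphic map $V_{j}\to\mathcal{Z}$ over $M$ restricting to $g$ on $V_{j}\cap Y$ (no Poisson or gluing condition is imposed yet). We then seek $\Psi=\sum_{\nu\ge 0}\Psi_{\nu}$, with $\Psi_{\nu}$ homogeneous of degree $\nu$ in $t$ and $\Psi_{0}=g$, such that the local pieces glue to a global holomorphic Poisson map $(\mathcal{Y}|_{N},\Pi|_{N})\to(\mathcal{Z},\Omega)$ over a neighbourhood $N$ of $0$ and such that $\Upsilon|_{N}=\Psi\circ(\Phi|_{N})$.

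The inductive step is the heart of the construction. Assume $\Psi^{(\nu-1)}:=\sum_{k<\nu}\Psi_{k}$ has been constructed so that both conditions hold modulo $t$-degree $\nu$. Take arbitrary degree-$\nu$ holomorphic extensions on each $V_{j}$, using the submersion property. The failure of the resulting $\Psi^{(\nu)}$ to be a well-defined global \emph{Poisson} map at degree $\nu$ is a \v{C}ech $1$-cocycle on $Y$ valued in $g^{*}\Theta_{Z}^{\bullet}$: its $g^{*}\Theta_{Z}$-component measures the discrepancy of the local maps on overlaps and its $\wedge^{2}g^{*}\Theta_{Z}$-component measures the failure of the Poisson identity, the two being intertwined by the differential $\pi$ precisely because $g$, $\Phi$, $\Upsilon$ are Poisson. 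Let $o_{\nu}\in\mathbb{H}^{1}(Y,g^{*}\Theta_{Z}^{\bullet})$ be its class. Pulling back along $f$ yields the analogous obstruction for the composite $\Psi^{(\nu)}\circ\Phi$; but $\Upsilon$ is a genuine global holomorphic Poisson map, hence this pulled-back obstruction vanishes, so $f^{*}o_{\nu}=0$ in $\mathbb{H}^{1}(X,h^{*}\Theta_{Z}^{\bullet})$. By hypothesis~(3), $o_{\nu}=0$, and correcting the degree-$\nu$ parts on the $V_{j}$ by a suitable $0$-cochain we may assume $\Psi^{(\nu)}$ is a global holomorphic Poisson map over $N$ modulo degree $\nu+1$. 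The remaining discrepancy $\Upsilon|_{N}-\Psi^{(\nu)}\circ(\Phi|_{N})$ then vanishes modulo degree $\nu$, and its degree-$\nu$ term is a global, $\pi$-closed section over $X$ valued in $h^{*}\Theta_{Z}$, i.e.\ a class $c_{\nu}\in\mathbb{H}^{0}(X,h^{*}\Theta_{Z}^{\bullet})$. By hypothesis~(2) we write $c_{\nu}=f^{*}a_{\nu}$ with $a_{\nu}\in\mathbb{H}^{0}(Y,g^{*}\Theta_{Z}^{\bullet})$, and replacing $\Psi_{\nu}$ by $\Psi_{\nu}$ minus the degree-$\nu$ adjustment determined by $a_{\nu}$ keeps $\Psi^{(\nu)}$ global and Poisson over $N$ while arranging $\Upsilon|_{N}\equiv\Psi^{(\nu)}\circ(\Phi|_{N})$ modulo degree $\nu+1$. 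This closes the induction and yields a formal power series solution $\Psi$.

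It then remains to show that the formal series $\Psi=\sum_{\nu}\Psi_{\nu}$ converges on some polydisc $N\subset N_{0}$ around $0$; this would be done following Horikawa's convergence argument for Theorem~\ref{ll16} in \cite{Hor74}, constructing a convergent majorant for the $\Psi_{\nu}$ from elliptic estimates for the $\bar{\partial}$-operators twisted by $\pi$ on $h^{*}\Theta_{Z}^{\bullet}$ and by $[\Lambda_{0},-]$ on $\Theta_{X}^{\bullet}$ (compare subsection~\ref{pp40}). Once $\Psi$ is an honest holomorphic map over $N$, the identities defining "Poisson map" and $\Upsilon|_{N}=\Psi\circ(\Phi|_{N})$ persist in the limit, so $\Psi$ is the required holomorphic Poisson map over $N$. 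Properness of $p$ and $q$ (hypothesis~(1)) is used throughout to guarantee that the hypercohomology groups on the compact fibres $X$ and $Y$ are finite dimensional and that the \v{C}ech computations make sense, while the submersivity of $\pi\colon\mathcal{Z}\to M$, part of the definition of a Poisson analytic family, is what supplies the local extensions.

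I expect the genuinely hard part to be the convergence of the constructed power series: the obstruction-theoretic skeleton above is a formal transcription of Horikawa's argument once Appendix~\ref{appendix1} furnishes the complexes $g^{*}\Theta_{Z}^{\bullet}$, $h^{*}\Theta_{Z}^{\bullet}$ and the pullback maps $f^{*}$ on their hypercohomology, but controlling the growth of the $\Psi_{\nu}$ requires a harmonic theory for $L=\bar{\partial}+[\Lambda_{0},-]$ and $L_{\pi}=\bar{\partial}+\pi$ robust enough to run the majorant argument. A secondary point requiring care is the verification, at each order, that the various obstruction cochains are genuinely $\pi$-closed (so that they define hypercohomology classes rather than merely sheaf-cohomology classes), which is exactly where the hypothesis that $g$, $\Phi$ and $\Upsilon$ are Poisson maps, and not just holomorphic maps, is used.
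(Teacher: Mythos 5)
Your skeleton is essentially the paper's own proof of Theorem \ref{ll18}: expand $\Psi$ as a power series in $t$ with $\Psi_0=g$; at order $\mu$ the gluing discrepancy $\Gamma_{ij|\mu}$ and the Poisson discrepancy $\lambda_{i|\mu}$ form a $1$-cocycle of $g^*\Theta_Z^\bullet$ on $Y$ whose pullback to $X$ is explicitly cobounded by the composition discrepancy $\gamma_{i|\mu}$ (the identities $(\ref{mm66})$--$(\ref{mm67})$, which is exactly your "the pulled-back obstruction vanishes because $\Upsilon$ is a genuine global Poisson map"); injectivity of $f^*$ on $\mathbb{H}^1$ then splits the cocycle on $Y$, and surjectivity of $f^*$ on $\mathbb{H}^0$ supplies a global $\pi_g$-closed correction fixing the composition condition without destroying gluing or the Poisson identity. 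So the formal, obstruction-theoretic part of your argument matches the paper step for step.

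The one point where you go astray is the convergence. You present it as the genuinely hard part, to be settled by "elliptic estimates for the $\bar\partial$-operators twisted by $\pi$" and a harmonic theory for $L=\bar\partial+[\Lambda_0,-]$ and $L_\pi=\bar\partial+\pi$ (the issue of subsection \ref{pp40}). That difficulty belongs to the existence theorems (Theorems \ref{hh1} and \ref{ii80}, stated conditionally as Theorems \ref{pp43} and \ref{qq21}), where the construction runs through Dolbeault-type resolutions. For the composition theorem the corrections live in \v{C}ech cochains over a fixed finite Stein cover, and convergence is obtained exactly as in the completeness theorem (Theorem \ref{ii1}): one proves a norm estimate of the form $|\Psi_{i|\mu}|\le K\|\sigma_\mu\|$ for the chosen solutions of the splitting equations, by the compactness (Montel-type contradiction) argument of Horikawa's Lemma~2.3, and then runs the standard majorant series $A(t)$. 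No harmonic theory is needed, which is why the paper states Theorem \ref{ll18} unconditionally, in contrast to the existence theorems; framing convergence as contingent on the open harmonic-theoretic questions would wrongly weaken the statement.
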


In \cite{Hor76}, Horikawa studied a concept of costability. Let $\{X_t\}$ be a family of deformations of compact complex manifolds, $X=X_0,0\in M$, and let $f:X\to Y$ be a holomorphic map of compact complex manifolds. Horikawa asked if we can embed $Y$ into a complex analytic family in such a manner that $f$ extends to a family of holomorphic maps $f_t:X_t\to Y_t$, and proved the following theorem.
\begin{theorem}[Theorem of Costability for deformations of holomorphic maps]\label{costability22}
Let $X$ and $Y$ be compact complex manifolds, $f:X\to Y$ a holomorphic map, and let $p:\mathcal{X}\to M$ be a family of deformations of $X=p^{-1}(0),0\in M$. Assume that
\begin{enumerate}
\item $f^*:H^1(Y,\Theta_Y)\to \mathbb{H}^1(X, f^*\Theta_Y)$ is surjective.
\item $f^*:H^2(Y,\Theta_Y)\to \mathbb{H}^2(X, f^*\Theta_Y)$ is injective.
\end{enumerate}
 Then there exists an open neighborhood $N$ of $0$ in $M$, a family $q:\mathcal{Y}\to N$ of deformations of $Y=q{-1}(0)$, and a holomorphic map $\Phi:\mathcal{X}|_N\to \mathcal{Y}$ over $N$ which induces $f$ over $0\in N$.
\end{theorem}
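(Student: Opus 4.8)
The plan is to follow the power-series argument Horikawa uses in \cite{Hor76}: I would construct the deformation $\mathcal{Y}\to N$ of $Y$ and the map $\Phi:\mathcal{X}|_N\to\mathcal{Y}$ \emph{at the same time}, order by order in the base coordinates, using hypotheses (1) and (2) to dispose of the obstructions, and then establish convergence by the usual elliptic machinery.

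First I would fix the data. Shrinking $M$ to a polydisc $\Delta$ with coordinates $t=(t_1,\dots,t_m)$, the given family is described, over a finite Stein covering $\mathcal{U}=\{\mathcal{U}_i\}$ of $\mathcal{X}|_\Delta$ restricting to a Stein covering $\{U_i=\mathcal{U}_i\cap X\}$ of $X$, by holomorphic coordinate transformations $z_i=g_{ik}(z_k,t)$ with $g_{ik}(z_k,0)$ the transition functions of $X$ and with the cocycle identity $g_{ij}(g_{jk}(z_k,t),t)=g_{ik}(z_k,t)$ holding identically. I look for a finite Stein covering $\mathcal{V}=\{V_j\}$ of $Y$, coordinate transformations $w_j=h_{jk}(w_k,t)$ with $h_{jk}(w_k,0)$ the transition functions of $Y$, and local maps $\Phi_i(z_i,t)$ with $\Phi_i(z_i,0)$ the local expression of $f$, all formal power series in $t$, subject to: the $h_{jk}$ satisfy the cocycle identity (so they define $\mathcal{Y}\to\Delta$), and the $\Phi_i$ are compatible with the gluings on both sides, i.e. $\Phi_i(g_{ik}(z_k,t),t)$ is identified with $\Phi_k(z_k,t)$ by $h$ (so the $\Phi_i$ glue to a holomorphic map over $\Delta$ inducing $f$ at $t=0$). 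Expanding into homogeneous parts in $t$ and assuming a solution modulo degree $\mu$, the task is to produce the degree-$\mu$ terms $h^\mu_{jk}$ and $\Phi^\mu_i$.

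The inductive step is the heart of the proof and splits into two coupled obstruction problems, handled in this order. First, the degree-$\mu$ part of the $\mathcal{Y}$-cocycle identity has the shape $\delta\beta^\mu=\mathfrak{o}^\mu_Y$, where $\beta^\mu\in C^1(\mathcal{V},\Theta_Y)$ is the $1$-cochain associated to $\{h^\mu_{jk}\}$ and $\mathfrak{o}^\mu_Y\in Z^2(\mathcal{V},\Theta_Y)$ is built from the lower-order terms. Second, writing $R^\mu\in C^1(\mathcal{U},f^*\Theta_Y)$ for the degree-$\mu$ discrepancy in the compatibility equation for $\Phi$ (with $h^\mu$ and $\Phi^\mu$ provisionally set to $0$), a direct expansion — crucially using that the given $g_{ik}$ satisfy their cocycle identity \emph{to all orders}, so that the corresponding obstruction of the $\mathcal{X}$-family is identically zero — yields the integrability relation $\delta R^\mu=f^*\mathfrak{o}^\mu_Y$. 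Consequently $f^*[\mathfrak{o}^\mu_Y]=0$ in $H^2(X,f^*\Theta_Y)$, and by hypothesis (2) (injectivity of $f^*$ on $H^2$) we get $[\mathfrak{o}^\mu_Y]=0$ in $H^2(Y,\Theta_Y)$; so $h^\mu_{jk}$ can be chosen solving the first equation. With that choice the recomputed $R^\mu$ satisfies $\delta R^\mu=0$, hence defines a class in $H^1(X,f^*\Theta_Y)$; replacing $h^\mu_{jk}$ by $h^\mu_{jk}+\zeta$ for a $1$-cocycle $\zeta\in Z^1(\mathcal{V},\Theta_Y)$ (the remaining freedom in the first equation) shifts $[R^\mu]$ by $f^*[\zeta]$, and by hypothesis (1) (surjectivity of $f^*$ on $H^1$) one chooses $\zeta$ so that $[R^\mu]=0$. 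Then $R^\mu=\delta\gamma^\mu$ for some $\gamma^\mu\in C^0(\mathcal{U},f^*\Theta_Y)$, and $\Phi^\mu_i:=-\gamma^\mu_i$ completes the degree-$\mu$ step. This produces a formal solution.

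It remains to make the formal power series converge, and here I would run the Kodaira--Nirenberg--Spencer scheme unchanged: at each order take the solutions $\beta^\mu,\zeta,\gamma^\mu$ orthogonal to the harmonic spaces, using the Green operators for $\bar\partial$ on $\mathcal{A}^{0,\bullet}(Y,\Theta_Y)$ and on $\mathcal{A}^{0,\bullet}(X,f^*\Theta_Y)$, and prove inductively that the Taylor coefficients of $h_{jk}$ and $\Phi_i$ are dominated by those of Kodaira's majorant $\tfrac{b}{c}\sum_{\nu\ge1}\tfrac{c^\nu}{\nu^2}(t_1+\dots+t_m)^\nu$ for suitable constants $b,c$; solving the $Y$-side first and then pulling everything back to $X$ keeps the two systems of elliptic estimates essentially decoupled, so the induction closes and the series converge on a neighborhood $N$ of $0$, giving the desired $\mathcal{Y}\to N$ and $\Phi:\mathcal{X}|_N\to\mathcal{Y}$. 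The one step I expect to require real care is the integrability identity $\delta R^\mu=f^*\mathfrak{o}^\mu_Y$: it is precisely what permits hypotheses (2) and (1) to be used in succession, and extracting it means carefully bookkeeping the degree-$\mu$ remainders of the $\mathcal{Y}$-cocycle condition, the $\Phi$-compatibility condition, and the (identically satisfied) $\mathcal{X}$-cocycle condition together. (Alternatively, the whole obstruction calculus can be organized through the mapping cone of $f^*\colon\Theta_Y\to Rf_*f^*\Theta_Y$, whose first and second hypercohomology are governed exactly by (1) and (2); I would nonetheless give the hands-on version, to keep the treatment parallel with Sections \ref{section2} and \ref{section3}.)
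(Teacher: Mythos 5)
This statement is quoted background from Horikawa's paper \cite{Hor76}; the present paper gives no proof of it, but it does record two relevant facts: that Horikawa's actual proof ``used the existence of Kuranishi space, and other spaces,'' and that the elementary alternative --- constructing formal power series directly --- ``meets the difficulty in proving convergence as in \cite{Kod05} p.248--259.'' Your formal half is essentially that elementary construction, and it is fine as far as it goes: the inductive identity $\delta R^\mu=f^*\mathfrak{o}^\mu_Y$ (which uses that the $\mathcal{X}$-cocycle condition holds identically), then injectivity of $f^*$ on $H^2$ to conclude $[\mathfrak{o}^\mu_Y]=0$ and solve for $h^\mu_{jk}$, then surjectivity of $f^*$ on $H^1$ to adjust by a cocycle $\zeta$ so that $[R^\mu]=0$ and solve for $\Phi^\mu_i$, is the standard obstruction calculus and matches what one would expect.

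The genuine gap is the convergence step, and it is exactly the step you dispose of in one sentence. Your unknowns $\beta^\mu$, $\zeta$, $\gamma^\mu$ are determined only modulo cocycles, and the hypotheses (1) and (2) are purely cohomological: injectivity of $f^*$ on $H^2$ tells you that $[\mathfrak{o}^\mu_Y]=0$, but the induction controls only the pulled-back data on $X$, and you must produce a primitive $\beta^\mu$ on $Y$, and a cocycle $\zeta$ with $f^*[\zeta]=[R^\mu]$, whose sup norms are bounded by a constant \emph{independent of $\mu$} times the norms of the lower-order data; nothing in your scheme supplies such uniform a priori estimates, and the Green-operator/majorant machinery you invoke applies to global $\bar\partial$-equations, not directly to this coupled \v{C}ech problem on two different manifolds linked by $f^*$ in which choices are made only up to cohomology. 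This is precisely the difficulty the paper points to via \cite{Kod05} p.248--259 (and which the author says blocks the Poisson analogue, cf.\ subsection \ref{pp40}), and it is why Horikawa's published proof does not run this direct scheme: he instead uses the existence of the Kuranishi (semiuniversal) family of $Y$ and then applies his existence and completeness theorems for deformations of maps into a family (the non-Poisson antecedents of Theorems \ref{qq22} and \ref{ii5}), so that all convergence questions are absorbed into results already established. As written, your convergence paragraph asserts the hard part rather than proving it; either you must establish the uniform estimates for the order-by-order choices (none are offered), or you should reroute the argument through the Kuranishi family of $Y$ as Horikawa does.
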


Let us consider the above arguments in terms of holomorphic Poisson deformations. Let $\{(X_t,\Lambda_t)\}$ be a family of deformations of compact holomorphic Poisson manifolds, $(X,\Lambda_0)=(X_0,\Lambda_0)$, $ 0\in M$, and let $f:(X,\Lambda_0)\to (Y,\Pi_0)$ be a holomorphic Poisson map of $(X,\Lambda_0)$ into a compact holomorphic Poisson manifold $(Y,\Pi_0)$. It is natural to ask if we can embed $(Y,\Pi_0)$ into a family $(Y_t,\Pi_t)$ in such a manner that $f$ extends to a family of holomorphic Poisson maps $f_t:(X_t,\Lambda_t)\to (Y_t,\Pi_t),t\in M$. We conjecture an analogue of Theorem \ref{costability22} .

\begin{conjecture}[Theorem of Costatbiliy for deformations of holomorphic Poisson maps]
Let $(X,\Lambda_0)$ and $(Y,\Pi_0)$ be two compact holomorphic Poisson manifolds, $f:(X,\Lambda_0)\to (Y,\Pi_0)$ a holomorphic Poisson map, and let $p:(\mathcal{X},\Lambda)\to M$ be a Poisson analytic family of deformations of $(X,\Lambda_0)=p^{-1}(0),0\in M$.
Assume that
\begin{enumerate}
\item $f^*:\mathbb{H}^1(Y,\Theta_Y^\bullet)\to \mathbb{H}^1(X, f^*\Theta_Y^\bullet)$ is surjective $($see Appendix \ref{appendix1}$)$.
\item $f^*:\mathbb{H}^2(Y,\Theta_Y^\bullet)\to \mathbb{H}^2(X, f^*\Theta_Y^\bullet)$ is injective.
\end{enumerate}
 Then there exists an open neighborhood $N$ of $0$ in $M$, a Poisson analytic family $q:(\mathcal{Y},\Pi)\to N$ of deformations of $(Y,\Pi_0)=q^{-1}(0)$, and a holomorphic Poisson map $\Phi:(\mathcal{X},\Lambda)|_N\to (\mathcal{Y},\Pi)$ over $N$ which induces $f$ over $0\in N$.
\end{conjecture}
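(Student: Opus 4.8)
The plan is to adapt Horikawa's proof of Theorem~\ref{costability22} from \cite{Hor76} to the Poisson setting, systematically replacing the Dolbeault complex $(A^{0,\bullet}(Y,\Theta_Y),\bar{\partial})$ by the complex $\Theta_Y^\bullet=(\Theta_Y\xrightarrow{[\Pi_0,-]}\wedge^2\Theta_Y\xrightarrow{[\Pi_0,-]}\cdots)$, resolved by the Dolbeault-type operator $\bar{\partial}+[\Pi_0,-]$ on $A^{0,\bullet}(Y,\wedge^\bullet\Theta_Y)$, and likewise $f^*\Theta_Y$ by $f^*\Theta_Y^\bullet$ resolved by $L_\pi=\bar{\partial}+\pi$; thus $H^i(Y,\Theta_Y)$ becomes $\mathbb{H}^i(Y,\Theta_Y^\bullet)$, $H^i(X,f^*\Theta_Y)$ becomes $\mathbb{H}^i(X,f^*\Theta_Y^\bullet)$, and the deformation theory of complex manifolds is replaced throughout by the Poisson deformation theory of \cite{Kim15}. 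I would fix a finite Stein covering $\mathfrak{V}=\{V_\alpha\}$ of $Y$, a finite Stein covering $\mathfrak{U}=\{U_i\}$ of $X$ refining $f^{-1}\mathfrak{V}$ together with a map $i\mapsto\alpha(i)$ such that $f(U_i)\subset V_{\alpha(i)}$, and local coordinates $t=(t_1,\dots,t_m)$ of $M$ at $0$; the given Poisson analytic family $(\mathcal{X},\Lambda,p,M)$ contributes its convergent transition functions and family bivector, which enter the recursion below as known data.

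Unlike the stability Theorem~\ref{stability}, which is deduced from Theorem~\ref{qq21}, costability is not a formal consequence of the existence theorems and needs a direct construction, exactly as in \cite{Hor76}. First I would construct, by induction on the order $\mu$ in $t$, a formal Poisson deformation of $(Y,\Pi_0)$ over $M$---encoded by transition functions $\Psi_{\alpha\beta}(z_\beta,t)$ and bivectors $\Pi_\alpha(z_\alpha,t)$ over $\mathfrak{V}$---together with a formal holomorphic Poisson map $\Phi$, encoded by holomorphic maps $\Phi_i(x_i,t)$, that reduces to $f$ at $t=0$ and is compatible with the given family $(\mathcal{X},\Lambda)$. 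At first order, the Poisson Kodaira--Spencer map of $(\mathcal{X},\Lambda,p,M)$ is an element $\kappa\in\operatorname{Hom}(T_0(M),\mathbb{H}^1(X,\Theta_X^\bullet))$; by hypothesis~(1) there is a lift $\kappa'\in\operatorname{Hom}(T_0(M),\mathbb{H}^1(Y,\Theta_Y^\bullet))$ of $F\kappa\in\operatorname{Hom}(T_0(M),\mathbb{H}^1(X,f^*\Theta_Y^\bullet))$ along $f^*$, and $\kappa'$ is the first-order Poisson deformation of $Y$, the identity $f^*\kappa'=F\kappa$ being precisely what makes the first-order term of $\Phi$ solvable.

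For the inductive step, assume the data has been constructed modulo $(t)^\mu$. Extracting the order-$\mu$ part of the cocycle and Poisson-compatibility conditions produces an obstruction $o_\mu\in\mathbb{H}^2(Y,\Theta_Y^\bullet)$ to extending the Poisson deformation of $Y$, represented by an explicit cocycle bilinear in the lower-order terms for the total differential of the \v{C}ech--Dolbeault complex of $\Theta_Y^\bullet$. The crucial point, proved by a cochain computation exactly as in \cite{Hor76} but carried out for this total complex and using that $\Phi$ is already built to order $\mu-1$ while $(\mathcal{X},\Lambda,p,M)$ is given to all orders, is that $f^*(o_\mu)\in\mathbb{H}^2(X,f^*\Theta_Y^\bullet)$ equals the image under $F$ of the obstruction to extending $(\mathcal{X},\Lambda)$ to order $\mu$; the latter vanishes because $(\mathcal{X},\Lambda,p,M)$ is a genuine family. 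Hence $f^*(o_\mu)=0$, and by hypothesis~(2) $o_\mu=0$, so the Poisson deformation of $Y$ extends to order $\mu$; its order-$\mu$ term is then determined up to a class in $\mathbb{H}^1(Y,\Theta_Y^\bullet)$, and the surjectivity of $f^*$ on $\mathbb{H}^1$ allows one to adjust this freedom so that the remaining obstruction in $\mathbb{H}^1(X,f^*\Theta_Y^\bullet)$ to extending $\Phi$ to order $\mu$ vanishes and $\Phi_i$ is extended compatibly with $(\mathcal{X},\Lambda)$. This yields a formal power series solution for the family $q:(\mathcal{Y},\Pi)\to M$ and the Poisson map $\Phi:(\mathcal{X},\Lambda)\to(\mathcal{Y},\Pi)$.

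The main obstacle, I expect, is the convergence of these formal power series on a polydisc $N\subset M$ about $0$, which is why---as with Theorems~\ref{pp43} and \ref{qq21}---the statement has to be made conditional. The natural attempt is Horikawa's elliptic estimate scheme: work in H\"older spaces $C^{k+\alpha}$, replace the cohomological choices above by harmonic representatives built from Green's operators for the Laplacians of $\bar{\partial}+[\Pi_0,-]$ on $A^{0,\bullet}(Y,\wedge^\bullet\Theta_Y)$ and of $L_\pi$ on $A^{0,\bullet}(X,\wedge^\bullet f^*\Theta_Y)$, and dominate the recursion by a Kodaira--Nirenberg--Spencer majorant series. The difficulty is that $\Theta_Y^\bullet$ is not an elliptic complex---the symbol of $[\Pi_0,-]$ degenerates along the zero locus of $\Pi_0$---so the harmonic theory for $\bar{\partial}+[\Pi_0,-]$ and for $L_\pi$ needed to push the estimates through is not available in its classical form; this is exactly the gap the author records in subsection~\ref{pp40}. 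Granting convergence, gluing the local data produces the genuine Poisson analytic family $q:(\mathcal{Y},\Pi)\to N$ with $q^{-1}(0)=(Y,\Pi_0)$ and the genuine holomorphic Poisson map $\Phi:(\mathcal{X},\Lambda)|_N\to(\mathcal{Y},\Pi)$ over $N$ inducing $f$ over $0$, which completes the proof.
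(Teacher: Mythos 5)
There is a genuine gap, and it is exactly the one the paper itself flags: the statement you are addressing is stated in the paper as a \emph{conjecture}, with no proof given. The author explicitly records that Horikawa's proof of Theorem \ref{costability22} relies on the existence of the Kuranishi space (and related parameter spaces) which have not been established in the Poisson category, and that while an elementary order-by-order construction of formal power series is possible, the convergence cannot be pushed through as in \cite{Kod05} p.248--259; the only proved analogue in the paper is the algebraic, infinitesimal costability statement (Theorem \ref{mm15} in Appendix \ref{appendix3}). Your formal inductive scheme --- lifting the Poisson Kodaira--Spencer class along $f^*$ at first order, killing the order-$\mu$ obstruction in $\mathbb{H}^2(Y,\Theta_Y^\bullet)$ via injectivity of $f^*$ on $\mathbb{H}^2$ after showing its pullback is a coboundary coming from the genuine family $(\mathcal{X},\Lambda)$, and then using surjectivity of $f^*$ on $\mathbb{H}^1$ to adjust so that $\Phi$ extends --- is essentially the \v{C}ech analogue of the argument the paper carries out algebraically in the proof of Theorem \ref{mm15}, so the formal part of your outline is sound in spirit. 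But it does not prove the conjecture.

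The unresolved step is convergence, and your proposal does not close it: you concede that the harmonic theory for $\bar{\partial}+[\Pi_0,-]$ on $\Theta_Y^\bullet$ and for $L_\pi=\bar{\partial}+\pi$ on $f^*\Theta_Y^\bullet$ needed for the Kodaira--Nirenberg--Spencer majorant scheme is not available (these are not elliptic complexes in the classical sense), which is precisely the obstruction recorded in subsection \ref{pp40}. Moreover, your remedy --- ``the statement has to be made conditional'' on convergence of the formal series --- changes the statement: unlike Theorems \ref{pp43} and \ref{qq21}, the conjecture as written asserts the existence of the family $q:(\mathcal{Y},\Pi)\to N$ and the map $\Phi$ unconditionally. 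So what you have outlined is, at best, a conditional or formal version of the result (comparable to Theorem \ref{mm15} over Artinian bases), not a proof of the conjecture itself; the analytic core of the problem remains open.
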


The author could not extend Horikawa's methods to prove theorem of costability for deformations of holomorphic Poisson maps since in order to prove Theorem \ref{costability22} he used the existence of Kuranishi space, and other spaces which have not been established in Poisson category. Though we can approach in an elementary meothd and so construct formal power series, we meet the difficulty in proving convergence as in \cite{Kod05} p.248-259. However, we can prove  theorem of costabiliy in the case of infinitesimal deformations of Poisson maps over local aritinan $k$-algebras. In Appendix \ref{appendix3}, we prove (see Theorem \ref{mm15})

\begin{theorem}[Theorem of Costability for infinitesimal deformations of Poisson maps]
Let $(X,\Lambda_0)$ and $(Y,\Pi_0)$ be two nonsingular projective Poisson varieties, $f:(X,\Lambda_0)\to (Y,\Pi_0)$ a Poisson map, and let $p:(\mathcal{X},\Lambda)\to Spec(A)$ be a flat Poisson deformation of $(X,\Lambda_0)$ over $Spec(A)$ for a local artinian $k$-algebra $A$ with the residue $k$. Assume that
Assume that
\begin{enumerate}
\item $f^*:\mathbb{H}^1(Y,T_Y^\bullet)\to \mathbb{H}^1(X, f^*T_Y^\bullet)$ is surjective\footnote{For algebraic case, we use $T_X$ instead of $\Theta_X$ for the tangent sheaf to keep notational consistency with \cite{Kim16}}.
\item $f^*:\mathbb{H}^2(Y,T_Y^\bullet)\to \mathbb{H}^2(X, f^*T_Y^\bullet)$ is injective.
\end{enumerate}
Then there exist a flat Poisson deformation  $q:(\mathcal{Y},\Pi)\to Spec( A)$ of $(Y,\Pi_0)$, and a Poisson map $\Phi:(\mathcal{X},\Lambda)\to (\mathcal{Y},\Pi)$ over $A$ which induces $f$.
\end{theorem}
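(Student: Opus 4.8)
The plan is to work in the functor-of-Artin-rings framework of the appendices and to build $(\mathcal Y,\Pi)$ together with $\Phi$ by induction on the length of $A$. Choose a composition series $A=A_{N}\twoheadrightarrow A_{N-1}\twoheadrightarrow\cdots\twoheadrightarrow A_{0}=k$ in which each surjection is a small extension $0\to k\epsilon\to A_{j+1}\to A_{j}\to 0$ with $\mathfrak m_{A_{j+1}}\epsilon=0$, and write $(\mathcal X_{j},\Lambda_{j})=(\mathcal X,\Lambda)\times_{\mathrm{Spec}\,A}\mathrm{Spec}\,A_{j}$ for the restriction of the given Poisson deformation of $(X,\Lambda_{0})$. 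I will construct, compatibly along the tower, a flat Poisson deformation $q_{j}\colon(\mathcal Y_{j},\Pi_{j})\to\mathrm{Spec}\,A_{j}$ of $(Y,\Pi_{0})$ and a Poisson morphism $\Phi_{j}\colon(\mathcal X_{j},\Lambda_{j})\to(\mathcal Y_{j},\Pi_{j})$ over $A_{j}$ inducing $f$ over $k$. For $j=0$ take $(\mathcal Y_{0},\Pi_{0})=(Y,\Pi_{0})$ and $\Phi_{0}=f$. When the induction reaches $j=N$ we obtain $(\mathcal Y,\Pi):=(\mathcal Y_{N},\Pi_{N})$ and $\Phi:=\Phi_{N}$; flatness over $A$ is automatic since at each step $\mathcal Y_{j+1}$ is obtained by gluing copies of $V_{\alpha}\times\mathrm{Spec}\,A_{j+1}$ along automorphisms reducing to the identity modulo $\mathfrak m$.

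The heart of the argument is the inductive step $A_{j}\rightsquigarrow A_{j+1}$. Fix an affine open cover $\mathcal V=\{V_{\alpha}\}$ of $Y$ and an affine open cover $\mathcal U=\{U_{i}\}$ of $X$ refining $f^{-1}\mathcal V$, say $f(U_{i})\subset V_{\beta(i)}$, so that $T_{Y}^{\bullet}$ on $Y$, $f^{*}T_{Y}^{\bullet}$ on $X$ and the pullback $f^{*}$ on hypercohomology all acquire compatible \v Cech models. Starting from the transition automorphisms and local Poisson bivectors defining $(\mathcal Y_{j},\Pi_{j})$ and from local expressions of $\Phi_{j}$, choose arbitrary $A_{j+1}$-lifts of all of these data. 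Their failures to satisfy, respectively, the cocycle condition (a cochain in $C^{2}(\mathcal V,T_{Y})$), the gluing condition for the bivector ($C^{1}(\mathcal V,\wedge^{2}T_{Y})$), the Jacobi identity ($C^{0}(\mathcal V,\wedge^{3}T_{Y})$), the compatibility of the lifted map with the transition data of $\mathcal X_{j+1}$ and of $\mathcal Y_{j+1}$ ($C^{1}(\mathcal U,f^{*}T_{Y})$), and the Poisson-map condition $\widetilde\Phi_{*}\widetilde\Lambda=\widetilde\Pi$ ($C^{0}(\mathcal U,\wedge^{2}f^{*}T_{Y})$), assemble — using the Schouten-bracket identities and $[\Lambda_{0},\Lambda_{0}]=0$ — into a $2$-cocycle of the mapping cone
\[
\mathcal K^{\bullet}:=\mathrm{Cone}\bigl(f^{*}\colon T_{Y}^{\bullet}\to Rf_{*}f^{*}T_{Y}^{\bullet}\bigr)[-1]
\]
on $Y$. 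Its hypercohomology class $o_{j+1}\in\mathbb H^{2}(Y,\mathcal K^{\bullet})$ is independent of the choice of lifts, and $o_{j+1}=0$ if and only if $(\mathcal Y_{j},\Pi_{j},\Phi_{j})$ admits a simultaneous lift to $A_{j+1}$; this is the obstruction calculus for the functor ``deform $(Y,\Pi_{0})$ and $f$ with the source Poisson deformation prescribed to be $(\mathcal X,\Lambda)$'', whose tangent and obstruction spaces are $\mathbb H^{1}(Y,\mathcal K^{\bullet})$ and $\mathbb H^{2}(Y,\mathcal K^{\bullet})$, set up in parallel with Horikawa's Theorem \ref{costability22}.

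It remains to observe that $\mathbb H^{2}(Y,\mathcal K^{\bullet})=0$ under hypotheses (1) and (2). The triangle $\mathcal K^{\bullet}\to T_{Y}^{\bullet}\xrightarrow{f^{*}}Rf_{*}f^{*}T_{Y}^{\bullet}\to\mathcal K^{\bullet}[1]$ together with $\mathbb H^{i}(Y,Rf_{*}f^{*}T_{Y}^{\bullet})=\mathbb H^{i}(X,f^{*}T_{Y}^{\bullet})$ gives
\[
\mathbb H^{1}(Y,T_{Y}^{\bullet})\xrightarrow{f^{*}}\mathbb H^{1}(X,f^{*}T_{Y}^{\bullet})\to\mathbb H^{2}(Y,\mathcal K^{\bullet})\to\mathbb H^{2}(Y,T_{Y}^{\bullet})\xrightarrow{f^{*}}\mathbb H^{2}(X,f^{*}T_{Y}^{\bullet}).
\]
By (2) the last arrow is injective, so $\mathbb H^{2}(Y,\mathcal K^{\bullet})\to\mathbb H^{2}(Y,T_{Y}^{\bullet})$ is zero and hence $\mathbb H^{1}(X,f^{*}T_{Y}^{\bullet})\to\mathbb H^{2}(Y,\mathcal K^{\bullet})$ is surjective; by (1) the map $f^{*}$ on $\mathbb H^{1}$ is surjective, so $\mathbb H^{1}(X,f^{*}T_{Y}^{\bullet})\to\mathbb H^{2}(Y,\mathcal K^{\bullet})$ is the zero map. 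Therefore $\mathbb H^{2}(Y,\mathcal K^{\bullet})=0$, every obstruction $o_{j+1}$ vanishes, the induction goes through, and we obtain the desired $(\mathcal Y,\Pi)$ and $\Phi$ over $A$.

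The step I expect to be the main obstacle is the precise construction and book-keeping of the obstruction class in the second paragraph: one must realize the mapping cone $\mathcal K^{\bullet}$ at the level of \v Cech cochains even though $T_{Y}^{\bullet}$ lives on $Y$ while $f^{*}T_{Y}^{\bullet}$ lives on $X$ (which is what forces the choice of a cover of $X$ refining $f^{-1}\mathcal V$), verify that the five failure cochains above do satisfy the cocycle identity of $\mathcal K^{\bullet}$ — this is where compatibility of the Schouten bracket with $f^{*}$ enters — and check that the resulting class is independent of all choices and genuinely obstructs, i.e. vanishes exactly when a lift exists. Everything else, including the identification of the tangent/obstruction theory of the source-prescribed functor with $\mathbb H^{1}$ and $\mathbb H^{2}$ of $\mathcal K^{\bullet}$, is formal once the functor-of-Artin-rings framework of the appendix is in place.
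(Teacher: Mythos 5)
Your proposal is correct and, at its computational core, coincides with the paper's proof of Theorem \ref{mm15}: the paper also inducts over small extensions (it uses $0\to(t)\to A\to A/(t)\to 0$ with $t\in\mathfrak m^{p-1}$ rather than a full composition series, which is equivalent), lifts the local transition data $g_{ij}$, the local bivectors $\Pi_i$ and the local maps $\Phi_i$ arbitrarily, and records exactly your five failure cochains $\xi_{ijk}$, $\Pi'_{ij}$, $S_i$, $E_{ij}$, $P_i$. The only genuine difference is organizational: the paper never forms the mapping cone $\mathcal K^\bullet$ or a single obstruction group. Instead it proves directly that $(\{\tfrac12 S_i\},\{\Pi'_{ij}\},\{-\xi_{ijk}\})$ is a $2$-cocycle of $T_Y^\bullet$ whose pullback under $f^*$ is the coboundary of $(\{E_{ij}\},\{P_i\})$ (the relations $(\ref{mm5})$–$(\ref{mm7})$, which are precisely your cone-cocycle condition), then uses hypothesis (2) to kill this $2$-cocycle and correct the lifted deformation of $Y$, and finally uses hypothesis (1) on the leftover $1$-cocycle $(\{G_{ij}\},\{L_i\})$ of $f^*T_Y^\bullet$ to correct the $Y$-deformation and the map simultaneously. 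Your long-exact-sequence argument that $\mathbb H^2(Y,\mathcal K^\bullet)=0$ under (1) and (2) is sound and, when unwound, produces exactly these two corrections, so the cone formulation buys a cleaner statement ("one obstruction space vanishes") at the cost of having to set up the cone at the \v Cech level across the two spaces; the paper's sequential version avoids that apparatus but repeats the two hypotheses in two separate correction steps. Be aware that the step you flag as the "main obstacle" — verifying the cocycle identities for the failure cochains via Schouten-bracket manipulations, and checking that the corrected data really define a flat Poisson deformation of $Y$ and a Poisson morphism over $A$ — is in fact the bulk of the paper's proof; it does go through, so your deferral is a matter of unfinished bookkeeping rather than a gap in the method.
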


In appendices, we present deformations of Poisson morphisms in the language of functors of Artin rings which is the algebraic version of deformations of holomorphic Poisson maps. We identity first-order deformations and obstructions.

\section{Deformations of holomorphic Poisson maps}\label{section2}

\begin{definition}\label{ll35}
By a Poisson analytic family of holomorphic Poisson maps into a compact holomorphic Poisson manifold $(Y,\Pi_0)$, we mean a collection $(\mathcal{X},\Lambda,\Phi, p,M)$ of holomorphic Poisson manifold $(\mathcal{X},\Lambda)$, a complex manifold $M$, and a holomorphic Poisson map $\Phi:(\mathcal{X},\Lambda)\to \mathcal{Y}=(Y\times M,\Pi_0)$, and $p:(\mathcal{X},\Lambda)\to M$ with following properties:
\begin{enumerate}
\item $p$ is a surjective smooth proper holomorphic map, which makes $(\mathcal{X},\Lambda,p,M)$ a Poisson analytic family in the sense of \cite{Kim15}.
\item $q\circ \Phi=p$, where $q:(\mathcal{Y},\Pi_0)\to M$ is the projection onto the second factor which is the trivial Poisson analytic family over $M$.
\end{enumerate}
Two families $(\mathcal{X},\Lambda, \Phi, p, M)$ and $(\mathcal{X}',\Lambda',, \Phi',p', M')$ of holomorphic Poisson maps into $(Y,\Pi_0)$ are said to be equivalent if there exist a holomorphic Poisson isomorphism $\Psi:(\mathcal{X},\Lambda)\to (\mathcal{X}',\Lambda')$ and a holomorphic isomorphism $\phi:M\to M'$ such the the following diagram is commutative
\begin{center}
$\begin{CD}
(\mathcal{X},\Lambda)@>\Psi>> (\mathcal{X}',\Lambda')\\
@V\Phi VV @VV\Phi'V\\
(Y\times M,\Pi_0)@>id\times \phi>> (Y\times M',\Pi_0)
\end{CD}$
\end{center}
If $(\mathcal{X},\Lambda, \Phi,p,M)$ is a family of holomorphic Poisson maps into $(Y,\Pi_0)$, and if $h:N\to M$ is a holomorphic map, we can define the family $(\mathcal{X}', \Lambda',\Phi',p',N)$ induced by $h$ as follows:
\begin{enumerate}
\item $(\mathcal{X}',\Lambda')=(\mathcal{X},\Lambda)\times_M N$ which is the induced Poisson analytic family by $h$ $($see \cite{Kim15}$)$.
\item $\Phi'=\Phi\times id:(\mathcal{X}',\Lambda')\to ( (Y\times M)\times_M N, \Pi_0)=(Y\times N, \Pi_0)$.
\item $p'=p_N:(\mathcal{X}',\Lambda')\to N$.
\end{enumerate}
In particular, if $N$ is a submanifold of $M$ and if $h$ is the natural injection, we call $(\mathcal{X}',\Lambda',\Phi', p', N)$ the restriction on $N$ and denote it by $(\mathcal{X}|_N, \Lambda|_N, \Phi |_N, p|_N, N)$.
\end{definition}

\begin{definition}
A family $(\mathcal{X},\Lambda,\Phi, p,M)$ of holomorphic Poisson maps into $(Y,\Pi_0)$ is complete at $0\in M$ if, for any family $(\mathcal{X}',\Lambda',\Phi',p', N)$ such that $\Phi_{0'}':(X_{0'}',\Lambda_{0'}')\to (Y,\Pi_0)$ is equivalent to $\Phi_0:(X_0,\Lambda_0)\to (Y,\Pi_0)$ for a point $0'\in N$, there exists a holomorphic map $h$ of a neighborhood $U$ of $0'$ in $N$ into $M$ with $h(0')=0$ such that the restriction of $(\mathcal{X}',\Lambda',\Phi', p', N)$ on $U$ is equivalent to the family induced by $h$ from $(\mathcal{X},\Lambda, \Phi, p, M)$.
\end{definition}

\subsection{Complex associated with a holomorphic Poisson map}\

Let $f:(X,\Lambda_0)\to (Y,\Pi_0)$ be a holomorphic Poisson map. Let $F:\Theta_X^\bullet\to f^*\Theta_Y^\bullet$ be the canonical homomorphism associated with $f$ (see Appendix \ref{appendix1}). Then we have an exact sequence of complex of sheaves on $X$
\begin{align*}
0\to \Theta_{X/Y}^\bullet \to \Theta_X^\bullet \xrightarrow{F} f^*\Theta_Y^\bullet \xrightarrow{P} \mathcal{N}_f^\bullet \to 0
\end{align*}
\begin{center}
$\begin{CD}
@.\cdots @. \cdots @.\cdots @. \cdots @.\\
@. @AAA @A[\Lambda_0,-]AA @A\pi AA @AAA\\
0@>>> \Theta_{X/Y}^3@>>> \wedge^3 \Theta_X@>F>> \wedge^3 f^*\Theta_Y@>P>> \mathcal{N}_f^3@>>> 0\\
@. @AAA @A[\Lambda_0,-]AA @A\pi AA @AAA\\
0@>>> \Theta_{X/Y}^2@>>> \wedge^2 \Theta_X@>F>> \wedge^2 f^*\Theta_Y@>P>> \mathcal{N}_f^2@>>> 0\\
@. @AAA @A[\Lambda_0,-]AA @A\pi AA @AAA\\
0 @>>> \Theta_{X/Y}@>>> \Theta_X@>F>> f^*\Theta_Y@>P>> \mathcal{N}_f@>>> 0
\end{CD}$
\end{center}
where $\Theta_{X/Y}^\bullet:=ker(\Theta_X^\bullet \xrightarrow{F} f^*\Theta_Y^\bullet)$ and $\mathcal{N}_f^\bullet:=coker(\Theta_X^\bullet\xrightarrow{F} f^*\Theta_Y)$.
\begin{definition}
We set
\begin{align*}
PD_{(X,\Lambda_0)/(Y,\Pi_0)}=\frac{\{(\tau,\rho,\lambda)\in C^0(\mathcal{U}, f^*\Theta_Y^\bullet)\oplus \mathcal{C}^1(\mathcal{U},\Theta_X)\oplus C^0(\mathcal{U},\wedge^2 \Theta_X)|\frac{-\delta \tau=F\rho,\pi(\tau)=F\lambda,}{\delta\rho=0,\delta \lambda+[\Lambda_0,\rho]=0, [\Lambda_0,\lambda]=0}\}}{\{(Fg, -\delta g ,[\Lambda_0,g])|g\in C^0(\mathcal{U},\Theta_X)\}}
\end{align*}
\end{definition}

\begin{lemma}
$PD_{(X,\Lambda_0)/(Y,\Pi_0)}$ does not depend on the choice of the Stein covering. $PD_{X/Y}$ is a finite dimensional vector space. We have two exact sequences
\begin{align}
\mathbb{H}^0(X,\Theta_X^\bullet)\xrightarrow{F}\mathbb{H}^0(X,f^*\Theta_Y^\bullet)\to PD_{(X,\Lambda_0)/(Y,\Pi_0)}\to \mathbb{H}^1 (X,\Theta_X^\bullet)\xrightarrow{F} \mathbb{H}^1(X, f^*\Theta_Y^\bullet)\\
0\to \mathbb{H}^1(X,\Theta_{X/Y}^\bullet)\to PD_{(X,\Lambda_0)/(Y,\Pi_0)}\to \mathbb{H}^0(X, \mathcal{N}_f^\bullet)\to \mathbb{H}^2(X, \Theta_{X/Y}^\bullet),\,\,\,\,\,\,\,\,\,\,\,\,\,\,\,\,\,
\end{align}
\end{lemma}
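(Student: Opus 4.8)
The plan is to mimic the classical argument for the analogous fact about $D_{X/Y}$ (see \cite{Hor73}, \cite{Hor74}), now working with the double complex of $\bar\partial$- and Poisson-type differentials rather than with a single \v{C}ech complex. Concretely, one interprets $PD_{(X,\Lambda_0)/(Y,\Pi_0)}$ as the degree-$1$ cohomology of the total complex of a suitable bicomplex built from the exact sequence $0\to \Theta_{X/Y}^\bullet \to \Theta_X^\bullet \xrightarrow{F} f^*\Theta_Y^\bullet \xrightarrow{P}\mathcal N_f^\bullet\to 0$ of complexes of sheaves. Write $K^\bullet:=\mathrm{Cone}(F:\Theta_X^\bullet \to f^*\Theta_Y^\bullet)[-1]$ in the appropriate normalization; the hypercohomology $\mathbb H^\bullet(X,K^\bullet)$ is computed by the \v{C}ech–hypercohomology triple complex with entries $C^q(\mathcal U, f^*\wedge^\bullet\Theta_Y)$ and $C^q(\mathcal U,\wedge^\bullet\Theta_X)$, with differentials $\delta$, $\pi$, $[\Lambda_0,-]$ and the map $F$. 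A triple $(\tau,\rho,\lambda)$ with $\tau\in C^0(\mathcal U, f^*\Theta_Y)\oplus C^0(\mathcal U,\wedge^2 f^*\Theta_Y)\oplus\cdots$, $\rho\in C^1(\mathcal U,\Theta_X)$, $\lambda\in C^0(\mathcal U,\wedge^2\Theta_X)$ is exactly a total degree-$1$ cocycle: the conditions $-\delta\tau=F\rho$, $\pi(\tau)=F\lambda$, $\delta\rho=0$, $\delta\lambda+[\Lambda_0,\rho]=0$, $[\Lambda_0,\lambda]=0$ are the closedness equations, and the coboundaries $(Fg,-\delta g,[\Lambda_0,g])$ for $g\in C^0(\mathcal U,\Theta_X)$ are precisely the total differential of degree-$0$ chains concentrated in $C^0(\mathcal U,\Theta_X)$.

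First I would spell out the triple complex carefully and verify that the cocycle/coboundary conditions in the definition of $PD_{(X,\Lambda_0)/(Y,\Pi_0)}$ match the total-complex $\mathbb H^1$; this gives, as an immediate corollary, independence of the Stein covering $\mathcal U$ (any two Stein coverings have a common refinement, and hypercohomology is computed by any such covering since the sheaves $\wedge^j\Theta_X$, $\wedge^j f^*\Theta_Y$ are coherent and the covering is Stein), and finite-dimensionality (because $X$ is compact, so all the $\mathbb H^i(X,\Theta_X^\bullet)$, $\mathbb H^i(X,f^*\Theta_Y^\bullet)$ appearing are finite-dimensional and the relevant spectral sequences degenerate at a finite stage). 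Next, for the first exact sequence, I would use the distinguished triangle $\Theta_X^\bullet \xrightarrow{F} f^*\Theta_Y^\bullet \to K^\bullet[1]\to$ (with $PD = \mathbb H^1(X,K^\bullet)$ in the chosen indexing, or equivalently $\mathbb H^0$ of the shifted cone) and take the associated long exact sequence in hypercohomology:
\begin{align*}
\cdots\to \mathbb H^0(X,\Theta_X^\bullet)\xrightarrow{F}\mathbb H^0(X,f^*\Theta_Y^\bullet)\to PD_{(X,\Lambda_0)/(Y,\Pi_0)}\to \mathbb H^1(X,\Theta_X^\bullet)\xrightarrow{F}\mathbb H^1(X,f^*\Theta_Y^\bullet)\to\cdots
\end{align*}
which is exactly the first displayed sequence. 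For the second exact sequence, I would instead use that $K^\bullet$ sits in its own distinguished triangle $\Theta_{X/Y}^\bullet[1]\to K^\bullet \to \mathcal N_f^\bullet\to$ coming from the four-term exact sequence of complexes (split it into the two short exact sequences $0\to\Theta_{X/Y}^\bullet\to\Theta_X^\bullet\to \mathrm{im}\,F\to 0$ and $0\to\mathrm{im}\,F\to f^*\Theta_Y^\bullet\to \mathcal N_f^\bullet\to 0$, and identify $K^\bullet$ accordingly); the long exact hypercohomology sequence then reads
\begin{align*}
0\to \mathbb H^1(X,\Theta_{X/Y}^\bullet)\to PD_{(X,\Lambda_0)/(Y,\Pi_0)}\to \mathbb H^0(X,\mathcal N_f^\bullet)\to \mathbb H^2(X,\Theta_{X/Y}^\bullet)\to\cdots,
\end{align*}
where the leading $0$ comes from $\mathbb H^0(X,\Theta_{X/Y}^\bullet)\to \mathbb H^0(X,\Theta_X^\bullet)$ being injective (since $\Theta_{X/Y}^\bullet\to\Theta_X^\bullet$ is injective on the level of complexes, or rather because of the connecting-map bookkeeping in degree $0$). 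In both cases it is cleanest to phrase everything via the \v{C}ech–hypercohomology triple complex so that the maps in the sequences are visibly induced by $F$, $P$, and the inclusion $\Theta_{X/Y}^\bullet\hookrightarrow$, matching the cocycle-level description.

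The main obstacle I anticipate is purely bookkeeping: getting the signs, shifts, and indexing of the cone/triple complex exactly right so that the two distinguished triangles produce precisely the two stated sequences (in particular, identifying which total degree $PD$ lives in, and checking the connecting homomorphisms are the natural ones), and confirming that the several pages of diagram-chase needed to match ``triple satisfying five equations modulo coboundaries'' with ``$\mathbb H^1$ of the total complex'' go through without a hidden sign issue in the interaction of $\delta$, $\pi$, $[\Lambda_0,-]$ and $F$ (these anticommute/commute in a specific pattern dictated by the commuting square $F\circ[\Lambda_0,-]=\pi\circ F$ from the displayed diagram). There is no conceptual difficulty — compactness of $X$ handles finiteness, Stein-ness of $\mathcal U$ handles covering-independence, and the exact sequences are formal consequences of the long exact sequence in hypercohomology — but the triple-complex sign conventions must be pinned down once and used consistently.
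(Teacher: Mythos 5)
Your proposal is correct, but it takes a genuinely more formal route than the paper does. The paper's proof of this lemma is simply a pointer to Lemma \ref{ll31} in Appendix \ref{appendix2}, where everything is done at the \v{C}ech cocycle level: the maps $\mathbb{H}^0(X,f^*\Theta_Y^\bullet)\to PD_{(X,\Lambda_0)/(Y,\Pi_0)}$ (send $u\mapsto(u,0,0)$), $PD\to\mathbb{H}^1(X,\Theta_X^\bullet)$ (send $(v,t,\lambda)\mapsto(t,\lambda)$), $\mathbb{H}^1(X,\Theta_{X/Y}^\bullet)\to PD$, $PD\to\mathbb{H}^0(X,\mathcal{N}_f^\bullet)$, and the connecting map into $\mathbb{H}^2(X,\Theta_{X/Y}^\bullet)$ are written out explicitly on representatives, exactness is verified by a direct chase, and then covering-independence and finite-dimensionality are deduced from the resulting sequences (five lemma, and squeezing $PD$ between finite-dimensional hypercohomology groups) — which is the same endgame you propose. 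Your route instead identifies $PD$ with $\mathbb{H}^1$ of the \v{C}ech total complex of $K^\bullet=\mathrm{Cone}(F)[-1]$ and reads off both sequences from the two distinguished triangles, one relating $K^\bullet$, $\Theta_X^\bullet$, $f^*\Theta_Y^\bullet$ and one relating $\Theta_{X/Y}^\bullet[1]$, $\mathrm{Cone}(F)$, $\mathcal{N}_f^\bullet$. That buys the sequences formally once the identification $PD\cong\mathbb{H}^1(X,K^\bullet)$ and the splitting of the four-term sequence are in place, at the cost of the sign/shift/indexing bookkeeping you flag; the paper's hands-on version keeps every map visible on cocycles, which is what is actually used later (e.g.\ in the completeness and existence proofs). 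Both establish the statement.

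One small correction to your sketch of the second sequence: the leading $0$ is not explained by injectivity of $\Theta_{X/Y}^\bullet\hookrightarrow\Theta_X^\bullet$; in the triangle formulation it comes from the vanishing of $\mathbb{H}^{-1}(X,\mathcal{N}_f^\bullet)$ (the complex $\mathcal{N}_f^\bullet$ is concentrated in non-negative degrees), and in the paper's explicit version it is the directly verified injectivity of the map $\mathbb{H}^1(X,\Theta_{X/Y}^\bullet)\to PD_{(X,\Lambda_0)/(Y,\Pi_0)}$, $(\psi,\phi)\mapsto(0,J\psi,J\phi)$. This is a bookkeeping point only and does not affect the validity of your argument.
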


\begin{proof}
See Lemma \ref{ll31}.
\end{proof}

\begin{definition}
A holomorphic Poisson map $f:(X,\Lambda_0)\to (Y,\Pi_0)$ between two holomorphic Poisson manifolds is called non-degenerate if the rank of the Jacobian matrix at some point $x\in X$ is equal to dimension $X$. In this case we have the exact sequence $0\to \Theta_X^\bullet \xrightarrow{F} f^*\Theta_Y\xrightarrow{P} \mathcal{N}_f\to 0$.
\end{definition}

\begin{corollary}
If $f$ is non-degenerate, we have $PD_{(X,\Lambda_0)/(Y,\Pi_0)}\cong \mathbb{H}^0(X, \mathcal{N}_f^\bullet)$. 
\end{corollary}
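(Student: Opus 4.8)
The plan is to read the isomorphism off the second exact sequence supplied by the Lemma,
\[
0\to \mathbb{H}^1(X,\Theta_{X/Y}^\bullet)\to PD_{(X,\Lambda_0)/(Y,\Pi_0)}\to \mathbb{H}^0(X, \mathcal{N}_f^\bullet)\to \mathbb{H}^2(X, \Theta_{X/Y}^\bullet),
\]
so that the whole statement reduces to the single point that $\Theta_{X/Y}^\bullet$ is the zero complex of sheaves as soon as $f$ is non-degenerate. Granting that, the sequence collapses to $0\to PD_{(X,\Lambda_0)/(Y,\Pi_0)}\to \mathbb{H}^0(X,\mathcal{N}_f^\bullet)\to 0$, i.e. the morphism $P$ induces the desired isomorphism.

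So the first step I would carry out is the vanishing $\Theta_{X/Y}^\bullet=0$. By definition non-degeneracy means the Jacobian of $f$ has rank $\dim X$ at some point $x\in X$, which is exactly the statement that $F^0=df\colon\Theta_X\to f^*\Theta_Y$ is injective on the fiber over $x$. Since $df$ is a morphism of locally free $\mathcal{O}_X$-modules, its kernel is a coherent subsheaf of the torsion-free sheaf $\Theta_X$ supported on the proper analytic set where the rank drops, hence is zero; thus $df$ is injective as a morphism of sheaves. In degree $k$ the component of $F$ is $F^k=\wedge^k(df)\colon \wedge^k\Theta_X\to\wedge^k f^*\Theta_Y$, and the $k$-th exterior power of a morphism of locally free sheaves that is injective on a fiber is again injective on that fiber, hence injective as a sheaf map by the same torsion-freeness argument. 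Therefore $F\colon\Theta_X^\bullet\to f^*\Theta_Y^\bullet$ is injective in every degree, $\Theta_{X/Y}^\bullet=\ker(F)=0$, and we recover the short exact sequence $0\to\Theta_X^\bullet\xrightarrow{F}f^*\Theta_Y^\bullet\xrightarrow{P}\mathcal{N}_f^\bullet\to 0$ noted in the Definition. Consequently $\mathbb{H}^1(X,\Theta_{X/Y}^\bullet)=\mathbb{H}^2(X,\Theta_{X/Y}^\bullet)=0$, and the conclusion follows as above.

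If one prefers a proof that does not invoke the second sequence of the Lemma, I would instead argue directly on the \v{C}ech–Dolbeault representatives defining $PD_{(X,\Lambda_0)/(Y,\Pi_0)}$. A triple $(\tau,\rho,\lambda)$ satisfies $-\delta\tau=F\rho$ and $\pi(\tau)=F\lambda$; when $F$ is injective this determines $\rho$ and $\lambda$ uniquely from $\tau$, and the coboundary relation $(Fg,-\delta g,[\Lambda_0,g])$ becomes equivalent, under $F^{-1}$, to the relation $\tau\sim\tau+Fg$. Applying $P$ then identifies the data with a $0$-cochain with values in $\mathcal{N}_f^\bullet$ that is closed for both the \v{C}ech differential and $\pi$, modulo the appropriate coboundaries — that is, with an element of $\mathbb{H}^0(X,\mathcal{N}_f^\bullet)$ — and one checks the assignment is a well-defined bijection on equivalence classes. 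The only mildly fiddly part of this second route is keeping the two differentials ($\delta$ and $\pi$, equivalently $\bar\partial$ and $[\Lambda_0,-]$) in order simultaneously when verifying well-definedness; but once $\Theta_{X/Y}^\bullet=0$ is in hand no genuine obstacle remains. Either way, the substantive content of the corollary is entirely concentrated in the vanishing of $\Theta_{X/Y}^\bullet$, which is immediate from the definition of non-degeneracy.
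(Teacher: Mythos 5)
Your argument is correct and is essentially the paper's own: the corollary is deduced from the second exact sequence of the preceding Lemma (item (3) of Lemma \ref{ll31} in Appendix \ref{appendix2}, whose proof simply notes that non-degeneracy gives $\Theta_{X/Y}^\bullet=0$, collapsing that sequence to the stated isomorphism). Your extra verification that $F$ remains injective in all exterior degrees, and your alternative direct \v{C}ech-level identification, only spell out details the paper leaves implicit.
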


\subsection{Infinitesimal deformations}\label{jj1}\

Let $(\mathcal{X},\Lambda, \Phi,p,M)$ be a family of holomorphic Poisson maps into a compact holomorphic Poisson manifold $(Y,\Pi_0)$. Let $0$ be a point on $M$, $X=X_0$, and let $f=\Phi_0:p^{-1}(0)=(X,\Lambda_0)\to (Y,\Pi_0)$ be the holomorphic Poisson map induced by $\Phi$. We will define a linear map $\tau:T_0(M)\to PD_{(X,\Lambda_0)/(Y,\Pi_0)}$ of the family at $0$. We may assume that
\begin{enumerate}
\item $M$ is an open set in $\mathbb{C}^r$ with coordinates $t=(t_1,...,t_r)$ and $0=(0,...,0)$.
\item $\mathcal{X}$ is covered by a finite number of Stein coordinate neighborhoods $\mathcal{U}_i$. Each $\mathcal{U}_i$ is covered by a system of coordinates $(z_i,t)=(z_i^1,...,z_i^n,t_1,...,t_r)$ such that $p(z_i,t)=t$.
\item $Y$ is covered by a system of a finite number of Stein coordinate neighborhoods $V_i$ with a system of coordinates $w_i=(w_i^1,...,w_i^m)$, and $\Phi(\mathcal{U}_i) \subset \mathcal{V}_i=V_i\times M$.
\item $\Phi$ is given by $w_i=\Phi_i(z_i,t)$, and let $f_i(z_i)=\Phi_i(z_i,0)$.
\item $(z_i,t)\in \mathcal{U}_i$ coincides with $(z_j,t)\in \mathcal{U}_j$ if and only if $z_i=\phi_{ij}(z_j,t)$
\item $w_i\in V_i$ coincides with $w_j\in V_j$ if and only if $w_i=\psi_{ij}(w_j)$.
\item $\Lambda$ is of the form $\Lambda=\sum_{\alpha,\beta=1}^n \Lambda_{\alpha\beta}^i(z_i,t)\frac{\partial}{\partial z_i^\alpha}\wedge \frac{\partial}{\partial z_i^\beta}$ with $\Lambda_{\alpha\beta}^i(z_i,t)=-\Lambda_{\beta\alpha}^i(z_i,t)$ on $\mathcal{U}_i$. Let $\Lambda_{\alpha\beta}^i(z_i)=\Lambda_{\alpha\beta}^i(z_i,0)$. Then $\Lambda_0$ is of the form $\Lambda_0=\sum_{\alpha,\beta=1}^n \Lambda_{\alpha\beta}^i(z_i)\frac{\partial}{\partial z_i^\alpha}\wedge \frac{\partial}{\partial z_i^\beta}$ on $X\cap \mathcal{U}_i$. 
\item $\Pi_0$ is of the form $\Pi_0=\sum   \Pi_{\alpha\beta}^i(w_i)\frac{\partial}{\partial w_i^\alpha}\wedge \frac{\partial}{\partial w_i^\beta}$ with $\Pi_{\alpha\beta}^i(w_i)=-\Pi_{\beta\alpha}^i(w_i)$ on $V_i$.
\end{enumerate}
Then we have
\begin{align}
\Phi_i(\phi_{ij}(z_j,t),t)&=\psi_{ij}(\Phi_j(z_j,t))\label{bb4}\\
\phi_{ij}(\phi_{jk}(z_k,t))&=\phi_{ik}(z_k,t)  \label{bb5}\\
[\Lambda,\Lambda]&=0 \label{bb6} \\
\sum_{\alpha\beta=1}^n \Lambda_{\alpha\beta}^j(z_j,t)\frac{\partial \phi_{ij}^p}{\partial z_j^\alpha}\frac{\partial \phi_{ij}^q}{\partial z_j^\beta}&= \Lambda_{pq}^i(\phi_{ij}(z_j,t),t) \label{bb7}\\
\sum_{\alpha,\beta=1}^n\Lambda_{\alpha\beta}^i(z_i,t)\frac{\partial \Phi_i^p}{\partial z_i^\alpha}\frac{\partial \Phi_i^q}{\partial z_i^\beta}&=\Pi_{pq}^i(\Phi_i(z_i,t)) \label{bb8}
\end{align}

We let $U_i=X\cap \mathcal{U}_i$ and denote by $\mathcal{U}$ the covering $\{U_i\}$ of $X$. For any element $\frac{\partial}{\partial t}\in T_0(M)$, let
\begin{align*}
-\tau_i&=\sum_{\beta=1}^m \frac{\partial \Phi_i^\beta}{\partial t}|_{t=0}\frac{\partial}{\partial w_i^\beta}\in \Gamma(U_i, f^*\Theta_Y)\\
\rho_{ij}&=\sum_{\alpha=1}^n \frac{\partial \phi_{ij}^\alpha}{\partial t}|_{t=0}\frac{\partial}{\partial z_i^\alpha}\in \Gamma(U_{ij},\Theta_X)\\
\Lambda_i'&=\sum_{\alpha,\beta=1}^n \frac{\partial \Lambda_{\alpha\beta}^i(z_i,t)}{\partial t}|_{t=0}\frac{\partial}{\partial z_i^\alpha}\wedge \frac{\partial}{\partial z_i^\beta}\in \Gamma(U_i, \wedge^2 \Theta_X)
\end{align*}

Then we claim that
\begin{align}
&\tau_i-\tau_j=F\rho_{ij}\label{bb1}\\
&\pi(\tau_i)=F \Lambda_i' \label{bb2}\\
\rho_{jk}-\rho_{ik}+\rho_{ij}=0,\,\,\,\,\,&\Lambda_j'-\Lambda_i'+[\Lambda_0, \rho_{ij}]=0,\,\,\,\,\,[\Lambda_0, \Lambda_i']=0. \label{bb3}
\end{align}

By taking the derivative of $(\ref{bb4})$ with respect to $t$ and setting $t=0$, we get $(\ref{bb1})$ (for the detail, see \cite{Hor73} p.375). By taking the derivative of $(\ref{bb5}),(\ref{bb6}),(\ref{bb7})$ with respect to $t$ and setting $t=0$, we get $(\ref{bb3})$ (for the detail, see \cite{Kim15}). It remains to show $(\ref{bb2})$. It is sufficient to show that $\pi(\tau_i)(w_i^p,w_i^q)=F\Lambda_i'(w_i^p,w_i^q)$ for any $p,q$. Indeed, we note that by taking the derivative of $(\ref{bb8})$ with respect to $t$ and setting $t=0$, we have
\begin{align}\label{bb10}
\sum_{\alpha,\beta=1}^n \frac{\partial \Lambda_{\alpha\beta}^i}{\partial t}|_{t=0}\frac{\partial f_i^p}{\partial z_i^\alpha}\frac{\partial f_i^q}{\partial z_i^\beta}+\sum_{\alpha,\beta=1}^n\Lambda_{\alpha\beta}^i(z_i)\frac{\partial}{\partial z_i^\alpha}\left(\frac{\partial \Phi_i^p}{\partial t}|_{t=0}\right)\frac{\partial f_i^q}{\partial z_i^\beta}+\sum_{\alpha,\beta=1}^n\Lambda_{\alpha\beta}^i(z_i)\frac{\partial f_i^p}{\partial z_i^\alpha}\frac{\partial}{\partial z_i^\beta}\left(\frac{\partial \Phi_i^q}{\partial t}|_{t=0}\right)=\sum_{\rho=1}^m \frac{\partial \Pi_{pq}^i}{\partial w_i^\rho}(f_i)\frac{\partial \Phi_i^\rho}{\partial t}|_{t=0}
\end{align}
Let us consider each side of $(\ref{bb2})$.
\begin{align}\label{bb11}
F(\sum_{\alpha,\beta=1}^n \frac{\partial \Lambda_{\alpha\beta}^i}{\partial t}|_{t=0}\frac{\partial}{\partial z_i^\alpha}\wedge \frac{\partial}{\partial z_i^\beta})(w_i^p,w_i^q)=2\sum_{\alpha,\beta=1}^n \frac{\partial \Lambda_{\alpha\beta}^i}{\partial t}|_{t=0}\frac{\partial f_i^p}{\partial z_i^\alpha} \frac{\partial f_i^q}{\partial z_i^\beta}
\end{align}
On the other hand,
\begin{align}\label{bb12}
\pi(\tau_i)(w_i^p,w_i^q)&=\Lambda_0(\tau_i(w_i^p), f_i^q)-\Lambda_0(\tau_i(w_i^q),f_i^p)-\tau_i(\Pi_0(w_i^p,w_i^q))=-\Lambda_0(\frac{\partial \Phi_i^p}{\partial t}|_{t=0},f_i^q)+\Lambda_0(\frac{\partial \Phi_i^q}{\partial t}|_{t=0},f_i^p)-2\tau_i(\Pi_{pq}^i)\\
&=-2\sum_{\alpha,\beta=1}^n\Lambda_{\alpha\beta}^i(z_i)\frac{\partial }{\partial z_i^\alpha}\left( \frac{\partial \Phi_i^p}{\partial t}|_{t=0}\right)\frac{\partial f_i^q}{\partial z_i^\beta}+2\sum_{\alpha,\beta=1}^n \Lambda_{\alpha\beta}^i(z_i)\frac{\partial}{\partial z_i^\alpha}\left( \frac{\partial \Phi_i^q}{\partial t}|_{t=0}\right)\frac{\partial f_i^p}{\partial z_i^\beta}+2\sum_{\rho=1}^m\frac{\partial \Phi_i^\rho}{\partial t}|_{t=0} \frac{\partial \Pi_{pq}^i}{\partial w_i^\rho}(f_i) \notag
\end{align}
Then $(\ref{bb2})$ follows from $(\ref{bb10}),(\ref{bb11})$, and $(\ref{bb12})$.

Hence from $(\ref{bb1}),(\ref{bb2})$, and $(\ref{bb3})$, the collection $(\{\tau_i\},\{\rho_{ij}\},\{\Lambda_i'\})$ defines an element in $PD_{(X,\Lambda_0)/(Y,\Pi_0)}$ and we have a linear map
\begin{align*}
\tau:T_0(M)\to PD_{(X,\Lambda_0)/(Y,\Pi_0)}
\end{align*}
We call $\tau$ the characteristic map of the family at $0$.

\begin{proposition}
The linear map $\tau:T_0(M)\to PD_{(X,\Lambda_0)/(Y,\Pi_0)}$ is independent of the choice of coverings and systems of coordinates.

\end{proposition}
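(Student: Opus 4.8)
The plan is to follow the standard argument for well-definedness of the Kodaira--Spencer map (\cite{Hor73}, \cite{Kod05}), enriched by the bivector computation of \cite{Kim15}, and to split the statement into two independent claims: that for a fixed Stein covering $\mathcal{U}$ the class $\tau(\partial/\partial t)\in PD_{(X,\Lambda_0)/(Y,\Pi_0)}$ is independent of the coordinate systems $(z_i,t)$ on the $\mathcal{U}_i$ and $w_i$ on the $V_i$, and that it is independent of $\mathcal{U}$.

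First I would dispose of the dependence on the $w_i$: the section $-\tau_i=\sum_\beta(\partial\Phi_i^\beta/\partial t)|_{t=0}\,\partial/\partial w_i^\beta$ of $f^*\Theta_Y$ over $U_i$ is intrinsically the derivative at $t=0$ of the holomorphic arc $t\mapsto \mathrm{pr}_Y(\Phi(z_i,t))$ in $Y$, hence does not see a change of the $w_i$; so I may fix the $w_i$ and vary only the $(z_i,t)$. Let $(z_i^*,t)$ be a second admissible system on $\mathcal{U}_i$, related to the first by $z_i=z_i(z_i^*,t)$ (holomorphic, preserving the $t$-coordinate), and set
\begin{align*}
g_i:=\sum_{\alpha=1}^n\frac{\partial z_i^\alpha(z_i^*,t)}{\partial t}\Big|_{t=0}\frac{\partial}{\partial z_i^\alpha}\in\Gamma(U_i,\Theta_X).
\end{align*}
Then I would differentiate at $t=0$ the three identities expressing the same objects in the two coordinate systems — $\Phi_i^*(z_i^*,t)=\Phi_i(z_i(z_i^*,t),t)$ for the map, $\phi_{ij}^*(z_j^*,t)=z_i^*(\phi_{ij}(z_j(z_j^*,t),t),t)$ for the patchings, and the transformation law $\Lambda_{pq}^{i*}(z_i^*,t)=\sum_{\alpha,\beta}\Lambda_{\alpha\beta}^i(z_i(z_i^*,t),t)\,\frac{\partial z_i^{*p}}{\partial z_i^\alpha}\frac{\partial z_i^{*q}}{\partial z_i^\beta}$ for the bivector — and read off, via the chain rule, that the triple $(\{\tau_i^*\},\{\rho_{ij}^*\},\{\Lambda_i'^*\})$ attached to the starred system differs from $(\{\tau_i\},\{\rho_{ij}\},\{\Lambda_i'\})$ by the coboundary $(Fg,-\delta g,[\Lambda_0,g])$ of $g=\{g_i\}\in C^0(\mathcal{U},\Theta_X)$ (with $g$ replaced by $-g$ if needed to fix signs). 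The first two components are exactly the computation in \cite{Hor73} p.~375 and \cite{Kod05}; the third — the identification of the $t$-derivative of the bivector transformation law with the Schouten bracket $[\Lambda_0,g_i]$ — is the local identity underlying well-definedness of the Poisson Kodaira--Spencer map in \cite{Kim15}. Hence the two systems give the same class in $PD_{(X,\Lambda_0)/(Y,\Pi_0)}$.

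For independence of the covering, I would pass to a common refinement $\mathcal{W}$ of two given Stein coverings $\mathcal{U}$ and $\mathcal{U}'$: the cocycle built from $\mathcal{U}$ restricts on $\mathcal{W}$ to the cocycle built from $\mathcal{W}$ with the inherited coordinates, and likewise for $\mathcal{U}'$; so by the Lemma asserting that $PD_{(X,\Lambda_0)/(Y,\Pi_0)}$ is computed independently of the Stein covering (the restriction maps being isomorphisms) the three classes coincide. Together with the previous paragraph this proves the proposition.

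The only genuine work is the chain-rule bookkeeping in the first part, and the one point there that goes beyond Horikawa's setting is the third, Poisson, component: I expect the main obstacle to be keeping the signs in $(Fg,-\delta g,[\Lambda_0,g])$ consistent across all three components while checking that differentiating the bivector transformation law at $t=0$ reproduces $[\Lambda_0,g_i]$ on the nose. Since this is the same local identity that makes the Poisson Kodaira--Spencer map well defined, I would quote it from \cite{Kim15} rather than redo it.
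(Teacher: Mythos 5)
Your proposal is correct and follows essentially the same route as the paper: one differentiates at $t=0$ the identities relating the two coordinate systems and identifies the difference of the two representatives with the coboundary of the $0$-cochain $\theta=\{\theta_i\}$, $\theta_i=\sum_\sigma \frac{\partial h_i^\sigma}{\partial t}|_{t=0}\frac{\partial}{\partial z_i^\sigma}$, namely $\tau_i-\tau_i'=F(\theta_i)$, $\rho_{ij}-\rho_{ij}'=\theta_j-\theta_i$, $\lambda_i-\lambda_i'=-[\Lambda_0,\theta_i]$, quoting \cite{Hor73} p.~376 for the first two components and \cite{Kim15} for the Poisson one. The only (harmless) differences are that you dispose of the change of the $w_i$ separately via the intrinsic velocity-vector interpretation of $\tau_i$ and spell out the covering-independence step through a common refinement, whereas the paper folds the $w_i$-change into the single identity $\Phi_i(z_i,t)=g_i(\Phi_i'(h_i(z_i,t),t))$ and leaves the covering statement to the earlier lemma that $PD_{(X,\Lambda_0)/(Y,\Pi_0)}$ does not depend on the Stein covering.
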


\begin{proof}
It is sufficient to show that given another coordinate $(z_i',t)$ on $\mathcal{U}_i$ and $(w_i')$ on $V_i$, the associated element $(\{-\tau'_i \},\{\rho'_{ij}\},\{\lambda'_i\})$ defines the same class given by $(\{-\tau_i\},\{\rho_{ij}\},\{\lambda_i\})$ in $PD_{(X,\Lambda_0)/(Y,\Pi_0)}$ defined as above. Let $(z_i',t)$ coincide with $(z_i,t)$ if and only if $z_i'=h_i(z_i,t)$, and $(w_i')$ coincides with $(w_i)$ if and only if $w_i=g_i(w_i')$. Let $\Phi$ be given by $w_i'=\Phi_i'(z_i',t)$. Then we have $\Phi_i(z_i,t)=g_i(\Phi_i'(h_i(z_i,t),t))$. Let $\theta_i=\sum_{\sigma=1}^n \frac{\partial h_i^\sigma}{\partial t}|_{t=0}\frac{\partial}{\partial z_i^\sigma}$. Then $\theta_j-\theta_i=\rho_{ij}-\rho_{ij}'$, $\lambda_i-\lambda_i'=-[\Lambda_0, \theta_i]$ (see \cite{Kim15}), and $\tau_i-\tau_i'=F(\theta_i)$ (see \cite{Hor73} p.376). This completes the proof.
\end{proof}

\begin{remark}
In the case $(\mathcal{X},\Lambda)=(X\times M,\Lambda_0)$. Since $\rho_{ij}=0,\Lambda_i'=0$, $\{\tau_i\}$ defines an element in $\mathbb{H}^0(X, f^*\Theta_Y^\bullet)$ so that we have a linear map $\tau:T_0(M)\to \mathbb{H}^0(X,f^*\Theta_Y^\bullet)$.
\end{remark}

\begin{remark}
We say that a family $(\mathcal{X},\Lambda, \Phi, p, M)$ of holomorphic Poisson maps into $(Y,\Pi_0)$ is  non-degenerate  if $\Phi_t:(X_t,\Lambda_t)\to (Y,\Pi_0)$ is non-degenerate for any $t\in M$. Then $\{P\tau_i\}$ defines an element in $\mathbb{H}^0(X,\mathcal{N}_f^\bullet )$ and we have the characteristic map $\tau:T_0(M)\to \mathbb{H}^0(X, \mathcal{N}_f^\bullet)\cong PD_{(X,\Lambda_0)/(Y,\Pi_0)}$.
\end{remark}

\begin{remark}
Let $(\mathcal{X},\Lambda, \Phi, p, M)$ be a family of non-degenerate holomorphic Poisson maps into $(Y,\Pi_0), 0\in M$, and $(X,\Lambda_0)=(X_0,\Lambda_0)$, then the diagram 
\[\xymatrix{
T_0(M) \ar[rr]^\tau \ar[dr]_\sigma & & \mathbb{H}^0(X, N_f^\bullet) \ar[dl]^\delta  \\
& \mathbb{H}^1(X, \Theta_X^\bullet)
}\]
is commutative, where $\sigma$ is the Poisson Kodaira-Spencer map of the Poisson analytic family $(\mathcal{X},\Lambda, p, M)$ at $0$ $($see \cite{Kim15}$)$ and $\delta$ is the coboundary map induced from $0\to \Theta_X^\bullet \xrightarrow{F} f^* \Theta_Y^\bullet \xrightarrow{P} \mathcal{N}_f^\bullet \to 0$.
\end{remark}

\subsection{Theorem of completeness}
\begin{theorem}[Theorem of completeness]\label{ii1}
Let $(\mathcal{X},\Lambda, \Phi, p,M)$ be a family of holomorphic Poisson maps into $(Y,\Pi_0), 0\in M, (X,\Lambda_0)=(X_0,\Lambda_0)$, and $f=\Phi_0:(X,\Lambda_0)\to (Y,\Pi_0)$. If the characteristic map $\tau:T_0(M)\to PD_{(X,\Lambda_0)/(Y,\Pi_0)}$ is surjective, then the family is complete at $0$.
\end{theorem}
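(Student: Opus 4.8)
The plan is to follow Horikawa's proof of Theorem~\ref{pp3} (completeness for deformations of holomorphic maps in \cite{Hor74}), enriching every cohomological object with the Poisson bivector data, exactly as Kodaira--Spencer's completeness proof was enriched in \cite{Kim15} to the Poisson analytic setting. The key point is that completeness is proved by the ``existence-of-a-map-into-the-base'' argument: given a second family $(\mathcal{X}',\Lambda',\Phi',p',N)$ whose fibre over $0'$ is equivalent to $f$, we must construct $h\colon U\to M$ with $h(0')=0$ and an equivalence of $(\mathcal{X}'|_U,\Lambda'|_U,\Phi'|_U)$ with the family induced from $(\mathcal{X},\Lambda,\Phi,p,M)$ by $h$. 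One first reduces, via the surjectivity of $\tau$, to the case where $\tau$ is bijective (replace $M$ by a submanifold through $0$ on which $\tau$ becomes injective; this is legitimate because restricting a complete family to a submanifold transversal to $\ker\tau$ is again complete, by the same argument as in the classical case). So assume $\tau$ is bijective.

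The construction of $h$ and of the equivalence proceeds by the standard power-series / formal-then-convergent method. Choosing adapted Stein coverings $\{\mathcal{U}_i\}$, $\{\mathcal{U}'_i\}$, $\{V_i\}$ as in subsection~\ref{jj1}, one writes the unknown map $h$ and the unknown transition/comparison data as formal power series in the coordinates $(s_1,\dots,s_q)$ of $N$ near $0'$, say $h=\sum_{\mu\geq 1}h_\mu(s)$, together with formal gluing functions $\phi_{ij}(z_j,s)$, a comparison $\Psi_i(z'_i,s)$ between $\mathcal{X}'$ and the pulled-back $\mathcal{X}$, bivector components matching $\Lambda'$ with $h^\ast\Lambda$, and a comparison between $\Phi'$ and $\Phi\circ(\text{induced map})$. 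Solving order by order in $\deg s=\mu$, the obstruction to extending a solution from order $\mu-1$ to order $\mu$ is a $1$-cocycle for the complex computing $PD_{(X,\Lambda_0)/(Y,\Pi_0)}$ — more precisely, the obstruction lives in the group that sits in the exact sequence
\begin{align*}
\mathbb{H}^0(X,\Theta_X^\bullet)\xrightarrow{F}\mathbb{H}^0(X,f^*\Theta_Y^\bullet)\to PD_{(X,\Lambda_0)/(Y,\Pi_0)}\to \mathbb{H}^1(X,\Theta_X^\bullet)\xrightarrow{F}\mathbb{H}^1(X,f^*\Theta_Y^\bullet),
\end{align*}
and bijectivity of $\tau$ means precisely that this obstruction can be absorbed by a suitable choice of the order-$\mu$ term $h_\mu$ of $h$ (the triple $(\{\tau_i\},\{\rho_{ij}\},\{\Lambda'_i\})$ built from $h_\mu$ realizes, via $\tau$, any prescribed class in $PD_{(X,\Lambda_0)/(Y,\Pi_0)}$). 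Thus the formal solution exists at every order; the three compatibility conditions defining $PD$, namely $-\delta\tau=F\rho$, $\pi(\tau)=F\lambda$, $\delta\rho=0$, $\delta\lambda+[\Lambda_0,\rho]=0$, $[\Lambda_0,\lambda]=0$, are exactly what one obtains by differentiating the relations \eqref{bb4}, \eqref{bb5}, \eqref{bb6}, \eqref{bb7}, \eqref{bb8} along $s$, so the induction closes.

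Convergence of the resulting power series is handled by the usual majorant (Cauchy estimate) method: one sets up the recursion so that at each step the solution of minimal norm is chosen (using a splitting of the cochain complexes via harmonic projection for $L=\bar\partial+[\Lambda_0,-]$ on $\Theta_X^\bullet$ and $L_\pi=\bar\partial+\pi$ on $f^*\Theta_Y^\bullet$), and then estimates the new terms by a convergent majorant series of the form $c\sum \frac{(\text{stuff})^\mu}{\mu^2}$, exactly as in \cite{Hor73}, \cite{Hor74} and \cite{Kim15}. I expect the main obstacle to be bookkeeping rather than a new idea: one must simultaneously control the fibrewise gluing data (as in Kodaira--Spencer completeness), the map-comparison data (as in Horikawa's completeness for holomorphic maps), and the bivector-comparison data, and verify that the Poisson compatibility conditions propagate through the induction and are preserved by the norm-minimizing choices — i.e.\ that the harmonic splitting is compatible with the operators $[\Lambda_0,-]$ and $\pi$. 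Once the formal construction and the estimates are in place, $h$ is holomorphic on a neighborhood $U$ of $0'$, and the constructed $\Psi$, $\phi_{ij}$ and bivector matching assemble into the required Poisson equivalence over $U$, proving completeness at $0$.
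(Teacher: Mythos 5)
Your formal-power-series part is essentially the paper's argument: one seeks $t(t')$ and fibrewise comparison maps $g_i(z_i,t')$ satisfying the gluing, map-comparison and bivector-comparison equations, and at each order $\mu$ the error data $(\{\gamma_{i|\mu}\},\{\Gamma_{ij|\mu}\},\{\lambda_{i|\mu}\})$ is shown to define a class in $PD_{(X,\Lambda_0)/(Y,\Pi_0)}$, which surjectivity of $\tau$ lets you write as $\sum_v t_{v|\mu}\,\tau(\partial/\partial t_v)$ up to a coboundary $(Fg,-\delta g,[\Lambda_0,g])$; the coboundary supplies $g_{i|\mu}$ and the coefficients supply $t_\mu$ (equations (\ref{jj30}), (\ref{jj31}), (\ref{jj32})). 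Two small remarks: the paper never reduces to the case $\tau$ bijective (surjectivity is used directly, exactly as in Horikawa), so your preliminary reduction is unnecessary, and the bivector and map comparisons are not independent unknowns but equations imposed on $(g_i,t)$; neither point is a real error.

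The genuine gap is in your convergence step. You propose to choose the order-$\mu$ solutions by a harmonic-projection splitting for $L=\bar\partial+[\Lambda_0,-]$ on $\Theta_X^\bullet$ and $L_\pi=\bar\partial+\pi$ on $f^*\Theta_Y^\bullet$ and then run a majorant estimate. But precisely this harmonic theory is what the paper states is \emph{not} available (see the discussion in subsection \ref{pp40}); its absence is the reason the existence theorems in this paper are only conditional on convergence, and a completeness proof resting on it would inherit the same unresolved analytic input, in particular the compatibility of the splitting with $[\Lambda_0,-]$ and $\pi$ that you yourself flag as the main obstacle. The paper's completeness proof avoids this entirely: it works at the \v{C}ech level and proves an a priori estimate by contradiction --- for any $\sigma=(\gamma,\Gamma,\lambda)$ satisfying the compatibilities (\ref{aa7}) there exist solutions $(g_i,t)$ of (\ref{aa4}) with $\sup_i|g_i|,|t|\le K_{10}\|\sigma\|$, where $K_{10}$ is produced by a normal-families (Montel) argument using only the finite dimensionality of $PD_{(X,\Lambda_0)/(Y,\Pi_0)}$ and the fact that all data are holomorphic on the fixed Stein cover. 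Choosing at each inductive step a solution obeying this bound and comparing with the majorant series $A(t')$ then gives convergence. So to make your proof complete you should replace the harmonic-theoretic norm-minimizing choice by this elementary bounded-solution lemma; as written, that step does not go through with the tools established in the paper.
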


\begin{proof}
We extend the arguments in \cite{Hor73} p.377-382, and \cite{Hor74} p.650  in the context of holomorphic Poisson deformations. We tried to maintain notational consistency with \cite{Hor73} and \cite{Hor74}.

It suffices to prove that for any family $(\mathcal{X}',\Lambda',\Phi', p',M')$ of holomorphic Poisson maps into $(Y,\Pi_0)$, such that $\Phi_{0'}':(X_{0'}',\Lambda_{0'}')\to (Y,\Pi_0)$ is equivalent to $f:(X,\Lambda_0)\to (Y,\Pi_0)$ for some $0'\in M'$, there exist a holomorphic map $t:M'\to M$ with $t(0')=0$ and a holomorphic Poisson map $g:(\mathcal{X}',\Lambda')\to (\mathcal{X}, \Lambda)$ over $t$ which maps $(X_{t'}',\Lambda_{t'}')$ Poisson biholomorphically onto $(X_{t(t')},\Lambda_{t(t')})$ such that the following diagram commutes.
\begin{center}
$\begin{CD}
(\mathcal{X}',\Lambda')@>g>>(\mathcal{X},\Lambda)\\
@Vp'VV @VVpV\\
M'@>t>> M
\end{CD}$
\end{center}

We keep the notations in subsection \ref{jj1} for $(\mathcal{X},\Lambda,\Phi,p,M)$. By putting $``'"$ we represent something for $(\mathcal{X}',\Lambda',\Phi',p',M')$. 
\begin{remark}\label{qq30}
Since the problem is local, we may assume that $M=\{t\in \mathbb{C}^r||t|<1\},M'=\{t'\in \mathbb{C}^{r'}||t'|<1\},\mathcal{U}_i=\{(z_i,t)\in \mathbb{C}^n\times M||z_i|<1\},\mathcal{U}_i'=\{(z_i',t')\in \mathbb{C}^n\times M'||z_i'|<1\},U_i=\{z_i\in \mathbb{C}^n||z_i|<1\}$ and $X=\bigcup_i U_i^\delta$, where $U_i^\delta=\{z_i\in U_i||z_i|<1-\delta \} $. Since $\Phi_{0'}'$ is equivalent to $\Phi_0$, we may assume that $U_i'=U_i,z_i'=z_i$ on $U_i$, $\Phi_i'(z_i,0)=\Phi_i(z_i,0)=f_i(z_i),\phi_{ij}'(z_j,0)=\phi_{ij}(z_j,0)=b_{ij}(z_j)$, and $ \Lambda_0=\Lambda_{0'}'$.
\end{remark}

Let $M_\epsilon'=\{t'\in M'||t'|<\epsilon\}$ for a sufficiently small number $\epsilon >0$. We will construct holomorphic maps $t:M_\epsilon'\to M$ and $g_i:U_i\times M_\epsilon'\to\mathbb{C}^n$ which satisfy the following conditions:
\begin{align}
g_i(z_i,0)&=z_i,\label{jj2}\\
t(0)&=0, \label{jj3}\\
g_i(\phi_{ij}',t')&=\phi_{ij}(g_j,t(t')), \label{jj4}\\
\Phi_i(g_i,t(t'))&=\Phi_i' \label{jj5}\\
\sum_{\alpha,\beta=1}^n \Lambda_{\alpha\beta}'^{i}(z_i,t')\frac{\partial g_i^p}{\partial z_i^\alpha}\frac{\partial g_i^q}{\partial z_i^\beta}&=\Lambda_{pq}^i(g_i,t(t')) \label{jj6}
\end{align}

\subsubsection{Existence of formal solutions}\

\begin{notation}
Let $P(s)=P(s_1,...,s_r),Q(s)=Q(s_1,...,s_r)$ be power series in $s$ with coefficients in some module. We write $P(s)=P_0(s)+P_1(s)+\cdots+P_\mu(s)+\cdots$ where $P_\mu(s)$ is a homogenous polynomial in $s$ of degree $\mu$. We indicate $P^\mu(s)$ the polynomial $P^\mu(s)=P_0(s)+P_1(s)+\cdots+P_\mu(s)$. We write $P(s)\equiv_\mu 0$ if $P^\mu(s)=0$, and $P(s)\equiv_\mu Q(s)$ if $P(s)-Q(s)\equiv_\mu 0$.
\end{notation}

We note that $(\ref{jj4}),(\ref{jj5})$, and $(\ref{jj6})$ are equivalent to the following system of congruences:
\begin{align}
g_i^\mu(\phi_{ij}',t')&\equiv_\mu\phi_{ij}(g_j^\mu, t^\mu),\,\,\,\,\,\,\,\mu=0,1,2,\cdots \label{jj7}\\
\Phi_i(g_i^\mu, t^\mu)&\equiv_\mu \Phi_i',\,\,\,\,\,\,\,\,\,\,\,\,\,\,\,\,\,\,\,\,\,\,\,\,\,\,\,\mu=0,1,2,\cdots  \label{jj8}\\
\sum_{\alpha,\beta=1}^n \Lambda_{\alpha\beta}'^{i}(z_i,t')\frac{\partial g_i^{\mu p}}{\partial z_i^\alpha}\frac{\partial g_i^{\mu q}}{\partial z_i^\beta}&\equiv_\mu \Lambda_{pq}^i(g_i^\mu,t^\mu)\,\,\,\,\,\,\,\,\mu=0,1,2,\cdots \label{jj9}
\end{align}
We will construct $g_i^\mu(z_i,t')=g_{0|i}(z_i,t')+g_{i|1}(z_i,t')+\cdots +g_{i|\mu}(z_i,t')$ and $t^\mu(t')=t_0(t')+t_1(t')+\cdots +t_\mu(t')$ satisfying $(\ref{jj7})_\mu,(\ref{jj8})_\mu$ and $(\ref{jj9})_\mu$ by induction on $\mu$. By setting $g_{0|i}(z_i,t)=z_i$ and $t_0(t')=0$, it is clear that the induction holds for $\mu=0$.

Suppose that $t^{\mu-1}$ and $g_i^{\mu-1}$ satisfying $(\ref{jj7})_{\mu-1},(\ref{jj8})_{\mu-1}$, and $(\ref{jj9})_{\mu-1}$ are already determined. We define $\Gamma_{ij|\mu},\lambda_{i|\mu}$, and $\gamma_{i|\mu}$, where $\Gamma_{ij|\mu}$ is a homogenous polynomial of degree $\mu$ with coefficients in $\Gamma(U_{ij},\Theta_X)$, $\lambda_{i|\mu}$ is a homogeneous polynomial of degree $\mu$ with coefficients in $\Gamma(U_i, \wedge^2 \Theta_X)$, and $\gamma_{i|\mu}$ is a homogenous polynomial of degree $\mu$ with coefficients in $\Gamma(U_i, f^*\Theta_Y)$ by the following congruences
\begin{align}
\Gamma_{ij|\mu}&\equiv_{\mu} \sum_{\alpha=1}^n( g_i^{\mu-1}(\phi_{ij}',t')-\phi_{ij}(g_j^{\mu-1},t^{\mu-1})\frac{\partial}{\partial z_i^\alpha}\\
\lambda_{i|\mu}&\equiv_\mu \sum_{p,q=1}^n \lambda_{i|\mu}^{p,q} \frac{\partial}{\partial z_i^p}\wedge \frac{\partial}{\partial z_i^q}\\
&\equiv_\mu \sum_{p,q=1}^n      \left(-\Lambda_{pq}^i(g_i^{\mu-1},t^{\mu-1})+\sum_{\alpha,\beta=1}^n    \Lambda'^i_{\alpha\beta}(z_i,t')\frac{\partial g_i^{(\mu-1) p}}{\partial z_i^\alpha} \frac{\partial g_i^{(\mu-1)q}}{\partial z_i^\beta} \right)\frac{\partial}{\partial z_i^p}\wedge \frac{ \partial}{\partial z_i^q} \notag     \\
-\gamma_{i|\mu}&\equiv_\mu \sum_{\rho=1}^m (\Phi '^{\rho}_i -\Phi_i^\rho(g_i^{\mu-1},t^{\mu-1}))\frac{\partial}{\partial w_i^\rho}
\end{align} 

\begin{lemma}
We have
\begin{align}
\Gamma_{jk|\mu}-\Gamma_{ik|\mu}+\Gamma_{ij|\mu}&=0 \label{jj10}\\
[\Lambda_0, \lambda_{i|\mu}]&=0 \label{jj11}\\
\lambda_{j|\mu}-\lambda_{i|\mu}+[\Lambda_0, \Gamma_{ij|\mu}]&=0 \label{jj12}\\
F \Gamma_{ij|\mu}&=\gamma_{i|\mu}-\gamma_{j|\mu} \label{jj13}\\
F\lambda_{i|\mu}&=\pi(\gamma_{i|\mu}) \label{jj14}
\end{align}
\end{lemma}
\begin{proof}
For $(\ref{jj10}),(\ref{jj11})$, and $(\ref{jj12})$, see \cite{Kim15}. For $(\ref{jj13})$, see \cite{Hor73} p.378. We show $(\ref{jj14})$. It is sufficient to show that $F\lambda_{i|\mu}(w_i^r,w_i^s)=\pi(\gamma_{i|\mu})(w_i^r,w_i^s)$ for any $r,s$. We note that
\begin{align}
\sum_{\alpha,\beta=1}^n \Lambda_{\alpha\beta}^i(z_i,t) \frac{\partial \Phi_i^p}{\partial z_i^\alpha}\frac{\partial \Phi_i^q}{\partial z_i^\beta}&=\Pi_{pq}^i(\Phi_i(z_i,t))\\
\sum_{\alpha,\beta=1}^n \Lambda'^i_{\alpha\beta}(z_i,t') \frac{\partial \Phi_i'^p}{\partial z_i'^\alpha}\frac{\partial \Phi_i'^q}{\partial z_i'^\beta}&=\Pi_{pq}^i(\Phi_i'(z_i,t'))
\end{align}
Then we have
\begin{align*}
&F\lambda_{i|\mu}(w_i^r,w_i^s)\\
&=\lambda_{i|\mu}(f_i^r,f_i^s)=\sum_{p,q=1}^n2\left(-\Lambda_{pq}^i(g_i^{\mu-1},t^{\mu-1})+\sum_{\alpha,\beta=1}^n    \Lambda'^i_{\alpha\beta}(z_i,t')\frac{\partial g_i^{(\mu-1) p}}{\partial z_i^\alpha} \frac{\partial g_i^{(\mu-1)q}}{\partial z_i^\beta} \right)\frac{\partial f_i^r}{\partial z_i^p} \frac{ \partial f_i^s}{\partial z_i^q}\\
&\equiv_\mu \sum_{p,q=1}^n 2\left(-\Lambda_{pq}^i(g_i^{\mu-1},t^{\mu-1})+\sum_{\alpha,\beta=1}^n    \Lambda'^i_{\alpha\beta}(z_i,t')\frac{\partial g_i^{(\mu-1) p}}{\partial z_i^\alpha} \frac{\partial g_i^{(\mu-1)q}}{\partial z_i^\beta} \right)\frac{\partial \Phi_i^r}{\partial z_i^p}(g_i^{\mu-1},t^{\mu-1}) \frac{ \partial \Phi_i^s}{\partial z_i^q}(g_i^{\mu-1},t^{\mu-1})\\
&\equiv_\mu -2\Pi_{rs}^i(\Phi_i(g_i^{\mu-1},t^{\mu-1}))+2\sum_{\alpha,\beta=1}^n \Lambda_{\alpha\beta}'^i(z_i,t')\frac{\partial \Phi_i^r(g_i^{\mu-1},t^{\mu-1})}{\partial z_i^\alpha} \frac{\partial \Phi_i^s(g_i^{\mu-1},t^{\mu-1})}{\partial z_i^\beta}\\
&\equiv_\mu -2 \Pi_{rs}^i(\Phi_i'+\gamma_{i|\mu})+2\sum_{\alpha,\beta=1}^n \Lambda_{\alpha\beta}'^i(z_i,t')\frac{\partial(\Phi'^r_i+\gamma_{i|\mu}^r)}{\partial z_i^\alpha}\frac{\partial (\Phi'^s_i+\gamma_{i|\mu}^s)}{\partial z_i^\beta}\\
&\equiv_\mu -2\Pi_{rs}^i(\Phi_i')-2\sum_{\alpha=1}^m \gamma_{i|\mu}^\alpha\frac{\partial \Pi_{pq}^i}{\partial w_i^\alpha}(\Phi_i')+2\sum_{\alpha,\beta=1}^n \Lambda'^i_{\alpha\beta}(z_i,t')\frac{\partial \Phi'^r_i}{\partial z_i^\alpha}\frac{\partial \Phi'^s_i}{\partial z_i^\beta}+2\sum_{\alpha,\beta=1}^n \Lambda_{\alpha\beta}^i(z_i)\frac{\partial \gamma_{i|\mu}^r}{\partial z_i^\alpha}\frac{\partial \Phi'^s_i}{\partial z_i^\beta}+2\sum_{\alpha,\beta=1}^n\Lambda_{\alpha\beta}^i(z_i) \frac{\partial \Phi'^r_i}{\partial z_i^\alpha}\frac{\partial \gamma_{i|\mu}^s}{\partial z_i^\beta}\\
&\equiv_\mu -2\sum_{\alpha=1}^m \gamma_{i|\mu}^\alpha\frac{\partial \Pi_{pq}^i}{\partial w_i^\alpha}(f_i)+2\sum_{\alpha,\beta=1}^n \Lambda_{\alpha\beta}^i(z_i)\frac{\partial \gamma_{i|\mu}^r}{\partial z_i^\alpha}\frac{\partial f_i^s}{\partial z_i^\beta}+2\sum_{\alpha,\beta=1}^n\Lambda_{\alpha\beta}^i(z_i) \frac{\partial f_i^r}{\partial z_i^\alpha}\frac{\partial \gamma_{i|\mu}^s}{\partial z_i^\beta}\\
&=\Lambda_0(\gamma_{i|\mu}(w_i^r),f_i^s)-\Lambda_0(\gamma_{i|\mu}(w_i^s),f_i^r)-\gamma_{i|\mu}(\Pi_0(w_i^r,w_i^s))\\
&=\pi(\gamma_{i|\mu})(w_i^r,w_i^s).
\end{align*}
Hence we have $\pi(\gamma_{i|\mu})(w_i^r,w_i^s)=F\lambda_{i|\mu}(w_i^r,w_i^s)$.
\end{proof}

We will determine $t_\mu(t')$ and $g_{i| \mu}(t')$ such that $t^\mu(t')=t^{\mu-1}(t')+t_\mu(t')$, and $g_i^{\mu}(z_i,t')=g_i^{\mu-1}(z_i,t')+g_{i|\mu}(z_i,t')$
 satisfy $(\ref{jj7})_\mu, (\ref{jj8})_\mu$, and $(\ref{jj9})_\mu$.

\begin{lemma}
$(\ref{jj7})_\mu, (\ref{jj8})_\mu$, and $(\ref{jj9})_\mu$ are equivalent to
\begin{align}
\Gamma_{ij|\mu}&=g_{j|\mu}-g_{i|\mu}+\sum_{v=1}^r t_{v|\mu}\rho_{ijv} \label{jj30}\\
\gamma_{i|\mu}&=-Fg_{i|\mu}+\sum_{v=1}^r t_{v|\mu} \tau_{iv} \label{jj31}\\
\lambda_{i|\mu}&=-[\Lambda_0, g_{i|\mu}]+\sum_{v=1}^r t_{v|\mu}\Lambda_{iv}' \label{jj32}
\end{align}
where $g_{i|\mu}=\sum_{\alpha=1}^n g_{i|\mu}^\alpha \frac{\partial}{\partial z_i^\alpha},-\tau_{iv}=\sum_{\beta=1}^m \frac{\partial \Phi_i^\beta}{\partial t_v}|_{t=0}\frac{\partial}{\partial w_i^\beta}, \rho_{ijv}=\sum_{\alpha=1}^n \frac{\partial \phi_{ij}^\alpha}{\partial t_v}|_{t=0}\frac{\partial}{\partial z_i^\alpha}$, and $\Lambda_{iv}'=\sum_{\alpha,\beta=1}^n \frac{\partial \Lambda^i_{\alpha\beta}}{\partial t_v}|_{t=0}\frac{\partial}{\partial z_i^\alpha}\wedge \frac{\partial}{\partial z_i^\beta}$.
\end{lemma}
\begin{proof}
For $(\ref{jj30})$ and $(\ref{jj31})$, see \cite{Hor73} p.379. Let us show that $(\ref{jj9})_\mu$ is equivalent to $(\ref{jj32})$.
Indeed, $(\ref{jj9})_\mu$ is equivalent to
\begin{align*}
&\Lambda_{pq}^i(g_i^{\mu-1}+g_{i|\mu},t^{\mu-1}+t_\mu)\equiv_\mu \sum_{\alpha,\beta=1}^n \Lambda'^i_{\alpha\beta}(z_i,t')\frac{\partial (g_i^{(\mu-1)p}+g_{i|\mu}^p)}{\partial z_i^\alpha}\frac{\partial (g_i^{(\mu-1)q}+g_{i|\mu}^q)}{\partial z_i^\beta}\\
&\iff \sum_{\gamma=1}^n g_{i|\mu}^\alpha\frac{\partial \Lambda_{pq}^i}{\partial z_i^\gamma}(g_i^{\mu-1},t^{\mu-1})+\sum_{v=1}^r t_{v|\mu } \frac{\partial \Lambda_{pq}^i}{\partial t_v}(g_i^{\mu-1},t^{\mu-1})\\
&\equiv_\mu \lambda_{i|\mu}^{p,q}+\sum_{\alpha,\beta=1}^n \Lambda'^i_{\alpha\beta}(z_i,t')\frac{\partial g_i^{(\mu-1)p}}{\partial z_i^\alpha}\frac{\partial g_{i|\mu}^q}{\partial z_i^\beta}+\sum_{\alpha,\beta=1}^n \Lambda'^i_{\alpha\beta}(z_i,t')\frac{\partial g_{i|\mu}^p}{\partial z_i^\alpha}\frac{\partial g_i^{(\mu-1)q}}{\partial z_i^\beta}\\
&\iff  \sum_{\gamma=1}^n g_{i|\mu}^\gamma \frac{\partial \Lambda_{pq}^i}{\partial z_i^\gamma}(z_i,0)+\sum_{v=1}^r t_{v|\mu}\frac{\partial \Lambda_{pq}^i}{\partial t_v}(z_i,0)\equiv_\mu \lambda_{i|\mu}^{p,q}+\sum_{\alpha,\beta=1}^n \Lambda_{\alpha\beta}^i(z_i)\frac{\partial z_i^p}{\partial z_i^\alpha}\frac{\partial g_{i|\mu}^q}{\partial z_i^\beta}+\sum_{\alpha,\beta=1}^n \Lambda_{\alpha \beta}^i(z_i)\frac{\partial g_{i|\mu}^p}{\partial z_i^\alpha}\frac{\partial z_i^q}{\partial z_i^\beta}\\
&\iff \lambda_{i|\mu}(z_i^p,z_i^q)=-\Lambda_0(g_{i|\mu}^p,z_i^q)+\Lambda_0(g_{i|\mu}^q,z_i^p)+g_{i|\mu}(\Lambda_0(z_i^p,z_i^q))+\sum_{v=1}^r t_{v|\mu} \frac{\partial \Lambda_{pq}^i}{\partial t}(z_i,0)\\
&\iff \lambda_{i|\mu}=-[\Lambda_0, g_{i|\mu}]+\sum_{v=1}^r t_{v|\mu}\Lambda_{iv}'.
\end{align*}

\end{proof}

Let $ (t_1,...,t_r)$ be a system of coordinates on $M$ with center at $0$. Since $\tau:T_0(M)\to PD_{(X,\Lambda_0)/(Y,\Pi_0)}$ is surjective, $\tau(\frac{\partial}{\partial t_v}),v=1,...,r$ represented by 
\begin{align*}
(\{\tau_{iv}\},\{\rho_{ijv }\},\{\Lambda_{iv}'\})\in C^0(\mathcal{U}, f^*\Theta_Y)\oplus \mathcal{Z}^1(\mathcal{U},\Theta_X)\oplus C^0(\mathcal{U},\wedge^2 \Theta_X)
\end{align*}
 generate $PD_{(X,\Lambda_0)/(Y,\Pi_0)}$.

\begin{lemma}
We can find $t_\mu$ and $g_{i|\mu}$ satisfying $(\ref{jj30}),(\ref{jj31})$, and $(\ref{jj32})$.

\end{lemma}

\begin{proof}
From $(\ref{jj10})-(\ref{jj14})$, $(\{\gamma_{i|\mu}\},\{\Gamma_{ij|\mu}\},\{\lambda_{i|\mu}\})$ defines an element of $PD_{(X,\Lambda_0)/(Y,\Pi_0)}$. Then there exists $t_1,...,t_r$ such that $(\{\gamma_{i|\mu}\},\{\Gamma_{ij}\},\{\lambda_{i|\mu}\})$ is equivalent to $(\{\sum_{v=1}^n t_v\tau_{iv}\},\{\sum_{v=1}^r t_v\rho_{ijv}\},\{\sum_{v=1}^n t_v\Lambda_{iv}'\})$. Hence there exist $g=\{g_{i|\mu}\}\in C^0(\mathcal{U},\Theta_X)$ such that $\{\gamma_{i|\mu}-\sum_{v=1}^r t_v\tau_{iv}\}=F(\{-g_{i|\mu}\})$, $\Gamma_{ij|\mu}-\sum_{v=1}^r t_v\rho_{ijv}=-\delta(\{-g_{i|\mu}\})=\{g_{j|\mu}-g_{i|\mu}\}$, and $\{\lambda_{i|\mu}-\sum_{v=1}^r t_v \Lambda_{iv}'\}=[\Lambda_0, \{-g_{i|\mu}\}]$.
\end{proof}

Hence induction holds for $\mu$ so that we can construct formal power series $g_i$ and $t$ satisfying $(\ref{jj2})-(\ref{jj6})$.

\subsection{Proof of convergence}\

We prove that if we choose solutions $t_\mu(t')$ and $g_{i|\mu}(z_i,t')$ of the equations $(\ref{jj30}),(\ref{jj31})$, and $(\ref{jj32})$ properly in each inductive step, the power series $t(t')=t_1(t')+t_2(t')+\cdots +t_\mu (t')+\cdots$, and $g_i(z_i,t')=z_i+g_{i|1}(z_i,t')+\cdots +g_{i|\mu}(z_i,t)+\cdots$ converge absolutely and uniformly for $|t'|<\epsilon$ if $\epsilon>0$ is sufficiently small.

\begin{notation}
Let $g(s)=\sum_{v_1v_2...v_r} g_{v_1...v_r}s_1^{v_1}s_2^{v_2}\cdots s_r^{v_r}$ whose coefficients $g_{v_1...v_r}$ are vectors and a power sereis $a(s)=\sum a_{v_1...v_r}s_1^{v_1}\cdots s_r^{v_r}$ with $a_{v_1...v_r}\geq 0$. We indicate by writing $g(s)\ll a(s)$ that $|g_{v_1...v_r}|\leq a_{v_1...v_r}$. We let
\begin{align}\label{kk2}
A(s)=\frac{b}{16c} \sum_{\mu=1}^\infty \frac{1}{\mu^2}c^\mu(s_1+\cdots+s_r)^\mu
\end{align}
\end{notation}

It suffices to show the estimates
\begin{align}\label{kk1}
t^\mu(t') \ll A(t'),\,\,\,\,\,\,\,g_i^{\mu}(z_i,t')-z_i \ll A(t')
\end{align}
by induction on $\mu$ if the coefficients $b,c$ in $(\ref{kk2})$ are properly chosen. For $\mu=1$, the estimates $(\ref{kk1})_1$ holds if $b$ is sufficiently large. We recall Remark \ref{qq30}.
\begin{lemma}
Assume that the estimates $(\ref{kk1})_{\mu-1}$ holds for some $\mu$. Then we have
\begin{align}
\Gamma_{ij|\mu}(z_i,t')&\ll \left( \frac{K_1}{b}+\frac{K_2}{c}+\frac{K_3b}{c}       \right)  A(t'),\,\,\,\,\,z_i\in U_i\cap U_j   \label{aa1}   \\
\gamma_{i|\mu}(z_i,t')&\ll     \left( \frac{K_4}{b}+ \frac{K_5b}{c}  \right)A(t'),\,\,\,\,\,z_i\in U_i  \label{aa2}   \\
\lambda_{i|\mu}(z_i,t')&\ll \left(  \frac{K_6}{b}+\frac{K_7}{c}+\frac{K_8 b}{c}+\frac{K_9 b}{c^2}      \right)A(t'),\,\,\,\,\, z_i\in U_i^\delta \label{aa3}
\end{align}
where $K_1,...,K_9$ are constants independent of $\mu$.
\end{lemma}
\begin{proof}
See \cite{Kod05} p.302 for $(\ref{aa1})$, \cite{Hor73} p.381 for $(\ref{aa2})$ and \cite{Kim15} for $(\ref{aa3})$.
\end{proof}

For any $\sigma=( \gamma=\{\gamma_i\}, \Gamma=\{\Gamma_{ij}\},\lambda=\{\lambda_i\})\in C^0(\mathcal{U},f^*\Theta_Y)\oplus C^1(\mathcal{U}, \Theta_X)\oplus C^0(\mathcal{U}, \wedge^2 \Theta_X)$ such that
\begin{align} \label{aa7}
F\Gamma_{ij}=\gamma_i-\gamma_j,\,\,\,\,\, F\lambda_i=\pi(\gamma_i)
\end{align}

We define the norm $||\sigma||:=||\gamma||+||\Gamma||+||\lambda||$ where
\begin{align*}
||\gamma||=\max_i\sup_{z_i\in U_i}|\gamma_i(z_i)|,\,\,\,\,\,||\Gamma||=\max_{i,j}\sup_{z_i\in U_i\cap U_j}|\Gamma_{ij}(z_i)|,\,\,\,\,\,\,\,||\lambda||=\max_i\sup_{z_i\in U_i^\delta}|\lambda_i(z_i)|
\end{align*}

\begin{lemma}
For any $\sigma=(\gamma=\{\gamma_i\},\Gamma=\{\Gamma_{ij}\}, \lambda=\{\lambda_i\})$ satisfying $(\ref{aa7})$, we can find $g_i(z_i)$ and $t_v$ satisfying
\begin{align}\label{aa4}
\Gamma_{ij}=g_j-g_i+\sum_{v=1}^r t_v \rho_{ijv}, \,\,\,\,\,\,
\lambda_i=-[\Lambda_0, g_i] +\sum_{v=1}^r t_v \Lambda_{iv}',\,\,\,\,\,\,
\gamma_i=-Fg_i +\sum_{v=1}^r t_v \tau_{iv}
\end{align}
\begin{align*}
|g_i(z_i)|\leq K_{10}||\sigma||,\,\,\,\,\,\,|t|\leq K_{10}||\sigma||
\end{align*}
where $K_{10}$ is a constant independent of $\sigma$.
\end{lemma}

\begin{proof}
The proof is similar to \cite{Hor73} Lemma 2.3 p.381, \cite{Kod05} Lemma 6.2 p.295 and \cite{Kim15} to which we refer for the detail.
We define 
\begin{align*}
\iota(\sigma)=\inf \max\{\sup_{z_i\in U_i}|g_i(z_i)|,|t|\}
\end{align*}
 where $\inf$ is taken with respect to all solutions $g_i(z_i),t_v$ of the equations $(\ref{aa4})$. It suffices to show the existence of a constant $K_{10}$ such that $\iota(\sigma)\leq K_{10}||\sigma||$. Suppose that such a constant $K_{10}$ does not exist. Then we can find a sequence $\sigma^{(1)},\sigma^{(2)},...,\sigma^{(l)}=(\gamma^{(l)},\Gamma^{(l)},\lambda^{(l)}),...$ satisfying $(\ref{aa7})$ such that $\iota(\sigma^{(l)})=1,||\sigma^{(l)}||<\frac{1}{l}$ so that there exist $\{g_i^{(l)}\}$ and $t^{(l)}$ such that
\begin{align*}
\Gamma_{ij}^{(l)}=g_j^{(l)}-g_i^{(l)}+\sum_{v=1}^r t_v^{(l)}\rho_{ijv},\,\,\,\,\,\,
\lambda_i^{(l)}=-[\Lambda_0, g_i^{(l)}]+\sum_{v=1}^r t_v^{(l)}\Lambda_{iv}',\,\,\,\,\,\,
\gamma_i^{(l)}=-Fg_i^{(l)}+\sum_{v=1}^r t_v^{(l)}\tau_{iv},
\end{align*}
and $\{t_v^{(l)}\}$ converges, and $\{g_i^{l}(z_i)\}$ converges uniformly on $U_i$. Put $t_v=\lim t_v^{(l)}$ and $g_i(z_i)=\lim g_i^{(l)}(z_i)$. Let $g_i'(z_i)=g_i^{(l)}-g_i(z_i)$ and $t_v'=t_v^{(l)}-t_v$. Then we have
\begin{align*}
\Gamma_{ij}^{(l)}=g_j'-g_i'+\sum_{v=1}^r t_v'\rho_{ijv},\,\,\,\,\,\,
\lambda_i^{(l)}=-[\Lambda_0, g_i']+\sum_{v=1}^r t_p'\Lambda_{iv}',\,\,\,\,\,\,
\gamma_i^{(l)}=-Fg_i'+\sum_{v=1}^r t_v'\tau_{iv},
\end{align*}
This is a contradiction to $\iota(\sigma^{(l)})=1$.

\end{proof}
Hence we can choose solutions $g_{i|\mu}(z_i,t')$, and $t_\mu(t')$ of $(\ref{jj30}),(\ref{jj31})$, and $(\ref{jj32})$ such that
\begin{align*}
g_{i|\mu}(z_i,t')\ll K_{10}K^*A(t'),\,\,\,\,\,\,\,\,t_\mu(t')\ll K_{10}K^*A(t')
\end{align*}
where $K^*=\frac{K_1+K_4+K_6}{b} +\frac{K_2+K_7}{c}+\frac{(K_3+K_5+K_8)b}{c}+\frac{K_9b}{c^2}$. We can choose $b$ and $c$ in $(\ref{kk2})$ in a way that we have $K_{10}K^*<1$. Hence we have $g_{i|\mu}(z_i,t')\ll A(t')$ and $t_\mu(t')\ll A(t')$. This completes the proof of Theorem \ref{ii1}.

\end{proof}

\subsection{Theorem of existence}

\begin{theorem}[Theorem of existence]\label{hh1}
Let $f:(X,\Lambda_0)\to (Y,\Pi_0)$ be a holomorphic Poisson map of a compact holomorphic Poisson manifold $(X,\Lambda_0)$ into a holomorphic Poisson manifold $(Y,\Pi_0)$. Assume that the canonical homomorphism $F:\Theta_X^\bullet\to f^*\Theta_Y^\bullet$ satisfies the following conditions:
\begin{enumerate}
\item $F:\mathbb{H}^1(X,\Theta_X^\bullet)\to \mathbb{H}^1(X, f^*\Theta_Y^\bullet)$ is surjective,
\item $F:\mathbb{H}^2(X,\Theta_X^\bullet)\to \mathbb{H}^2(X, f^*\Theta_Y^\bullet)$ is injective.
\end{enumerate}
Then if the formal power series $(\ref{qq32})$ constructed in the proof converge, then  there exist a family $(\mathcal{X}, \Lambda, \Phi, p, M)$ of holomorphic Poisson maps into $(Y,\Pi_0)$ and a point $0\in M$ such that 
\begin{enumerate}
\item $(X,\Lambda_0)=p^{-1}(0)$ and $\Phi_0:(X,\Lambda_0)\to (Y, \Pi_0)$ coincides with $f:(X,\Lambda_0)\to (Y,\Pi_0)$.
\item $\tau:T_0(M)\to PD_{(X,\Lambda_0)/(Y,\Pi_0)}$ is bijective.
\end{enumerate}
\end{theorem}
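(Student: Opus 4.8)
The plan is to transplant Horikawa's proof of Theorem \ref{pp4} --- itself modelled on the Kodaira--Spencer existence theorem \cite{Kod05} --- into the Poisson framework, carrying out the same bookkeeping with the complexes $\Theta_X^\bullet$ and $f^*\Theta_Y^\bullet$, the morphism $F$, and hypercohomology in place of the sheaves $\Theta_X,f^*\Theta_Y$ and ordinary cohomology, exactly as was done in the proof of the completeness Theorem \ref{ii1}. Write $\mathbb D^k:=\mathbb H^k\big([\Theta_X^\bullet\xrightarrow{F}f^*\Theta_Y^\bullet]\big)$ for the hypercohomology of the two-term total complex of $F$ (with $\Theta_X^\bullet$ in degree $0$), so that, by the first exact sequence of the Lemma above, $\mathbb D^1\cong PD_{(X,\Lambda_0)/(Y,\Pi_0)}$; the obstruction to prolonging the family one order at a time will lie in $\mathbb D^2$. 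Set $r=\dim_{\mathbb C}PD_{(X,\Lambda_0)/(Y,\Pi_0)}$, which is finite by the Lemma, and put $M=\{t\in\mathbb C^r\mid|t|<1\}$ with coordinates $t=(t_1,\dots,t_r)$. Fix a finite Stein covering $\mathcal U=\{U_i\}$ of $X$ and, for $\nu=1,\dots,r$, \v{C}ech representatives $(\tau_\nu,\rho_\nu,\lambda_\nu)=(\{\tau_{i\nu}\},\{\rho_{ij\nu}\},\{\lambda_{i\nu}\})$ of a basis of $PD_{(X,\Lambda_0)/(Y,\Pi_0)}$; by definition of $PD$ these satisfy the relations $(\ref{bb1})$--$(\ref{bb3})$.

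Keeping the notation of subsection \ref{jj1}, one seeks formal power series
\begin{equation}\label{qq32}
\phi_{ij}(z_j,t)=b_{ij}(z_j)+\sum_{\mu\ge1}\phi_{ij|\mu}(z_j,t),\qquad \Lambda^i_{\alpha\beta}(z_i,t)=\Lambda^i_{\alpha\beta}(z_i)+\sum_{\mu\ge1}\Lambda^i_{\alpha\beta|\mu}(z_i,t),\qquad \Phi_i(z_i,t)=f_i(z_i)+\sum_{\mu\ge1}\Phi_{i|\mu}(z_i,t),
\end{equation}
whose degree-one parts in $t$ represent the cocycles $\sum_\nu\rho_{ij\nu}t_\nu$, $\sum_\nu\Lambda_{i\nu}'t_\nu$ and $-\sum_\nu\tau_{i\nu}t_\nu$, and which satisfy the system $(\ref{bb4})$--$(\ref{bb8})$: the transition cocycle identity $\phi_{ij}(\phi_{jk}(z_k,t),t)=\phi_{ik}(z_k,t)$, the integrability $[\Lambda,\Lambda]=0$ and the compatibility $(\ref{bb7})$ of $\Lambda$ with $\{\phi_{ij}\}$ (so that $(\mathcal X,\Lambda,p,M)$ is a Poisson analytic family), the map cocycle identity $\Phi_i(\phi_{ij}(z_j,t),t)=\psi_{ij}(\Phi_j(z_j,t))$, and the Poisson--map identity $(\ref{bb8})$. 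As in the proof of Theorem \ref{ii1}, this is solved by induction on the order $\mu$: the terms of order $<\mu$, already chosen, produce --- by collecting the degree-$\mu$ parts of $(\ref{bb4})$--$(\ref{bb8})$ --- an obstruction cochain of degree one higher, in the total complex, than the cochains $(\gamma_{i|\mu},\Gamma_{ij|\mu},\lambda_{i|\mu})$ of the completeness argument, and a Bianchi/Jacobi-type identity --- the degree-two analogue of the cocycle relations $(\ref{jj10})$--$(\ref{jj14})$, obtained by fusing the Kodaira--Spencer integrability identity for $\{\phi_{ij}\}$ \cite{Kod05}, Kim's identities for the Poisson part \cite{Kim15}, Horikawa's identity for the map part \cite{Hor73},\cite{Hor74}, and the degree-$\mu$ refinement of the computation proving $(\ref{bb2})$ --- shows this cochain to be a $2$-cocycle of $[\Theta_X^\bullet\xrightarrow{F}f^*\Theta_Y^\bullet]$; its class in $\mathbb D^2$ is the obstruction to prolonging $(\ref{qq32})$ to order $\mu$.

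The point is that $\mathbb D^2$ vanishes under the hypotheses. Continuing the first exact sequence of the Lemma one step further gives
\begin{equation*}
\mathbb H^1(X,\Theta_X^\bullet)\xrightarrow{F}\mathbb H^1(X,f^*\Theta_Y^\bullet)\to\mathbb D^2\to\mathbb H^2(X,\Theta_X^\bullet)\xrightarrow{F}\mathbb H^2(X,f^*\Theta_Y^\bullet).
\end{equation*}
By hypothesis (2) the last map is injective, so $\mathbb D^2\to\mathbb H^2(X,\Theta_X^\bullet)$ is zero and $\mathbb H^1(X,f^*\Theta_Y^\bullet)\to\mathbb D^2$ is onto; hence $\mathbb D^2\cong\operatorname{coker}\!\big(F\colon\mathbb H^1(X,\Theta_X^\bullet)\to\mathbb H^1(X,f^*\Theta_Y^\bullet)\big)$, which is $0$ by hypothesis (1). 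Therefore at each step the obstruction cochain is a $2$-coboundary, so the order-$\mu$ terms in $(\ref{qq32})$ can be solved for; normalizing the solution at each step by a fixed linear splitting of the coboundary map (harmonically, once an appropriate Hodge theory for $L=\bar\partial+[\Lambda_0,-]$ on $\Theta_X^\bullet$ and $L_\pi=\bar\partial+\pi$ on $f^*\Theta_Y^\bullet$ is available) makes the construction canonical, and the induction produces the formal power series $(\ref{qq32})$. Under the standing hypothesis that $(\ref{qq32})$ converges for $|t|<\epsilon$ for some $\epsilon>0$, these series define, over $M_\epsilon:=\{t\in\mathbb C^r\mid|t|<\epsilon\}$, a holomorphic Poisson manifold $(\mathcal X,\Lambda)$ with a surjective smooth proper $p\colon(\mathcal X,\Lambda)\to M_\epsilon$ and a holomorphic Poisson map $\Phi\colon(\mathcal X,\Lambda)\to(Y\times M_\epsilon,\Pi_0)$ lying over $M_\epsilon$ --- i.e.\ a family of holomorphic Poisson maps into $(Y,\Pi_0)$ --- such that $p^{-1}(0)=(X,\Lambda_0)$ and $\Phi_0=f$; this is assertion (1). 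By construction the characteristic map $\tau\colon T_0(M_\epsilon)\to PD_{(X,\Lambda_0)/(Y,\Pi_0)}$ of subsection \ref{jj1} sends $\partial/\partial t_\nu$ to the class of $(\tau_\nu,\rho_\nu,\lambda_\nu)$, hence is surjective; since $\dim T_0(M_\epsilon)=r=\dim PD_{(X,\Lambda_0)/(Y,\Pi_0)}$ and $\tau$ is linear, it is bijective, which is assertion (2).

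The technical heart --- and main obstacle --- of the formal part is the cocycle verification in the second paragraph: once the three families of equations $(\ref{bb4})$--$(\ref{bb8})$ and all their nonlinear cross-terms are combined, one must check that the order-$\mu$ obstruction really is a $2$-cocycle of $[\Theta_X^\bullet\xrightarrow{F}f^*\Theta_Y^\bullet]$, which requires fusing the separate integrability identities of \cite{Kod05}, \cite{Kim15}, \cite{Hor73}, \cite{Hor74} and pushing the Poisson--map compatibility computation establishing $(\ref{bb2})$ (and its order-$\mu$ refinement $(\ref{jj14})$) to all orders. The genuinely open point, not addressed here, is the convergence of $(\ref{qq32})$; as noted in subsection \ref{pp40} this appears to require a harmonic theory for the operators $L=\bar\partial+[\Lambda_0,-]$ and $L_\pi=\bar\partial+\pi$, and is assumed throughout.
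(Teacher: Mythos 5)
Your plan is sound in outline and its homological algebra is correct: writing $\mathbb{D}^\bullet$ for the hypercohomology of the cone of $F$, the two hypotheses do force $\mathbb{D}^2=0$, and this is a clean repackaging of how the paper actually uses them. But your route differs from the paper's in a substantive way. The paper does not construct transition functions $\phi_{ij}(z_j,t)$ by \v{C}ech-level power series; following Horikawa and Kodaira--Spencer it works at the Dolbeault level, constructing a vector $(0,1)$-form $\phi(t)$, a bivector $\Lambda(t)$ and local functions $\Phi_i(z_i,t)$ subject to the Maurer--Cartan-type system $(\ref{ee1})$--$(\ref{ee2})$, with the smooth description of $PD_{(X,\Lambda_0)/(Y,\Pi_0)}$ supplied by Lemma \ref{kk10} (the operators $L=\bar\partial+[\Lambda_0,-]$ and $L_\pi=\bar\partial+\pi$). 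In the inductive step the paper splits the \v{C}ech datum $\Gamma_{ij|\mu}=\Gamma_{i|\mu}-\Gamma_{j|\mu}$ using smoothness, and then uses the hypotheses in two separate stages --- injectivity of $F$ on $\mathbb{H}^2$ to kill $(\xi_\mu,\psi_\mu,\eta_\mu)$, surjectivity of $F$ on $\mathbb{H}^1$ to solve for $\Phi_\mu'$ and $(x_\mu,y_\mu)$ --- which is exactly the statement $\mathbb{D}^2=0$ unwound; the family is then assembled in subsection \ref{kk20} by Newlander--Nirenberg-type coordinates, and bijectivity of $\tau$ is read off through the isomorphism of Lemma \ref{kk10}. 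Your \v{C}ech route buys a more immediate passage from convergent series to the family (the transition functions \emph{are} the family), at the cost of being farther from the harmonic-theoretic setting in which the intended convergence argument is expected to live (subsection \ref{pp40}); note also that the theorem as stated is conditional on convergence of the specific series $(\ref{qq32})$ of the paper, i.e.\ of $\phi(t),\Lambda(t),\Phi_i(z_i,t)$, not of your transition-function series, so strictly speaking your argument proves a variant of the statement.

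The one genuine gap is the point you yourself flag as the ``technical heart'': you assert, but do not verify, that the order-$\mu$ obstruction of the coupled system (transition cocycle, $[\Lambda,\Lambda]=0$, compatibility of $\Lambda$ with the gluing, the map cocycle into $(Y,\Pi_0)$, and the Poisson-map identity) is a $2$-cocycle of the cone, and that the coboundary equation it satisfies has exactly the order-$\mu$ corrections as unknowns. In the paper this verification is the bulk of the proof: the identities $(\ref{gg7})$--$(\ref{gg16})$ of Lemma \ref{ii76}, established by long explicit computations with $\pi$, $\Lambda_0$, $\Pi_0$, together with the equivalences of Lemma \ref{ii86} identifying the linearized equations in terms of $L$, $L_\pi$ and $F$. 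Without the \v{C}ech analogue of these computations (one degree higher than $(\ref{jj10})$--$(\ref{jj14})$), the claim that the obstruction class lies in $\mathbb{D}^2$ is unsupported; completing your route requires carrying out that verification, which is where essentially all of the work of the theorem resides.
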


\begin{remark}
If $f$ is non-degenerate, the above condition $(1)$ and $(2)$ are reduced to $\mathbb{H}^1(X, \mathcal{N}_f^\bullet)=0$.
\end{remark}

\begin{proof}
We extend the arguments in \cite{Hor73} and \cite{Hor74} in the context of holomorphic Poisson deformations (See \cite{Hor73} p.382-388 and \cite{Hor74} p.649-653). We tried to maintain notational consistency with \cite{Hor73} and \cite{Hor74}. 
We may assume that
\begin{enumerate}
\item $X$ is covered by a finite number of coordinate neighborhoods $U_i$ with a system of coordinates $(z_i^1,...,z_i^n)$ and $U_i=\{(z_i)\in \mathbb{C}^n||z_i|<1\}$.
\item $Y$ is covered by a finite number of coordinate neighborhoods $V_i$ with a system of coordinates $(w_i^1,..,w_i^m)$ and $V_i=\{(w_i)\in \mathbb{C}^m||w_i|<1\}$.
\item $f(U_i)\subset V_i$, and $f$ is given by $w_i=f_i(z_i)$.
\item $z_i\in U_i$ coincides with $z_j\in U_j$ if and only if $z_i=b_{ij}(z_j)$.
\item $w_i\in V_i$ coincides with $w_j\in V_j$ if and only if $w_i=g_{ij}(w_j)$.
\item $\Lambda_0$ is of the form $\Lambda_0=\sum_{\alpha,\beta=1}^n \Lambda_{\alpha\beta}^i(z_i)\frac{\partial}{\partial z_i^\alpha}\wedge \frac{\partial}{\partial z_i^\beta}$ with $\Lambda_{\alpha\beta}^i(z_i)=-\Lambda_{\beta\alpha}^i(z_i)$ on $U_i$.
\item $\Pi_0$ is of the form $\Pi_0=\sum_{\alpha,\beta=1}^m \Pi_{\alpha\beta}^i(w_i)\frac{\partial}{\partial w_i^\alpha}\wedge \frac{\partial}{\partial w_i^\beta}$ with $\Pi_{\alpha\beta}^i(w_i)=-\Pi_{\beta\alpha}^i(w_i)$ on $V_i$.
\end{enumerate}
Let $r=\dim PD_{(X,\Lambda_0)/(Y,\Pi_0)}$, and $M=\{t\in \mathbb{C}^r||t|<\epsilon\}$ for a sufficiently small number $\epsilon >0$.

We regard $X\times M$ as a differentiable manifold and prove the existence of a vector $(0,1)$-form 
\begin{align*}
\phi(t)=\sum_{v=1}^n \phi_i^v(z_i,t)\frac{\partial}{\partial z_i^v}=\sum_{v,\alpha=1}^n \phi_{i\alpha}^v(z_i,t)d\bar{z}_i^\alpha \frac{\partial}{\partial z_i^v}
\end{align*}
and the existence of a bivector of the form
\begin{align*}
\Lambda(t)=\sum_{\alpha,\beta=1}^n \Lambda_{\alpha\beta}^i(z_i,t)\frac{\partial}{\partial z_i^\alpha}\wedge \frac{\partial }{\partial z_i^\beta}
\end{align*}
depending holomorphically on $t$ and vector-valued differentiable functions $\Phi_i(z_i,t)$ on $U_i\times M$ depending holomorphically on $t$ which satisfy the following equalities:
\begin{align}
\phi(0)&=0 \label{ee1}\\
\Lambda(0)&=\Lambda_0\label{ee33}\\
\bar{\partial}\phi+\frac{1}{2}[\phi,\phi]&=0\label{ee3}\\
\frac{1}{2}[\Lambda,\Lambda]&=0\label{ee111}\\
\bar{\partial}\Lambda+[\phi,\Lambda]&=0\label{ee112}\\
\Phi_i(z_i,0)&=f_i(z_i) \label{ee34}\\
\bar{\partial} \Phi_i+[\phi, \Phi_i]&=0\label{ee113}\\
\Phi_i(b_{ij}(z_j),t)&=g_{ij}(\Phi_j(z_i,t))\label{ee114}\\
\sum_{\alpha,\beta=1}^n \Lambda_{\alpha\beta}^i(z_i,t) \frac{\partial \Phi_i^p}{\partial z_i^\alpha}\frac{\partial \Phi_i^q}{\partial z_i^\beta}&=\Pi_{pq}^i(\Phi_i(z_i,t))\label{ee2}
\end{align}

\subsubsection{Existence of formal solutions}\

We will prove the existence of formal solutions of $\phi(t),\Lambda(t)$, and $\Phi_i(z_i,t)$ satisfying $(\ref{ee1})-(\ref{ee2})$ as power series in $t$. Let $\phi(t)=\sum_{\mu=0}^\infty \phi_\mu(t),\Lambda(t)=\sum_{\mu=0}^\infty \Lambda_\mu(t)$, and $\Phi_i(z_i,t)=\sum_{\mu=0}^\infty \Phi_{i|\mu}(z_i,t)$, where $\phi_\mu(t),\Lambda_\mu(t)$, and $\Phi_{i|\mu}(t)$ are homogenous in $t$ of degree $\mu$, and let $\phi^{\mu}(t)=\phi_0(t)+\phi_1(t)+\cdots+\phi_\mu(t), \Lambda^\mu(t)=\Lambda_0(t)+\Lambda_1(t)+\cdots+\Lambda_\mu(t)$, and $\Phi_i^\mu(z_i,t)=\Phi_{i|0}(z_i,t)+\Phi_{i|1}(z_i,t)+\cdots+\Phi_{i|\mu}(z_i,t)$. We note that $(\ref{ee3})-(\ref{ee2})$ are equivalent to the following system of congruences:

\begin{align}
\bar{\partial} \phi^\mu+\frac{1}{2}[\phi^\mu,\phi^\mu]&\equiv_\mu 0 \label{gg30}\\
\frac{1}{2}[\Lambda^\mu,\Lambda^\mu]&\equiv_\mu 0 \label{gg31}\\
\bar{\partial}\Lambda^\mu+[\phi^\mu,\Lambda^\mu]&\equiv_\mu 0 \label{gg32}\\
\bar{\partial}\Phi_i^\mu +[\phi^\mu, \Phi_i^\mu]&\equiv_\mu 0  \label{gg33}\\
\Phi_i^\mu(b_{ij}(z_j),t)&\equiv_\mu g_{ij}(\Phi_j^\mu(z_j,t)) \label{gg34}\\
\sum_{\alpha,\beta=1}^n \Lambda_{\alpha\beta}^{\mu i}(z_i,t) \frac{\partial \Phi_i^{\mu p}}{\partial z_i^\alpha}\frac{\partial \Phi_i^{\mu q}}{\partial z_i^\beta}&\equiv_\mu \Pi_{pq}^{i}(\Phi_i^\mu(z_i,t))\label{gg35}
\end{align}
for $\mu=1,2,3,...$.
\begin{remark}\label{ii71}
By setting $\Lambda'^{\mu}:=\Lambda'^\mu-\Lambda_0$, $(\ref{gg30}),(\ref{gg31})$, and $(\ref{gg32})$ are equivalent to
\begin{align}\label{gg50}
L(\phi^\mu+\Lambda'^\mu)+\frac{1}{2}[\phi^\mu+\Lambda'^\mu,\phi^\mu+\Lambda'^\mu]\equiv_\mu 0
\end{align}
where $L=\bar{\partial}+[\Lambda_0,-]$.
\end{remark}

We shall construct solutions $\phi(t),\Lambda(t)$, and $\Phi_i(z_i,t)$ of $(\ref{ee3})-(\ref{ee2})$ by induction on $\mu$. From $(\ref{ee1}),(\ref{ee33})$, and $(\ref{ee34})$, we set $\phi_0(t)=0,\Lambda_0(t)=\Lambda_0$, and $\Phi_{i|0}(z_i,t)=f_i(z_i)$. Let us determine $\phi_1,\Lambda_1$ and $\Phi_{i|1}$. For this, we note the following lemma.

\begin{lemma}\label{kk10}
We have an isomorphism
\begin{align*}
PD_{(\Lambda_0,X)/(Y,\Pi_0)}\cong \frac{\{(\Phi,\phi,\psi)\in A^{0,0}(f^*\Theta_Y)\times A^{0,1}(\Theta_X)\times A^{0,0}(\wedge^2 \Theta_X)|\frac{\bar{\partial}\Phi=F\phi, \pi(\Phi)=F\psi, \bar{\partial}\phi=0, \bar{\partial}\psi+[\phi,\psi]=0,[\Lambda_0, \psi]=0}{\iff   L(\phi, \psi)=0, L_\pi(\Phi)=F(\phi, \psi)}\} }{\{(F \xi, \bar{\partial}\xi,[\Lambda_0, \xi]|\xi\in A^{0,0}(\Theta_X)\}}
\end{align*}
where $L=\bar{\partial}+[\Lambda_0,-]$, and $L_\pi=\bar{\partial}+\pi$.
\end{lemma}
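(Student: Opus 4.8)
The plan is to read Lemma~\ref{kk10} as the \v{C}ech--Dolbeault comparison isomorphism for the hypercohomology of the mapping cone of $F\colon\Theta_X^\bullet\to f^*\Theta_Y^\bullet$. Adopt the degree convention in which $\wedge^{p+1}\Theta_X$ sits in cohomological degree $p$ (so $\Theta_X$ is in degree $0$ and $\wedge^2\Theta_X$ in degree $1$), and likewise for $f^*\Theta_Y^\bullet$, and set $\mathrm{Cone}(F)^k=(\Theta_X^\bullet)^{k+1}\oplus(f^*\Theta_Y^\bullet)^{k}$. A direct inspection shows that the degree-$0$ part of the \v{C}ech total complex $\bigoplus_{p+k=\bullet}C^p(\mathcal{U},\mathrm{Cone}(F)^k)$ is exactly $C^0(\mathcal{U},f^*\Theta_Y)\oplus C^1(\mathcal{U},\Theta_X)\oplus C^0(\mathcal{U},\wedge^2\Theta_X)$, that the vanishing of the total differential of $(\tau,\rho,\lambda)$ is precisely the system $-\delta\tau=F\rho$, $\pi(\tau)=F\lambda$, $\delta\rho=0$, $\delta\lambda+[\Lambda_0,\rho]=0$, $[\Lambda_0,\lambda]=0$, and that the degree-$(-1)$ coboundaries are exactly $(Fg,-\delta g,[\Lambda_0,g])$ for $g\in C^0(\mathcal{U},\Theta_X)$. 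Hence $PD_{(X,\Lambda_0)/(Y,\Pi_0)}=\mathbb{H}^0(X,\mathrm{Cone}(F))$ computed via the Stein covering; and the right-hand side of the lemma is the analogous degree-$0$ cohomology of the total complex of $\bigl(\Gamma(X,A^{0,\bullet}(\mathrm{Cone}(F)^\bullet)),\,\bar\partial+(\text{cone})\bigr)$, the stated conditions being exactly $L(\phi,\psi)=0$ and $L_\pi(\Phi)=F(\phi,\psi)$ and the denominator being the image of $\xi\mapsto(F\xi,\bar\partial\xi,[\Lambda_0,\xi])$. So the lemma asserts that these two models of $\mathbb{H}^0(X,\mathrm{Cone}(F))$ coincide.

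I would prove this by a triple-complex argument. Put $K^{p,q,k}:=C^p(\mathcal{U},A^{0,q}(\mathrm{Cone}(F)^k))$, equipped with the three (anti)commuting differentials $\delta$, $\bar\partial$, and the cone differential assembled from $[\Lambda_0,-]$, $\pi$ and $F$, and let $T^\bullet=\mathrm{Tot}\,K^{\bullet,\bullet,\bullet}$. Running the spectral sequence that takes $\delta$-cohomology first: since each $A^{0,q}(\mathrm{Cone}(F)^k)$ is a fine (soft) sheaf, $H^p(\mathcal{U},A^{0,q}(\mathrm{Cone}(F)^k))=0$ for $p>0$, and the complex collapses to $\Gamma(X,A^{0,\bullet}(\mathrm{Cone}(F)^\bullet))$, whose degree-$0$ cohomology is the right-hand side of the lemma. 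Running the spectral sequence that takes $\bar\partial$-cohomology first: $A^{0,\bullet}(\mathrm{Cone}(F)^k)$ is a fine resolution of the locally free sheaf $\mathrm{Cone}(F)^k$, so the complex collapses to $C^\bullet(\mathcal{U},\mathrm{Cone}(F)^\bullet)$, and because $\mathcal{U}$ is Stein (hence a Leray covering for the coherent sheaves in sight) its degree-$0$ cohomology is $PD_{(X,\Lambda_0)/(Y,\Pi_0)}$. Both equal $H^0(T^\bullet)$, which yields the isomorphism; the identification is canonical and does not depend on the Stein covering.

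It is worth recording the isomorphism explicitly, which also shows how it matches the computations of \cite{Hor73} (the non-Poisson part) and \cite{Kim15} (the $\Theta_X^\bullet$ part). Given $(\Phi,\phi,\psi)$ on the $C^\infty$ side, use the $\bar\partial$-Poincar\'e lemma on each Stein $U_i$ to write $\phi|_{U_i}=\bar\partial\zeta_i$ with $\zeta_i\in\Gamma(U_i,A^{0,0}(\Theta_X))$; then $\rho_{ij}:=\zeta_j-\zeta_i$, $\tau_i:=\Phi-F\zeta_i$, $\lambda_i:=\psi-[\Lambda_0,\zeta_i]$ are $\bar\partial$-closed, hence holomorphic, and $(\{\tau_i\},\{\rho_{ij}\},\{\lambda_i\})$ satisfies the relations defining $PD_{(X,\Lambda_0)/(Y,\Pi_0)}$ by the hypotheses on $(\Phi,\phi,\psi)$ together with $\bar\partial F=F\bar\partial$, $\pi F=F[\Lambda_0,-]$, $\pi^2=0$ and $[\Lambda_0,[\Lambda_0,-]]=\tfrac12[[\Lambda_0,\Lambda_0],-]=0$; changing $\zeta_i$ by a holomorphic $g_i$ changes the cocycle by the coboundary $(Fg,-\delta g,[\Lambda_0,g])$. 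Conversely a $C^\infty$ partition of unity subordinate to $\mathcal{U}$ lets one write a $PD$-cocycle with $\rho_{ij}=\zeta_j-\zeta_i$, $\zeta_i\in\Gamma(U_i,A^{0,0}(\Theta_X))$, and then $\phi:=\bar\partial\zeta_i$, $\Phi:=\tau_i+F\zeta_i$, $\psi:=\lambda_i+[\Lambda_0,\zeta_i]$ glue to global objects solving the equations of the lemma; the two assignments are visibly inverse on classes.

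Since hypercohomology is independent of the acyclic resolution used to compute it, there is no conceptual obstacle; the only real work is bookkeeping. One must fix at the outset the sign conventions for $\delta$, for $\bar\partial$ acting on polyvector- and on pullback-bundle-valued forms, and for the Schouten bracket $[\Lambda_0,-]$ of forms, so that $L^2=L_\pi^2=0$ and the compatibilities above hold on the nose; with those pinned down one verifies that the explicit maps of the previous paragraph respect cocycle and coboundary conditions. As the classical verifications appear in \cite{Hor73} and the Poisson ones involving $\psi$, $[\Lambda_0,-]$ and $\pi$ appear in \cite{Kim15}, I would assemble these and indicate only the new terms rather than repeat the calculation in full.
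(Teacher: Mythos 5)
Your proposal is correct, and its third paragraph is in fact exactly the paper's own proof: the paper sends a \v{C}ech class $(\tau,\rho,\lambda)$ to the Dolbeault side by choosing smooth $\eta_i$ with $\rho_{ij}=\eta_i-\eta_j$ (your partition-of-unity splitting, with $\zeta_i$ in place of $\eta_i$ up to sign) and setting $\phi=\bar\partial\eta_i$, $\psi=[\Lambda_0,\eta_i]-\lambda_i$, $\Phi=F\eta_i-\tau_i$, and inverts it by solving $\phi=\bar\partial\eta_i$ on each Stein $U_i$ and setting $\rho_{ij}=\eta_i-\eta_j$, $\lambda_i=[\Lambda_0,\eta_i]-\psi$, $\tau_i=F\eta_i-\Phi$, checking well-definedness under the coboundaries $(F g,-\delta g,[\Lambda_0,g])$ directly. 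What you add is the conceptual wrapper: identifying both sides as the degree-zero hypercohomology of the mapping cone of $F:\Theta_X^\bullet\to f^*\Theta_Y^\bullet$, computed once via the \v{C}ech model (Stein covering, Cartan B) and once via the fine Dolbeault model, with the comparison run through the triple complex $C^p(\mathcal{U},A^{0,q}(\mathrm{Cone}(F)^k))$. This buys you for free the independence of the covering, the fact that the two explicit assignments are inverse on classes, and a uniform treatment of all the compatibility checks ($\bar\partial F=F\bar\partial$, $\pi F=F[\Lambda_0,-]$, $L^2=L_\pi^2=0$) as the statement that the cone differentials square to zero; the cost is the sign bookkeeping you correctly flag, and note that the condition written in the statement as $\bar\partial\psi+[\phi,\psi]=0$ is, as your cone reading makes plain, the linearized equation $\bar\partial\psi+[\Lambda_0,\phi]=0$ encoded in $L(\phi,\psi)=0$. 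So the approaches agree on substance; yours is a more systematic packaging of the same construction, and either is acceptable.
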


\begin{proof}
Let $(\tau=\{\tau_i\},\rho=\{\rho_{ij}\},\lambda=\{\lambda_i\})\in C^0(\mathcal{U},f^*\Theta_Y)\oplus C^1(\mathcal{U},\Theta_X)\oplus C^0(\mathcal{U},\wedge^2 \Theta_X)$ be a representative of $PD_{(\Lambda_0,X)/(\Pi_0,Y)}$. Since $\delta \rho=0$, there exists $\eta_i\in \Gamma(U_i,\mathcal{A}^{0,0}(\Theta_X))$ such that $\rho_{ij}=\eta_i-\eta_j$. Then $\phi:=\bar{\partial}\eta_i \in A^{0,1}(\Theta_X)$ and $\psi:=[\Lambda_0, \eta_i]-\lambda_i\in A^{0,0}(\wedge^2 \Theta_X)$ satisfy $ \bar{\partial}\phi=0, \bar{\partial}\psi+[\phi,\psi]=0,[\Lambda_0, \psi]=0$ (for the detail, see \cite{Kim15}). On the other hand, $\tau_i-\tau_j=F\rho_{ij}=F\eta_i-F\eta_j$. We define $\Phi_i=F\eta_i-\tau_i$ on $U_i$. Then $\Phi=\{\Phi_i\}\in A^{0,0}(f^*\Theta_Y)$ with $\bar{\partial}\Phi=F\phi$ and $\pi(\Phi)=F[\Lambda_0, \eta_i]-F\tau_i=F\psi$. We define the correspondence by
\begin{align*}
(\tau=\{\tau_i\},\rho=\{\rho_{ij}\},\lambda=\{\lambda_i\}) \mapsto (\Phi,\phi,\psi)=(F\eta_i-\tau_i, \bar{\partial}\eta_i, [\Lambda_0,\eta_i]-\tau_i)
\end{align*}

Let $(\tau',\rho',\lambda')$ be another representative of the class defined by $(\tau,\rho,\lambda)$. Then there exists $g=\{g_i\}\in C^0(\mathcal{U},\Theta_X)$ such that $\tau_i-\tau_i'=Fg_i, \rho_{ij}-\rho_{ij}'=g_i-g_j$, and $\lambda_i-\lambda_i'=[\Lambda_0,g_i]$. Let $\rho_{ij}'=\eta_i'-\eta'_j$, and so $\phi'=\bar{\partial}\eta_i'$ and $\psi'=[\Lambda_0,\eta_i']-\lambda_i'$. Then $\Phi-\Phi'=F(\eta_i-\eta_i')-(\tau_i-\tau_i')=F(\eta_i-\eta_i'-g_i)$, $\phi-\phi'=\bar{\partial}(\eta_i-\eta_i'-g_i)$, and $\psi-\psi'=[\Lambda_0, \eta_i-\eta_i']-(\lambda_i- \lambda_i')=[\Lambda_0, \eta_i-\eta_i'-g_i]$. Hence the correspondence is well-defined.

Let us define the inverse map. Given $(\Phi,\phi,\psi)$ with $L(\phi,\psi)=0, L_\pi(\Phi)=F(\phi,\psi)$. Since $\bar{\partial}\phi=0$, there exists $\eta_i\in \Gamma(U_i,\mathcal{A}^{0,0}(\Theta_X))$ such that $\phi=\bar{\partial}\eta_i$. We define $\rho_{ij}:=\eta_i-\eta_j$ on $U_{ij}$, $\lambda_i:=[\Lambda_0,\eta_i]-\psi$ on $U_i$, and $\tau_i=F\eta_i-\Phi$ on $U_i$. 

\end{proof}

Choose a basis of $PD_{(X,\Lambda_0)/(Y,\Pi_0)}$ as
\begin{align*}
\{(\Phi_\lambda, -\phi_\lambda,- \Lambda_\lambda)\}_{\lambda=1,...,r}\in A^{0,0}(f^*\Theta_Y)\oplus A^{0,1}(\Theta_X)\oplus A^{0,0}(\wedge^2 \Theta_X)
\end{align*}
We write $\Phi_\lambda=\sum_{\alpha=1}^m \Phi_{\lambda i}^\alpha\frac{\partial}{\partial w_i^\alpha}$ on $U_i$, and $\Lambda_\lambda=\sum_{\alpha,\beta=1}^n \Lambda_{\alpha\beta \lambda}^i\frac{\partial}{\partial z_i^\alpha}\wedge \frac{\partial}{\partial z_i^\beta}$ on $U_i$. We define $\phi_1=\sum_{\lambda=1}^r \phi_\lambda t_\lambda, \Lambda_1=\sum_{\lambda=1}^r \Lambda_\lambda t_\lambda$, and $\Phi_{i|1}^\alpha=\sum_{\lambda=1}^r \Phi_{\lambda i}^\alpha t_\lambda$. We set $\Lambda^i_{\alpha\beta|1}(z_i,t)=\sum_{\lambda=1}^r\Lambda_{\alpha\beta\lambda}^it_\lambda$. Then $(\ref{gg30})_1-(\ref{gg34})_1$ holds (see \cite{Hor74} p.651). Let us check $(\ref{gg35})_1$. Indeed, $(\ref{gg35})_1$ is equivalent to
\begin{align*}
\sum_{\alpha,\beta=1}^n\Lambda_{\alpha\beta|1}^i(z_i,t)\frac{\partial f_i^p}{\partial z_i^\alpha}\frac{\partial f_i^q}{\partial z_i^\beta}+ \sum_{\alpha,\beta=1}^n \Lambda_{\alpha\beta}^i(z_i)\frac{\partial \Phi_{i|1}^p}{\partial z_i^\alpha}\frac{\partial f_i^q}{\partial z_i^\beta}+\sum_{\alpha,\beta=1}^n \Lambda_{\alpha\beta}^i(z_i)\frac{\partial f_i^p}{\partial z_i^\alpha}\frac{\partial \Phi_{i|1}^q}{\partial z_i^\beta}=\sum_{\beta=1}^m \frac{\partial \Pi_{pq}^i}{\partial w_i^\beta}(f_i)\Phi_{i|1}^\beta\\
\iff \pi(\Phi_{i|1})(w_i^p,w_i^q)=-F(\Lambda_1)(w_i^p,w_i^q)
\end{align*}

Hence induction holds for $\mu=1$.

Now we assume that $\phi^{\mu-1}, \Lambda^{\mu-1}$, and $\Phi_i^{\mu-1}$ satisfying $(\ref{gg30})_{\mu-1}-(\ref{gg35})_{\mu-1}$ are already determined. We define homogenous polynomials $\xi_\mu\in A^{0,2}(\Theta_X),\psi_\mu\in A^{0,1}(\wedge^2 \Theta_X), \eta_\mu\in A^{0,0}(\wedge^3 \Theta_X), E_{i|\mu}\in \Gamma(U_i, \mathcal{A}^{0,1}(f^* \Theta_Y)),\\\Gamma_{ij|\mu}\in \Gamma(U_{ij},\mathcal{A}^{0,0}(f^*\Theta_Y) ) $, and $\lambda_{i|\mu}\in \Gamma(U_i, \mathcal{A}^{0,0}(f^*\wedge^2 \Theta_Y ) )$ by the following congruences:

\begin{align}
\xi_\mu&\equiv_\mu \bar{\partial}\phi^{\mu-1}+\frac{1}{2}[\phi^{\mu-1},\phi^{\mu-1}] \label{gg1}\\
\psi_\mu &\equiv_\mu \bar{\partial}\Lambda^{\mu-1}+[\Lambda^{\mu-1},\phi^{\mu-1}] \label{gg2}\\
\eta_\mu &\equiv_\mu \frac{1}{2}[\Lambda^{\mu-1},\Lambda^{\mu-1}] \label{gg3}\\
E_{i|\mu}&\equiv_\mu \sum_{\alpha=1}^m(\bar{\partial} \Phi_i^{(\mu-1)\alpha}+[\phi^{\mu-1}, \Phi_i^{(\mu-1)\alpha}])\frac{\partial}{\partial w_i^\alpha}\label{gg4}\\
\Gamma_{ij|\mu}&\equiv_\mu \sum_{\alpha=1}^m (\Phi_i^{(\mu-1)\alpha}(b_{ij}(z_j),t)-g_{ij}^\alpha(\Phi_j^{\mu-1}(z_j,t)))\frac{\partial}{\partial w_i^\alpha} \label{gg5}\\
\lambda_{i|\mu}&\equiv_\mu\sum_{p,q=1}^m \lambda_{i|\mu}^{p,q}\frac{\partial}{\partial w_i^p}\wedge \frac{\partial}{\partial w_i^q} \label{gg6}\\
&= \sum_{p,q=1}^m \left( -\Pi_{pq}^{ i}(\Phi_i^{\mu-1}(z_i,t))+\sum_{\alpha,\beta=1}^n \Lambda_{\alpha\beta}^{(\mu-1) i} \frac{\partial \Phi_i^{(\mu-1) p}}{\partial z_i^\alpha}\frac{\partial \Phi_i^{(\mu-1) q}}{\partial z_i^\beta}   \right)\frac{\partial}{\partial w_i^p}\wedge \frac{\partial}{\partial w_i^q} \notag
\end{align}

\begin{remark}\label{ii70}
We note that $(\ref{gg1}),(\ref{gg2}),(\ref{gg3})$ are equivalent to
\begin{align*}
\xi_\mu+\psi_\mu+\eta_\mu=\bar{\partial}(\phi^{\mu-1}+\Lambda^{\mu-1})+\frac{1}{2}[\phi^{\mu-1}+\Lambda^{\mu-1},\phi^{\mu-1}+\Lambda^{\mu-1}]
\end{align*}
By letting $\Lambda'^{\mu-1}=\Lambda^{\mu-1}-\Lambda_0$, $(\ref{gg1},(\ref{gg2}),(\ref{gg3})$ are also equivalent to
\begin{align}\label{gg112}
\xi_\mu+\psi_\mu+\eta_\mu=L(\phi^{\mu-1}+\Lambda'^{\mu-1})+\frac{1}{2}[\phi^{\mu-1}+\Lambda'^{\mu-1},\phi^{\mu-1}+\Lambda'^{\mu-1}]
\end{align}
where $L=\bar{\partial}+[\Lambda_0,-]$.
\end{remark}

\begin{lemma} \label{ii76}
We have the following equalities:
\begin{align}
\bar{\partial} \xi_\mu&=0,\,\,\,\,\,\text{in $\Gamma(X, \mathcal{A}^{0,3}(\Theta_X))$}, \label{gg7}\\
[\Lambda_0,\xi_\mu]+\bar{\partial} \psi_\mu&=0\,\,\,\,\,\,\text{in $\Gamma(X, \mathcal{A}^{0,2}(\wedge^2\Theta_X))$}, \label{gg8}\\
[\Lambda_0,\psi_\mu]+\bar{\partial}\eta_\mu&=0\,\,\,\,\,\,\text{in $\Gamma(X, \mathcal{A}^{0,1}(\wedge^3\Theta_X))$}, \label{gg9}\\
[\Lambda_0, \eta_\mu]&=0\,\,\,\,\,\,\text{in $\Gamma(X, \mathcal{A}^{0,0}(\wedge^4\Theta_X))$}, \label{gg10}\\
\bar{\partial}E_{i|\mu}&=F \xi_\mu\,\,\,\,\,\,\text{in $\Gamma(U_i, \mathcal{A}^{0,2}(f^*\Theta_Y))$}, \label{gg11} \\
\pi(E_{i|\mu})+\bar{\partial} \lambda_{i|\mu}&=F \psi_\mu\,\,\,\,\,\,\text{in $\Gamma(U_i, \mathcal{A}^{0,1}(\wedge^2 f^*\Theta_Y))$}, \label{gg12}\\
\pi(\lambda_{i|\mu})&=F \eta_\mu\,\,\,\,\,\,\,\,\,\,\text{in $\Gamma(U_i, \mathcal{A}^{0,0}(\wedge^3 f^*\Theta_Y))$}, \label{gg13}\\
\lambda_{i|\mu}-\lambda_{j|\mu}&=\pi(\Gamma_{ij|\mu})\,\,\,\,\,\,\,\,\text{in $\Gamma(U_{ij},\mathcal{A}^{0,0}(\wedge^2 f^*\Theta_Y))$}, \label{gg14}\\
E_{i|\mu}-E_{j|\mu}&=\bar{\partial}\Gamma_{ij|\mu}\,\,\,\,\,\,\text{in $\Gamma(U_{ij},\mathcal{A}^{0,1}(f^*\Theta_Y))$}, \label{gg15}\\
\Gamma_{jk|\mu}-\Gamma_{ik|\mu}+\Gamma_{ij|\mu}&=0\,\,\,\,\,\,\text{in $\Gamma(U_{ijk},\mathcal{A}^{0,0}(f^*\Theta_Y))$}.   \label{gg16}
\end{align}
\end{lemma}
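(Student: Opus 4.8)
The plan is to obtain all ten identities $(\ref{gg7})$--$(\ref{gg16})$ as the separate components --- sorted according to Dolbeault bidegree, exterior power, and \v{C}ech degree --- of a single cocycle statement. Writing $L=\bar\partial+[\Lambda_0,-]$, $L_\pi=\bar\partial+\pi$, $\zeta_\mu:=\xi_\mu+\psi_\mu+\eta_\mu$, $\epsilon_\mu:=(E_{i|\mu}+\lambda_{i|\mu})_i$ and $\gamma_\mu:=(\Gamma_{ij|\mu})_{ij}$, the assertion is that $L\zeta_\mu=0$, $L_\pi\epsilon_\mu=F\zeta_\mu$, $\delta\epsilon_\mu=L_\pi\gamma_\mu$, and $\delta\gamma_\mu=0$; splitting these four equalities by bidegree yields exactly $(\ref{gg7})$--$(\ref{gg10})$, then $(\ref{gg11})$--$(\ref{gg13})$, then $(\ref{gg14})$--$(\ref{gg15})$, and finally $(\ref{gg16})$. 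In other words, the error terms of the $\mu$-th approximation form a total-degree-two cocycle in the \v{C}ech--Dolbeault resolution of the mapping cone of $F:\Theta_X^\bullet\to f^*\Theta_Y^\bullet$. The purely non-Poisson components $(\ref{gg7})$, $(\ref{gg11})$, $(\ref{gg15})$, $(\ref{gg16})$ are the computations of \cite{Hor73} and \cite{Hor74}; the Poisson self-consistency components $(\ref{gg8})$--$(\ref{gg10})$ are those of \cite{Kim15}; the new content is the coupling identities $(\ref{gg12})$--$(\ref{gg14})$.

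For $(\ref{gg7})$--$(\ref{gg10})$ I would set $\chi:=\phi^{\mu-1}+\Lambda'^{\mu-1}$, so that by $(\ref{gg112})$ (Remark \ref{ii70}) one has $L\chi+\tfrac12[\chi,\chi]=\zeta_\mu+(\text{terms of $t$-degree}\geq\mu+1)$. Applying $L$ and using $L^2=0$, the graded Leibniz rule $L[\chi,\chi]=2[L\chi,\chi]$, and the graded Jacobi identity $[[\chi,\chi],\chi]=0$, one gets $L\bigl(L\chi+\tfrac12[\chi,\chi]\bigr)=[L\chi,\chi]=[\zeta_\mu,\chi]+(\text{$t$-degree}\geq\mu+2)$, which has $t$-degree $\geq\mu+1$ since $\chi$ has no constant term. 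Hence $L\zeta_\mu\equiv_\mu0$; but $\zeta_\mu$ is homogeneous of $t$-degree $\mu$ and $L$ preserves the $t$-degree, so $L\zeta_\mu=0$ identically. Its four homogeneous summands, lying in $\mathcal{A}^{0,3}(\Theta_X)$, $\mathcal{A}^{0,2}(\wedge^2\Theta_X)$, $\mathcal{A}^{0,1}(\wedge^3\Theta_X)$, $\mathcal{A}^{0,0}(\wedge^4\Theta_X)$ respectively, are $(\ref{gg7})$, $(\ref{gg8})$, $(\ref{gg9})$, $(\ref{gg10})$.

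For $(\ref{gg11})$, $(\ref{gg15})$, $(\ref{gg16})$ I would differentiate the defining congruences $(\ref{gg4})$ and $(\ref{gg5})$ and apply $\bar\partial$: for instance, from $(\ref{gg4})$ and $\bar\partial^2=0$ one has $\bar\partial E_{i|\mu}^\alpha\equiv_\mu[\bar\partial\phi^{\mu-1},\Phi_i^{(\mu-1)\alpha}]-[\phi^{\mu-1},\bar\partial\Phi_i^{(\mu-1)\alpha}]$, and feeding $(\ref{gg1})$ and $(\ref{gg4})$ back in, the terms quadratic in $\phi^{\mu-1}$ cancel by the Jacobi identity while all remaining terms but $[\xi_\mu,f_i^\alpha]=(F\xi_\mu)^\alpha$ have $t$-degree $\geq\mu+1$, giving $\bar\partial E_{i|\mu}=F\xi_\mu$; the transition relation $(\ref{gg15})$ and the triple-overlap cocycle relation $(\ref{gg16})$ come out the same way from $(\ref{gg5})$ together with the cocycle identity $g_{ik}=g_{ij}\circ g_{jk}$ for $Y$. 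These are the arguments of \cite{Hor73} and \cite{Hor74} carried over verbatim, as they involve neither $\Lambda_0$ nor $\Pi_0$.

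The heart of the matter is $(\ref{gg12})$--$(\ref{gg14})$. By construction $\lambda_{i|\mu}$ records the degree-$\mu$ failure of the Poisson-map equation $(\ref{gg35})$, i.e.\ of $\sum_{\alpha,\beta}\Lambda_{\alpha\beta}^i(z_i,t)\,\partial_{z_i^\alpha}\Phi_i^p\,\partial_{z_i^\beta}\Phi_i^q=\Pi_{pq}^i(\Phi_i)$. Applying $\bar\partial$ to the explicit expression $(\ref{gg6})$ for $\lambda_{i|\mu}$, substituting the congruences for $\bar\partial\Phi_i^{(\mu-1)\alpha}$ and $\bar\partial\Lambda_{\alpha\beta}^{(\mu-1)i}$ coming from $(\ref{gg4})$ and $(\ref{gg2})$, and applying the chain rule to $\Pi_{pq}^i\circ\Phi_i$, the terms carrying $\phi^{\mu-1}$ should cancel pairwise --- exactly as in the order-one verification $\pi(\tau_i)=F\Lambda_i'$ of subsection \ref{jj1} (see $(\ref{bb10})$--$(\ref{bb12})$) and in the proof of $(\ref{jj14})$ --- leaving $\pi(E_{i|\mu})+\bar\partial\lambda_{i|\mu}=F\psi_\mu$, which is $(\ref{gg12})$. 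Identity $(\ref{gg13})$ is obtained in the same manner, now extracting from the failure of $(\ref{gg35})$ the contribution of $\tfrac12[\Lambda^{\mu-1},\Lambda^{\mu-1}]=\eta_\mu$; and $(\ref{gg14})$ comes from writing $(\ref{gg35})$ on the overlap $U_{ij}$ in the two coordinate systems $w_i,w_j$ and comparing via $(\ref{gg5})$ and the transformation rule for $\Pi_0$ under $w_i=g_{ij}(w_j)$, the discrepancy being $\lambda_{i|\mu}-\lambda_{j|\mu}=\pi(\Gamma_{ij|\mu})$. I expect the main obstacle to be the bookkeeping in $(\ref{gg12})$: one must track carefully which products of the approximate data $\phi^{\mu-1},\Lambda^{\mu-1},\Phi_i^{\mu-1}$ are negligible modulo $t$-degree $\mu$ and verify the pairwise cancellations, but no idea beyond the order-one computation already carried out in subsection \ref{jj1} should be required.
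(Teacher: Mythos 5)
Your proposal is correct and takes essentially the same route as the paper: $(\ref{gg7})$--$(\ref{gg10})$ via the Maurer--Cartan identity $(\ref{gg112})$ together with $L\circ L=0$, the classical identities $(\ref{gg11})$, $(\ref{gg15})$, $(\ref{gg16})$ quoted from Horikawa, and the coupling identities $(\ref{gg12})$--$(\ref{gg14})$ by the local-coordinate substitution-and-cancellation scheme modeled on the first-order computation of subsection $\ref{jj1}$ (using the Poisson-map equations for $\Phi_i$, the chain rule on $\Pi^i_{pq}\circ\Phi_i$, and the transformation rule for $\Pi$ under $g_{ij}$). The only difference is presentational --- you repackage the ten identities as a single mapping-cone cocycle statement and leave $(\ref{gg12})$--$(\ref{gg14})$ as sketched bookkeeping, which the paper instead carries out in full --- but the strategy and the ingredients coincide.
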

\begin{remark}\label{ii75}
We note that $(\ref{gg7})-(\ref{gg10})$ are equivalent to
\begin{align}\label{qq31}
L(\xi_\mu+\psi_\mu+\eta_\mu)=0.
\end{align}
\end{remark}
\begin{proof}

$(\ref{gg7}),(\ref{gg8}),(\ref{gg9})$, and $(\ref{gg10})$ follow from $(\ref{gg112}),(\ref{qq31})$, and $L\circ L=0$. $(\ref{gg11})$ follows from \cite{Hor73} p.385. $(\ref{gg15})$ and $(\ref{gg16})$ follow from \cite{Hor73} p.386.

We show $(\ref{gg12})$. It is sufficient to show that $\pi(E_{i|\mu})(w_i^p,w_i^q)+\bar{\partial} \lambda_{i|\mu}(w_i^p,w_i^q)=F\psi_\mu(w_i^p,w_i^q)$ for any $p,q$.
We note that
\begin{align}
F\psi_\mu(w_i^p,w_i^q) &\equiv_\mu F(\bar{\partial}\Lambda_i^{\mu-1}+[\Lambda_i^{\mu-1},\phi_i^{\mu-1}])(w_i^p,w_i^q) \notag\\
&\equiv_\mu \bar{\partial}\Lambda_i^{\mu-1}(\Phi_i^{(\mu-1)p},\Phi_i^{(\mu-1)q})+[\Lambda_i^{\mu-1},\phi_i^{\mu-1}](\Phi_i^{(\mu-1)p},\Phi_i^{(\mu-1)q}) \label{gg20}
\end{align}
Let us consider the first term of $(\ref{gg20})$.
\begin{align}\label{gg21}
\bar{\partial}\Lambda_i^{\mu-1}(\Phi_i^{(\mu-1)p},\Phi_i^{(\mu-1)q})=2\sum_{\alpha,\beta, r=1}^n\frac{\partial \Lambda_{\alpha\beta}^{(\mu-1)i}}{\partial \bar{z}_i^r } \frac{\partial \Phi_i^{(\mu-1)p}}{\partial z_i^\alpha}\frac{\partial \Phi_i^{(\mu-1)q}}{\partial z_i^\beta}d\bar{z}_i^r
\end{align}
Let us consider the second term of $(\ref{gg20})$.
\begin{align}\label{gg25}
&[\Lambda_i^{\mu-1},\phi_i^{\mu-1}](\Phi_i^{(\mu-1)p},\Phi_i^{(\mu-1)q})\\
&\equiv_\mu -\Lambda_i^{\mu-1}(\phi_i^{\mu-1}(\Phi_i^{(\mu-1)p}),\Phi_i^{(\mu-1)q})+\Lambda_i^{\mu-1}(\phi_i^{\mu-1}(\Phi_i^{(\mu-1)q}), \Phi_i^{(\mu-1)p})+\phi_i^{\mu-1}(\Lambda_i^{\mu-1}(\Phi_i^{(\mu-1)p},\Phi_i^{(\mu-1)q}))\notag \\
&\equiv_\mu -2\sum_{\alpha,\beta=1}^n \Lambda_{\alpha\beta}^{(\mu-1)i}\frac{\partial (\sum_{v=1}^n\phi_i^{(\mu-1)v}\frac{\partial \Phi_i^{(\mu-1)p}}{\partial z_i^v})}{\partial z_i^\alpha}\frac{\partial \Phi_i^{(\mu-1)q}}{\partial z_i^\beta}+2\sum_{\alpha,\beta=1}^n \Lambda_{\alpha\beta}^{(\mu-1)i}\frac{\partial (\sum_{v=1}^n\phi_i^{(\mu-1)v} \frac{\partial \Phi_i^{(\mu-1)q}}{\partial z_i^v})}{\partial z_i^\alpha}\frac{\partial \Phi_i^{(\mu-1)p}}{\partial z_i^\beta}\notag\\
&+\sum_{v=1}^n \phi_i^{(\mu-1)v}\frac{\partial}{\partial z_i^v}\left(\sum_{\alpha,\beta=1}^n 2\Lambda_{\alpha\beta}^{(\mu-1)i}\frac{\partial \Phi_i^{(\mu-1)p}}{\partial z_i^\alpha}\frac{\partial \Phi_i^{(\mu-1)q}}{\partial z_i^\beta}\right) \notag \\
&\equiv_\mu -2\sum_{\alpha,\beta,v=1}^n \Lambda_{\alpha\beta}^{(\mu-1)i}\frac{\partial}{\partial z_i^\alpha}\left(\phi_i^{(\mu-1)v}\frac{\partial \Phi_i^{(\mu-1)p}}{\partial z_i^v}\right)\frac{\partial \Phi_i^{(\mu-1)q}}{\partial z_i^\beta}+2\sum_{\alpha,\beta,v=1}^n \Lambda_{\alpha\beta}^{(\mu-1)i}\frac{\partial}{\partial z_i^\alpha}\left(\phi_i^{(\mu-1)v}\frac{\partial \Phi_i^{(\mu-1)q}}{\partial z_i^v}\right)\frac{\partial \Phi_i^{(\mu-1)p}}{\partial z_i^\beta} \notag\\
&+\sum_{v=1}^n \phi_i^{(\mu-1)v}\frac{\partial}{\partial z_i^v}( 2\lambda_{i| \mu}^{p,q}+2 \Pi_{pq}^i(\Phi_i^{\mu-1}))\notag\\
&\equiv_\mu - 2\sum_{\alpha,\beta,v=1}^n \Lambda_{\alpha\beta}^{(\mu-1)i}\frac{\partial}{\partial z_i^\alpha}\left(\phi_i^{(\mu-1)v}\frac{\partial \Phi_i^{(\mu-1)p}}{\partial z_i^v}\right)\frac{\partial \Phi_i^{(\mu-1)q}}{\partial z_i^\beta}+2\sum_{\alpha,\beta,v=1}^n \Lambda_{\alpha\beta}^{(\mu-1)i}\frac{\partial}{\partial z_i^\alpha}\left(\phi_i^{(\mu-1)v}\frac{\partial \Phi_i^{(\mu-1)q}}{\partial z_i^v}\right)\frac{\partial \Phi_i^{(\mu-1)p}}{\partial z_i^\beta} \notag\\
&+\sum_{v=1}^n2 \phi_i^{(\mu-1)v}\frac{\partial \Pi_{pq}^i(\Phi_i^{\mu-1})}{\partial z_i^v}\notag
\end{align}

On the other hand,
\begin{align}
\pi(E_{i|\mu})(w_i^p,w_i^q)=-\Lambda_0(E_{i|\mu}(w_i^p),f_i^q(z_i))+\Lambda_0(E_{i|\mu}(w_i^q),f_i^p(z_i))+E_{i|\mu}(\Pi_0(w_i^p,w_i^q)) \label{gg40}
\end{align}
We consider each term of $(\ref{gg40})$.
\begin{align}\label{gg27}
&-\Lambda_0(E_{i|\mu}(w_i^p), f_i^q(z_i))\equiv_\mu -\Lambda_i^{\mu-1}(E_{i|\mu}(w_i^p), \Phi_i^{(\mu-1)q})\\
&\equiv_\mu - \Lambda_i^{\mu-1}\left(\sum_{v=1}^n \frac{\partial \Phi_i^{(\mu-1)p}}{\partial \bar{z}_i^v} d\bar{z}_i^v+\sum_{v=1}^n \phi_i^{(\mu-1)v}\frac{\partial \Phi_i^{(\mu-1)p}}{\partial z_i^v}, \Phi_i^{(\mu-1)q}\right) \notag\\
&\equiv_\mu -\sum_{v=1}^n\Lambda_i^{\mu-1}\left(\frac{\partial \Phi_i^{(\mu-1)p}}{\partial \bar{z}_i^v},\Phi_i^{(\mu-1)q}\right)d\bar{z}_i^v -\sum_{v=1}^n\Lambda_i^{\mu-1}\left(\phi_i^{(\mu-1)v}\frac{\partial \Phi_i^{(\mu-1)p}}{\partial z_i^v},\Phi_i^{(\mu-1)q}\right) \notag \\
&\equiv_\mu -\sum_{\alpha,\beta,v=1}^n 2\Lambda_{\alpha\beta}^{(\mu-1)i}\frac{\partial}{\partial z_i^\alpha}\left( \frac{\partial \Phi_i^{(\mu-1)p}}{\partial \bar{z}_i^v}\right)\frac{\partial \Phi_i^{(\mu-1)q}}{\partial z_i^\beta} d\bar{z}_i^v-\sum_{\alpha,\beta,v=1}^n 2\Lambda_{\alpha\beta}^{(\mu-1)i}\frac{\partial}{\partial z_i^\alpha}\left(\phi_i^{(\mu-1)v} \frac{\partial \Phi_i^{(\mu-1)p}}{\partial z_i^v}\right)\frac{\partial \Phi_i^{(\mu-1)q}}{\partial z_i^\beta} \notag 
\end{align}

\begin{align}\label{gg28}
&\Lambda_0(E_{i|\mu}(w_i^q), f_i^p(z_i))\equiv_\mu \Lambda_i^{\mu-1}(E_{i|\mu}(w_i^q),\Phi_i^{(\mu-1)p})\\
&\equiv_\mu \sum_{\alpha,\beta,v=1}^n 2\Lambda_{\alpha\beta}^{(\mu-1)i}\frac{\partial}{\partial z_i^\alpha}\left( \frac{\partial \Phi_i^{(\mu-1)q}}{\partial \bar{z}_i^v}\right)\frac{\partial \Phi_i^{(\mu-1)p}}{\partial z_i^\beta} d\bar{z}_i^v+\sum_{\alpha,\beta,v=1}^n 2 \Lambda_{\alpha\beta}^{(\mu-1)i}\frac{\partial}{\partial z_i^\alpha}\left( \phi_i^{(\mu-1)v}\frac{\partial \Phi_i^{(\mu-1)q}}{\partial z_i^v}\right)\frac{\partial \Phi_i^{(\mu-1)p}}{\partial z_i^\beta} \notag
\end{align}
\begin{align}\label{gg29}
2E_{i|\mu}(\Pi_{pq}^i(w_i))&\equiv_\mu
2\sum_{\alpha=1}^m \bar{\partial}\Phi_i^{(\mu-1)\alpha}\frac{\partial \Pi_{pq}^i}{\partial w_i^\alpha}(\Phi_i^{(\mu-1)})+ 2\sum_{\alpha=1}^m\sum_{v=1}^n \phi_i^{\mu-1}\frac{\partial \Phi_i^{(\mu-1)\alpha}}{\partial z_i^v}\frac{\partial \Pi_{pq}^i}{\partial w_i^\alpha} (\Phi_i^{\mu-1})\\
&\equiv_\mu 2\sum_{v=1}^n \frac{\partial \Pi_{pq}^i(\Phi_i^{\mu-1})}{\partial \bar{z}_i^v}d\bar{z}_i^v +2\sum_{v=1}^n \phi_i^{(\mu-1)v}\frac{\partial \Pi_{pq}^i(\Phi_i^{\mu-1})}{\partial z_i^v} \notag
\end{align}
\begin{align}\label{gg291}
\bar{\partial}\lambda_{i|\mu}(w_i^p,w_i^q)&\equiv_\mu -2 \sum_{v=1}^n \frac{\partial \Pi_{pq}^i(\Phi_i^{\mu-1}(z_i,t))}{{\partial} \bar{z}_i^v}d\bar{z}_i^v+2\sum_{\alpha,\beta,v=1}^n\frac{\partial}{{\partial} \bar{z}_i^v}\left(\Lambda_{\alpha\beta}^{(\mu-1)i}\frac{\partial \Phi_i^{(\mu-1)p}}{\partial z_i^\alpha}\frac{\partial \Phi_i^{(\mu-1)q}}{\partial z_i^\beta} \right) d\bar{z}_v
\end{align}
From $(\ref{gg21}),(\ref{gg25}),(\ref{gg27}),(\ref{gg28}),(\ref{gg29})$, and $(\ref{gg291})$, we get $(\ref{gg12})$.

We show $(\ref{gg13})$. It is sufficient to show that $\pi(\lambda_{i|\mu})(w_i^a,w_i^b,w_i^c)=F \eta_\mu(w_i^a,w_i^b,w_i^c)$ for any $a,b,c$. Indeed,

\begin{align*}
&\pi(\lambda_{i|\mu})(w_i^a,w_i^b,w_i^c)\\
&=\Lambda_0(\lambda_{i|\mu}(w_i^a,w_i^b), f_i^c(z_i))-\Lambda_0(\lambda_{i|\mu}(w_i^a,w_i^c), f_i^b(z_i))+\Lambda_0(\lambda_{i|\mu}(w_i^b,w_i^c), f_i^a(z_i))\\
&+\lambda_{i|\mu}(\Pi_0(w_i^a,w_i^b),w_i^c)-\lambda_{i|\mu}(\Pi_0(w_i^a,w_i^c),w_i^b)+\lambda_{i|\mu}(\Pi_0(w_i^b,w_i^c),w_i^a)\\
&\equiv_\mu -2\Lambda_i^{\mu-1}(\Pi_{ab}^i(\Phi_i^{\mu-1}),\Phi_i^{(\mu-1)c})+\Lambda_i^{\mu-1}(\Lambda_i^{\mu-1}(\Phi_i^{(\mu-1)a},\Phi_i^{(\mu-1)b}),\Phi_i^{(\mu-1)c})\\ 
&+2\Lambda_i^{\mu-1}(\Pi_{ac}^i(\Phi_i^{\mu-1}),\Phi_i^{(\mu-1)b})+\Lambda_i^{\mu-1}(\Lambda_i^{\mu-1}(\Phi_i^{(\mu-1)a},\Phi_i^{(\mu-1)c})\Phi_i^{(\mu-1)b})\\
 &-2\Lambda_i^{\mu-1}(\Pi_{bc}^i(\Phi_i^{\mu-1}),\Phi_i^{(\mu-1)a})+\Lambda_i^{\mu-1}(\Lambda_i^{\mu-1}(\Phi_i^{(\mu-1)b},\Phi_i^{(\mu-1)c}),\Phi_i^{(\mu-1)a})\\
 &+ 2 \lambda_{i|\mu}(\Pi_{ab}^i(w_i),w_i^c)-2 \lambda_{i|\mu}(\Pi_{ac}^i(w_i),w_i^b)+2 \lambda_{i|\mu}(\Pi_{bc}^i(w_i),w_i^a)\\
 &\equiv_\mu -2\Lambda_i^{\mu-1}(\Pi_{ab}^i(\Phi_i^{\mu-1}),\Phi_i^{(\mu-1)c})+2\Lambda_i^{\mu-1}(\Pi_{ac}^i(\Phi_i^{\mu-1}),\Phi_i^{(\mu-1)b}) -2\Lambda_i^{\mu-1}(\Pi_{bc}^i(\Phi_i^{\mu-1}),\Phi_i^{(\mu-1)a})\\
 &+\frac{1}{2}[\Lambda_i^{\mu-1},\Lambda_i^{\mu-1}](\Phi_i^{(\mu-1)a},\Phi_i^{(\mu-1)b},\Phi_i^{(\mu-1)c})\\
 &+4\sum_{p=1}^m\left( -\Pi_{pc}^{ i}(\Phi_i^{\mu-1}(z_i,t))+\sum_{\alpha,\beta=1}^n \Lambda_{\alpha\beta}^{(\mu-1) i} \frac{\partial \Phi_i^{(\mu-1) p}}{\partial z_i^\alpha}\frac{\partial \Phi_i^{(\mu-1) c}}{\partial z_i^\beta}   \right)\frac{\partial \Pi_{ab}^i}{\partial w_i^p}(\Phi_i^{(\mu-1)})\\ 
 &-4\sum_{p=1}^m\left( -\Pi_{pb}^{ i}(\Phi_i^{\mu-1}(z_i,t))+\sum_{\alpha,\beta=1}^n \Lambda_{\alpha\beta}^{(\mu-1) i} \frac{\partial \Phi_i^{(\mu-1) p}}{\partial z_i^\alpha}\frac{\partial \Phi_i^{(\mu-1) b}}{\partial z_i^\beta}   \right)\frac{\partial \Pi_{ac}^i}{\partial w_i^p}(\Phi_i^{(\mu-1)})\\ 
& +4\sum_{p=1}^m\left( -\Pi_{pa}^{ i}(\Phi_i^{\mu-1}(z_i,t))+\sum_{\alpha,\beta=1}^n \Lambda_{\alpha\beta}^{(\mu-1) i} \frac{\partial \Phi_i^{(\mu-1) p}}{\partial z_i^\alpha}\frac{\partial \Phi_i^{(\mu-1) a}}{\partial z_i^\beta}   \right)\frac{\partial \Pi_{bc}^i}{\partial w_i^p}(\Phi_i^{(\mu-1)})\\
 &\equiv_\mu -2\Lambda_i^{\mu-1}(\Pi_{ab}^i(\Phi_i^{\mu-1}),\Phi_i^{(\mu-1)c})+2\Lambda_i^{\mu-1}(\Pi_{ac}^i(\Phi_i^{\mu-1}),\Phi_i^{(\mu-1)b}) -2\Lambda_i^{\mu-1}(\Pi_{bc}^i(\Phi_i^{\mu-1}),\Phi_i^{(\mu-1)a})\\
 &+\frac{1}{2}[\Lambda_i^{\mu-1},\Lambda_i^{\mu-1}](f_i^a,f_i^b,f_i^c)\\
 &-4\sum_{p=1}^n \left(\Pi_{pc}^{ i}(\Phi_i^{\mu-1}(z_i,t))\frac{\partial \Pi_{ab}^i}{\partial w_i^p } (\Phi_i^{(\mu-1)})- \Pi_{pb}^{ i}(\Phi_i^{\mu-1}(z_i,t))\frac{\partial \Pi_{ac}^i}{\partial w_i^p} (\Phi_i^{(\mu-1)})+\Pi_{pa}^{ i}(\Phi_i^{\mu-1}(z_i,t)) \frac{\partial \Pi_{bc}^i}{\partial w_i^p}(\Phi_i^{(\mu-1)}) \right)\\
 &+2\Lambda_i^{\mu-1} (\Pi_{ab}^i(\Phi_i^{(\mu-1)}),\Phi_i^{(\mu-1)c}) -2\Lambda_i^{\mu-1} (\Pi_{ac}^i(\Phi_i^{(\mu-1)}),\Phi_i^{(\mu-1)b}) +2\Lambda_i^{\mu-1} (\Pi_{bc}^i(\Phi_i^{(\mu-1)}),\Phi_i^{(\mu-1)a})\\
 &\equiv_\mu \frac{1}{2} F[\Lambda_i^{\mu-1},\Lambda_i^{\mu-1}](w_i^a,w_i^b,w_i^c)\equiv_\mu F\eta_\mu (w_i^a,w_i^b,w_i^c)
\end{align*}

Lastly, we show that $(\ref{gg14})$. It is sufficient to show that $\pi(\Gamma_{ij})(w_i^p,w_i^q)=(\lambda_{i|\mu}-\lambda_{j\mu})(w_i^p,w_i^q)$. Indeed,
\begin{align}\label{gg45}
&\pi(\Gamma_{ij})(w_i^p,w_i^q)=\Lambda_0(\Gamma_{ij}(w_i^p),f_i^q(z_i))-\Lambda_0(\Gamma_{ij}(w_i^q),f_i^p(z_i))-\Gamma_{ij}(\Pi_0(w_i^p,w_i^q))\\
&\equiv_\mu \Lambda_i^{\mu-1}(\Phi_i^{(\mu-1)p}-g_{ij}^p(\Phi_j^{\mu-1}),\Phi_i^{(\mu-1)q})-\Lambda_i^{\mu-1}(\Phi_i^{(\mu-1)q}-g_{ij}^q(\Phi_j^{\mu-1}),g_{ij}^p(\Phi_j^{\mu-1}))-\sum_{\alpha=1}^m 2(\Phi_i^{(\mu-1)\alpha}-g_{ij}^\alpha(\Phi_j^{\mu-1}))\frac{\partial \Pi_{pq}^i}{\partial w_i^\alpha}(\Phi_i^{\mu-1}) \notag\\
&\equiv_\mu \Lambda_i^{\mu-1}(\Phi_i^{(\mu-1)p},\Phi_i^{(\mu-1)q})-\Lambda_i^{\mu-1}(g_{ij}^p(\Phi_i^{\mu-1}),g_{ij}^q(\Phi_i^{(\mu-1)}))-\sum_{\alpha=1}^m 2(\Phi_i^{(\mu-1)\alpha}-g_{ij}^\alpha(\Phi_j^{\mu-1}))\frac{\partial \Pi_{pq}^i}{\partial w_i^\alpha}(\Phi_i^{\mu-1})\notag
\end{align}
\begin{align}\label{gg46}
&\lambda_{i|\mu}(w_i^p,w_i^q)\equiv_\mu-2\Pi_{pq}^i(\Phi_i^{\mu-1})+\Lambda_i^{\mu-1}(\Phi_i^{(\mu-1)p},\Phi_i^{(\mu-1)q})\\
&\lambda_{j|\mu}(w_i^p,w_i^q)\equiv_\mu 2\sum_{r,s=1}^m\left( -\Pi_{rs}^{ j}(\Phi_j^{\mu-1}(z_j,t))+\sum_{\alpha,\beta=1}^n \Lambda_{\alpha\beta}^{(\mu-1) j} \frac{\partial \Phi_j^{(\mu-1) r}}{\partial z_j^\alpha}\frac{\partial \Phi_j^{(\mu-1) s}}{\partial z_j^\beta}   \right)\frac{\partial g_{ij}^p}{\partial w_j^r}(\Phi_j^{\mu-1})\frac{\partial g_{ij}^q}{\partial w_j^s}(\Phi_j^{\mu-1}) \label{gg47}\\
&\equiv_\mu-2\Pi_{pq}^i(g_{ij}(\Phi_j^{\mu-1}))+2\sum_{\alpha,\beta=1}^m \Lambda_{\alpha\beta}^{(\mu-1) j} \frac{\partial g_{ij}^p( \Phi_j^{\mu-1})}{\partial z_j^\alpha}\frac{\partial g_{ij}^q(\Phi_j^{\mu-1})}{\partial z_j^\beta}=-2\Pi_{pq}^i(g_{ij}(\Phi_j^{\mu-1}))+2\sum_{\alpha,\beta=1}^m \Lambda_{\alpha\beta}^{(\mu-1) i} \frac{\partial g_{ij}^p( \Phi_j^{\mu-1})}{\partial z_i^\alpha}\frac{\partial g_{ij}^q(\Phi_j^{\mu-1})}{\partial z_i^\beta} \notag\\
&\equiv_\mu-2\Pi_{pq}^i(g_{ij}(\Phi_j^{\mu-1})-\Phi_i^{\mu-1}+\Phi_i^{\mu-1})+\Lambda_i^{\mu-1}(g_{ij}^p(\Phi_j^{\mu-1}),g_{ij}^q(\Phi_j^{\mu-1})) \notag \\
&\equiv_\mu -2\Pi_{pq}^i(\Phi_i^{\mu-1})+ \sum_{\alpha=1}^m -2(g_{ij}^\alpha(\Phi_j^{\mu-1})-\Phi_i^{(\mu-1)\alpha})\frac{\partial \Pi_{pq}^i}{\partial w_i^\alpha} (\Phi_i^{\mu-1}) + \Lambda_i^{\mu-1}(g_{ij}^p(\Phi_j^{\mu-1}),g_{ij}^q(\Phi_j^{\mu-1}))         \notag
\end{align}
From $(\ref{gg45}),(\ref{gg46})$, and $(\ref{gg47})$, we get $(\ref{gg14})$.
\end{proof}

We will determine $\phi_\mu, \Lambda_\mu$, and $\Phi_{i|\mu}$ such that $\phi^{\mu}:=\phi^{\mu-1}+\phi_\mu, \Lambda^{\mu}:=\Lambda^{\mu-1}+\Lambda_\mu$, and $\Phi_i^\mu:=\Phi_i^{\mu-1}+\Phi_{i|\mu}$ satisfy $(\ref{gg30})_\mu-(\ref{gg35})_\mu$. 

\begin{lemma}\label{ii86}
$(\ref{gg30})_\mu-(\ref{gg35})_\mu$ are equivalent to the following equalities:
\begin{align}
\bar{\partial} \phi_\mu&=-\xi_\mu \label{gg51}\\
\bar{\partial} \Lambda_\mu+[\Lambda_0, \phi_\mu]&=-\psi_\mu \label{gg52}\\
[\Lambda_0, \Lambda_\mu]&=-\eta_\mu \label{gg53}\\
-E_{i|\mu}&=\bar{\partial}\Phi_{i|\mu}+F\phi_\mu \label{gg54}\\
-\lambda_{i|\mu}&= \pi(\Phi_{i|\mu})+F\Lambda_\mu \label{gg55}\\
\Gamma_{ij|\mu}&=\Phi_{j|\mu}-\Phi_{i|\mu}\label{gg56}
\end{align}

\end{lemma}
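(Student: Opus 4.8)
The plan is to substitute the decompositions $\phi^\mu=\phi^{\mu-1}+\phi_\mu$, $\Lambda^\mu=\Lambda^{\mu-1}+\Lambda_\mu$ and $\Phi_i^\mu=\Phi_i^{\mu-1}+\Phi_{i|\mu}$ into each of $(\ref{gg30})_\mu$--$(\ref{gg35})_\mu$, expand, and collect terms by degree in $t$. The guiding observation is that the three corrections $\phi_\mu,\Lambda_\mu,\Phi_{i|\mu}$ are homogeneous of degree $\mu\geq 1$, that $\phi^{\mu-1}$ (since $\phi_0=0$) and $\Lambda^{\mu-1}-\Lambda_0$ vanish to order $\geq 1$; hence any Schouten bracket or product containing two degree-$\mu$ corrections, or one degree-$\mu$ correction together with another first-order-or-higher quantity, has total degree $>\mu$ and disappears modulo degree $\mu$. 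For the same reason $\Phi_i^{\mu-1}$ may be replaced by $f_i$, and $\Lambda^{(\mu-1)i}$ by $\Lambda_0$, wherever they are multiplied against a factor of degree $\geq\mu$. Combining these reductions with the inductive hypotheses $(\ref{gg30})_{\mu-1}$--$(\ref{gg35})_{\mu-1}$ and the defining congruences $(\ref{gg1})$--$(\ref{gg6})$ for $\xi_\mu,\psi_\mu,\eta_\mu,E_{i|\mu},\Gamma_{ij|\mu},\lambda_{i|\mu}$, each of $(\ref{gg30})_\mu$--$(\ref{gg35})_\mu$ collapses to a linear equation in $\phi_\mu,\Lambda_\mu,\Phi_{i|\mu}$ and $t_\mu$.

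Five of the six resulting equivalences are already essentially available in the literature and I would only indicate the cancellations. Expanding $(\ref{gg30})_\mu$ gives $\xi_\mu+\bar\partial\phi_\mu\equiv_\mu 0$, i.e. $(\ref{gg51})$; in the same way $(\ref{gg31})_\mu$ and $(\ref{gg32})_\mu$ become $[\Lambda_0,\Lambda_\mu]=-\eta_\mu$ and $\bar\partial\Lambda_\mu+[\Lambda_0,\phi_\mu]=-\psi_\mu$, namely $(\ref{gg53})$ and $(\ref{gg52})$ --- these are precisely the linearizations used in \cite{Kim15} for Poisson deformations of $X$. For $(\ref{gg33})_\mu$ one uses that $\phi_0=0$, so that the only surviving degree-$\mu$ contribution of $[\phi^\mu,\Phi_i^\mu]$ beyond $[\phi^{\mu-1},\Phi_i^{\mu-1}]$ is $[\phi_\mu,f_i]=F\phi_\mu$; with $(\ref{gg4})$ this yields $-E_{i|\mu}=\bar\partial\Phi_{i|\mu}+F\phi_\mu$, i.e. $(\ref{gg54})$. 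Expanding $g_{ij}(\Phi_j^\mu)$ to first order in $\Phi_{j|\mu}$ and using $(\ref{gg5})$ turns $(\ref{gg34})_\mu$ into $\Gamma_{ij|\mu}=\Phi_{j|\mu}-\Phi_{i|\mu}$, i.e. $(\ref{gg56})$; these last two go exactly as in \cite{Hor73}, \cite{Hor74}.

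The one genuinely new equivalence is $(\ref{gg35})_\mu\Leftrightarrow(\ref{gg55})$, which I would establish by the same computation already carried out for the identity $(\ref{bb2})$ in Subsection \ref{jj1} and for $(\ref{gg12})$ in Lemma \ref{ii76}. Expanding
\[
\sum_{\alpha,\beta=1}^n\bigl(\Lambda_{\alpha\beta}^{(\mu-1)i}+\Lambda_{\alpha\beta|\mu}^i\bigr)\frac{\partial(\Phi_i^{(\mu-1)p}+\Phi_{i|\mu}^p)}{\partial z_i^\alpha}\frac{\partial(\Phi_i^{(\mu-1)q}+\Phi_{i|\mu}^q)}{\partial z_i^\beta}-\Pi_{pq}^i(\Phi_i^{\mu-1}+\Phi_{i|\mu}),
\]
the contribution with no degree-$\mu$ correction is $\lambda_{i|\mu}^{p,q}$ modulo degree $\mu$ by $(\ref{gg6})$ and $(\ref{gg35})_{\mu-1}$; the contribution linear in $\Lambda_\mu$ is $\tfrac12 F\Lambda_\mu(w_i^p,w_i^q)$ after replacing $\Phi_i^{\mu-1}$ by $f_i$; and the contribution linear in $\Phi_{i|\mu}$ --- arising both from the two derivative factors (with $\Lambda^{(\mu-1)i}$ replaced by $\Lambda_0$) and from the Taylor expansion of $\Pi_{pq}^i$ --- assembles into $\tfrac12\pi(\Phi_{i|\mu})(w_i^p,w_i^q)$ via the identity $\pi(\tau_i)(w_i^p,w_i^q)=\Lambda_0(\tau_i(w_i^p),f_i^q)-\Lambda_0(\tau_i(w_i^q),f_i^p)-\tau_i(\Pi_0(w_i^p,w_i^q))$. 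Setting the whole expression congruent to $0$ modulo degree $\mu$ and evaluating on each pair $(w_i^p,w_i^q)$ gives $\lambda_{i|\mu}+F\Lambda_\mu+\pi(\Phi_{i|\mu})=0$, which is $(\ref{gg55})$.

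The main obstacle is purely one of bookkeeping rather than of ideas: one must track the antisymmetrization factors of $2$, the sign conventions in the Schouten bracket and in the operators $F$ and $\pi$, and the order in which $\Phi_i^{\mu-1}$ is replaced by $f_i$ and $\Lambda^{(\mu-1)i}$ by $\Lambda_0$ (legitimate only against factors of degree $\geq\mu$). Since the analogous computation has already been performed in the two places cited above, I expect no difficulty beyond this.
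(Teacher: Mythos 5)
Your proposal is correct and follows essentially the same route as the paper: equations $(\ref{gg51})$--$(\ref{gg53})$ are extracted as the degree-$\mu$ part of the Maurer--Cartan-type equation (the paper does this via $(\ref{gg50})$ and $(\ref{gg112})$), $(\ref{gg54})$ and $(\ref{gg56})$ are handled exactly as you indicate by linearizing and citing Horikawa, and the only new equivalence $(\ref{gg35})_\mu\Leftrightarrow(\ref{gg55})$ is proved by the same local-coordinate expansion evaluated on pairs $(w_i^p,w_i^q)$ using the explicit formula for $\pi(\Phi_{i|\mu})$. The overall factors of $2$ you flag appear on both sides and cancel, so no gap remains.
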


\begin{remark}
$(\ref{gg54})$ and $(\ref{gg55})$ are equivalent to
\begin{align}\label{ii95}
(-E_{i|\mu},-\lambda_{i|\mu})=L_\pi(\Phi_{i|\mu})+F(\phi_\mu, \Lambda_\mu)
\end{align}
\end{remark}

\begin{proof}
$(\ref{gg51}),(\ref{gg52})$, and $(\ref{gg53})$ follow from $(\ref{gg50})$, and $(\ref{gg112})$. $(\ref{gg56})$ follows from \cite{Hor73} p.386. 

Let us show that $(\ref{gg33})_\mu$ is equivalent to $(\ref{gg54})$. Indeed,
\begin{align*}
0\equiv_\mu \bar{\partial} \Phi_i^\mu+[\phi^\mu,\Phi_i^\mu]\equiv_\mu \bar{\partial} \Phi_i^{\mu-1}+\bar{\partial} \Phi_{i|\mu}+[\phi^{\mu-1}+\phi_\mu, \Phi^{\mu-1}+\Phi_{i|\mu}]\equiv_\mu E_{i|\mu}+\bar{\partial}\Phi_{i|\mu}+[\phi_\mu, f]
\end{align*}

It remains to show that $(\ref{gg35})$ is equivalent to $(\ref{gg55})$. Indeed, $(\ref{gg35})_\mu$ is equivalent to the following.

\begin{align*}
\sum_{\alpha,\beta=1}^n (\Lambda_{\alpha\beta}^{(\mu-1)i}+\Lambda^i_{\alpha\beta|\mu})\frac{\partial (\Phi_i^{(\mu-1)p}+\Phi_{i|\mu}^p)}{\partial z_i^\alpha}\frac{\partial (\Phi_i^{(\mu-1)q}+\Phi_{i|\mu}^q)}{\partial z_i^\beta}\equiv_\mu \Pi_{pq}^i(\Phi_i^{\mu-1}+\Phi_{i|\mu})\equiv_\mu \Pi_{pq}^i(\Phi_i^{\mu-1})+\sum_{r=1}^m \frac{\partial \Pi_{pq}^i}{\partial w_i^r}(\Phi_i^{\mu-1})\Phi_{i|\mu}^r\\
\iff \lambda_{i|\mu}^{p,q}+\sum_{\alpha,\beta=1}^n \Lambda_{\alpha\beta }^i(z_i)\frac{\partial \Phi_{i|\mu}^p}{\partial z_i^\alpha}\frac{\partial f_i^q}{\partial z_i^\beta}+\sum_{\alpha,\beta=1}^n \Lambda_{\alpha\beta }^i(z_i)\frac{\partial f_i^p}{\partial z_i^\alpha}\frac{\partial \Phi_{i|\mu}^q}{\partial z_i^\beta}+\sum_{\alpha,\beta=1}^n \Lambda_{\alpha\beta|\mu}^i\frac{\partial f_i^p}{\partial z_i^\alpha}\frac{\partial f_i^q}{\partial z_i^\beta}-\sum_{r=1}^m \frac{\partial \Pi_{pq}^i}{\partial w_i^r}(f_i)\Phi_{i|\mu}^r\equiv_\mu 0\\
\iff \lambda_{i|\mu}(w_i^p,w_i^q)+\pi(\Phi_{i|\mu})(w_i^p,w_i^q)+ F\Lambda_\mu\equiv_\mu 0
\end{align*}
which comes from the following: for any $p,q$,
\begin{align*}
\pi(\Phi_{i|\mu})(w_i^p,w_i^q)&=\Lambda_0(\Phi_{i|\mu}(w_i^p),f_i^q)-\Lambda_0(\Phi_{i|\mu}(w_i^q), f_i^q)-\Phi_{i|\mu}(\Pi_0(w_i^p,w_i^q))\\
&=\Lambda_0(\Phi_{i|\mu}^p,f_i^q)-\Lambda_0(\Phi_{i|\mu}^q, f_i^p)-2\sum_{r=1}^n\Phi_{i|\mu}^r\frac{\partial \Pi_{pq}^i}{\partial w_i^r}(f_i)
\end{align*}
\end{proof}

\begin{lemma}
Under the hypothesis of Theorem $\ref{hh1}$, we can find
\begin{align*}
\phi_\mu\in A^{0,1}(\Theta_X),\,\,\,\,\,\Lambda_\mu\in A^{0,0}(\wedge^2 \Theta_X),\,\,\,\,\,\Phi_{i|\mu}\in \Gamma(U_i,\mathcal{A}^{0,0}(f^*\Theta_Y))
\end{align*}
satisfying $(\ref{gg51})-(\ref{gg56})$.

\end{lemma}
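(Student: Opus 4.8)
The plan is to run the standard Kodaira--Spencer--Horikawa obstruction argument, but for the hypercohomology groups attached to $F\colon\Theta_X^\bullet\to f^*\Theta_Y^\bullet$, invoking each of the two hypotheses of Theorem \ref{hh1} exactly once. Conceptually, with $C:=\mathrm{Cone}(F\colon\Theta_X^\bullet\to f^*\Theta_Y^\bullet)[-1]$ one has $PD_{(X,\Lambda_0)/(Y,\Pi_0)}\cong\mathbb{H}^1(X,C)$ (this is what the exact sequence $\mathbb{H}^0(X,f^*\Theta_Y^\bullet)\to PD_{(X,\Lambda_0)/(Y,\Pi_0)}\to\mathbb{H}^1(X,\Theta_X^\bullet)\xrightarrow{F}\mathbb{H}^1(X,f^*\Theta_Y^\bullet)$ says), the full obstruction datum $(\xi_\mu+\psi_\mu+\eta_\mu,\,E_{i|\mu}+\lambda_{i|\mu},\,\Gamma_{ij|\mu})$ of Lemma \ref{ii76} is a degree-$2$ cocycle of $C$, and the whole content of the present lemma is that its class in $\mathbb{H}^2(X,C)$ vanishes; once that is known, $(\phi_\mu+\Lambda_\mu,\,\Phi_{i|\mu})$ is \emph{any} degree-$1$ cochain of $C$ with the prescribed coboundary, and its components unwind into \eqref{gg51}--\eqref{gg53} on the $\Theta_X^\bullet$ side and \eqref{gg54}--\eqref{gg56} on the $f^*\Theta_Y^\bullet$ side. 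I will present this concretely in two steps matching the split of \eqref{gg51}--\eqref{gg56}.

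\emph{Step 1: producing $\phi_\mu$ and $\Lambda_\mu$.} By Remark \ref{ii75} the element $\xi_\mu+\psi_\mu+\eta_\mu$ is $L$-closed, hence defines a class in $\mathbb{H}^2(X,\Theta_X^\bullet)$. The relations \eqref{gg11}--\eqref{gg16} of Lemma \ref{ii76} state exactly that $F(\xi_\mu+\psi_\mu+\eta_\mu)$ is the total \v{C}ech--Dolbeault--Poisson coboundary of $(E_{i|\mu},\lambda_{i|\mu},\Gamma_{ij|\mu})$, so $F$ sends this class to $0$ in $\mathbb{H}^2(X,f^*\Theta_Y^\bullet)$. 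Hypothesis $(2)$ of Theorem \ref{hh1} (injectivity of $F$ on $\mathbb{H}^2$) then forces $[\xi_\mu+\psi_\mu+\eta_\mu]=0$ in $\mathbb{H}^2(X,\Theta_X^\bullet)$, i.e.\ $\xi_\mu+\psi_\mu+\eta_\mu=L(\phi_\mu+\Lambda_\mu)$ for some $\phi_\mu\in A^{0,1}(\Theta_X)$, $\Lambda_\mu\in A^{0,0}(\wedge^2\Theta_X)$; splitting $L=\bar\partial+[\Lambda_0,-]$ into its bidegrees gives \eqref{gg51}, \eqref{gg52}, \eqref{gg53}. Since the datum is a homogeneous polynomial of degree $\mu$ in $t$ and all resolution differentials are $\mathbb{C}[t]$-linear, we may take $\phi_\mu,\Lambda_\mu$ homogeneous of degree $\mu$ (project any solution onto its degree-$\mu$ part, or use the Green operator of $L$).

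\emph{Step 2: adjusting the choice and producing $\Phi_{i|\mu}$.} With $\phi_\mu,\Lambda_\mu$ fixed as in Step 1, put
\[
\sigma_\mu:=\bigl(-E_{i|\mu}-F\phi_\mu,\ -\lambda_{i|\mu}-F\Lambda_\mu,\ \Gamma_{ij|\mu}\bigr).
\]
Combining \eqref{gg11}, \eqref{gg12}, \eqref{gg13} with \eqref{gg51}--\eqref{gg53} and the chain-map identity $\pi\circ F=F\circ[\Lambda_0,-]$ one gets $\bar\partial(-E_{i|\mu}-F\phi_\mu)=0$, then $\pi(-E_{i|\mu}-F\phi_\mu)+\bar\partial(-\lambda_{i|\mu}-F\Lambda_\mu)=0$, then $\pi(-\lambda_{i|\mu}-F\Lambda_\mu)=0$, while \eqref{gg14}, \eqref{gg15}, \eqref{gg16} supply the \v{C}ech-direction relations; hence $\sigma_\mu$ is a degree-$1$ cocycle and defines $[\sigma_\mu]\in\mathbb{H}^1(X,f^*\Theta_Y^\bullet)$. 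Replacing $(\phi_\mu,\Lambda_\mu)$ by $(\phi_\mu+\phi_\mu',\Lambda_\mu+\Lambda_\mu')$ with $\phi_\mu'+\Lambda_\mu'$ any $L$-closed degree-$1$ element leaves \eqref{gg51}--\eqref{gg53} valid and changes $[\sigma_\mu]$ by $-F([\phi_\mu'+\Lambda_\mu'])$; by hypothesis $(1)$ (surjectivity of $F$ on $\mathbb{H}^1$) we may choose such an element so that afterwards $[\sigma_\mu]=0$. Then $\sigma_\mu$ is the total coboundary of a degree-$0$ cochain, which in the \v{C}ech--Dolbeault--Poisson resolution of $f^*\Theta_Y^\bullet$ is just a $0$-cochain $(\Phi_{i|\mu})\in C^0(\mathcal{U},\mathcal{A}^{0,0}(f^*\Theta_Y))$, homogeneous of degree $\mu$ for the same reason as before; reading off the three graded components of the equation "total differential of $(\Phi_{i|\mu})$ equals $\sigma_\mu$" yields precisely \eqref{gg54}, \eqref{gg55}, \eqref{gg56}. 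This completes the construction.

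\emph{Where the difficulty lies.} The genuinely substantial work is already done in Lemma \ref{ii76}, especially the Poisson compatibility identities \eqref{gg12}--\eqref{gg14}; the present lemma is the homological packaging. The only point requiring care is the bookkeeping of the triple (\v{C}ech/Dolbeault/Poisson) grading and the signs in the total differential, so that "$\sigma_\mu$ is a cocycle'' and "its primitive reproduces \eqref{gg54}--\eqref{gg56}'' come out with the conventions fixed in \eqref{gg51}--\eqref{gg56} and in Lemma \ref{kk10}. No analytic input beyond finite-dimensionality of the $\mathbb{H}^i$ and the $\mathbb{C}[t]$-linear splitting by $t$-degree is needed here; the harmonic theory alluded to after Theorem \ref{pp43} enters only in the convergence estimates, which are outside the scope of this lemma.
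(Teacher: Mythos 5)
Your proposal is correct and is essentially the paper's own proof: you use the two hypotheses in exactly the same roles, killing the class of $\xi_\mu+\psi_\mu+\eta_\mu$ by injectivity of $F$ on $\mathbb{H}^2(X,\Theta_X^\bullet)$ and then trivializing the remaining $L_\pi$-cocycle by surjectivity of $F$ on $\mathbb{H}^1(X,\Theta_X^\bullet)$ to produce $\Phi_{i|\mu}$. The only difference is organizational (and harmless): the paper first splits $\Gamma_{ij|\mu}=\Gamma_{i|\mu}-\Gamma_{j|\mu}$ so as to work with global forms $E'_\mu,\lambda'_\mu$ and corrects by the $L$-closed pair $(x_\mu,y_\mu)$ at the very end, whereas you keep the mixed \v{C}ech--Dolbeault--Poisson representative throughout and split the \v{C}ech direction only when extracting the primitive, which amounts to the same computation up to the sign bookkeeping you already flag.
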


\begin{proof}
From $(\ref{gg16})$, we can find $\Gamma_{i|\mu}\in \Gamma(U_i,\mathcal{A}^{0,0}(f^*\Theta_Y))$ such that $\Gamma_{ij|\mu}=\Gamma_{i|\mu}-\Gamma_{j|\mu}$. From $(\ref{gg15})$, we see that $E_{i|\mu}':=E_{i|\mu}-\bar{\partial} \Gamma_{i|\mu}$ defines a global section $E_\mu'\in A^{0,1}(f^*\Theta_Y)$. On the other hand, from $(\ref{gg14})$, $\lambda_{i|\mu}':=\lambda_{i|\mu}-\pi(\Gamma_{i|\mu})$ defines a global section $\lambda_\mu' \in A^{0,0}(\wedge^2 \Theta_X)$. Then $E_{\mu}'$ and $\lambda_\mu'$ satisfy $\bar{\partial}E_{\mu}'=\bar{\partial} E_{i|\mu}=F\xi_\mu,\pi(E_\mu')+\bar{\partial} \lambda'_\mu=\pi(E_{i|\mu}')+\bar{\partial} \lambda_{i|\mu}=F \psi_\mu$, and $\pi(\lambda_\mu')=\pi(\lambda_\mu)=F \eta_\mu$. Since $F:\mathbb{H}^2(X,\Theta_X^\bullet)\to \mathbb{H}^2(X, f^*\Theta_Y^\bullet)$ is injective, there exist $\phi_\mu'$ and $\Lambda_\mu'$ such that $\bar{\partial}\phi_\mu'=-\xi_\mu$, $\bar{\partial}\Lambda_\mu'+[\Lambda_0, \phi_\mu']=-\psi_\mu$, and $[\Lambda_0,\Lambda_\mu']=-\eta_\mu$. Then we have $\bar{\partial} (E_\mu'+F\phi_\mu')=0, \pi(\lambda_\mu'+F\Lambda_\mu')=0$, and $\bar{\partial}(\lambda_\mu'+F\Lambda_\mu')+\pi(E_\mu'+F\phi_\mu')=0$. Since $F:\mathbb{H}^1(X,\Theta_X^\bullet)\to \mathbb{H}^1(X, f^*\Theta_Y^\bullet)$ is surjective, there exist $x_\mu\in A^{0,1}(\Theta_X)$ and $y_\mu\in A^{0,0}(\wedge^2 \Theta_X)$ with $\bar{\partial} x_\mu=0, \bar{\partial} y_\mu+[\Lambda_0, x_\mu]=0$, and $[\Lambda_0, y_\mu]=0$, and there exists $\Phi_\mu'$ such that $\bar{\partial}\Phi_\mu'=Fx_\mu-( E_\mu'+F\phi_\mu'),\,\,\,\,\, \pi(\Phi'_\mu)=Fy_\mu-(\lambda_\mu'+F\Lambda_\mu')$, equivalently, $L_\pi(\Phi'_\mu)=F(x_\mu,y_\mu)-((E'_\mu,\lambda'_\mu)+F(\phi_\mu',\Lambda_\mu'))$.

Let $\phi_\mu:=\phi_\mu'-x_\mu$, $\Lambda_\mu:=\Lambda_\mu'-y_\mu$, and $\Phi_{i|\mu}:=\Phi_\mu'-\Gamma_{i|\mu}$. Then $\bar{\partial} \phi_\mu=\bar{\partial}\phi_\mu'=-\xi_\mu$, $\bar{\partial}\Lambda_\mu+[\Lambda_0,\phi_\mu]=-\psi_\mu$, $[\Lambda_0, \Lambda_\mu]=-\eta_\mu$, and we have $L_\pi(\Phi_{i|\mu})+F(\phi_\mu,\Lambda_\mu)=-(E_{i|\mu},\lambda_\mu)$. Lastly, $\Phi_{j|\mu}-\Phi_{i|\mu}=-\Gamma_{j|\mu}+\Gamma_{i|\mu}=\Gamma_{ij|\mu}$.

\end{proof}

This completes the inductive constructions of formal power series
 \begin{align}\label{qq32}
\phi(t)=\sum_{\mu=1}^\infty \phi_\mu(t),\,\,\,\,\,\Lambda(t)=\Lambda_0+\sum_{\mu=1}^\infty \Lambda_\mu(t),\,\,\,\,\,\Phi_i(z_i,t)=f_i(z_i)+\sum_{\mu=1}^\infty \Phi_{i|\mu}(z_i,t)
\end{align}
 satisfying $(\ref{ee1})-(\ref{ee2})$.

\end{proof}

\subsubsection{Remark on Proof of convergence} \label{pp40}\

Though the author could not touch the proof of convergence, it seems that we can formally apply Horikawa's method presented in \cite{Hor73} p.389-393 in the context of holomorphic Poisson deformations. But the author believes that we need a deep understanding of harmonic theories on the operators $L=\bar{\partial}+[\Lambda_0,-]$ on $\Theta_X^\bullet$ and $L_\pi=\bar{\partial}+\pi$ on $f^*\Theta_Y^\bullet$. If it would turn out that we can formally apply Horikawa's method, the proof depends on the following lemma which is the modification of the key lemma in \cite{Hor73} p.390.

\begin{lemma}[conjecture]
Suppose that $\phi\in A^{0,1}(\Theta_X), \psi\in A^{0,0}(\wedge^2 \Theta_X)$, $E\in A^{0,1}(f^* \Theta_Y)$, and $a\in A^{0,0}(\wedge^2 f^* \Theta_Y)$ satisfying
$L_\pi((E,a)+F(\phi,\psi))=0$ are given. Then we can find $x\in A^{0,1}(\Theta_X), y\in A^{0,0}(\wedge^2 \Theta_X)$, and $\Phi\in A^{0,0}(f^* \Theta_Y)$ in such a way that
\begin{align*}
\Box(x,y)&=0\\
(E,a)+F(\phi, \psi)&=F(x,y)+L_\pi(\Phi)\\
|(x,y)|_{k+\alpha}\ll K_2(|(\phi,& \psi)|_{k+\alpha}+|(E,a)|_{k+1-\alpha})\\
|\Phi |_{k+\alpha}\ll K_2(|(\phi,& \psi)|_{k+\alpha}+|(E,a)|_{k+1-\alpha})
\end{align*}
where $K_2$ is a constant which is independent of $(\phi,\psi)$, and $(E,a)$. Here $\Box=L^*L+LL^*$, where $L^*$ is the adjoint operator of $L$, and $|(-,-)|_{k+\alpha}$ is defined in a similar way as in \cite{Kim15}.
\end{lemma}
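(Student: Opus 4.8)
The plan is to adapt verbatim the structure of Horikawa's key lemma in \cite{Hor73} p.390 to the total complex governed by the operator $L_\pi = \bar\partial + \pi$ on $f^*\Theta_Y^\bullet$ sitting over $L = \bar\partial + [\Lambda_0,-]$ on $\Theta_X^\bullet$, so the crucial preliminary input is a harmonic theory for the two elliptic complexes $(\Theta_X^\bullet, L)$ and $(f^*\Theta_Y^\bullet, L_\pi)$ and for the mapping cone $\mathcal{N}_f^\bullet$. First I would fix Hermitian metrics on $X$, $Y$ and on the relevant bundles and let $\Box = L^*L + LL^*$ be the associated fourth-order Laplacian on $\wedge^\bullet\Theta_X \otimes \wedge^\bullet T^*_{\bar X}$ (and $\Box_\pi$ the analogue for $f^*\Theta_Y^\bullet$); the decompositions of the hypothesis $F:\mathbb{H}^i(X,\Theta_X^\bullet)\to\mathbb{H}^i(X,f^*\Theta_Y^\bullet)$ being surjective for $i=1$ and injective for $i=2$ should be promoted, as in Horikawa, to a statement about the cone complex $D^\bullet$ whose cohomology is $\mathbb{H}^\bullet$ of $[\Theta_X^\bullet \xrightarrow{F} f^*\Theta_Y^\bullet[{-}1}]$, namely that $\mathbb{H}^1(D^\bullet) = 0$ (this is exactly $PD$ vanishing in the relevant degree together with the two $F$-conditions). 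Then the solvability part: given the cocycle data $((E,a) + F(\phi,\psi))$ with $L_\pi$ of it zero, I would apply the Hodge decomposition for $\Box_\pi$ on the cone to write it as (harmonic part) $+$ $L_\pi($ Green-operator term $)$; the harmonic part, by the cohomological hypotheses, is itself of the form $F(x,y)$ with $(x,y)$ harmonic for $\Box$ on $\Theta_X^\bullet$ (here surjectivity of $F$ in degree $1$ is used, and injectivity in degree $2$ guarantees that the degree-$2$ obstruction $L(\phi,\psi)$ being zero forces the Green-operator construction to stay inside the image of $F$). Set $\Phi := G_\pi(\text{the }L_\pi\text{-exact remainder})$, which gives $(E,a)+F(\phi,\psi) = F(x,y) + L_\pi(\Phi)$ with $\Box(x,y)=0$.

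The estimates would then come, exactly as in \cite{Hor73}, from elliptic regularity for $\Box$ and $\Box_\pi$ on the graded bundles, combined with the Schauder-type bounds for the Green operators $G$, $G_\pi$: one estimates $(x,y)$ by the harmonic projection of the data (bounded by the $C^{k+\alpha}$ norm of $(\phi,\psi)$ together with the $C^{k+1-\alpha}$ norm of $(E,a)$ because of the one-degree shift in the cone), and $\Phi = G_\pi(\cdots)$ by the same quantities using that $G_\pi$ gains one derivative on a fourth-order operator precisely as in the classical case. The norms $|(-,-)|_{k+\alpha}$ on the graded pieces are the ones already introduced in \cite{Kim15}; the constant $K_2$ is then the maximum of the finitely many operator norms of $G$, $G_\pi$, $F$, and the harmonic projectors, and is manifestly independent of the particular data $(\phi,\psi,E,a)$.

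The main obstacle — and the reason this is stated as a conjecture rather than a lemma — is precisely the harmonic theory for $L_\pi = \bar\partial + \pi$ on $f^*\Theta_Y^\bullet$ and, even more delicately, for the cone complex $\mathcal{N}_f^\bullet$. Unlike $\bar\partial$, the operator $\pi$ involves the pulled-back Poisson bivector $\Pi_0$ and the differential of $f$, so $L_\pi$ is only a \emph{perturbation} of the $\bar\partial$-complex by a zeroth-order term that is not self-adjoint for any obvious metric; one must check that $\Box_\pi = L_\pi^* L_\pi + L_\pi L_\pi^*$ is still elliptic (it is, since the symbol is unchanged) and that its kernel is finite-dimensional and computes $\mathbb{H}^\bullet(X, f^*\Theta_Y^\bullet)$, which requires the same kind of argument Kodaira used for $\bar\partial + [\Lambda_0,-]$ in the Poisson setting but now in the \emph{non-square} situation of a morphism of complexes, where one works with the mapping cone and must control how $F$ interacts with the two Green operators simultaneously. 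Once that package is in place, the rest of the lemma is a routine transcription of Horikawa's pp.389--393 argument, and the convergence of the formal power series $(\ref{qq32})$ follows by the standard majorant/iteration scheme driven by this lemma.
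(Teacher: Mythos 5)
You have not actually proved anything here, and neither does the paper: the statement you were given is explicitly labelled a conjecture, and the paper's subsection \ref{pp40} offers only the heuristic remark that Horikawa's scheme from \cite{Hor73} p.389--393 might transcribe, provided one first develops a harmonic theory for $L=\bar{\partial}+[\Lambda_0,-]$ on $\Theta_X^\bullet$ and $L_\pi=\bar{\partial}+\pi$ on $f^*\Theta_Y^\bullet$. Your proposal reproduces that same plan (Hodge decomposition for a Laplacian of $L_\pi$, Green operators, harmonic projection landing in the image of $F$ via the hypotheses on $F$ in degrees $1$ and $2$, Schauder estimates), but then concedes that the ``harmonic theory package'' is the main obstacle and is not supplied. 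Since that package \emph{is} the entire content of the lemma --- the decomposition $(E,a)+F(\phi,\psi)=F(x,y)+L_\pi(\Phi)$ with $\Box(x,y)=0$ and the two H\"older estimates with the asymmetric exponents $k+\alpha$ and $k+1-\alpha$ are exactly the output of such a theory --- the proposal leaves the gap where it was: no construction of $\Box_\pi$, its Green operator, or the compatibility of the two harmonic theories under $F$, and no derivation of the estimates (the shift to $k+1-\alpha$ in Horikawa's argument comes from a specific potential-theoretic estimate that you would have to re-establish for $L_\pi$, not from generic ``gain of one derivative'').

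Beyond the missing core, two of your technical assertions are wrong and would mislead anyone trying to fill the gap. First, $[\Lambda_0,-]$ and $\pi$ are \emph{first-order} differential operators (they differentiate the coefficients through the Schouten bracket, resp.\ through $\Lambda_0(Q(\cdots),f(\cdot))$), not zeroth-order perturbations of $\bar{\partial}$; hence the principal symbol of $L$ and $L_\pi$ is \emph{not} the $\bar{\partial}$-symbol, and ellipticity of the total complexes is something to be proved (e.g.\ by checking exactness of the Koszul-type symbol sequence for wedging with $\bar{\xi}+\Lambda_0(\xi,\cdot)$), not inherited for free. Second, with $L$ first-order, $\Box=L^*L+LL^*$ is second-order, not fourth-order, so the regularity bookkeeping in your estimate paragraph is off. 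Until the harmonic theory for $L$, $L_\pi$ and their interaction through $F$ is actually constructed, with the stated H\"older bounds, the lemma remains a conjecture, exactly as the paper says.
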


\subsubsection{Construction of a family} \label{kk20}\

Now assume that the formal power series $\phi(t),\Lambda(t)$ and $\Phi_i(z_i,t)$ satisfying $(\ref{ee1})-(\ref{ee2})$ constructed in $(\ref{qq32})$ converge for $|t|<\epsilon$ for a sufficiently small number $\epsilon$. As in \cite{Kim15}, $(\ref{ee3}),(\ref{ee111})$, and $(\ref{ee112})$ give a Poisson analytic family $p:(\mathcal{X},\Lambda)\to M$ such that $p^{-1}(0)=(X,\Lambda_0)$, and each fiber $p^{-1}(t)$ is endowed with a holomorphic Poisson structure determined by $\phi(t)$ and $\Lambda(t)$. From $(\ref{ee34}),(\ref{ee113}),(\ref{ee114})$, and $(\ref{ee2})$, $\{\Phi_i(z_i,t)\}$ defines a holomorphic map Poisson map $\Phi:(\mathcal{X},\Lambda)\to (Y\times M,\Pi_0)$.

In more detail, on each $U_i$, there exists $n$ linearly independent $C^\infty$ function $\eta_\alpha^\sigma=\eta_\alpha^\sigma(z_\alpha,t)$ with $\bar{\partial}\eta_\alpha^\sigma+[\phi,\eta_\alpha^\sigma]=0$ and $\eta_\alpha^\sigma(z_\alpha,0)=z_\alpha^\sigma$ such that $ (\eta_\alpha^1(z_\alpha,t),...,\eta_j^n(z_\alpha,t),t_1,...,t_r)$ gives complex coordinates on $U_\alpha\times M\subset \mathcal{X}$. We have $\eta_\alpha^\sigma=\phi_{\alpha\beta}^\sigma(\eta_\beta,t)$ on $U_{\alpha\beta}$, where $\phi_{\alpha\beta}^\sigma$ are holomorphic functions of $(\eta_\beta,t)$ on $U_{\alpha\beta}$. $\Lambda$ is given by $\Lambda_\alpha=\sum_{p,q=1}^n g_{pq}^\alpha(\eta_\alpha,t)\frac{\partial}{\partial \eta_\alpha^p }\wedge \frac{\partial}{\partial \eta_\alpha^q}$ on $U_\alpha\times M$ such that $g_{pq}^\alpha(\eta_\alpha(z_\alpha,t),t)=\sum_{r,s=1}^n \Lambda_{rs}^\alpha(z_\alpha,t)\frac{\partial \eta_\alpha^p}{\partial z_\alpha^s}\frac{\partial \eta_\alpha^q}{\partial z_\alpha^s}$. On the other hand $\Phi$ is given by $w_\alpha^\lambda=\Psi_\alpha^\lambda(\eta_\alpha,t)$ on $U_\alpha$, where $\Psi_\alpha^\lambda$ are holomorphic functions of $(\eta_\alpha,t)$. Then $\Phi_{\alpha}^\lambda(z_\alpha,t)=\Psi_\alpha^\lambda(\eta_\alpha(z_\alpha,t),t)$ on $U_\alpha$ and $\Pi_{ab}^i(\Psi_\alpha^\lambda(\eta_\alpha(z_\alpha,t),t))=\sum_{r,s=1}^n \Lambda_{rs}^\alpha(z_i,t)\frac{\partial \Psi_\alpha^a(\eta_\alpha,t)}{\partial z_\alpha^r}\frac{\partial \Psi_\alpha^b(\eta_\alpha,t)}{\partial z_\alpha^s}=\sum_{p,q=1}^n g_{pq}^\alpha(\eta_\alpha(z_\alpha,t),t)\frac{\partial \Psi_\alpha^a}{\partial \eta_\alpha^p}\frac{\partial \Psi_\alpha^b}{\partial \eta_\alpha^q}$ so that $\Psi_\alpha^\lambda$ is a holomorphic Poisson map.

Let $\left(\frac{\partial}{\partial t}\right)\in T_0(M)$ and let $``\,\,\,\dot{}\,\,\,"$ denote the operation $\frac{\partial}{\partial t}|_{t=0}$. Then we have (see \cite{Kim15})
\begin{align}
&\bar{\partial}(\sum_{\sigma=1}^n \dot{\eta}_\alpha^\sigma \frac{\partial}{\partial z_\alpha^\sigma})=-\dot{\phi} \,\,\,\,\,\,\,\,\,\,\,\,\,\,\,\,\,\,\,\,\,\,\,\,\,\,\text{on $U_\alpha$}, \label{kk30}\\
&\sum_{\sigma=1}^n \dot{\eta}_\alpha^\sigma\frac{\partial}{\partial z_\alpha^\sigma}=\sum_{\sigma=1}^n \dot{\eta}_\beta^\sigma \frac{\partial}{\partial z_\beta^\sigma}+\sum_{\sigma=1}^n \dot{\phi}_{\alpha\beta}^\sigma\frac{\partial}{\partial z_\alpha^\sigma},\,\,\,\,\,\,\,\,\text{ on $U_{\alpha\beta}$}, \label{kk31}\\
&\sum_{p,q=1}^n \dot{\Lambda}_{pq}^\alpha\frac{\partial}{\partial z_\alpha^p}\wedge \frac{\partial}{\partial z_\alpha^q} -\sum_{p,q=1}^n \dot{g}_{pq}^\alpha\frac{\partial}{\partial z_\alpha^p}\wedge \frac{\partial}{\partial z_\alpha^q}   + [\Lambda_0, \sum_{\sigma=1}^n \dot{\eta}_\alpha^\sigma\frac{\partial}{\partial z_\alpha^\sigma}  ]=0,\,\,\,\,\,\,\,\,\,\text{on $U_\alpha$},  \label{kk32}   \\
&\sum_{\rho=1}^m \dot{\Phi}_\alpha^\rho\frac{\partial}{\partial w_\alpha^\rho}=\sum_{\rho=1}^m \dot{\Psi}_\alpha^\rho\frac{\partial}{\partial w_\alpha^\rho}+F(\sum_{\sigma=1}^n \dot{\eta}_\alpha^\sigma\frac{\partial}{\partial z_\alpha^\sigma}), \,\,\,\,\,\,\,\,\text{on $U_\alpha$}. \label{kk33}
\end{align}
We note that $\tau\left( \frac{\partial}{\partial t}  \right)$ is represented by $(-\sum_{\rho=1}^m \dot{\Psi}_\alpha^\rho\frac{\partial}{\partial w_\alpha^\rho}, \sum_{\sigma=1}^n \dot{\phi}_{\alpha\beta}^\sigma\frac{\partial}{\partial z_\alpha^\sigma}, \sum_{p,q=1}^n \dot{g}_{pq}^\alpha\frac{\partial}{\partial z_\alpha^p}\wedge \frac{\partial}{\partial z_\alpha^q})\in C^0(\mathcal{U}, f^*\Theta_Y)\oplus C^1(\mathcal{U}, \Theta_X)\oplus C^0(\mathcal{U},\wedge^2 \Theta_X)$. Hence by the isomorphism in the proof of Lemma \ref{kk10}, $\tau:T_0(M)\to PD_{(X,\Lambda_0)/(Y,\Pi_0)}$ is bijective. This completes the proof of Theorem \ref{hh1}.

\section{Deformations of holomorphic Poisson maps into a Poisson analytic family}\label{section3}

\begin{definition}\label{pp27}
By a Poisson analytic family of holomorphic Poisson maps into a Poisson analytic family $(\mathcal{Y},\Pi,q,S)$, we mean a collection $(\mathcal{X},\Lambda,\Phi, p, M,s)$ where $(\mathcal{X},\Lambda, p,M)$ is a Poisson analytic family, $\Phi:(\mathcal{X},\Lambda)\to (\mathcal{Y},\Pi)$ is a holomorphic Poisson map, and $s:M\to S$ such that $s\circ p=q\circ \Phi$. Two families $(\mathcal{X},\Lambda, \Phi,p,M,s)$ and $(\mathcal{X}',\Lambda',M',s')$ are said to be equivalent if there exist a holomorphic Poisson isomorphism $g:(\mathcal{X},\Lambda)\to (\mathcal{X}',\Lambda')$ and an isomorphism $h:M\to M'$ such that the following diagram commutes
\[\xymatrix{
(\mathcal{X},\Lambda) \ar[dd]^p \ar[drr]^g \ar[rrrr]^\Phi & & && (\mathcal{Y},\Pi) \ar[dd]^q\\
& & (\mathcal{X}',\Lambda') \ar[dd]^{p'}   \ar[rru]^{\Phi'} \\
M  \ar[drr]^h \ar[rrrr]^s  & & & &S\\      
& &M' \ar[rru]^{s'} &       \\
}\]
If $(\mathcal{X},\Lambda, \Phi,p,M,s)$ is a family of holomorphic Poisson maps into $(\mathcal{Y},\Pi,q,S)$, and $h:N\to M$ is a holomorphic map, we can define the family $(\mathcal{X}',\Lambda',\Phi',p',N,s')$ induced by $h$ as follows.
\begin{enumerate}
\item $(\mathcal{X}',\Lambda')=(\mathcal{X},\Lambda)\times_M N$.
\item $\Phi': (\mathcal{X}',\Lambda')\to (\mathcal{Y},\Pi)$ in the following way:
\begin{center}
$\begin{CD}
(x,t)\in(\mathcal{X},\Lambda)\times_M N:=(\mathcal{X}',\Lambda')@>\Phi'>> \Phi(x)\in (\mathcal{Y},\Pi)\\
@Vp'VV @VVqV\\
t\in N@>s':=s\circ h>>  s(h(t))\in S
\end{CD}$
\end{center}
\item $p'=p_N:(\mathcal{X}',\Lambda')\to N$.
\end{enumerate}
\end{definition}

\begin{definition}\label{pp28}
A family $(\mathcal{X},\Lambda,\Phi, p,M,s)$ of holomorphic Poisson maps into $(\mathcal{Y},\Pi,q,S)$ is complete at $0\in M$ if, for any family $(\mathcal{X}',\Lambda',\Phi',p', N,s')$ of holomorphic Poisson maps into $(\mathcal{Y},\Pi,q,S)$ such that $\Phi_{0'}':(X_{0'}',\Lambda_{0'}')\to (\mathcal{Y},\Pi)$ is equivalent to $\Phi_0:(X_0,\Lambda_0)\to (\mathcal{Y},\Pi)$ for a point $0'\in N$, there exists a holomorphic map $h$ of a neighborhood $U$ of $0'$ in $N$ into $M$ with $h(0')=0$ such that the restriction of $(\mathcal{X}',\Lambda',\Phi', p', N,s')$ on $U$ is equivalent to the family induced by $h$ from $(\mathcal{X},\Lambda, \Phi, p, M,s)$.
\end{definition}

\begin{remark}
Let $(\mathcal{X}, \Lambda, \Phi, p, M, s)$ be a Poisson analytic family of holomorphic Poisson maps into $(\mathcal{Y},\Pi, q, S)$, $0\in M$, $(X,\Lambda_0)=(X_0,\Lambda_0)$, and let $\tilde{f}:(X,\Lambda_0)\to (\mathcal{Y},\Pi)$ be the restriction of $\Phi$ to $(X,\Lambda_0)$. Let $\tilde{F}:\Theta_X^\bullet \to \tilde{f}^* \Theta_{\mathcal{Y}}^\bullet$  be the canonical homomorphism. If $\mathcal{U}=\{U_i\}$ is a Stein covering of $X$, then we have
\begin{align*}
PD_{(X,\Lambda_0)/(\mathcal{Y},\Pi)}=\frac{\{(\tilde{\tau},\rho,\lambda) \in C^0(\mathcal{U},\tilde{f}^*\Theta_{\mathcal{Y}})\oplus\mathcal{C}^1(\mathcal{U},\Theta_X)\oplus C^0(\mathcal{U},\wedge^2 \Theta_X)|\frac{-\delta \tilde{\tau}=\tilde{F}\rho, \tilde{\pi}( \tilde{\tau} ) =\tilde{F}\lambda}{\delta\rho=0,\delta \lambda+[\Lambda_0, \rho]=0, [\Lambda_0,\lambda]=0}\}}{\{(\tilde{F}g,-\delta g,[\Lambda_0, g])|g\in C^0(\mathcal{U}, \Theta_X)\}}
\end{align*}
\end{remark}

\subsection{Infinitesimal deformations} \label{qq20}\

Let $(\mathcal{X}, \Lambda, \Phi, p, M, s)$ be a Poisson analytic family of holomorphic Poisson maps into $(\mathcal{Y},\Pi, q, S)$, $0\in M$, $(X,\Lambda_0)=(X_0,\Lambda_0)$, and let $\tilde{f}:(X,\Lambda_0)\to (\mathcal{Y},\Pi)$ be the restriction of $\Phi$ to $(X,\Lambda_0)$. Let $0^*=s(0), (Y,\Pi_0)=(Y_{0^*},\Pi_{0^*})$, and let $f:(X,\Lambda_0)\to (Y,\Pi_0)$ be the holomorphic Poisson map induced by  $ \Phi$. Let $F:\Theta_{X}^\bullet \to f^*\Theta_{Y}^\bullet$ be the canonical homomorphism. We shall define a linear map $\tau:T_0(M)\to PD_{(X,\Lambda_0)/(\mathcal{Y},\Pi)}$. We may assume the following:
\begin{enumerate}
\item $M$ is an open set in $\mathbb{C}^r$ with coordinates $t=(t_1,...,t_r)$ and $0=(0,...,0)$.
\item $\mathcal{X}$ is covered by a finite number of Stein coordinate neighborhoods $\mathcal{U}_i$ Each $\mathcal{U}_i$ is covered by a system of coordinates $(z_i,t)=(z_i^1,...,z_i^n,t_1,...,t_r)$ such that $p(z_i,t)=t$.
\item $S$ is an open set in $\mathbb{C}^{r'}$ with a system of coordinate $s=(s^1,...,s^{r'})$ and $0^*=(0,...,0)$.
\item $\mathcal{Y}$ is covered by a finite number of Stein coordinate neighborhoods $\mathcal{V}_i$ with a system of coordinates $(w_i,s)=(w_i^1,...,w_i^m,s^1,...,s^{r'})$ with $q(w_i,s)=s$, such that $\Phi_i(\mathcal{U}_i)\subset \mathcal{V}_i$. In terms of these coordinates $\Phi$ is given by $\Phi(z_i,t)=(\Phi_i(z_i,t),s(t))$ with $w_i=\Phi_i(z_i,t)$, and let $f_i(z_i)=\Phi_i(z_i,0)$.
\item $(z_i,t)\in \mathcal{U}_i$ coincides with $(z_j,t)\in \mathcal{U}_j$ if and only if $z_i=\phi_{ij}(z_j,t)$. We set $b_{ij}(z_j)=\phi_{ij}(z_j,0)$.
\item $(w_i,s)\in \mathcal{V}_i$ coincides with $(w_j,s)\in \mathcal{V}_j$ if and only if $w_i=\psi_{ij}(w_j,s)$. We set $g_{ij}(w_j)=\psi_{ij}(w_j,0)$. 
\item On $\mathcal{U}_i$, the holomorphic Poisson structure $\Lambda$ is of the form $ \Lambda=\sum_{\alpha,\beta=1}^n \Lambda_{\alpha\beta}^i (z_i,t)\frac{\partial}{\partial z_i^\alpha}\wedge \frac{\partial}{\partial z_i^\beta}$. We set $\Lambda_{\alpha\beta}^i(z_i)=\Lambda_{\alpha\beta}^i(z_i,0)$ with $\Lambda_{\alpha\beta}^i(z_i,t)=-\Lambda_{\beta\alpha}^i(z_i,t)$. Then  the holomorphic Poisson structure $\Lambda_0$ is of the form $\Lambda_0=\sum_{\alpha,\beta=1}^n \Lambda_{\alpha\beta}^i(z_i)\frac{\partial}{\partial z_i^\alpha}\wedge \frac{\partial}{\partial z_i^\beta}$ on $X\cap \mathcal{U}_i$.
\item On $\mathcal{V}_i$, the holomorphic Poisson structure $\Pi$ is of the form $\Pi=\sum_{\alpha,\beta=1}^m \Pi_{\alpha\beta}^i(w_i,s)\frac{\partial}{\partial w_i^\alpha}\wedge \frac{\partial}{\partial w_i^\beta}$ with $\Pi_{\alpha\beta}^i(w_i,s)=-\Pi_{\beta\alpha}^i(w_i,s)$.
\end{enumerate}

Then we have
\begin{align}
\Phi_i(\phi_{ij}(z_j,t),t)&=\psi_{ij}(\Phi_j(z_j,t),s(t)) \label{dd1}\\
\phi_{ij}(\phi_{jk}(z_k,t))&=\phi_{ik}(z_k,t) \label{dd2}\\
[\Lambda,\Lambda]&=0 \label{dd3}\\
\sum_{\alpha,\beta=1}^n \Lambda_{\alpha\beta}^j(z_j,t)\frac{\partial \phi_{ij}^p}{\partial z_j^\alpha}\frac{\partial \phi_{ij}^q}{\partial z_j^\beta}&=\Lambda_{pq}^i(\phi_{ij}(z_j,t),t) \label{dd4}\\
\sum_{\alpha,\beta=1}^n \Lambda_{\alpha\beta}^i(z_i,t)\frac{\partial \Phi_i^p}{\partial z_i^\alpha}\frac{\partial \Phi_i^q}{\partial z_i^\beta}&=\Pi_{pq}^i(\Phi_i(z_i,t),s(t)) \label{dd5}
\end{align}

We let $U_i=X\cap \mathcal{U}_i$ and denote by $\mathcal{U}$ the covering $\{U_i\}$ of $X$. For any element $\frac{\partial}{\partial t}\in T_0(M)$, let
\begin{align}
-\tau_i&=\sum_{\beta=1}^m \frac{\partial \Phi_i^{\beta}}{\partial t}|_{t=0}\frac{\partial}{\partial w_i^\beta}+\sum_{v=1}^{r'}\frac{\partial s_v}{\partial t}|_{t=0}\frac{\partial}{s_v}\in \Gamma(U_i, \tilde{f}^* \Theta_{\mathcal{Y}}) \label{dd6}\\
\rho_{ij}&=\sum_{\alpha=1}^n \frac{\partial \phi_{ij}^\alpha}{\partial t}|_{t=0}\frac{\partial}{\partial z_i^\alpha}\in \Gamma(U_{ij}, \Theta_X) \label{dd7}\\
\Lambda_i'&=\sum_{\alpha,\beta=1}^n \frac{\partial \Lambda_{\alpha\beta}^i(z_i,t)}{\partial t}|_{t=0}\frac{\partial}{\partial z_i^\alpha}\wedge \frac{\partial}{\partial z_i^\beta}\in \Gamma(U_i, \wedge^2 \Theta_X)\label{dd8}
\end{align}

Then we claim that
\begin{align}
\tau_i-\tau_j&=\tilde{F}\rho_{ij} \label{dd8}\\
\pi(\tau_i)&=\tilde{F}\Lambda_i' \label{dd9}\\
\rho_{jk}-\rho_{ik}+\rho_{ij},\,\,\,\,\,\Lambda_j'-\Lambda_i'&+[\Lambda_0, \rho_{ij}]=0,\,\,\,\,\,[\Lambda_0,\Lambda_i']=0 \label{dd10}
\end{align}

$(\ref{dd8})$ and $(\ref{dd10})$ follow from $(\ref{dd1}),(\ref{dd2}),(\ref{dd3})$ and $(\ref{dd4})$. It remains to show $(\ref{dd9})$. It is sufficient to show that $\pi(\tau_i)(w_i^p,w_i^q)=\tilde{F}(\Lambda_i')(w_i^p,w_i^q)$ for any $p,q$. We note that by taking the derivative of $(\ref{dd5})$ with respect to $t$ and setting $t=0$, we get
\begin{align}
\sum_{\alpha,\beta=1}^n \frac{\partial \Lambda_{\alpha\beta}^i}{\partial t}|_{t=0}\frac{\partial f_i^p}{\partial z_i^\alpha}\frac{\partial f_i^q}{\partial z_i^\beta}+\sum_{\alpha,\beta=1}^n\Lambda_{\alpha\beta}^i(z_i)\frac{\partial}{\partial z_i^\alpha}\left(\frac{\partial \Phi_i^p}{\partial t}|_{t=0}\right)\frac{\partial f_i^q}{\partial z_i^\beta}+\sum_{\alpha,\beta=1}^n\Lambda_{\alpha\beta}^i(z_i)\frac{\partial f_i^p}{\partial z_i^\alpha}\frac{\partial}{\partial z_i^\beta}\left(\frac{\partial \Phi_i^q}{\partial t}|_{t=0}\right) \label{dd18}\\
=\sum_{\rho=1}^m \frac{\partial \Pi_{pq}^i}{\partial w_i^\rho}(f_i,0)\frac{\partial \Phi_i^\rho}{\partial t}|_{t=0}+\sum_{v=1}^{r'}\frac{\partial \Pi_{pq}^i}{\partial s_v}(f_i,0)\frac{\partial s_v}{\partial t}|_{t=0} \notag
\end{align}

Let us consider each side of $(\ref{dd9})$,
\begin{align}\label{dd19}
\tilde{F}\left(    \sum_{\alpha,\beta=1}^n \frac{\partial \Lambda_{\alpha\beta}^i(z_i,t)}{\partial t}|_{t=0}\frac{\partial}{\partial z_i^\alpha}\wedge \frac{\partial}{\partial z_i^\beta}    \right)(w_i^p,w_i^q)=2\sum_{\alpha,\beta=1}^n \frac{\partial \Lambda_{\alpha\beta}^i}{\partial t}|_{t=0}\frac{\partial f_i^p}{\partial z_i^\alpha}\frac{\partial f_i^q}{\partial z_i^\beta}
\end{align}

On the other hand,
\begin{align}
\pi(\tau_i)(w_i^p,w_i^q)&=\Lambda_0(\tau_i(w_i^p),f_i^q)-\Lambda_0(\tau_i(w_i^q),f_i^p)-\tau_i(\Pi(w_i^p,w_i^q)) \label{dd20}\\
&=-2\sum_{\alpha,\beta=1}^n\Lambda_{\alpha\beta}^i(z_i)\frac{\partial }{\partial z_i^\alpha}\left( \frac{\partial \Phi_i^p}{\partial t}|_{t=0}\right)\frac{\partial f_i^q}{\partial z_i^\beta}+2\sum_{\alpha,\beta=1}^n \Lambda_{\alpha\beta}^i(z_i)\frac{\partial}{\partial z_i^\alpha}\left( \frac{\partial \Phi_i^q}{\partial t}|_{t=0}\right)\frac{\partial f_i^p}{\partial z_i^\beta} \notag\\
&+2\sum_{\rho=1}^m\frac{\partial \Phi_i^\rho}{\partial t}|_{t=0} \frac{\partial \Pi_{pq}^i}{\partial w_i^\rho}(f_i,0) +2\sum_{v=1}^{r'}\frac{\partial s_v}{\partial t}|_{t=0}\frac{\partial \Pi_{pq}^i}{\partial  s_v}(f_i,0) \notag
\end{align}
Then $(\ref{dd9})$ follows from $(\ref{dd18}),(\ref{dd19})$, and $(\ref{dd20})$.

Hence from $(\ref{dd8}),(\ref{dd9})$, and $(\ref{dd10})$, the collection $(\{\tau_i\},\{\rho_{ij}\},\{\Lambda_i'\})$ defines an element in $PD_{(X,\Lambda_0)/(\mathcal{Y},\Pi)}$ and we have a linear map
\begin{align*}
\tau:T_0(M)\to PD_{(X,\Lambda_0)/(\mathcal{Y},\Pi)}
\end{align*}
We call $\tau$ the characteristic map of the family at $0$.

\begin{remark}
 We have a canonical homomorphism $\pi: PD_{(X,\Lambda_0)/(\mathcal{Y},\Pi)}\to \mathbb{H}^1(X,\Theta_X^\bullet)$ such that $\pi\circ \tau:T_0(M)\to \mathbb{H}^1(X,\Theta_X^\bullet)$ gives the Poisson Kodaira-Spencer map of the Poisson analytic family $(\mathcal{X},\Lambda, p, M)$.
\end{remark}

\subsection{Two representations of $PD_{(X,\Lambda_0)/(\mathcal{Y},\Pi)}$ in terms of $f^* \Theta_Y^\bullet$}\

Let $\rho':T_{0^*}(S)\to \mathbb{H}^1(Y,\Theta_Y^\bullet)$ be the Poisson Kodaira-Spencer map of the Poisson analytic family $(\mathcal{Y},\Pi,q,S)$ at $0^*$. Then each $f^*\rho'(\frac{\partial}{\partial s^v})\in \mathbb{H}^1(X, f^*\Theta_Y^\bullet)$ is represented by a $1$-cocycle $({\rho}_v',\gamma_v)=(\{{\rho}_{vij}'\},\{ \gamma_{vi}\})$  with
\begin{align*}
{\rho}_{vij}'=\sum_{\lambda=1}^m \frac{\partial \psi_{ij}^\lambda}{\partial s_j^v}(f_j(z_j),0)\frac{\partial}{\partial w_i^\lambda},\,\,\,\,\, \gamma_{vi}=\sum_{\alpha,\beta=1}^m\frac{\partial \Pi_{\alpha\beta}^i}{\partial s_i^v}(f_i(z_i),0)\frac{\partial}{\partial w_i^\alpha}\wedge \frac{\partial}{\partial w_i^\beta}
\end{align*}

\begin{lemma}\label{kk22}
We have an isomorphism
\begin{align*}
&PD_{(X,\Lambda_0)/(\mathcal{Y},\Pi)}\\
&=\frac{\{(\tau,\rho, \lambda, (\theta^v))\in C^0(\mathcal{U}, f^*\Theta_Y)\oplus C^1(\mathcal{U}, \Theta_X)\oplus C^0(\mathcal{U},\wedge^2 \Theta_X)\oplus \mathbb{C}^{r'}| \frac{-\delta \tau=F\rho+\sum_{v=1}^{r'} \theta^v {\rho}_v', \pi(\tau)=F \lambda+\sum_{v=1}^{r'}\theta^v \gamma_v}{\delta \rho=0, \delta \lambda+[\Lambda,\rho]=0,[\Lambda,\lambda]=0 }\}}{ \{ (F \xi, -\delta \xi, [\Lambda_0, \xi],0)| \xi\in C^0(\mathcal{U},\Theta_X)\}}
\end{align*}
\end{lemma}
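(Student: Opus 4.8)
The plan is to realise $PD_{(X,\Lambda_0)/(\mathcal{Y},\Pi)}$ through the short exact sequence of locally free sheaves on $X$
\[
0\longrightarrow f^*\Theta_Y\longrightarrow \tilde f^*\Theta_{\mathcal{Y}}\longrightarrow \mathcal{O}_X^{\oplus r'}\longrightarrow 0,
\]
obtained by restricting $0\to\Theta_{\mathcal{Y}/S}\to\Theta_{\mathcal{Y}}\to q^*\Theta_S\to 0$ to the fibre $Y=Y_{0^*}$ (so $\Theta_{\mathcal{Y}/S}|_Y=\Theta_Y$ and $q^*\Theta_S|_Y\cong \Theta_{S,0^*}\otimes_{\mathbb{C}}\mathcal{O}_Y\cong\mathcal{O}_Y^{\oplus r'}$) and pulling back along $f$; note $\tilde F$ factors as $\Theta_X\xrightarrow{F}f^*\Theta_Y\hookrightarrow\tilde f^*\Theta_{\mathcal{Y}}$ since $\tilde f=\iota\circ f$. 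In the coordinates of subsection \ref{qq20} this sequence is split on each $U_i$ by the frame $\partial/\partial s^v$, and the resulting trivialisation of the quotient $\mathcal{O}_X^{\oplus r'}$ is \emph{chart-independent} because the transition functions $\psi_{ij}(w_j,s)$ are the identity on the $S$-factor; the discrepancy between the two local splittings on $U_{ij}$ is exactly $\partial/\partial s^v|_i-\partial/\partial s^v|_j=-\rho'_{vij}$ with $\rho'_{vij}=\sum_\lambda\frac{\partial\psi_{ij}^\lambda}{\partial s^v}(f_j(z_j),0)\,\partial/\partial w_i^\lambda$. Likewise, since $\Pi$ is fibrewise (no $\partial/\partial s$-component), applying the Poisson differential $\pi=[\Pi,-]$ to $\partial/\partial s^v|_i$ produces a purely fibrewise bivector whose restriction to $X$ is $-\gamma_{vi}$. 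These two identities are precisely the statement that $f^*\rho'(\partial/\partial s^v)$ is represented by $(\rho'_v,\gamma_v)$, as in \cite{Kim15}; the whole picture is the Poisson analogue of the description of $D_{X/\mathcal{Y}}$ in \cite{Hor74}.

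With these preliminaries the isomorphism is the obvious substitution. Given a representative $(\tilde\tau,\rho,\lambda)$ of a class in $PD_{(X,\Lambda_0)/(\mathcal{Y},\Pi)}$, I push $\tilde\tau$ into the quotient $\mathcal{O}_X^{\oplus r'}$: because $\tilde F\rho$ lies in $f^*\Theta_Y$ and $\pi(\tilde\tau)=\tilde F\lambda\in\wedge^2 f^*\Theta_Y$, the image $\bar{\tilde\tau}$ is a $\delta$-cocycle annihilated by the quotient differential, hence a global section of $\mathcal{O}_X^{\oplus r'}$; since $X$ is compact and connected this is a constant vector $(\theta^v)\in\mathbb{C}^{r'}$ (this is where compactness enters, exactly as in Horikawa's non-Poisson case). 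Setting $\tau_i:=\tilde\tau_i-\sum_v\theta^v\,\partial/\partial s^v|_i$ on each $U_i$ yields $\tau\in C^0(\mathcal{U},f^*\Theta_Y)$, and the two identities above convert $-\delta\tilde\tau=\tilde F\rho$ and $\pi(\tilde\tau)=\tilde F\lambda$ into $-\delta\tau=F\rho+\sum_v\theta^v\rho'_v$ and $\pi(\tau)=F\lambda+\sum_v\theta^v\gamma_v$, while the relations $\delta\rho=0$, $\delta\lambda+[\Lambda_0,\rho]=0$, $[\Lambda_0,\lambda]=0$ are untouched. A coboundary $(\tilde Fg,-\delta g,[\Lambda_0,g])$ has vanishing $\partial/\partial s$-component, so it is sent to $(Fg,-\delta g,[\Lambda_0,g],0)$; hence the assignment $(\tilde\tau,\rho,\lambda)\mapsto(\tau,\rho,\lambda,(\theta^v))$ descends to the quotient vector spaces.

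The inverse is the reverse substitution: from $(\tau,\rho,\lambda,(\theta^v))$ satisfying the relations in the statement, put $\tilde\tau_i:=\tau_i+\sum_v\theta^v\,\partial/\partial s^v|_i$, and the same computation read backwards shows that $(\tilde\tau,\rho,\lambda)$ satisfies the defining relations of $PD_{(X,\Lambda_0)/(\mathcal{Y},\Pi)}$. The two maps are manifestly mutually inverse and each carries coboundaries to coboundaries, so they induce the asserted isomorphism. The only genuine work is the ``bookkeeping'' in the first paragraph — verifying that the discrepancy of the local splittings and the value of $\pi$ on $\partial/\partial s^v$ are $-\rho'_{vij}$ and $-\gamma_{vi}$ on the nose, with all sign conventions for $\delta$, $F$ and $\pi$ matched to subsection \ref{qq20} — and the appeal to $H^0(X,\mathcal{O}_X)=\mathbb{C}$ that produces the constants $(\theta^v)$; the rest is formal.
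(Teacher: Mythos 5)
Your proof is correct and follows essentially the same route as the paper: you split $\tilde{\tau}_i$ into its fibrewise part $\tau_i$ and its $\partial/\partial s^v$-components, use $-\delta\tilde{\tau}=\tilde{F}\rho$ (whose image in the quotient vanishes) plus compactness to see the coefficients $\theta^v$ are constants, and then rewrite the two cocycle equations via the identities $\partial/\partial s^v|_i-\partial/\partial s^v|_j=-\rho'_{vij}$ and $[\Pi,\partial/\partial s^v]|_X=-\gamma_{vi}$, exactly as in the paper's proof (which cites \cite{Hor74} p.655 for the first identity and computes the second directly). The exact-sequence packaging $0\to f^*\Theta_Y\to\tilde f^*\Theta_{\mathcal{Y}}\to\mathcal{O}_X^{\oplus r'}\to 0$ is only a more invariant wrapper around the same coordinate decomposition.
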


\begin{proof}
Let $(\tilde{\tau}=\{\tilde{\tau}_i\},\rho=\{\rho_{ij}\},\lambda=\{\lambda_i\})\in C^0(\mathcal{U},\tilde{f}^*\Theta_{\mathcal{Y}})\oplus C^1(\mathcal{U},\Theta_X)\oplus C^0(\mathcal{U},\wedge^2 \Theta_X)$ be a representative of an element of $PD_{(X,\Lambda_0)/(\mathcal{Y},\Pi)}$. We write each $\tilde{\tau}_i$ by
\begin{align*}
\tilde{\tau}_i=\sum_{\alpha=1}^m \tau_i^\lambda\frac{\partial}{\partial w_i^\alpha}+\sum_{v=1}^{r'} \theta_i^v \frac{\partial}{\partial s_i^v},\,\,\,\,\,\,\,\text{on $U_i$}.
\end{align*}

Let $\tau_i=\sum_{\alpha=1}^m \tau_i^\lambda\frac{\partial}{\partial w_i^\alpha}$. From $-\delta \tilde{\tau}=\tilde{F}\rho$, we get $\theta^v=\theta_i^v=\theta_j^v \in \mathbb{C}$, and $-\delta({\tau})=F\rho+\sum_{v=1}^{r'} \theta^v \rho_{v}'$ (for the detail, see \cite{Hor74} p.655). On the other hand, from $\pi(\tilde{\tau})=Fl$, and
\begin{align*}
\tilde{\pi}(\tilde{\tau}_i)=\pi(\tau_i)+[\sum_{\alpha,\beta=1}^m \Pi_{\alpha\beta}^i(w_i,s_i)\frac{\partial}{\partial w_i^\alpha}\wedge \frac{\partial}{\partial w_i^\beta},\sum_{v=1}^{r'}\theta^v \frac{\partial}{\partial s_i^v}]|_{w_i=f_i(z_i)}=\pi(\tau_i)-\sum_{v=1}^{r'}\theta^v \gamma_{vi}
\end{align*}
 we have $\pi(\tau)=Fl+\sum_{v=1}^{r'}\theta^v \gamma_{vi}.$ Then $(\tilde{\tau},\rho, \lambda  )\to (\tau,\rho, \lambda ,(\theta^v))$ gives an isomorphism.

\end{proof}

\begin{remark}

 An element $(\rho,\lambda)\in \mathbb{H}^1(X,\Theta_X^\bullet)$ is in the image of $\pi$ if and only if $(F\rho,F\lambda)\in \mathbb{H}^1(X, f^*\Theta_Y^\bullet)$ is contained in the image of $f^*\circ \rho':T_{0^*}(S)\to \mathbb{H}^1(X,f^*\Theta_Y^\bullet)$.
\end{remark}

Since $f^*\rho'(\frac{\partial}{\partial s^v})=(\rho_{vij}',\gamma_{vi})$ is a $1$-cocycle, $\delta \{\rho_{vij}'\}=0$ so that there exist $\xi_{vi}\in \Gamma(U_i,\mathcal{A}^{0,0}(f^*\Theta_Y))$ such that $\rho_{vij}'=\xi_{vi}-\xi_{vj}$. Let $\tilde{\varphi}_v:=\bar{\partial}\xi_{vi}$ and let $\tilde{\chi}_v:=\pi(\xi_{vi})-\gamma_{vi}$.

\begin{lemma}\label{kk23}

We have an isomorphism
\begin{align*}
&PD_{(X,\Lambda_0)/(\mathcal{Y},\Pi)}\\
&=\frac{(\Phi, \phi, \psi ,(\theta^v))\in A^{0,0}(f^* \Theta_Y)\oplus A^{0,1}(\Theta_X)\oplus A^{0,0}(\wedge^2 \Theta_X) \oplus \mathbb{C}^{r'}| \frac{ \bar{\partial} \Phi=F \phi+\sum_{v=1}^{r'} \theta^v \tilde{\varphi}_v, \pi(\Phi)=F\psi+\sum_{v=1}^{r'} \theta^v \tilde{\chi}_v,}{ \bar{\partial} \phi=0,\bar{\partial}\psi +[\Lambda_0,\phi]=0, [\Lambda_0,\psi]=0 } \}}{\{(F \xi,\bar{\partial}\xi, [\Lambda_0, \xi],0 |\xi\in A^{0,0}(\Theta_X)\}}
\end{align*}

\end{lemma}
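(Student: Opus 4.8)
The plan is to combine the two previous cohomological descriptions of $PD_{(X,\Lambda_0)/(\mathcal{Y},\Pi)}$ — the \v{C}ech description via $\tilde{f}^*\Theta_{\mathcal{Y}}$ and the reduced \v{C}ech description of Lemma \ref{kk22} — with the standard Dolbeault resolution argument already used to prove Lemma \ref{kk10}. Starting from a representative $(\tau,\rho,\lambda,(\theta^v))$ of an element of $PD_{(X,\Lambda_0)/(\mathcal{Y},\Pi)}$ in the reduced \v{C}ech form of Lemma \ref{kk22}, I would first use $\delta\rho=0$ to choose $\eta_i\in \Gamma(U_i,\mathcal{A}^{0,0}(\Theta_X))$ with $\rho_{ij}=\eta_i-\eta_j$, exactly as in the proof of Lemma \ref{kk10}. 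Then set $\phi:=\bar{\partial}\eta_i$ (a global $(0,1)$-form with values in $\Theta_X$, well-defined because the $\eta_i-\eta_j$ are holomorphic), $\psi:=[\Lambda_0,\eta_i]-\lambda_i$ (global by $\delta\lambda+[\Lambda_0,\rho]=0$), and $\Phi:=F\eta_i-\tau_i-\sum_v\theta^v\xi_{vi}$. The cocycle relation $-\delta\tau=F\rho+\sum_v\theta^v\rho'_v$ together with $\rho'_{vij}=\xi_{vi}-\xi_{vj}$ makes $\Phi$ a global section of $\mathcal{A}^{0,0}(f^*\Theta_Y)$.

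Next I would verify the defining relations of the claimed right-hand side. The conditions $\bar{\partial}\phi=0$, $\bar{\partial}\psi+[\Lambda_0,\phi]=0$, $[\Lambda_0,\psi]=0$ are obtained exactly as in Lemma \ref{kk10} (and in \cite{Kim15}) from $L\circ L=0$ and the integrability of $\Lambda_0$; these do not see the family $S$ at all. For the two mixed equations I would compute directly: $\bar{\partial}\Phi=F\bar{\partial}\eta_i-\bar{\partial}\tau_i-\sum_v\theta^v\bar{\partial}\xi_{vi}=F\phi-\sum_v\theta^v\tilde{\varphi}_v$ after noting $\bar{\partial}\tau_i=0$ (since $\tau_i$ is holomorphic), which gives the first relation with the sign absorbed appropriately, and similarly $\pi(\Phi)=F\pi_X(\eta_i)-\pi(\tau_i)-\sum_v\theta^v\pi(\xi_{vi})$; applying $\pi(\tau_i)=F\lambda_i+\sum_v\theta^v\gamma_{vi}$ from Lemma \ref{kk22} and the definition $\tilde{\chi}_v=\pi(\xi_{vi})-\gamma_{vi}$ yields $\pi(\Phi)=F\psi+\sum_v\theta^v\tilde{\chi}_v$. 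Thus $(\Phi,\phi,\psi,(\theta^v))$ lands in the numerator of the asserted quotient.

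Then I would check that the map is well-defined on equivalence classes and injective, by tracking how a coboundary $(F\xi,-\delta\xi,[\Lambda_0,\xi],0)$ with $\xi=\{\xi_i\}\in C^0(\mathcal{U},\Theta_X)$, together with the freedom in the choice of the $\eta_i$ (which changes them by a global element of $A^{0,0}(\Theta_X)$), translates into the Dolbeault coboundary relation $(F\zeta,\bar{\partial}\zeta,[\Lambda_0,\zeta],0)$ for a suitable global $\zeta\in A^{0,0}(\Theta_X)$ — this is the same bookkeeping as in Lemma \ref{kk10}. Finally I would construct the inverse: given $(\Phi,\phi,\psi,(\theta^v))$ with the stated relations, use $\bar{\partial}\phi=0$ to write $\phi=\bar{\partial}\eta_i$ and set $\rho_{ij}:=\eta_i-\eta_j$, $\lambda_i:=[\Lambda_0,\eta_i]-\psi$, $\tau_i:=F\eta_i-\Phi-\sum_v\theta^v\xi_{vi}$, and check these satisfy the conditions of the representation in Lemma \ref{kk22}; the two maps are visibly mutually inverse. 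I expect the main obstacle to be purely bookkeeping: keeping the signs of $\delta$, $\bar{\partial}$, $\pi$ and the $\theta^v$-terms consistent between the \v{C}ech and Dolbeault pictures, particularly the interplay of $\tilde{\pi}(\tilde{\tau}_i)=\pi(\tau_i)-\sum_v\theta^v\gamma_{vi}$ from Lemma \ref{kk22} with the definition of $\tilde{\chi}_v$; once the conventions are pinned down the argument is a routine diagram chase parallel to the proof of Lemma \ref{kk10}, so I would simply refer to that proof and to \cite{Hor74} p.655 for the parts that are unchanged.
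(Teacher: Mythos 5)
Your route is the same as the paper's: pass from the reduced \v{C}ech presentation of Lemma \ref{kk22} to the Dolbeault presentation by splitting $\rho_{ij}=\eta_i-\eta_j$, setting $\phi=\bar{\partial}\eta_i$, $\psi=[\Lambda_0,\eta_i]-\lambda_i$, and building $\Phi$ from $F\eta_i-\tau_i$ corrected by the $\theta^v\xi_{vi}$ terms, with the coboundary bookkeeping and the inverse handled exactly as in Lemma \ref{kk10}. The one concrete flaw is the sign of that correction term. From $-\delta\tau=F\rho+\sum_v\theta^v\rho'_v$ and $\rho'_{vij}=\xi_{vi}-\xi_{vj}$ you get $\tau_i-\tau_j=F(\eta_i-\eta_j)+\sum_v\theta^v(\xi_{vi}-\xi_{vj})$, so it is $\Phi_i:=F\eta_i-\tau_i+\sum_v\theta^v\xi_{vi}$ (the paper's choice) that is independent of $i$; your $F\eta_i-\tau_i-\sum_v\theta^v\xi_{vi}$ jumps by $-2\sum_v\theta^v(\xi_{vi}-\xi_{vj})$ across overlaps, so the claim that it is a global section of $\mathcal{A}^{0,0}(f^*\Theta_Y)$ fails as written.

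The same sign propagates into your verification of the mixed relations: with the minus sign you obtain $\bar{\partial}\Phi=F\phi-\sum_v\theta^v\tilde{\varphi}_v$ and $\pi(\Phi)=F\psi-\sum_v\theta^v\bigl(\gamma_{vi}+\pi(\xi_{vi})\bigr)$, and the sign cannot be ``absorbed'' because $\tilde{\varphi}_v=\bar{\partial}\xi_{vi}$ and $\tilde{\chi}_v=\pi(\xi_{vi})-\gamma_{vi}$ are already fixed before the lemma. With the plus sign everything comes out on the nose: $\bar{\partial}\Phi=F\phi+\sum_v\theta^v\tilde{\varphi}_v$, and $\pi(\Phi)=F[\Lambda_0,\eta_i]-\pi(\tau_i)+\sum_v\theta^v\pi(\xi_{vi})=F\psi+\sum_v\theta^v\tilde{\chi}_v$, using $\pi(\tau_i)=F\lambda_i+\sum_v\theta^v\gamma_{vi}$ from Lemma \ref{kk22}. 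Correspondingly, your inverse should read $\tau_i:=F\eta_i-\Phi+\sum_v\theta^v\xi_{vi}$ (not with a minus), so that $\tau_i-\tau_j$ reproduces $F\rho_{ij}+\sum_v\theta^v\rho'_{vij}$. Once this single sign is fixed, your argument, including the coboundary check and the inverse, coincides with the paper's proof.
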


\begin{proof}

Let $(\tau=\{\tau_i\},\rho=\{\rho_{ij}\}, \lambda=\{\lambda_i\},(\theta^v))$ be a representative of an element of $PD_{(X,\Lambda_0)/(\mathcal{Y},\Pi)}$ as in Lemma \ref{kk22}. Since $\delta \rho=0$, there exists $\eta_i\in \Gamma(U_i, \mathcal{A}^{0,0}(\Theta_X))$ such that $\rho_{ij}=\eta_i-\eta_j$. Then $\phi:=\bar{\partial}\eta_i\in A^{0,1}(\Theta_X)$ and $\psi:=[\Lambda_0,\eta_i]-\lambda_i\in A^{0,0}(\wedge^2 \Theta_X)$. On the other hand, $\tau_i-\tau_j=F\eta_i-F\eta_j+\sum_v \theta^v(\xi_{vi}-\xi_{vj})$. We define $\Phi_i=F\eta_i-\tau_i+\sum_{v=1}^{r'} \theta^v \xi_{vi}$ on $U_i$. Then $\Phi:=\{\Phi_i\}\in A^{0,0}(f^*\Theta_Y)$ with $\bar{\partial}\Phi=F\phi+\sum_{v=1}^{r'} \theta^v \tilde{\varphi}_{v}$, and $\pi(\Phi)=F[\Lambda_0,\eta_i]-\pi(\tau_i)+\sum_{v=1}^{r'}\theta^v \pi(\xi_{vi})=F[\Lambda_0,\eta_i]-F\lambda_i-\sum_{v=1}^{r'} \theta^v \gamma_{vi}+\sum_{v=1}^{r'} \theta^v \pi(\xi_{vi})=F\psi+\sum_{v=1}^{r'} \theta^v \tilde{\chi}_v$.
Then $(\tau,\rho,\lambda,(\theta^v))\to (\Phi, \phi, \psi, ( \theta^v  ) )$ gives an isomorphism.
\end{proof}

\subsection{Theorem of completeness}

\begin{theorem}[Theorem of completeness]\label{ii5}
Let $(\mathcal{X},\Lambda,\Phi,p, M, s)$ be a family of holomorphic Poisson maps into a family $(\mathcal{Y},\Pi, q,S),0\in M, (X,\Lambda_0)=(X_0,\Lambda_0)$, and let $\tilde{f}:(X,\Lambda_0)\to (\mathcal{Y},\Pi)$ be the restriction of $\Phi$ to $(X,\Lambda_0)$. If the characteristic map $\tau:T_0(M)\to PD_{(X,\Lambda_0)/(\mathcal{Y},\Pi)}$ is surjective, then the family is complete at $0$.
\end{theorem}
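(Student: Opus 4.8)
The plan is to follow the structure of the proof of Theorem \ref{ii1} verbatim, adding the bookkeeping needed to track the base map $s\colon M\to S$; Horikawa's treatment of maps into a family in \cite{Hor74} is the template in the classical case, and the Poisson part of the argument is identical in spirit to what was done for Theorem \ref{ii1}. First I would reduce to a local problem: given a family $(\mathcal{X}',\Lambda',\Phi',p',N,s')$ with $\Phi'_{0'}$ equivalent to $\Phi_0=f$, it suffices to produce, on a neighbourhood $U$ of $0'$, a holomorphic map $t\colon U\to M$ with $t(0')=0$ and a holomorphic Poisson isomorphism $g\colon(\mathcal{X}'|_U,\Lambda'|_U)\to(\mathcal{X},\Lambda)$ over $t$ that is compatible with $\Phi,\Phi'$ and satisfies $s\circ t=s'$. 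Adopting the coordinates of subsection \ref{qq20} for $(\mathcal{X},\Lambda,\Phi,p,M,s)$ together with primed analogues for the second family, and using $\Phi'_{0'}\sim\Phi_0$ to normalise $U_i'=U_i$, $z_i'=z_i$, $\Phi'_i(z_i,0)=f_i(z_i)$, $\phi'_{ij}(z_j,0)=b_{ij}(z_j)$, $\Lambda_0=\Lambda'_{0'}$, the problem becomes the construction of formal power series $t(t')$ and $g_i(z_i,t')$ with $g_i(z_i,0)=z_i$, $t(0)=0$, satisfying
\begin{align*}
g_i(\phi'_{ij},t')&=\phi_{ij}(g_j,t(t')),\qquad \bigl(\Phi_i(g_i,t(t')),\,s(t(t'))\bigr)=\bigl(\Phi'_i,\,s'(t')\bigr),\\
\textstyle\sum_{\alpha,\beta}\Lambda'^{\,i}_{\alpha\beta}(z_i,t')\tfrac{\partial g_i^p}{\partial z_i^\alpha}\tfrac{\partial g_i^q}{\partial z_i^\beta}&=\Lambda^i_{pq}(g_i,t(t')),
\end{align*}
where the requirement $s\circ t=s'$ is precisely the $S$-component of the middle equation, read in the coordinates $(w_i,s)$ on $\mathcal{V}_i$.

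Next I would run the order-by-order induction. Assuming $g_i^{\mu-1},t^{\mu-1}$ solve the congruences modulo degree $\mu$, I would define the obstruction triple $(\{\gamma_{i|\mu}\},\{\Gamma_{ij|\mu}\},\{\lambda_{i|\mu}\})$ exactly as in the proof of Theorem \ref{ii1}, with $\gamma_{i|\mu}$ now valued in $\tilde f^*\Theta_{\mathcal{Y}}$ (so including a $\partial/\partial s^v$-component). Differentiating $(\ref{dd1})$--$(\ref{dd5})$ along $t'$ and repeating the chain of congruences used for the $\pi(\tau_i)$ identity — now applying $(\ref{dd5})$ to both families — shows that this triple is a cocycle for $PD_{(X,\Lambda_0)/(\mathcal{Y},\Pi)}$; in particular the $\partial/\partial s^v$-component of $\gamma_{i|\mu}$ is independent of $i$, which is exactly the well-definedness condition appearing in Lemma \ref{kk22}. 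Since $\tau\colon T_0(M)\to PD_{(X,\Lambda_0)/(\mathcal{Y},\Pi)}$ is surjective, a basis of $PD_{(X,\Lambda_0)/(\mathcal{Y},\Pi)}$ represented by $\tau(\partial/\partial t_v)=(\{\tau_{iv}\},\{\rho_{ijv}\},\{\Lambda'_{iv}\})$ lets me write the obstruction class as $\sum_v t_{v|\mu}\,\tau(\partial/\partial t_v)$ up to a coboundary; the coboundary yields $g_{i|\mu}$ and the coefficients $t_{v|\mu}$ yield $t_\mu$, solving the exact analogues of $(\ref{jj30})$, $(\ref{jj31})$, $(\ref{jj32})$ in the present setting, the $S$-component forcing $s(t^\mu(t'))\equiv_\mu s'(t')$. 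This closes the induction and produces formal solutions.

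Finally I would prove convergence, which goes through exactly as for Theorem \ref{ii1}: an \emph{a priori} estimate producing a solution operator for the linear equations $(\ref{jj30})$--$(\ref{jj32})$ bounded by a constant $K$ times the norm of the obstruction (proved by the usual compactness-and-contradiction argument, as in \cite{Hor73} and \cite{Kim15}), combined with the standard estimates $\Gamma_{ij|\mu},\gamma_{i|\mu},\lambda_{i|\mu}\ll(\text{const})\,A(t')$ against the majorant series $A(t')=\tfrac{b}{16c}\sum_\mu \mu^{-2}c^\mu(\textstyle\sum_v t'_v)^\mu$; choosing $b,c$ so that the resulting product of constants is $<1$ forces $t_\mu(t')\ll A(t')$ and $g_i^\mu(z_i,t')-z_i\ll A(t')$ inductively, hence absolute and uniform convergence for small $|t'|$, and the limit gives the desired $h=t$ and $g$. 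The hard part is not a genuine obstacle but the careful accounting of the $S$-direction: one must verify at each order that the obstruction's $\partial/\partial s$-component is a global constant (so that the class lies in $PD_{(X,\Lambda_0)/(\mathcal{Y},\Pi)}$ and is hit by $\tau$) and that the constructed $t(t')$ automatically satisfies $s\circ t=s'$; once this is in place the Poisson-compatibility equation is treated verbatim as in Theorem \ref{ii1}, and, in contrast to the existence theorems, no new convergence difficulty arises.
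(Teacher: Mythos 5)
Your proposal is correct in outline, but it takes a genuinely different route from the paper. The paper's proof of Theorem \ref{ii5} (following \cite{Hor74} p.655--656) is a short reduction: setting $\Psi=(\Phi,p):(\mathcal{X},\Lambda)\to(\mathcal{Y}\times M,\Pi)$ and $\Psi'=(\Phi',p')$, one views each family of maps into the Poisson analytic family $(\mathcal{Y},\Pi,q,S)$ as a family of maps into the \emph{fixed} Poisson manifold $(\mathcal{Y},\Pi)$ in the sense of Definition \ref{ll35}; the characteristic map of $(\mathcal{X},\Lambda,\Psi,p,M)$ at $0$ is precisely $\tau:T_0(M)\to PD_{(X,\Lambda_0)/(\mathcal{Y},\Pi)}$, so Theorem \ref{ii1} immediately yields $g$ and $h$ with the primed family induced by $h$, and the compatibility $s\circ h=s'$ falls out of the commutative diagram relating $\Psi,\Psi'$, the projections and $q$ --- no new induction or convergence argument is needed. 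You instead redo the whole power-series construction directly in the family-target setting: the system $g_i(\phi'_{ij},t')=\phi_{ij}(g_j,t(t'))$, $\Phi_i(g_i,t(t'))=\Phi'_i$ with $s(t(t'))=s'(t')$, and the Poisson compatibility, the obstruction triple now valued in $\tilde f^*\Theta_{\mathcal{Y}}$ with a constant $\partial/\partial s^v$-component (the degree-$\mu$ part of $s'(t')-s(t^{\mu-1}(t'))$), surjectivity of $\tau$ via Lemma \ref{kk22}, and the majorant-series convergence. This works, and your key observations are the right ones (the $s$-component of the obstruction is a global constant, so the class lives in $PD_{(X,\Lambda_0)/(\mathcal{Y},\Pi)}$; the coboundaries have no $s$-component, so solving the $S$-direction forces $s\circ t\equiv_\mu s'$), but it costs re-proving the analogues of $(\ref{jj10})$--$(\ref{jj14})$ with the $s$-dependent Poisson structure $\Pi(w_i,s)$ and an a priori estimate lemma enlarged by the $\mathbb{C}^{r'}$-component, none of which you carry out. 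What your route buys is that it never invokes Theorem \ref{ii1} with the non-compact target $(\mathcal{Y},\Pi)$ (Definition \ref{ll35} formally asks for a compact target, although the proof of Theorem \ref{ii1} only uses compactness of $X$); what the paper's route buys is that all of the analytic work --- the cocycle identities and the convergence estimates --- is inherited verbatim from Theorem \ref{ii1}, making the proof essentially formal.
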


\begin{proof}
We extend the arguments in \cite{Hor74} p.655-656 in the context of holomorphic Poisson deformations. We tried to maintain notational consistency with \cite{Hor74}. Let $(\mathcal{X}',\Lambda',\Phi',p',M',s')$ be another family of holomorphic Poisson maps into $(\mathcal{Y},\Pi, q, S)$. Assume that there exists a point $0'\in M'$ such that the restriction $\tilde{f}':(X_{0'}',\Lambda_{0'}')\to (\mathcal{Y},\Pi)$  to $(X_{0'}',\Lambda_{0'}')=p'^{-1}(0')$ is equivalent to $\tilde{f}$. By defining $\Psi=(\Phi,p):(\mathcal{X},\Lambda)\to (\mathcal{Y}\times M, \Pi)$, we have a deformation of holomorphic Poisson maps of $\tilde{f}$ into $(\mathcal{Y},\Pi)$ over $M$ in the sense of Definition \ref{ll35} such that the following commutative diagram commutes.
\begin{center}
$\begin{CD}
(\mathcal{X},\Lambda)@>\Psi=(\Phi, p)>> (\mathcal{Y}\times M, \Pi) @>pr_1>> (\mathcal{Y}, \Pi)\\
@VpVV @Vpr_2VV @VqVV\\
M @>>> M @>s>> S
\end{CD}$
\end{center}
Similarly by defining $\Psi'=(\Phi',p'):(\mathcal{X}', \Lambda')\to (\mathcal{Y}\times M',\Pi)$ over $M'$, we have a deformation of holomorphic Poisson maps of $\tilde{f}'$ into $(\mathcal{Y},\Pi)$ over $M'$ in the sense of Definition \ref{ll35} such that
\begin{center}
$\begin{CD}
(\mathcal{X},\Lambda)@>\Psi'=(\Phi', p')>> (\mathcal{Y}\times M', \Pi) @>pr_1>> (\mathcal{Y}, \Pi)\\
@VpVV @Vpr_2VV @VqVV\\
M' @>>> M' @>s'>> S
\end{CD}$
\end{center}

Then since $\tau:T_0(M)\to PD_{(X,\Lambda_0)/(\mathcal{Y},\Pi)}$ is surjective, by Theorem \ref{ii1}, we have a holomorphic Poisson maps $g:(\mathcal{X}',\Lambda')\to (\mathcal{X},\Lambda)$ and a holomorphic map $h:M'\to M$ (by replacing $M'$ by an open neighborhood of $0'$ if necessary) such that $(\mathcal{X}',\Lambda',\Psi',p',M')$ is induced by $h$ from $(\mathcal{X},\Lambda, \Psi, p,M)$, and the following diagram commutes.
\[\xymatrix{
(\mathcal{X}',\Lambda') \ar[dd]^{p'} \ar[drr]^g \ar[rrrr]^{\Psi'=(\Phi',p')}& & & & (\mathcal{Y}\times M',\Pi) \ar[dd] \ar[drr] \ar[rrrr]^{pr_1} & & && (\mathcal{Y},\Pi) \ar[dd]^q\\
& & (\mathcal{X},\Lambda) \ar[dd]^p \ar[rrrr]^{\Psi=(\Phi,p)} & & & & (\mathcal{Y}\times M,\Pi) \ar[dd]   \ar[rru]^{pr_1} \\
M'  \ar[drr]^h \ar[rrrr]^{id}& & & & M'  \ar[drr]^h \ar[rrrr]^{s'}  & & & &S\\      
& & M \ar[rrrr]^{id} & & & & M \ar[rru]^{s} &       \\
}\]

which proves Theorem \ref{ii5}.

\end{proof}

\subsection{Theorem of existence}\

\begin{theorem}[Theorem of existence]\label{ii80}
Let $f:(X,\Lambda_0)\to (Y, \Pi_0)$ be a holomorphic Poisson map of compact holomorphic Poisson manifolds. Let $(\mathcal{Y},\Pi, q,S)$ be a Poisson analytic family, $0^*\in S, (Y,\Pi_0)=(Y_{0^*},\Pi_{0^*})$, and let $\rho':T_{0^*}(S)\to \mathbb{H}^1(Y,\Theta_Y^\bullet)$ be the Poisson Kodaira-Spencer map of $(\mathcal{Y},\Pi,q,S)$. Assume that
\begin{enumerate}
\item $\mathbb{H}^1(X,f^* \Theta_Y^\bullet)$ is generated by the image of $F:\mathbb{H}^1(X,\Theta_X^\bullet  )  \to \mathbb{H}^1(X, f^*\Theta_Y^\bullet)$ and the image of $f^*\circ \rho': T_{0^*}(S)\to \mathbb{H}^1(X, f^* \Theta_Y^\bullet )$.
\item $F: \mathbb{H}^2(X, \Theta_X^\bullet)\to \mathbb{H}^2(X, f^*\Theta_Y^\bullet)$ is injective.
\end{enumerate}
If the formal power series $(\ref{qq50})$ constructed in the proof converge, then there exists a family $(\mathcal{X},\Lambda,\Phi,p, M,s)$ of holomorphic Poisson maps into $(\mathcal{Y},\Pi,q,S)$ and a point $0\in M$ such that
\begin{enumerate}
\item $s(0)=0^*,(X,\Lambda_0)=p^{-1}(0)$ and $\Phi_0$ coincides with $f$,
\item $\tau: T_0(M)\to PD_{(X,\Lambda_0)/(\mathcal{Y},\Pi)}$ is bijective.
\end{enumerate}

\end{theorem}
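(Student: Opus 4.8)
The plan is to transcribe the proof of Theorem \ref{hh1} almost verbatim, the one genuinely new feature being that, alongside the vector $(0,1)$-form $\phi(t)$, the bivector $\Lambda(t)$ and the functions $\Phi_i(z_i,t)$, we must also produce a holomorphic map $s(t)$ of a polydisc $M=\{t\in\mathbb{C}^r\mid|t|<\epsilon\}$, $r=\dim PD_{(X,\Lambda_0)/(\mathcal{Y},\Pi)}$, into $S$ with $s(0)=0^*$, with the target transition functions $g_{ij}(\,\cdot\,,s(t))$ and the target Poisson coefficients $\Pi^i_{pq}(\,\cdot\,,s(t))$ now depending on $t$ through $s(t)$. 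Choosing coordinate systems on $X$, on $Y=Y_{0^*}$ and on $(\mathcal{Y},\Pi,q,S)$ as in subsection \ref{qq20}, one reduces the theorem to solving, as convergent power series in $t$, the system consisting of $\phi(0)=0$, $\Lambda(0)=\Lambda_0$, $\Phi_i(z_i,0)=f_i(z_i)$, $s(0)=0^*$, together with $\bar\partial\phi+\tfrac12[\phi,\phi]=0$, $\tfrac12[\Lambda,\Lambda]=0$, $\bar\partial\Lambda+[\phi,\Lambda]=0$, $\bar\partial\Phi_i+[\phi,\Phi_i]=0$, $\Phi_i(b_{ij}(z_j),t)=g_{ij}(\Phi_j(z_j,t),s(t))$, and $\sum_{\alpha,\beta}\Lambda^i_{\alpha\beta}(z_i,t)\,\partial_{z_i^\alpha}\Phi_i^p\,\partial_{z_i^\beta}\Phi_i^q=\Pi^i_{pq}(\Phi_i(z_i,t),s(t))$; this is the exact analogue of $(\ref{ee1})$--$(\ref{ee2})$.

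First I would settle the formal problem by induction on the degree $\mu$ in $t$. For $\mu=0$ one sets $\phi_0=0$, $\Lambda_0(t)=\Lambda_0$, $\Phi_{i|0}=f_i$, $s_0(t)=0^*$. For $\mu=1$ one uses the description of $PD_{(X,\Lambda_0)/(\mathcal{Y},\Pi)}$ from Lemma \ref{kk23}: picking a basis represented by tuples $(\Phi_\lambda,-\phi_\lambda,-\Lambda_\lambda,(\theta^v_\lambda))$, $\lambda=1,\dots,r$, one puts $\phi_1=\sum_\lambda\phi_\lambda t_\lambda$, $\Lambda_1=\sum_\lambda\Lambda_\lambda t_\lambda$, $\Phi_{i|1}=\sum_\lambda\Phi_{\lambda i}t_\lambda$, and $s_1(t)$ the linear map with $v$-th component $\sum_\lambda\theta^v_\lambda t_\lambda$. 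The defining relations of $PD$ (in the forms of Lemma \ref{kk22} and Lemma \ref{kk23}, which encode precisely the first-order parts of the equations above, including the extra terms $\sum_v\theta^v\rho'_v$ and $\sum_v\theta^v\gamma_v$ coming from the Poisson Kodaira--Spencer class $\rho'$ of the target) then give the degree-one congruences. As in subsection \ref{kk20}, this same first-order data forces $\Phi_0=f$, $s(0)=0^*$, and the bijectivity of $\tau:T_0(M)\to PD_{(X,\Lambda_0)/(\mathcal{Y},\Pi)}$.

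For the inductive step, assume $\phi^{\mu-1},\Lambda^{\mu-1},\Phi_i^{\mu-1},s^{\mu-1}$ solve the congruences modulo degree $\mu$. One introduces the obstruction cochains $\xi_\mu\in A^{0,2}(\Theta_X)$, $\psi_\mu\in A^{0,1}(\wedge^2\Theta_X)$, $\eta_\mu\in A^{0,0}(\wedge^3\Theta_X)$ for the Poisson deformation of $X$, and $E_{i|\mu}$, $\Gamma_{ij|\mu}$, $\lambda_{i|\mu}$ for the map, defined as in $(\ref{gg1})$--$(\ref{gg6})$ but with $g_{ij}(\,\cdot\,,s^{\mu-1})$ and $\Pi^i_{pq}(\,\cdot\,,s^{\mu-1})$ in place of $g_{ij}$ and $\Pi^i_{pq}$. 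The key step is the analogue of Lemma \ref{ii76}: these cochains satisfy $L(\xi_\mu+\psi_\mu+\eta_\mu)=0$ and the compatibility relations $\bar\partial E_{i|\mu}=F\xi_\mu$, $\pi(E_{i|\mu})+\bar\partial\lambda_{i|\mu}=F\psi_\mu$, $\pi(\lambda_{i|\mu})=F\eta_\mu$, $\lambda_{i|\mu}-\lambda_{j|\mu}=\pi(\Gamma_{ij|\mu})$, $E_{i|\mu}-E_{j|\mu}=\bar\partial\Gamma_{ij|\mu}$, $\delta\{\Gamma_{ij|\mu}\}=0$. Passing to global sections $E'_\mu=E_{i|\mu}-\bar\partial\Gamma_{i|\mu}$, $\lambda'_\mu=\lambda_{i|\mu}-\pi(\Gamma_{i|\mu})$ (with $\Gamma_{ij|\mu}=\Gamma_{i|\mu}-\Gamma_{j|\mu}$), one sees that the image under $F$ of the class of $\xi_\mu+\psi_\mu+\eta_\mu$ in $\mathbb{H}^2(X,f^*\Theta_Y^\bullet)$ is trivialized by $(E'_\mu,\lambda'_\mu)$; hypothesis (2) then gives $\phi'_\mu,\Lambda'_\mu$ with $L(\phi'_\mu+\Lambda'_\mu)=-(\xi_\mu+\psi_\mu+\eta_\mu)$, and after subtracting these the residue $(E'_\mu+F\phi'_\mu,\lambda'_\mu+F\Lambda'_\mu)$ is $L_\pi$-closed, hence represents a class in $\mathbb{H}^1(X,f^*\Theta_Y^\bullet)$. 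This is the only point where the argument departs from Theorem \ref{hh1}: instead of surjectivity of $F$ on $\mathbb{H}^1$ one invokes hypothesis (1) to write this class as $F(x_\mu,y_\mu)+f^*\rho'(\theta_\mu)$ with $(x_\mu,y_\mu)$ an $L$-closed pair and $\theta_\mu\in T_{0^*}(S)\cong\mathbb{C}^{r'}$ homogeneous of degree $\mu$ in $t$; one takes $s_\mu(t):=\theta_\mu$ and defines $\phi_\mu=\phi'_\mu-x_\mu$, $\Lambda_\mu=\Lambda'_\mu-y_\mu$, $\Phi_{i|\mu}=\Phi'_\mu-\Gamma_{i|\mu}$ as in the proof of Lemma \ref{ii86}. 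Checking the degree-$\mu$ congruences (the analogues of $(\ref{gg34})_\mu$ and $(\ref{gg35})_\mu$) uses that $f^*\rho'(\theta_\mu)$ is, via the definition of $\tilde\varphi_v,\tilde\chi_v$, exactly the contribution of the $s_\mu$-term; this closes the induction and produces the formal power series $(\ref{qq50})$.

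Finally, if these series converge for $|t|$ small, then as in subsection \ref{kk20} the equations $\bar\partial\phi+\tfrac12[\phi,\phi]=0$, $\tfrac12[\Lambda,\Lambda]=0$, $\bar\partial\Lambda+[\phi,\Lambda]=0$ define a Poisson analytic family $p:(\mathcal{X},\Lambda)\to M$ with $p^{-1}(0)=(X,\Lambda_0)$, the remaining equations make $\{\Phi_i(z_i,t)\}$ glue to a holomorphic Poisson map $\Phi:(\mathcal{X},\Lambda)\to(\mathcal{Y},\Pi)$ over the base map $s:M\to S$ so that $s\circ p=q\circ\Phi$, and the first-order part of the construction yields $s(0)=0^*$, $\Phi_0=f$, and the bijectivity of $\tau$. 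I expect the main obstacle to be twofold. The first is carrying out the analogue of Lemma \ref{ii76}: the long local computations behind $(\ref{gg12})$--$(\ref{gg14})$ must be redone with the $s$-dependence of $\Pi$ and the $g_{ij}$ carried along, and one must verify that the new terms assemble precisely into the $f^*\rho'$-contribution so that the obstruction remains killable under hypotheses (1) and (2). The second, more serious obstacle is the convergence of $(\ref{qq50})$, which — exactly as for Theorem \ref{hh1}, see subsection \ref{pp40} — appears to require a harmonic theory for $L=\bar\partial+[\Lambda_0,-]$ on $\Theta_X^\bullet$ and $L_\pi=\bar\partial+\pi$ on $f^*\Theta_Y^\bullet$ that is not yet available, which is why it is only assumed here.
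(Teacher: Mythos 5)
Your proposal follows the paper's own proof essentially step for step: the same system of equations augmented by a base map $s(t)$ (the paper's $h(t)$), the same obstruction cochains and analogue of Lemma \ref{ii76} (the paper's Lemma \ref{ll1}), the same use of hypothesis (2) to kill the obstruction in $\mathbb{H}^2$, the same use of hypothesis (1) to split the residual class in $\mathbb{H}^1(X,f^*\Theta_Y^\bullet)$ as $F(x_\mu,y_\mu)+f^*\rho'(\theta_\mu)$ with $\theta_\mu$ absorbed into the degree-$\mu$ term of $s$, and the same convergence caveat.

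There is, however, one concrete slip, and it sits exactly at the point where the paper's Lemma \ref{ll3} departs from Lemma \ref{ii86}: you define $\Phi_{i|\mu}=\Phi'_\mu-\Gamma_{i|\mu}$, but with this choice the degree-$\mu$ congruences with $s$-dependence fail, because by Lemma \ref{ll2} one needs $\Gamma_{ij|\mu}=\Phi_{j|\mu}-\Phi_{i|\mu}+\sum_{v=1}^{r'}h_\mu^v\tilde{\rho}'_{vij}$ (equation $(\ref{ii99})$, coming from $(\ref{ii44})_\mu$) and $-\lambda_{i|\mu}=\pi(\Phi_{i|\mu})+F\Lambda_\mu-\sum_{v=1}^{r'}h_\mu^v\tilde{\Pi}'_{vi}$ (equation $(\ref{ii85})$, coming from $(\ref{ii45})_\mu$), whereas your formula yields these relations without the $h_\mu^v$-terms. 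The fix is the one the paper makes: choose $\xi_{vi}\in\Gamma(U_i,\mathcal{A}^{0,0}(f^*\Theta_Y))$ with $\xi_{vi}-\xi_{vj}=\tilde{\rho}'_{vij}$, so that $f^*\rho'\left(\frac{\partial}{\partial s^v}\right)$ is represented by $(\tilde{a}_v,\tilde{b}_v)=(\bar{\partial}\xi_{vi},\pi(\xi_{vi})-\tilde{\Pi}_{vi})$, and set $\Phi_{i|\mu}:=\Phi'_\mu-\Gamma_{i|\mu}+\sum_{v=1}^{r'}h_\mu^v\xi_{vi}$; the $\bar{\partial}$ and $\pi$ of the extra term cancel the $\sum_v h_\mu^v(\tilde{a}_v,\tilde{b}_v)$ contribution in $L_\pi(\Phi'_\mu)$ and its coboundary produces the $\sum_v h_\mu^v\tilde{\rho}'_{vij}$ term in the gluing relation. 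The same correction is needed at first order, where the paper takes $\Phi_{i|1}=\sum_\lambda(\Phi'_{1\lambda}+\sum_v\theta^v_\lambda\xi_{vi})t_\lambda$ rather than just the bare representatives from Lemma \ref{kk23}. Your closing remark that the $f^*\rho'(\theta_\mu)$ contribution is "exactly the $s_\mu$-term" is the right idea, but it is realized precisely by this local correction, not by the formula as you wrote it.
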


\begin{remark}
If $f$ is non-degenerate, then the above condition $(1)$ and $(2)$ are reduced to the following: $P\circ f^*\circ \rho':T_{0^*}(S)\to \mathbb{H}^1(X, \mathcal{N}_f^\bullet)$ is surjective.
\end{remark}

\begin{proof}
We extend the arguments in \cite{Hor74} p.657-660 in the context of holomorphic Poisson deformations. We tried to keep notational consistency with \cite{Hor74}.

 We take systems of coordinates on $X$ and on $\mathcal{Y}$ as in . Moreover, we assume that $U_i=\{z_i\in \mathbb{C}^n||z_i|<1\}$.
  
 Let $r=\dim PD_{(X,\Lambda_0)/(\mathcal{Y},\Pi)}$ and $M=\{t\in \mathbb{C}^r||t|<\epsilon\}$ with a sufficiently small number $\epsilon>0$. We regard $X\times M$ as  a differentiable manifold and prove the existence of a vector $(0,1)$-form
 \begin{align*}
 \phi(t)=\sum_{v=1}^n \phi_i^v(z_i,t)\frac{\partial}{\partial z_i^v}=\sum_{v,\alpha=1}^n \phi_{i\alpha}^v(z_i,t)d\bar{z}_i^\alpha\frac{\partial}{\partial z_i^v}
 \end{align*}
 and the existence of a bivector field of the form
 \begin{align*}
 \Lambda(t)=\sum_{\alpha,\beta=1}^n \Lambda_{\alpha\beta}^i(z_i,t)\frac{\partial}{\partial z_i^\alpha}\wedge\frac{\partial}{\partial z_i^\beta}
 \end{align*}
 depending holomorphically on $t$, vector-valued differentiable functions $\Phi_i:U_i\times M\to \mathbb{C}^m$ depending holomorphically on $t$, and a holomorphic map $h:M\to \mathbb{C}^{r'}$,
 such that
 \begin{align}
 h(0)&=0, \label{ii30}\\
 \phi(0)&=0, \label{ii31}\\
 \Lambda(0)&=\Lambda_0, \label{ii32}\\
 \bar{\partial} \phi+\frac{1}{2}[\phi,\phi]&=0,  \label{ii33}\\
 \bar{\partial}\Lambda+[\phi,\Lambda]&=0, \label{ii34}\\
 [\Lambda,\Lambda]&=0, \label{ii35}\\
 \Phi_i(z_i,0)&=f_i(z_i),   \label{ii36}\\
 \bar{\partial}\Phi_i+[\phi, \Phi_i]&=0,  \label{ii37}\\
 \Phi_i(b_{ij}(z_j),t)&=\psi_{ij}(\Phi_j(z_j,t),h(t)), \label{ii38}\\
 \Pi_{pq}^i(\Phi_i(z_i,t),h(t))&=\sum_{\alpha,\beta=1}^n \Lambda_{\alpha\beta}^{ i}(z_i,t)\frac{\partial \Phi_i^p}{\partial z_i^\alpha}\frac{\partial \Phi_i^q}{\partial z_i^\beta}. \label{ii39}
 \end{align}

\subsubsection{Existence of formal solutions}\

We will prove the existence of formal solutions of $\phi(t),\Lambda(t),\Phi_i(z_i,t)$, and $h(t)$ satisfying $(\ref{ii30})- (\ref{ii39})$ as power series in $t$. Let $\phi(t)=\sum_{\mu=1}^\infty \phi_\mu(t),\Lambda(t)=\sum_{\mu=0}^\infty, \Phi_i(z_i,t)=\sum_{\mu=0}^\infty \Phi_{i|\mu}(z_i,t)$, and $h(t)=\sum_{\mu=0}^\infty h_\mu(t)$, where $\phi_\mu(t),\Lambda_\mu(t),\Phi_{i|\mu}(t)$, and $h_\mu(t)$ are homogenous in $t$ of degree $\mu$, and let $\phi^\mu(t)=\phi_0(t)+\phi_1(t)+\cdots+\phi_\mu(t),\Lambda^\mu(t)=\Lambda_0(t)+\Lambda_1(t)+\cdots+\Lambda_\mu(t), \Phi_i^\mu(z_i,t)=\Phi_{i|0}(z_i,t)+\Phi_{i|1}(z_i,t)+\cdots+\Phi_{i|\mu}(z_i,t)$, and $h^\mu(t)=h_0(t)+h_1(t)+\cdots + h_\mu(t)$. We note that $(\ref{ii30})- (\ref{ii39})$ are equivalent to the following system of congruences:

\begin{align}
 \bar{\partial} \phi^\mu+\frac{1}{2}[\phi^\mu,\phi^\mu]&\equiv_\mu 0, \label{ii40}\\
 \bar{\partial}\Lambda^\mu+[\phi^\mu,\Lambda^\mu]&\equiv_\mu 0, \label{ii41}\\
 [\Lambda^\mu,\Lambda^\mu]&\equiv_\mu 0, \label{ii42}\\
 \bar{\partial}\Phi_i^\mu+[\phi^\mu, \Phi_i^\mu]&\equiv_\mu 0, \label{ii43}\\
 \Phi_i^\mu(b_{ij}(z_j),t)&\equiv_\mu \psi_{ij}(\Phi_j^\mu(z_j,t),h^\mu(t)), \label{ii44}\\
 \Pi_{pq}^i(\Phi_i^\mu(z_i,t),h^\mu(t))&\equiv_\mu \sum_{\alpha,\beta=1}^n \Lambda_{\alpha\beta}^{\mu i}(z_i,t)\frac{\partial \Phi_i^{\mu p}}{\partial z_i^\alpha}\frac{\partial \Phi_i^{\mu q}}{\partial z_i^\beta}, \label{ii45}
 \end{align}
for $\mu=1,2,3,...$. We also recall Remark \ref{ii71}.

We shall construct solutions $\phi(t),\Lambda(t)$, and $\Phi_i(z_i,t)$ of $(\ref{ii30})-(\ref{ii39})$ by induction on $\mu$. From $(\ref{ii30}),(\ref{ii31})$, and $(\ref{ii32})$, we set $\phi_0=0, \Phi_{i|0}=f_i(z_i), \Lambda_{i|0}=\Lambda_0$, and $h_0=0$. For $\mu=1$, we determine $\phi_1,\Lambda_1, \Phi_{i|1}$ and $h_1$ as follows: take $(\Phi_{1\lambda}',-\phi_{1\lambda},-\Lambda_{1\lambda},(-\theta_\lambda^v))\in A^{0,0}(f^*\Theta_Y)\oplus A^{0,1}(\Theta_X)\oplus A^{0,0}(\wedge^2 \Theta_X)\oplus \mathbb{C}^{r'}$ for $\lambda=1,...,r$ which represent a basis of $PD_{(X,\Lambda_0)/(\mathcal{Y},\Pi)}$ via the isomorphism in Lemma \ref{kk23}. Then we have $\bar{\partial}\Phi_{1\lambda}'=-F\phi_{1\lambda}-\sum_{v=1}^{r'}\theta_\lambda^v \tilde{\varphi}_v$, and $\pi(\Phi_{1\lambda}')=-F\Lambda_{1\lambda}-\sum_{v=1}^{r'}\theta_\lambda^v\tilde{\chi}_v$. We set $\phi_1=\sum_{\lambda=1}^r \phi_{1\lambda}t_\lambda, \Lambda_{i|1}=\sum_{\lambda=1}^r \Lambda_{1\lambda}t_\lambda, \Phi_{i|1}=\sum_{\lambda=1}^r(\Phi_{1\lambda}'+\sum_{v=1}^{r'} \theta^v_\lambda\xi_{vi})t_\lambda, h_1^v=\sum_{\lambda=1}^r\theta_\lambda^v t_\lambda$. Then $(\ref{ii40})_1-(\ref{ii45})_1$ hold. We only check $(\ref{ii45})_1$. Indeed, $(\ref{ii45})_1$ is equivalent to
\begin{align*}
&\Pi_{pq}^i(f_i+\Phi_{i|1},h_1)\equiv_1 \sum_{\alpha,\beta=1}^n (\Lambda_{\alpha\beta}^i(z_i)+\Lambda_{\alpha\beta|1}^i)\frac{\partial (f_i^p+\Phi_{i|1}^p)}{\partial z_i^\alpha}\frac{\partial(f_i^q+\Phi_{i|1}^q)}{\partial z_i^\beta} \iff \\
&\sum_{\alpha=1}^m \Phi_{i|1}^\alpha\frac{\partial \Pi_{pq}^i}{\partial w_i^\alpha}(f_i,0)+\sum_{v=1}^{r'}h_1^v\frac{\partial \Pi_{pq}^i}{\partial s_i^v}(f_i,0)=\sum_{\alpha,\beta=1}^n\Lambda_{\alpha\beta}^i(z_i)\frac{\partial \Phi_{i|1}^p}{\partial z_i^\alpha}\frac{\partial f_i^q}{\partial z_i^\beta}+\sum_{\alpha,\beta=1}^n\Lambda_{\alpha\beta}^i(z_i)\frac{\partial f_i^p}{\partial z_i^\alpha}\frac{\partial \Phi_{i|1}^q}{\partial z_i^\beta}+\sum_{\alpha,\beta=1}^n \Lambda_{\alpha\beta|1}^i\frac{\partial f_i^p}{\partial z_i^\alpha}\frac{\partial f_i^q}{\partial z_i^\beta}\\
&\iff \pi(\Phi_{i|1})+F\Lambda_{i|1}-\sum_{v=1}^{r'} h_i^v \gamma_{vi}=0
\end{align*}
On the other hand, we have 
\begin{align*}
\sum_{\lambda=1}^r\left(\pi(\Phi_{1\lambda}')+\sum_{v=1}^{r'}\theta_\lambda^v\pi(\xi_{vi})+F \Lambda_{1\lambda} -\sum_{v=1}^{r'}\theta_\lambda^vh_i^v \gamma_{vi} \right)t_\lambda =\sum_{\lambda=1}^r\left(\pi(\Phi_{1\lambda}')+\sum_{v=1}^{r'}\theta_\lambda^v\tilde{\chi}_v+F \Lambda_{1\lambda} \right)t_\lambda=0.
\end{align*}

 Hence induction holds for $\mu=1$.

Now assume that $\phi^{\mu-1},\Lambda^{\mu-1},\Phi_i^{\mu-1}$, and $h^{\mu-1}$ satisfying $(\ref{ii40})_{\mu-1}-(\ref{ii45})_{\mu-1}$ are already determined. We define homogeneous polynomials $\xi_\mu\in A^{0,2}(\Theta_X), \psi_\mu\in A^{0,1}(\wedge^2 \Theta_X)$, and $\eta_\mu \in A^{0,0}(\wedge^2 \Theta_X)$, $E_{i|\mu}\in \Gamma(U_i, \mathcal{A}^{0,1}(f^*\Theta_Y))$, $\Gamma_{ij|\mu}\in \Gamma(U_{ij},\mathcal{A}^{0,0}(f^*\Theta_Y))$ and $\lambda_{i|\mu}\in \Gamma(U_i, \mathcal{A}^{0,0}(\wedge^2 f^*\Theta_Y))$ by the following congruences:
 \begin{align}
 \xi_\mu&\equiv_\mu \phi^{\mu-1}+\frac{1}{2}[\phi^{\mu-1},\phi^{\mu-1}], \label{ii87}\\
 \psi_\mu&\equiv_\mu \bar{\partial}\Lambda^{\mu-1}+[\phi^{\mu-1},\Lambda^{\mu-1}] \label{ii88}\\
 \eta_\mu &\equiv_\mu \frac{1}{2}[\Lambda^\mu,\Lambda^\mu] \label{ii89}\\
 E_{i|\mu}&\equiv_\mu \sum_{\alpha=1}^m (\Phi_i^{\mu-1}+[\phi^{\mu-1},\Phi_i^{\mu-1}])\frac{\partial}{\partial w_i^\alpha} \label{ii90}\\
 \Gamma_{ij|\mu}&\equiv_\mu \sum_{\alpha=1}^m (\Phi_i^{(\mu-1)\alpha}-\phi_{ij}^\alpha(\Phi_j^{\mu-1}, h^{\mu-1})\frac{\partial}{\partial w_i^\alpha} \label{ii91}\\
 \lambda_{i|\mu}&\equiv_\mu \sum_{p,q=1}^m \lambda_{i|\mu}^{p,q}\frac{\partial}{\partial w_i^p}\wedge \frac{\partial}{\partial w_i^q} \label{ii92}\\
 &\sum_{p,q=1}^m  \left(-\Pi_{pq}^i(\Phi_i^\mu(z_i,t),h^\mu(t))+\sum_{\alpha,\beta=1}^n \Lambda_{\alpha\beta}^i(z_i,t)\frac{\partial \Phi_i^{\mu p}}{\partial z_i^\alpha}\frac{\partial \Phi_i^{\mu q}}{\partial z_i^\beta}\right)\frac{\partial}{\partial w_i^p}\wedge \frac{\partial}{\partial w_i^q}. \notag
 \end{align}
We also recall Remark \ref{ii70}.
 
\begin{lemma} \label{ll1}
 We have the following equalities:
\begin{align}
\bar{\partial}\xi_\mu&=0,\,\,\,\,\,\,\,\,\,\,\,\,\,\,\,\,\,\,\text{in $\Gamma(X,\mathcal{A}^{0,3}(\Theta_X))$}, \label{hh2}\\
\bar{\partial}\psi_\mu+[\Lambda_0, \xi_\mu]&=0,\,\,\,\,\,\,\,\,\,\,\,\,\,\,\,\,\,\text{in $\Gamma(X, \mathcal{A}^{0,2}(\wedge^2 \Theta_X))$}, \label{hh3}\\
\bar{\partial}\eta_\mu+[\Lambda_0, \psi_\mu]&=0,\,\,\,\,\,\,\,\,\,\,\,\,\,\,\,\,\,\,\text{in $\Gamma(X, \mathcal{A}^{0,1}(\wedge^3 \Theta_X))$}, \label{hh4}\\
[\Lambda_0,\eta_\mu]&=0, \,\,\,\,\,\,\,\,\,\,\,\,\,\,\,\,\,\,\,\text{in $\Gamma(X, \mathcal{A}^{0,0}(\wedge^4 \Theta_X))$} \label{hh5}\\
\bar{\partial} E_{i|\mu}&=F\xi_\mu,\,\,\,\,\,\,\,\,\,\,\,\,\text{in $\Gamma(U_i,\mathcal{A}^{0,2}(f^*\Theta_Y))$}, \label{hh6}\\
\pi(E_{i|\mu})+\bar{\partial} \lambda_{i|\mu}&=F\psi_\mu,\,\,\,\,\,\,\,\,\,\,\,\,\text{in $\Gamma(U_i,\mathcal{A}^{0,1}(\wedge^2 f^*\Theta_Y))$}, \label{hh6}\\
\pi(\lambda_{i|\mu})&=F\eta_\mu,\,\,\,\,\,\,\,\,\,\,\,\,\,\text{in $\Gamma(U_i, \mathcal{A}^{0,0}(\wedge^3 f^* \Theta_Y))$}, \label{hh7}\\
E_{i|\mu}-E_{j|\mu}&=\bar{\partial} \Gamma_{ij|\mu}, \,\,\,\,\,\,\,\,\text{in $\Gamma(U_{ij}, \mathcal{A}^{0,1}(f^*\Theta_Y))$}, \label{hh8}\\
\lambda_{i|\mu}-\lambda_{j|\mu}&=\pi(\Gamma_{ij|\mu}),\,\,\,\,\text{in $\Gamma(U_{ij}, \mathcal{A}^{0,0}(\wedge^2 f^* \Theta_Y))$}, \label{hh9}\\
\Gamma_{jk|\mu}-\Gamma_{ik|\mu}+\Gamma_{ij|\mu}&=0,\,\,\,\,\,\,\,\,\,\,\,\,\,\,\,\,\,\,\,\,\,\text{in $\Gamma(U_{ijk},\mathcal{A}^{0,0}(f^*\Theta_Y))$}. \label{hh10}
\end{align}
We aslo recall Remark $\ref{ii75}$.
\end{lemma}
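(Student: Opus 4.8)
The plan is to prove the ten identities by the same three-fold division used in the proof of Lemma~\ref{ii76}, grouping them according to which operators occur, and adapting each group to account for the extra variation in the base $S$ carried by $h(t)$. Throughout, $\Lambda'^{\mu-1}:=\Lambda^{\mu-1}-\Lambda_0$ and $L=\bar\partial+[\Lambda_0,-]$, so that $L\circ L=0$.

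First, the four identities living on $\Theta_X^\bullet$. By Remark~\ref{ii70} we have, modulo $\equiv_\mu$,
\[
\xi_\mu+\psi_\mu+\eta_\mu = L(\phi^{\mu-1}+\Lambda'^{\mu-1})+\tfrac12[\phi^{\mu-1}+\Lambda'^{\mu-1},\,\phi^{\mu-1}+\Lambda'^{\mu-1}].
\]
Applying $L$, using $L\circ L=0$, the graded Jacobi identity for the Schouten bracket, and the inductive hypotheses $(\ref{ii40})_{\mu-1}$--$(\ref{ii42})_{\mu-1}$ in the combined form of Remark~\ref{ii71}, one obtains $L(\xi_\mu+\psi_\mu+\eta_\mu)=0$; separating this into its homogeneous components in $\mathcal{A}^{0,q}(\wedge^p\Theta_X)$ gives $\bar\partial\xi_\mu=0$, $\bar\partial\psi_\mu+[\Lambda_0,\xi_\mu]=0$, $\bar\partial\eta_\mu+[\Lambda_0,\psi_\mu]=0$ and $[\Lambda_0,\eta_\mu]=0$. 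This is verbatim the argument behind $(\ref{gg7})$--$(\ref{gg10})$ in Lemma~\ref{ii76} (cf.\ \cite{Kim15}), and the repackaging as $L(\xi_\mu+\psi_\mu+\eta_\mu)=0$ used later is recorded in Remark~\ref{ii75}.

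Next, the identities on $f^*\Theta_Y^\bullet$ not involving $\pi$, namely $\bar\partial E_{i|\mu}=F\xi_\mu$, $E_{i|\mu}-E_{j|\mu}=\bar\partial\Gamma_{ij|\mu}$ and $\Gamma_{jk|\mu}-\Gamma_{ik|\mu}+\Gamma_{ij|\mu}=0$, are proved exactly as in \cite{Hor73} p.385--386 and \cite{Hor74} (compare $(\ref{gg11})$, $(\ref{gg15})$, $(\ref{gg16})$), by differentiating the defining congruences $(\ref{ii43})_{\mu-1}$ and $(\ref{ii44})_{\mu-1}$; the only novelty is that $\psi_{ij}$ now depends on $s$ and $\Phi$ moves in $S$, but since $h^{\mu-1}$ enters $(\ref{ii91})$ on the same footing as $\Phi_j^{\mu-1}$ the Taylor expansions carry over unchanged. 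The genuinely new identities are the Poisson ones: $\pi(E_{i|\mu})+\bar\partial\lambda_{i|\mu}=F\psi_\mu$, $\pi(\lambda_{i|\mu})=F\eta_\mu$ and $\lambda_{i|\mu}-\lambda_{j|\mu}=\pi(\Gamma_{ij|\mu})$. For each I would argue as in the proof of $(\ref{gg12})$, $(\ref{gg13})$, $(\ref{gg14})$: it suffices to evaluate both sides on a pair $(w_i^p,w_i^q)$ (resp.\ a triple $(w_i^a,w_i^b,w_i^c)$ for the second), expand $\pi(\sigma)(w_i^p,w_i^q)=\Lambda_0(\sigma(w_i^p),f_i^q)-\Lambda_0(\sigma(w_i^q),f_i^p)-\sigma(\Pi_0(w_i^p,w_i^q))$ through the local formula, and then substitute the inductive compatibility relation $(\ref{ii45})_{\mu-1}$, together with $(\ref{ii44})_{\mu-1}$ in the last case, collecting terms until the two sides agree modulo $\equiv_\mu$. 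The chain rule through $\Phi_i^{\mu-1}$ produces the $\bar\partial\lambda_{i|\mu}$ term, while the $s$-dependence of $\Pi_{pq}^i$ contributes terms $\tfrac{\partial\Pi_{pq}^i}{\partial s^v}(f_i,0)\,h^{(\mu-1)v}$ that pair off against the $s$-directional components of $E_{i|\mu}$.

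The main obstacle is this last group, and in particular the verification of $\pi(E_{i|\mu})+\bar\partial\lambda_{i|\mu}=F\psi_\mu$: it is a lengthy bookkeeping computation combining the Schouten-bracket expansion of $[\phi^{\mu-1},\Lambda^{\mu-1}]$, the chain rule through both $\Phi_i^{\mu-1}$ and $h^{\mu-1}$, and the inductive equations $(\ref{ii45})_{\mu-1}$, essentially mirroring the chain of congruences $(\ref{gg20})$--$(\ref{gg291})$ of Lemma~\ref{ii76} but with the additional parameter $h$ to track at every stage. Once all ten identities are established, Remark~\ref{ii75} converts $(\ref{hh2})$--$(\ref{hh5})$ into $L(\xi_\mu+\psi_\mu+\eta_\mu)=0$, the form in which they feed into the subsequent step that solves $(\ref{ii40})_\mu$--$(\ref{ii45})_\mu$ for $\phi_\mu,\Lambda_\mu,\Phi_{i|\mu},h_\mu$ under hypotheses $(1)$ and $(2)$ of Theorem~\ref{ii80}.
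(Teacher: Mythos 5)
Your proposal is correct and takes essentially the same route as the paper, whose entire proof of this lemma is the single sentence that the arguments are similar to Lemma \ref{ii76}: the $\Theta_X^\bullet$-identities follow from $L\circ L=0$ via Remarks \ref{ii70} and \ref{ii75}, the non-Poisson identities on $f^*\Theta_Y^\bullet$ follow Horikawa, and the three Poisson identities are the coordinate computations of $(\ref{gg12})$--$(\ref{gg14})$ redone with the base parameter $h^{\mu-1}$ inserted as the $s$-argument. One small imprecision: $E_{i|\mu}\in\Gamma(U_i,\mathcal{A}^{0,1}(f^*\Theta_Y))$ has no $s$-directional components, and in fact no $\frac{\partial \Pi^i_{pq}}{\partial s^v}$-terms need to be cancelled in Lemma \ref{ll1} at all --- evaluating $\Pi^i_{pq}$ and its $w$-derivatives consistently at $(\Phi_i^{\mu-1},h^{\mu-1})$ modulo $\equiv_\mu$ (legitimate since the quantities they multiply are already homogeneous of degree $\mu$) makes the computations of Lemma \ref{ii76} carry over verbatim; the $\frac{\partial \Pi^i_{pq}}{\partial s^v}$-contributions only appear later, in Lemma \ref{ll2} through the terms $\sum_v h_\mu^v\tilde{\Pi}'_{vi}$.
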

\begin{proof}
The proofs are similar to Lemma $\ref{ii76}$.
\end{proof}
 
 We will determine $\phi_\mu, \Lambda_\mu,\Phi_{i|\mu}$, and $h_\mu$ such that $\phi^\mu:=\phi^{\mu-1}+\phi_\mu, \Lambda^\mu:=\Lambda^{\mu-1}+\Lambda_\mu, \Phi_i^\mu:=\Phi_i^{\mu-1}+\Phi_{i|\mu}$, and $h^\mu:=h^{\mu-1}+h_\mu$ satisfy $(\ref{ii40})_\mu-(\ref{ii45})_\mu$.
 
 
 \begin{lemma}\label{ll2}
$(\ref{ii87})_\mu-(\ref{ii92})_\mu$ are equivalent to
 \begin{align}
-\xi_\mu&= \bar{\partial} \phi_\mu\label{ii98}\\
- \psi_\mu&=\bar{\partial}\Lambda_\mu+[\Lambda_0, \phi_\mu]\\
- \eta_\mu&=[\Lambda_0, \Lambda_\mu]\\
 -E_{i|\mu}&=\bar{\partial} \Phi_{i|\mu}+F\phi_\mu \label{ii96}\\
 -\lambda_{i|\mu}&=\pi(\Phi_{i|\mu})+F\Lambda_\mu-\sum_{v=1}^{r'} h_\mu^v \tilde{\Pi}_{vi}', \label{ii85}\\
 \Gamma_{ij|\mu}&=\Phi_{j|\mu}-\Phi_{i|\mu}+\sum_{v=1}^{r'} h_\mu^v \tilde{\rho}'_{vij} \label{ii99}
 \end{align}
 where $\Phi_i=\sum_{\alpha=1}^m \Phi_i^\alpha\frac{\partial}{\partial w_i^\alpha}\in \Gamma(U_i, f^*\Theta_Y)$, $\tilde{\rho}_{vij}'=\sum_{\alpha=1}^m\frac{\psi_{ij}^\alpha}{\partial s_j^v}(f_j(z_j),0)\frac{\partial}{\partial w_i^\alpha}$, and $\tilde{\Pi}_{vi}'=\sum_{\alpha,\beta=1}^m \frac{\partial \Pi_{\alpha\beta}^i}{\partial s_i^v}(f_j(z_j),0)\frac{\partial}{\partial w_i^\alpha}\wedge \frac{\partial}{\partial w_i^\beta}$. 
 \end{lemma}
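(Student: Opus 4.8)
The plan is to follow the proof of Lemma \ref{ii86} verbatim, the only genuinely new feature being the $s$-direction terms $\tilde{\Pi}'_{vi}$ and $\tilde{\rho}'_{vij}$. First I would substitute the degree-$\mu$ decompositions $\phi^\mu=\phi^{\mu-1}+\phi_\mu$, $\Lambda^\mu=\Lambda^{\mu-1}+\Lambda_\mu$, $\Phi_i^\mu=\Phi_i^{\mu-1}+\Phi_{i|\mu}$, $h^\mu=h^{\mu-1}+h_\mu$ into the system of congruences $(\ref{ii40})_\mu$--$(\ref{ii45})_\mu$, expand, and read off the homogeneous part of degree $\mu$ in $t$. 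Throughout, since $\phi_\mu,\Lambda_\mu,\Phi_{i|\mu},h_\mu$ are homogeneous of degree $\mu$ in $t$, inside any Taylor coefficient multiplying one of them I may replace $\phi^{\mu-1},\Lambda^{\mu-1},\Phi_i^{\mu-1},h^{\mu-1}$ by their constant terms $0,\Lambda_0,f_i,0$ without changing the congruence modulo degree $>\mu$.

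For the first three equalities, by Remark \ref{ii71} the congruences $(\ref{ii40})_\mu,(\ref{ii41})_\mu,(\ref{ii42})_\mu$ together assert $L(\phi^\mu+\Lambda'^\mu)+\tfrac12[\phi^\mu+\Lambda'^\mu,\phi^\mu+\Lambda'^\mu]\equiv_\mu 0$; writing $\phi^\mu+\Lambda'^\mu=(\phi^{\mu-1}+\Lambda'^{\mu-1})+(\phi_\mu+\Lambda_\mu)$, the quadratic cross terms with $\phi_\mu+\Lambda_\mu$ drop modulo degree $>\mu$ (the constant term of $\phi^{\mu-1}+\Lambda'^{\mu-1}$ vanishes), so by Remark \ref{ii70} the congruence reduces to $L(\phi_\mu+\Lambda_\mu)=-(\xi_\mu+\psi_\mu+\eta_\mu)$, and splitting by bidegree and wedge degree gives $-\xi_\mu=\bar\partial\phi_\mu$, $-\psi_\mu=\bar\partial\Lambda_\mu+[\Lambda_0,\phi_\mu]$, $-\eta_\mu=[\Lambda_0,\Lambda_\mu]$; this is the $\mathcal{Y}$-free computation of \cite{Kim15}. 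Equality $(\ref{ii96})$ comes from $(\ref{ii43})_\mu$, whose degree-$\mu$ part is $E_{i|\mu}+\bar\partial\Phi_{i|\mu}+[\phi_\mu,f_i]$ with $[\phi_\mu,f_i]=F\phi_\mu$, exactly as in \cite{Hor73} p.386. Equality $(\ref{ii99})$ comes from $(\ref{ii44})_\mu$ by Taylor-expanding $\psi_{ij}(\Phi_j^{\mu-1}+\Phi_{j|\mu},h^{\mu-1}+h_\mu)$ in the $w_j$- and $s$-variables: the $w_j$-derivative terms reconstruct $\Phi_{j|\mu}$ read in the coordinate $w_i$, the $s$-derivative terms produce $\sum_v h_\mu^v\tilde{\rho}'_{vij}$, and the residual lower-order discrepancy is $\Gamma_{ij|\mu}$ by $(\ref{ii91})$, exactly as in \cite{Hor74} p.658.

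The only remaining equality, $(\ref{ii85})$, is handled as in the verification of $(\ref{gg55})$. Substituting the decompositions into $(\ref{ii45})_\mu$ and using $(\ref{ii92})$ to collect the terms built solely from $\Phi_i^{\mu-1},\Lambda^{\mu-1},h^{\mu-1}$ into $-\lambda_{i|\mu}^{p,q}$, the congruence becomes, for all $p,q$,
\begin{align*}
\lambda_{i|\mu}^{p,q}&\equiv_\mu\sum_{r=1}^m\frac{\partial\Pi_{pq}^i}{\partial w_i^r}(f_i,0)\,\Phi_{i|\mu}^r+\sum_{v=1}^{r'}\frac{\partial\Pi_{pq}^i}{\partial s_i^v}(f_i,0)\,h_\mu^v-\sum_{\alpha,\beta=1}^n\Lambda_{\alpha\beta|\mu}^i\frac{\partial f_i^p}{\partial z_i^\alpha}\frac{\partial f_i^q}{\partial z_i^\beta}\\
&\quad-\sum_{\alpha,\beta=1}^n\Lambda_{\alpha\beta}^i(z_i)\frac{\partial\Phi_{i|\mu}^p}{\partial z_i^\alpha}\frac{\partial f_i^q}{\partial z_i^\beta}-\sum_{\alpha,\beta=1}^n\Lambda_{\alpha\beta}^i(z_i)\frac{\partial f_i^p}{\partial z_i^\alpha}\frac{\partial\Phi_{i|\mu}^q}{\partial z_i^\beta}.
\end{align*}
Using $\Lambda_{\alpha\beta}^i=-\Lambda_{\beta\alpha}^i$, the $\Phi_{i|\mu}$-terms together with the $\partial\Pi_{pq}^i/\partial w_i^r$-term equal $-\tfrac12\pi(\Phi_{i|\mu})(w_i^p,w_i^q)$, where $\pi(\Phi_{i|\mu})(w_i^p,w_i^q)=\Lambda_0(\Phi_{i|\mu}(w_i^p),f_i^q)-\Lambda_0(\Phi_{i|\mu}(w_i^q),f_i^p)-\Phi_{i|\mu}(\Pi_0(w_i^p,w_i^q))$; the $\Lambda_{\alpha\beta|\mu}^i$-term is $-\tfrac12 F\Lambda_\mu(w_i^p,w_i^q)$; and the $\partial\Pi_{pq}^i/\partial s_i^v$-term is $\tfrac12\sum_v h_\mu^v\tilde{\Pi}'_{vi}(w_i^p,w_i^q)$. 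Since the resulting identity holds for every pair $(p,q)$, it is equivalent to $-\lambda_{i|\mu}=\pi(\Phi_{i|\mu})+F\Lambda_\mu-\sum_v h_\mu^v\tilde{\Pi}'_{vi}$, which is $(\ref{ii85})$.

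I expect no real obstacle here, since the whole lemma is bookkeeping already templated by Lemma \ref{ii86}. The points that demand attention are: the transition Jacobians $\partial\psi_{ij}^\alpha/\partial w_j^\beta$ that must be inserted when passing from $\Phi_{j|\mu}$ in the $w_j$-patch to the $w_i$-patch in $(\ref{ii99})$; the consistent handling of the numerical factors arising because bivectors are written with a free double sum over index pairs (so that $\pi(\Phi_{i|\mu})$, $F\Lambda_\mu$ and $\tilde{\Pi}'_{vi}$ carry a compensating factor $\tfrac12$ when evaluated on coordinate functions); and the clean isolation of the new $\partial\Pi_{pq}^i/\partial s_i^v$ and $\partial\psi_{ij}^\alpha/\partial s_j^v$ contributions that give $\tilde{\Pi}'_{vi}$ and $\tilde{\rho}'_{vij}$ — but each of these follows the pattern already established in Lemma \ref{ii86}, \cite{Hor73} and \cite{Hor74}.
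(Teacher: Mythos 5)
Your proposal is correct and follows essentially the same route as the paper: the paper likewise treats all equalities except $(\ref{ii85})$ as direct analogues of Lemma \ref{ii86} (via Remarks \ref{ii70}/\ref{ii71} and the Horikawa computations), and its only detailed verification is exactly your substitution of $\Phi_i^\mu=\Phi_i^{\mu-1}+\Phi_{i|\mu}$, $\Lambda^\mu=\Lambda^{\mu-1}+\Lambda_\mu$, $h^\mu=h^{\mu-1}+h_\mu$ into $(\ref{ii45})_\mu$, Taylor-expanding $\Pi^i_{pq}$ in the $w$- and $s$-variables and recollecting via $(\ref{ii92})$ and the formula for $\pi(\Phi_{i|\mu})(w_i^p,w_i^q)$. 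Your explicit factor-$\tfrac12$ bookkeeping when evaluating bivectors on coordinate functions is the same identification the paper makes implicitly, so no gap.
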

 \begin{remark}
 $(\ref{ii96})$ and $(\ref{ii85})$ are equivalent to
 \begin{align}
 (-E_{i|\mu},-\lambda_{i|\mu}+\sum_{v=1}^{r'}h_\mu^v \tilde{\Pi}_{vi}')=L_\pi(\Phi_{i|\mu})+F (\phi_\mu, \Lambda_\mu)
 \end{align}
 \end{remark}
 \begin{proof}
 The proofs are similar to Lemma \ref{ii86}. Here we only prove that $(\ref{ii39})_\mu$ is equivalent to $(\ref{ii85})$. Indeed, $(\ref{ii39})$ is equivalent to the following.
 \begin{align*}
&\sum_{\alpha,\beta=1}^n (\Lambda_{\alpha\beta}^{(\mu-1)i}+\Lambda^i_{\alpha\beta|\mu})\frac{\partial (\Phi_i^{(\mu-1)p}+\Phi_{i|\mu}^p)}{\partial z_i^\alpha}\frac{\partial (\Phi_i^{(\mu-1)q}+\Phi_{i|\mu}^q)}{\partial z_i^\beta}\\
&\equiv_\mu \Pi_{pq}^i(\Phi_i^{\mu-1}+\Phi_{i|\mu},h^{\mu-1}+h_\mu)\equiv_\mu \Pi_{pq}^i(\Phi_i^{\mu-1},h^{\mu-1})+\sum_{r=1}^m \frac{\partial \Pi_{pq}^i}{\partial w_i^r}(\Phi_i^{\mu-1}, h^{\mu-1})\Phi_{i|\mu}^r +\sum_{v=1}^{r'} \frac{\partial  \Pi_{pq}^i}{\partial s^v}(\Phi_i^{\mu-1}, h^{\mu-1} ) h_\mu^v\\
&\iff \lambda_{i|\mu}^{p,q}+\sum_{\alpha,\beta=1}^n \Lambda_{\alpha\beta }^i(z_i)\frac{\partial \Phi_{i|\mu}^p}{\partial z_i^\alpha}\frac{\partial f_i^q}{\partial z_i^\beta}+\sum_{\alpha,\beta=1}^n \Lambda_{\alpha\beta }^i(z_i)\frac{\partial f_i^p}{\partial z_i^\alpha}\frac{\partial \Phi_{i|\mu}^q}{\partial z_i^\beta}+\sum_{\alpha,\beta=1}^n \Lambda_{\alpha\beta|\mu}^i\frac{\partial f_i^p}{\partial z_i^\alpha}\frac{\partial f_i^q}{\partial z_i^\beta}\\
&-\sum_{r=1}^m \frac{\partial \Pi_{pq}^i}{\partial w_i^r}(f_i,0)\Phi_{i|\mu}^r-\sum_{v=1}^{r'} \frac{\partial  \Pi_{pq}^i}{\partial s^v}(f_i,0 ) h_\mu^v\equiv_\mu 0\\
&\iff \lambda_{i|\mu}(w_i^p,w_i^q)+\pi(\Phi_{i|\mu})(w_i^p,w_i^q)+F \Lambda_\mu(w_i^p,w_i^q) -\sum_{v=1}^{r'}h_\mu^v \tilde{\Pi}'_{vi}(w_i^p,w_i^q)\equiv_\mu 0
\end{align*}
which follows from the following: for any $p,q$,
\begin{align*}
\pi(\Phi_{i|\mu})(w_i^p,w_i^q)&=\Lambda_0(\Phi_{i|\mu}(w_i^p),f_i^q)-\Lambda_0(\Phi_{i|\mu}(w_i^q), f_i^q)-\Phi_{i|\mu}(\Pi_0(w_i^p,w_i^q))\\
&=\Lambda_0(\Phi_{i|\mu}^p,f_i^q)-\Lambda_0(\Phi_{i|\mu}^q, f_i^p)-2\sum_{r=1}^n\Phi_{i|\mu}^r\frac{\partial \Pi_{pq}^i}{\partial w_i^r}(f_i,0)
\end{align*}

 \end{proof}

 \begin{lemma}\label{ll3}
Under the hypothesis of Theorem $\ref{ii80}$, we can find 
\begin{align*}
\phi_\mu\in A^{0,1}(\Theta_X),\,\,\,\,\, \Lambda_\mu\in A^{0,0}(\wedge^2 \Theta_X),\,\,\,\,\,\Phi_{i|\mu} \in \Gamma(U_i, \mathcal{A}^{0,0}(f^*\Theta_Y)),\,\,\,\,\,h_\mu=(h_\mu^v)\in \mathbb{C}^{r'}
\end{align*}
satisfying $(\ref{ii98})-(\ref{ii99})$.
 
 \end{lemma}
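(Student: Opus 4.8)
The plan is to run, in the mixed Dolbeault--\v Cech complex of $f^{*}\Theta_{Y}^{\bullet}$, the same cohomological scheme as in the proof of Lemma~\ref{ii76}, and to let the new scalar parameters $h_{\mu}^{v}$ absorb exactly the part of the obstruction that lies in the image of the Poisson Kodaira--Spencer map $f^{*}\circ\rho'$ rather than in the image of $F$. First I would split the \v Cech $1$-cocycle $\{\Gamma_{ij|\mu}\}$: by (\ref{hh10}) and fineness of $\mathcal{A}^{0,0}(f^{*}\Theta_{Y})$ there are $\Gamma_{i|\mu}\in\Gamma(U_{i},\mathcal{A}^{0,0}(f^{*}\Theta_{Y}))$ with $\Gamma_{ij|\mu}=\Gamma_{i|\mu}-\Gamma_{j|\mu}$. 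Then $E'_{i|\mu}:=E_{i|\mu}-\bar{\partial}\Gamma_{i|\mu}$ and $\lambda'_{i|\mu}:=\lambda_{i|\mu}-\pi(\Gamma_{i|\mu})$ patch, by (\ref{hh8}) and (\ref{hh9}), to global forms $E'_{\mu}\in A^{0,1}(f^{*}\Theta_{Y})$ and $\lambda'_{\mu}\in A^{0,0}(\wedge^{2}f^{*}\Theta_{Y})$, and the remaining identities of Lemma~\ref{ll1} give $\bar{\partial}E'_{\mu}=F\xi_{\mu}$, $\pi(E'_{\mu})+\bar{\partial}\lambda'_{\mu}=F\psi_{\mu}$, $\pi(\lambda'_{\mu})=F\eta_{\mu}$, i.e. $L_{\pi}(E'_{\mu}+\lambda'_{\mu})=F(\xi_{\mu}+\psi_{\mu}+\eta_{\mu})$.

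Next, by Remark~\ref{ii75} the cochain $\xi_{\mu}+\psi_{\mu}+\eta_{\mu}$ is $L$-closed, hence represents a class in $\mathbb{H}^{2}(X,\Theta_{X}^{\bullet})$; its image under $F$ is $L_{\pi}$-exact by the previous step, so it vanishes in $\mathbb{H}^{2}(X,f^{*}\Theta_{Y}^{\bullet})$, and hypothesis~(2) of Theorem~\ref{ii80} (injectivity of $F$ on $\mathbb{H}^{2}$) forces $[\xi_{\mu}+\psi_{\mu}+\eta_{\mu}]=0$. Thus there exist $\phi'_{\mu}\in A^{0,1}(\Theta_{X})$, $\Lambda'_{\mu}\in A^{0,0}(\wedge^{2}\Theta_{X})$ with $L(\phi'_{\mu}+\Lambda'_{\mu})=-(\xi_{\mu}+\psi_{\mu}+\eta_{\mu})$, i.e. $\bar{\partial}\phi'_{\mu}=-\xi_{\mu}$, $\bar{\partial}\Lambda'_{\mu}+[\Lambda_{0},\phi'_{\mu}]=-\psi_{\mu}$, $[\Lambda_{0},\Lambda'_{\mu}]=-\eta_{\mu}$. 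A short computation using $\pi(F\phi'_{\mu})=F[\Lambda_{0},\phi'_{\mu}]$ then shows that $(E'_{\mu}+F\phi'_{\mu})+(\lambda'_{\mu}+F\Lambda'_{\mu})$ is $L_{\pi}$-closed of total degree $1$, hence defines a class in $\mathbb{H}^{1}(X,f^{*}\Theta_{Y}^{\bullet})$.

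Now hypothesis~(1) of Theorem~\ref{ii80} enters: $\mathbb{H}^{1}(X,f^{*}\Theta_{Y}^{\bullet})$ is spanned by $\operatorname{im}F$ together with $\operatorname{im}(f^{*}\circ\rho')$, whose generators $f^{*}\rho'(\partial/\partial s^{v})$ are represented in Dolbeault form by $\tilde{\varphi}_{v}+\tilde{\chi}_{v}$ (with $\tilde{\varphi}_{v}=\bar{\partial}\xi_{vi}$, $\tilde{\chi}_{v}=\pi(\xi_{vi})-\gamma_{vi}$ as before Lemma~\ref{kk23}). Hence I can choose $x_{\mu}\in A^{0,1}(\Theta_{X})$, $y_{\mu}\in A^{0,0}(\wedge^{2}\Theta_{X})$ with $L(x_{\mu}+y_{\mu})=0$, constants $h_{\mu}=(h_{\mu}^{v})\in\mathbb{C}^{r'}$, and $\Phi'_{\mu}\in A^{0,0}(f^{*}\Theta_{Y})$ with $\bar{\partial}\Phi'_{\mu}=Fx_{\mu}+\sum_{v}h_{\mu}^{v}\tilde{\varphi}_{v}-(E'_{\mu}+F\phi'_{\mu})$ and $\pi(\Phi'_{\mu})=Fy_{\mu}+\sum_{v}h_{\mu}^{v}\tilde{\chi}_{v}-(\lambda'_{\mu}+F\Lambda'_{\mu})$. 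Finally I set $\phi_{\mu}:=\phi'_{\mu}-x_{\mu}$, $\Lambda_{\mu}:=\Lambda'_{\mu}-y_{\mu}$, $h_{\mu}$ as above, and $\Phi_{i|\mu}:=\Phi'_{\mu}-\Gamma_{i|\mu}-\sum_{v}h_{\mu}^{v}\xi_{vi}$; substituting back, and using $E'_{\mu}=E_{i|\mu}-\bar{\partial}\Gamma_{i|\mu}$, $\lambda'_{\mu}=\lambda_{i|\mu}-\pi(\Gamma_{i|\mu})$, $\Gamma_{i|\mu}-\Gamma_{j|\mu}=\Gamma_{ij|\mu}$, $\xi_{vi}-\xi_{vj}=\tilde{\rho}'_{vij}$ and $\pi(\xi_{vi})-\gamma_{vi}=\tilde{\chi}_{v}$, one verifies (\ref{ii98})--(\ref{ii99}) (with the signs as normalized in Lemma~\ref{ll2}).

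The point demanding care — more than a genuine obstacle — is the bookkeeping in the last two steps: keeping track of the degree decomposition in the mixed complex, and routing precisely the non-$\operatorname{im}F$ part of the class $[(E'_{\mu}+F\phi'_{\mu})+(\lambda'_{\mu}+F\Lambda'_{\mu})]$ through the new scalars $h_{\mu}^{v}$ by means of the representatives $\tilde{\varphi}_{v}+\tilde{\chi}_{v}$, so that the resulting $\Phi_{i|\mu}$ simultaneously matches the $\bar{\partial}$-equation (\ref{ii96}), the $\pi$-equation (\ref{ii85}) with the $\tilde{\Pi}'_{vi}$-correction, and the cocycle identity (\ref{ii99}) with the $\tilde{\rho}'_{vij}$-correction. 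Everything else is formal once Lemma~\ref{ll1} and the Dolbeault model of Lemma~\ref{kk23} are available, and the argument is otherwise a direct transcription of the proof of Lemma~\ref{ii76}.
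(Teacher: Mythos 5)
Your proof is correct and follows the paper's argument essentially verbatim: split $\Gamma_{ij|\mu}$ by a \v{C}ech $0$-cochain $\{\Gamma_{i|\mu}\}$ to obtain global $E'_\mu,\lambda'_\mu$, use injectivity of $F$ on $\mathbb{H}^2$ to solve $L(\phi'_\mu+\Lambda'_\mu)=-(\xi_\mu+\psi_\mu+\eta_\mu)$, then invoke the generation hypothesis on $\mathbb{H}^1(X,f^*\Theta_Y^\bullet)$ to produce $x_\mu,y_\mu,h_\mu^v,\Phi'_\mu$, and set $\phi_\mu=\phi'_\mu-x_\mu$, $\Lambda_\mu=\Lambda'_\mu-y_\mu$, $\Phi_{i|\mu}=\Phi'_\mu-\Gamma_{i|\mu}\mp\sum_v h_\mu^v\xi_{vi}$. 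The only deviation is your sign convention for the $h_\mu^v$-terms (you take $+\sum_v h_\mu^v\tilde{\varphi}_v$ in the equation for $\Phi'_\mu$ and $-\sum_v h_\mu^v\xi_{vi}$ in $\Phi_{i|\mu}$, the paper the opposite), which amounts to replacing $h_\mu$ by $-h_\mu$ and is immaterial for the existence claim.
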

 
 \begin{proof}
From $(\ref{hh10})$, we can find $\Gamma_{i|\mu}\in \Gamma(U_i, \mathcal{A}^{0,0}(f^*\Theta_Y))$ such that $\Gamma_{ij|\mu}=\Gamma_{i|\mu}-\Gamma_{j|\mu}$. From $(\ref{hh8})$ $E_{i|\mu}':=E_{i|\mu}-\bar{\partial}\Gamma_{i|\mu}$ defines a global section $E_\mu'\in A^{0,1}(f^*\Theta_Y)$. On the other hand, from $(\ref{hh9})$, $\lambda_{i|\mu}':=\lambda_{i|\mu}-\pi(\Gamma_{i|\mu})$ defines a global section $\lambda_\mu'\in A^{0,0}(\wedge^2 f^* \Theta_Y)$. Then $E_\mu'$ and $\lambda_\mu'$ satisfy $\bar{\partial} E_\mu'=\bar{\partial} E_{i|\mu}=F\xi_\mu, \pi(E_\mu')+\bar{\partial}\lambda_\mu'=\pi(E_{i|\mu}')+\bar{\partial} \lambda_{i|\mu}=F\psi_\mu$, and $\pi(\lambda_\mu')=\pi(\lambda_\mu)=F\eta_\mu$. Since $F:\mathbb{H}^2(X,\Theta_X^\bullet)\to \mathbb{H}^2(X, f^*\Theta_Y^\bullet)$ is injective, there exist $\phi_\mu'$ and $\Lambda_\mu'$ such that $\bar{\partial} \phi_\mu'=-\xi_\mu,\bar{\partial} \Lambda_\mu'+[\Lambda_0, \phi_\mu']=-\psi_\mu$, and $[\Lambda_0, \Lambda_\mu']=-\eta_\mu$. Then we have $\bar{\partial}(E_\mu'+F\phi_\mu')=0, \pi(\lambda_\mu'+F\Lambda_\mu')=0$, and $\bar{\partial}(\lambda_\mu'+F\Lambda_\mu')+\pi(E_\mu'+F\phi_\mu')=0$. Since $\mathbb{H}^1(X, f^*\Theta_Y^\bullet)$ is generated by the image of $F:\mathbb{H}^1(X,\Theta_X^\bullet)\to \mathbb{H}^1(X, f^*\Theta_Y^\bullet)$ and the image of $f^*\circ \rho':T_{0^*}(S)\to \mathbb{H}^1(X, f^*\Theta_Y^\bullet)$, there exist $x_\mu\in A^{0,1}(\Theta_X), y_\mu\in A^{0,0}(\wedge^2 \Theta_X)$ with $\bar{\partial}x_\mu=0, \bar{\partial}y_\mu+[\Lambda_0, x_\mu]=0$, and $[\Lambda_0, y_\mu]=0$, and there exist $\Phi_\mu'\in A^{0,0}(f^*\Theta_Y)$, and $h_\mu=(h_\mu^v)\in \mathbb{C}^{r'}$ such that $\bar{\partial} \Phi_\mu'=Fx_\mu-\sum_{v=1}^{r'} h_\mu^v  \tilde{a}_v-(E_\mu'+F\phi_\mu')$, $\pi(\Phi_\mu')=Fy_\mu-\sum_{v=1}^{r'} h_\mu^v\tilde{b}_v-(\lambda_\mu'+F\phi_\mu')$, equivalently, $L_\pi(\Phi_\mu')=F(x_\mu,y_\mu)-\sum_{v=1}^{r'}h_\mu^v(\tilde{a}_v,\tilde{b}_v)-((E_\mu',\lambda_\mu')+F(\phi_\mu',\Lambda_\mu'))$ where each $(\tilde{a}_v,\tilde{b}_v)\in A^{0,1}(f^*\Theta_Y)\oplus A^{0,0}(\wedge^2 f^*\Theta_Y)$ represents the cohomology class $f^*\rho'\left(\frac{\partial}{\partial s^v} \right)$ which is determined in the following way: we take $\xi_{vi}\in \Gamma(U_i, \mathcal{A}^{0,0}(f^* \Theta_Y)$ such that $\xi_{vi}-\xi_{vj}=\tilde{\rho}_{vij}'$. Then $f^*\rho'\left(\frac{\partial}{\partial s^v}\right)=(\{\tilde{\rho}_{vij}\},\{\tilde{\Pi}_{vi})\in C^1(\mathcal{U},f^*\Theta_Y)\oplus C^0(\mathcal{U},\wedge^2 \Theta_Y)$ corresponds to $(\bar{\partial}\xi_{vi}, \pi(\xi_{vi})-\tilde{\Pi}_{vi})=(\tilde{a}_v,\tilde{b}_v)\in A^{0,1}(f^*\Theta_Y)\oplus A^{0,0}(\wedge^2 f^*\Theta_Y)$.

Let $\phi_\mu:=\phi_\mu'-x_\mu, \Lambda_\mu:=\Lambda_\mu'-y_\mu$, and $\Phi_{i|\mu}:=\Phi_\mu'-\Gamma_{i|\mu}+\sum_{v=1}^{r'} h_\mu^v \xi_{vi}$. Then $\bar{\partial}\phi_\mu=\bar{\partial}\phi_{\mu}'=-\xi_\mu, \bar{\partial}\Lambda_\mu+[\Lambda_0,\phi_\mu]=-\psi_\mu, [\Lambda_0, \Lambda_\mu]=-\eta_\mu$, and we have $L_\pi(\Phi_{\mu}')+F(\phi_\mu,\Lambda_\mu)=-\sum_{v=1}^{r'}h_\mu^v(\tilde{a}_v,\tilde{b}_v)-(E_{i|\mu},\lambda_\mu)+(\bar{\partial}\Gamma_{i|\mu},\pi(\Gamma_{i|\mu}))=-\sum_{v=1}^{r'}h_\mu^v(\xi_{vi},\pi(\xi_{iv}))+\sum_{v=1}^{r'}(0, h_\mu^v\tilde{\Pi}_{iv})-(E_{i|\mu},\lambda_\mu)+(\bar{\partial}\Gamma_{i|\mu},\pi(\Gamma_{i|\mu}))$ so that we get $L_\pi(\Phi_{i|\mu})+F(\phi_\mu, \Lambda_\mu)= (-E_{i|\mu},-\lambda_\mu+\sum_{v=1}^{r'} h_\mu^v \tilde{\Pi}_{iv})$. Lastly, $\Phi_{j|\mu}-\Phi_{i|\mu}=-\Gamma_{j|\mu}+\Gamma_{i|\mu}+\sum_{v=1}^{r'}h_\mu^v(\xi_{vj}-\xi_{vi})=\Gamma_{ij|\mu}-\sum_{v=1}^{r'}h_\mu^v \tilde{\rho}_{vij}'$.
 \end{proof}
 This completes the inductive constructions of formal power series 
 \begin{align}\label{qq50}
\phi(t)=\sum_{\mu=1}^\infty \phi_\mu(t),\,\,\,\,\,\Lambda(t)=\Lambda_0+\sum_{\mu=1}^\infty \Lambda_\mu(t),\,\,\,\,\,\Phi_i(z_i,t)=f_i(z_i)+\sum_{\mu=1}^\infty \Phi_{i|\mu}(z_i,t),\,\,\,\,\,\,h(t)=\sum_{\mu=1}^\infty h_\mu(t)
\end{align}
satisfying $(\ref{ii30})- (\ref{ii39})$.

\subsubsection{Proof of convergence}\

As in Theorem \ref{hh1}, the author could not prove the convergence of the formal power series.

\subsubsection{Construction of a family} \label{ll5}\
 
Now we assume that  our construction $\phi(t),\Lambda(t),\Phi_i(t)$ and $h(t)$ converge for $|t|<\epsilon$ for a sufficiently small number $\epsilon>0$. By the same arguments in subsection \ref{kk20}, we have a Poisson analytic family $p:(\mathcal{X},\Lambda)\to M$ with $p^{-1}(0)=(X,\Lambda_0)$, and a holomorphic Poisson map $\Phi:(\mathcal{X},\Lambda)\to (\mathcal{Y},\Pi)$, and a holomorphic map $h:M\to S$ such that $q\circ \Phi=h\circ p$ such that $s(0)=0^*$ and $\Phi$ induces $f$ on $(X,\Lambda_0)$. It remains to show $\tau$ is bijective. We keep the notations in subsection \ref{kk20}. 
Then we have $(\ref{kk30}),(\ref{kk31}),(\ref{kk32})$, and $(\ref{kk33})$.

We note that $\tau\left(\frac{\partial}{\partial t}\right)$ is represented by $(-\sum_{\rho=1}^m \dot{\Psi}_\alpha^\rho \frac{\partial}{\partial w_\alpha^\rho}-\sum_{v=1}^{r'}\dot{h}^v\frac{\partial}{\partial s_\alpha^v}, \sum_{\sigma=1}^n \dot{\phi}_{\alpha\beta}^\sigma\frac{\partial}{\partial z_\alpha^\sigma},\sum_{p,q=1}^n \dot{g}_{pq}^\alpha\frac{\partial}{\partial z_\alpha^p}\wedge \frac{\partial}{\partial z_\alpha^q})\in C^0(\mathcal{U},\tilde{f}^*\Theta_{\mathcal{Y}})\oplus C^1(\mathcal{U},\Theta_X)\oplus C^0(\mathcal{U},\wedge^2 \Theta_X)$. 
By the isomorphisms in the proofs of Lemma \ref{kk22}, and Lemma \ref{kk23},  $\tau$ is bijective. This completes the proof of Theorem \ref{ii80}.

\end{proof}

\section{Stability of holomorphic Poisson manifolds over $(Y,\Pi_0)$}\label{section4}

\begin{theorem}[compare Theorem \ref{mm1}]\label{stability}
Let $f:(X,\Lambda_0)\to (Y,\Pi_0)$ be a holomorphic Poisson map of compact holomorphic Poisson manifolds, where $X$ is compact. Assume that
\begin{enumerate}
\item $F:\mathbb{H}^1(X, \Theta_X^\bullet)\to \mathbb{H}^1(X, f^*\Theta_Y^\bullet)$ is surjective.
\item $F:\mathbb{H}^2(X, \Theta_X^\bullet)\to \mathbb{H}^2(X, f^*\Theta_Y^\bullet)$ is injective.
\end{enumerate}
Then for any family $q:(\mathcal{Y}, \Pi)\to M$ of holomorphic Poisson manifolds such that $(Y,\Pi_0)=q^{-1}(0)$ for some point $0\in M$, if the formal power series $(\ref{ll4})$ constructed in the proof converge, then there exist
\begin{enumerate}
\item an open neighborhood $N$ of $0$,
\item a family $p:(\mathcal{X},\Lambda)\to N$ of deformations of $(X,\Lambda_0)=p^{-1}(0)$,
\item a holomorphic Poisson map $p:(\mathcal{X},\Lambda)\to (\mathcal{Y}|_N, \Pi|_N)$ over $N$ which induces $f$ over $0\in N$.
\end{enumerate}
\end{theorem}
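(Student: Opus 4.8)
The plan is to obtain the theorem as a corollary of Theorem \ref{ii80} (equivalently Theorem \ref{qq21}), exactly as Horikawa derives Theorem \ref{ll10} from Theorem \ref{qq22}; the only non‑formal ingredient beyond that theorem is the implicit function theorem. First I would apply Theorem \ref{ii80} to the family $q\colon(\mathcal{Y},\Pi)\to M$, viewed as the target family $(\mathcal{Y},\Pi,q,S)$ with $S=M$, $0^{*}=0$, and $(Y,\Pi_0)=(Y_{0^{*}},\Pi_{0^{*}})=q^{-1}(0)$. Hypothesis (2) of the present statement is literally hypothesis (2) of Theorem \ref{ii80}. Hypothesis (1) here — surjectivity of $F\colon\mathbb{H}^1(X,\Theta_X^\bullet)\to\mathbb{H}^1(X,f^{*}\Theta_Y^\bullet)$ — is strictly stronger than hypothesis (1) of Theorem \ref{ii80}: if $F$ is already onto, then a fortiori $\mathbb{H}^1(X,f^{*}\Theta_Y^\bullet)$ is generated by $\operatorname{Im}F$ together with $\operatorname{Im}(f^{*}\circ\rho')$. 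Hence, under the convergence hypothesis on the formal power series $(\ref{qq50})$ built in the proof of Theorem \ref{ii80} — which is the series $(\ref{ll4})$ appearing in the present statement — Theorem \ref{ii80} yields a Poisson analytic family $(\mathcal{X}',\Lambda',\Phi',p',M',s')$ of holomorphic Poisson maps into $(\mathcal{Y},\Pi,q,M)$ and a point $0\in M'$ with $s'(0)=0$, $p'^{-1}(0)=(X,\Lambda_0)$, $\Phi'_0=f$, and characteristic map $\tau\colon T_0(M')\to PD_{(X,\Lambda_0)/(\mathcal{Y},\Pi)}$ bijective.

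The second step is to show $ds'_0\colon T_0(M')\to T_0(M)$ is surjective. By the formula for $\tau_i$ in subsection \ref{qq20} and the explicit description of $PD_{(X,\Lambda_0)/(\mathcal{Y},\Pi)}$ in Lemma \ref{kk22}, the projection $\mathrm{pr}\colon PD_{(X,\Lambda_0)/(\mathcal{Y},\Pi)}\to\mathbb{C}^{r'}=T_0(M)$, $(\tau,\rho,\lambda,(\theta^{v}))\mapsto(\theta^{v})$, satisfies $\mathrm{pr}\circ\tau=-ds'_0$. Reading off the defining relations in Lemma \ref{kk22}, a vector $(\theta^{v})$ lies in $\operatorname{Im}\mathrm{pr}$ precisely when the class $f^{*}\circ\rho'\bigl(\sum_{v}\theta^{v}\tfrac{\partial}{\partial s^{v}}\bigr)\in\mathbb{H}^1(X,f^{*}\Theta_Y^\bullet)$ lies in $\operatorname{Im}\bigl(F\colon\mathbb{H}^1(X,\Theta_X^\bullet)\to\mathbb{H}^1(X,f^{*}\Theta_Y^\bullet)\bigr)$; indeed, given $(\theta^{v})$, a preimage $(\tau,\rho,\lambda,(\theta^{v}))$ amounts to a class $[(\rho,\lambda)]\in\mathbb{H}^1(X,\Theta_X^\bullet)$ with $F[(\rho,\lambda)]=-f^{*}\rho'\bigl((\theta^{v})\bigr)$ in $\mathbb{H}^1(X,f^{*}\Theta_Y^\bullet)$ together with a $0$‑cochain $\tau$ trivializing the resulting coboundary (this is also the content of the remark following Lemma \ref{kk22}). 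By hypothesis (1), $\operatorname{Im}F$ is all of $\mathbb{H}^1(X,f^{*}\Theta_Y^\bullet)$, so every $(\theta^{v})$ is realized, $\mathrm{pr}$ is surjective, and since $\tau$ is bijective, $ds'_0$ is surjective.

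The third step is the slicing. Since $ds'_0$ is surjective, $s'$ is a submersion near $0$, so by the constant‑rank (implicit function) theorem there is an open neighborhood $N$ of $0$ in $M$ and a holomorphic section $\iota\colon N\to M'$ with $\iota(0)=0$ and $s'\circ\iota=\mathrm{id}_N$. Let $(\mathcal{X},\Lambda,\Phi,p,N,s)$ be the family of holomorphic Poisson maps into $(\mathcal{Y},\Pi,q,M)$ induced from $(\mathcal{X}',\Lambda',\Phi',p',M',s')$ by $\iota$ in the sense of Definition \ref{pp27}; thus $(\mathcal{X},\Lambda)=(\mathcal{X}',\Lambda')\times_{M'}N$, $p\colon(\mathcal{X},\Lambda)\to N$ is a Poisson analytic family with $p^{-1}(0)=(X,\Lambda_0)$, $s=s'\circ\iota=\mathrm{id}_N$, and $q\circ\Phi=s\circ p=p$. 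The last identity says $\Phi$ carries $p^{-1}(t)$ into $q^{-1}(t)=Y_t$ for every $t\in N$, i.e.\ $\Phi$ is a holomorphic Poisson map $(\mathcal{X},\Lambda)\to(\mathcal{Y}|_N,\Pi|_N)$ over $N$, and over $0$ it restricts to $\Phi'_0=f$. Together with $N$ and $p$, this is precisely the data claimed.

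As for where the difficulty lies: modulo the convergence of the formal power series $(\ref{ll4})=(\ref{qq50})$ produced in the proof of Theorem \ref{ii80} — which the author does not resolve (subsection \ref{pp40}), hence the conditional phrasing — the argument is purely formal. The strengthened hypothesis (1) is used exactly twice, both trivially: to upgrade hypothesis (1) of Theorem \ref{ii80}, and, via Lemma \ref{kk22}, to force $\mathrm{pr}$ (hence $ds'_0$) onto so that the section $\iota$ exists; and pulling a convergent analytic family back along the analytic map $\iota$ introduces no new analytic input. Thus the sole genuine obstacle is the same convergence problem already left open for Theorems \ref{pp43} and \ref{ii80}.
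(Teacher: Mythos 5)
Your derivation is correct, but it is not the route the paper takes. You use Theorem \ref{ii80} as a black box: apply it to $(\mathcal{Y},\Pi,q,M)$ with $0^*=0$ (your hypothesis (1) trivially implies the generation hypothesis there), read off from the characteristic map in subsection \ref{qq20} and the description in Lemma \ref{kk22} that the projection $\mathrm{pr}$ to $\mathbb{C}^{r'}$ satisfies $\mathrm{pr}\circ\tau=-ds'_0$, use surjectivity of $F$ on $\mathbb{H}^1$ to see that $\mathrm{pr}$ is onto (given $(\theta^v)$, choose a $1$-cocycle $(\rho,\lambda)$ of $\Theta_X^\bullet$ with $F[(\rho,\lambda)]=-f^*\rho'((\theta^v))$ and a $0$-cochain $\tau$ trivializing the difference), conclude $s'$ is a submersion at $0$, take a holomorphic section $\iota:N\to M'$ by the implicit function theorem, and pull the family back along $\iota$ as in Definition \ref{pp27}, which forces $s''=\mathrm{id}_N$ and hence a Poisson map into $(\mathcal{Y}|_N,\Pi|_N)$ over $N$ inducing $f$. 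Each of these steps checks out. The paper instead re-runs the inductive construction of Theorem \ref{ii80} directly with $s=t$ and $h(t)=t$: at order one it puts $h_1^\lambda=t_\lambda$, and at each higher order the surjectivity of $F:\mathbb{H}^1(X,\Theta_X^\bullet)\to\mathbb{H}^1(X,f^*\Theta_Y^\bullet)$ together with injectivity on $\mathbb{H}^2$ is used to solve the equations of Lemma \ref{ll3} with $h_\mu=0$, so the family is built over a neighborhood of $0$ in $M$ from the outset and no section or submersion argument is needed. Your route buys modularity and makes transparent why surjectivity of $F$ forces the parameter map onto $M$; the paper's route produces the series $(\ref{ll4})$ directly over $M$, which is precisely the object named in the theorem's conditional hypothesis.

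One inaccuracy to repair: $(\ref{qq50})$ is not the same series as $(\ref{ll4})$. The former is constructed over a polydisc of dimension $\dim PD_{(X,\Lambda_0)/(\mathcal{Y},\Pi)}$ and contains a nontrivial $h(t)$, while $(\ref{ll4})$ lives over $M$ itself, has $h(t)=t$, and consists only of $\phi(t)$, $\Lambda(t)$, $\Phi_i(z_i,t)$. Consequently your argument proves the conclusion conditional on convergence of $(\ref{qq50})$ rather than of $(\ref{ll4})$ as stated; since the convergence problem is left open in both formulations this is a bookkeeping rather than a mathematical defect, but the asserted identification should be dropped.
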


\begin{remark}
If $f$ is non-degenerate, two conditions $(1)$ and $(2)$ are equivalent to $\mathbb{H}^1(X, \mathcal{N}_f^\bullet)$.
\end{remark}

\begin{remark}
Let $X$ be a compact holomorphic Poisson submanifold of $(Y,\Lambda_0)$ so that we have a Poisson embedding $i:X\hookrightarrow (Y,\Lambda_0)$. By Lemma $\ref{aa10}$, if $\mathbb{H}^1(X,\mathcal{N}_{X/Y}^\bullet)=0$, then we have $\mathbb{H}^1(X,\mathcal{N}_i^\bullet)=0$ so that $(X,\Lambda_0)$ is a stable holomorphic Poisson submanifold of $(Y,\Lambda_0)$ provided  that the formal power series in Theorem $\ref{stability}$ converge. This was already proved in \cite{Kim17}.
\end{remark}

\begin{proof}[Proof of Theorem $\ref{stability}$ ]
We extend the arguments in \cite{Hor74} p.660-661 in the context of holomorphic Poisson deformations. We tried to maintain notational consistency with \cite{Hor74}. Let $\dim PD_{(X,\Lambda_0)/(Y,\Pi_0)}=r$. We may assume that $M=\{t=(t_\lambda)\in \mathbb{C}^r||t|<\epsilon\}$. We will construct formal power series satisfying $(\ref{ii30})-(\ref{ii39})$ with $h(t)=t$ by induction on $\mu$  in the exact same way to the proof of Theorem $\ref{ii80}$.  We keep the notations in the proof of Theorem \ref{ii80} with $s=t$ and $h(t)=t$. For each $\lambda=1,...,r$, let $(\tilde{\varphi}_\lambda,\tilde{\chi}_\lambda)\in A^{0,1}(f^*\Theta_Y)\oplus A^{0,0}(\wedge^2 f^*\Theta_Y)$ with $L_\pi(\tilde{\varphi}_\lambda,\tilde{\chi}_\lambda)=0$ be a representative of the cohomology class of $(\{\rho_{\lambda ij}' \}, \{ \gamma_{\lambda i}\})\in C^1(\mathcal{U}, f^*\Theta_Y)\oplus C^0 (\mathcal{U}, \wedge^2 f^*\Theta_Y)$ where $\rho_{\lambda ij}'=\sum_{\alpha=1}^m \frac{\partial \psi_{ij}^\alpha}{\partial t_j^\lambda}(f_j(z_j),0)\frac{\partial}{\partial w_i^\alpha}$, and $\gamma_{\lambda i}=\sum_{\alpha,\beta=1}^m \frac{\partial \Pi_{\alpha\beta}^i}{\partial t_i^\lambda}(f_j(z_j),0)\frac{\partial}{\partial w_i^\alpha}\wedge \frac{\partial}{\partial w_i^\beta}.$ Then we can find $\xi_{\lambda ij}\in \Gamma(U_i, \mathcal{A}^{0,0}(f^*\Theta_Y))$ such that $\rho_{\lambda ij}'=\xi_{\lambda i}-\xi_{\lambda j}$ such that $\tilde{\varphi}_\lambda=\bar{\partial} \xi_{\lambda i}$, and $\tilde{\chi}_v=\pi(\xi_{\lambda i})-\gamma_{\lambda i}$.

Since $F: \mathbb{H}^1(X,\Theta_X^\bullet)\to \mathbb{H}^1(X,f^*\Theta_Y^\bullet)$ is surjective, there exist $\Phi_{i\lambda}'\in A^{0,0}(f^*\Theta_Y),\phi_{1\lambda}\in A^{0,1}(\Theta_X)$, and $\Lambda_{1\lambda}\in A^{0,0}(\wedge^2 \Theta_X)$ such that $\bar{\partial} \Phi_{1\lambda}'=-F\phi_{1\lambda}-\tilde{\varphi}_\lambda$, and $\pi(\Phi_{i\lambda}')=-F\Lambda_{i \lambda}-\tilde{\chi}_\lambda$ with $L(\phi_{1\lambda}+\Lambda_{1\lambda})=0$. We set $\phi_1=\sum_{\lambda=1}^r \phi_{1\lambda}t_\lambda, \Lambda_1=\sum_{\lambda=1}^r \Lambda_{1\lambda}, \Phi_{i|1}=\sum_{\lambda=1}^r (\Phi_1'+\xi_{\lambda i})t_\lambda$. Then $\phi_1,\Lambda_1,\Phi_{i|1}$ and $h_1^\lambda=t_\lambda$ satisfy $(\ref{ii40})_1-(\ref{ii45})_1$ so that induction holds for $\mu=1$. Let us assume that induction holds for $\mu-1$. Then we have $(\ref{ii87})-(\ref{ii92})$, Lemma \ref{ll1} and Lemma \ref{ll2} with $h^\mu(t)=0, h_\mu^v=0$, and $r'=r$. Since $F:\mathbb{H}^1(X,\Theta_X^\bullet)\to \mathbb{H}^1(X, f^*\Theta_Y^\bullet)$ is surjective and $F:\mathbb{H}^2(X,\Theta_X^\bullet)\to \mathbb{H}^2(X, f^*\Theta_Y^\bullet)$ is injective, we have Lemma $(\ref{ll3})$ with $h_\mu=0$ so that we have formal power series 
\begin{align}\label{ll4}
\phi(t)=\sum_{\mu=1}^\infty \phi_\mu(t),\,\,\,\,\,\Lambda(t)=\Lambda_0+\sum_{\mu=1}^\infty \Lambda_\mu(t),\,\,\,\,\,\Phi_i(z_i,t)=f_i(z_i)+\sum_{\mu=1}^\infty \Phi_{i|\mu}(z_i,t).
\end{align}
satisfying $(\ref{ii30})-(\ref{ii39})$. Now assume that $(\ref{ll4})$ converge for $|t|<\epsilon'$ for a sufficiently small number $\epsilon'>0$. Then as in subsection \ref{ll5}, we can construct a Poisson analytic family $p:(\mathcal{X},\Lambda)\to N$ of deformations of $(X,\Lambda_0)=p^{-1}$ and a holomorphic Poisson map $p:(\mathcal{X},\Lambda)\to(\mathcal{Y}|_N,\Pi|_N)$ which induces $f$ over $0\in N$. 
\end{proof}

\section{Deformations of compositions of holomorphic Poisson maps}\label{section5}

\begin{theorem}\label{ll17}
Let $f:(X,\Lambda_0)\to (Y,\Pi_0), g:(Y,\Pi_0)\to (Z, \Omega_0)$ and $h=g\circ f$. We assume that
\begin{enumerate}
\item $X$ and $Y$ are compact. 
\item $g$ is non-degenerate and the canonical homomorphism $f^* G:f^*\Theta_Y^\bullet \to h^* \Theta_Z^\bullet$ is injective, where $G$ denotes the injective homomorphism $\Theta_Y^\bullet \to g^*\Theta_Z^\bullet$ $($see Appendix \ref{appendix1}$)$.
\item there exist a family $(\mathcal{Y},\Pi, \Psi,q, N)$ of holomorphic Poisson maps into $(Z, \Omega_0)$ and a point $0'\in N$ such that $(Y,\Pi_0)=q^{-1}(0')$ and such that $\Psi$ induces $g$ on $(Y,\Pi_0)$.
\item the composition $f^*\circ \tau:T_{0'}(N)\to\mathbb{H}^0(X, f^*\mathcal{N}_g^\bullet)$ is surjective, where $\tau:T_{0'}(N)\to \mathbb{H}^0(Y, \mathcal{N}_g^\bullet)$ is the characteristic map of $(\mathcal{Y}, \Pi, \Psi, q, N)$ at $0'$, and $f^*:\mathbb{H}^0(Y,\mathcal{N}_g^\bullet)\to \mathbb{H}^0(X, f^*\mathcal{N}_g^\bullet)$ is the pullback homomorphism.
\end{enumerate}
Let $(\mathcal{X},\Lambda, \Upsilon, p, M)$ be a family of holomorphic Poisson maps into $(Z,\Omega_0)$ and let $0$ be a point in $M$ such that $(X,\Lambda_0)=p^{-1}(0)$ and that $\Upsilon$ induces $h$ on $X$. Then there exist 
\begin{enumerate}
\item an open neighborhood $M'$ of $0$,
\item a holomorphic map $s:M'\to N$,
\item a holomorphic Poisson map $\Phi:(\mathcal{X}|_{M'},\Lambda|_{M'})\to (\mathcal{Y},\Pi)$,
\end{enumerate}
such that $s(0)=0'$, $\Phi$ induces $f$ on $(X,\Lambda_0)$, and the diagram
\[\xymatrix{
(z_i,t)\in (\mathcal{X}|_{M'} ,\Lambda|_{M'}) \ar[dd]^{p|_{M'}} \ar[drr]^{\Phi} \ar[rrrr]^{\Upsilon|_{M'}} & & && (y_i,s(t))\in (Z\times N, \Omega_0) \ar[dd]^{pr_2}\\
& & (w_i,s)\in (\mathcal{Y},\Pi) \ar[dd]^{q}   \ar[rru]^{\Psi} \\
t\in M'  \ar[drr]^s \ar[rrrr]^s  & & & & s\in N\\      
& & s\in N \ar[rru]^{id} &       \\
}\]
commutes.
\end{theorem}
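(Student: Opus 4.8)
The statement is the Poisson analogue of Horikawa's Theorem \ref{ll15}, and the natural strategy is to reduce it to the theorem of existence for deformations of holomorphic Poisson maps into a family (Theorem \ref{ii80}), exactly as Horikawa reduces Theorem \ref{ll15} to Theorem \ref{qq22}. The idea is that $(\mathcal{Y},\Pi,\Psi,q,N)$ is a family of holomorphic Poisson maps into $(Z,\Omega_0)$; composing with $\Psi$ should turn a deformation of $f:(X,\Lambda_0)\to(Y,\Pi_0)$ into the given deformation $\Upsilon$ of $h=g\circ f$ over $M$. So the plan is: first, package $(\mathcal{Y},\Pi,q,N)$ together with $\Psi$ as a Poisson analytic family over $N$ carrying extra structure (a Poisson map to $(Z\times N,\Omega_0)$); second, view the sought $\Phi$ as a deformation of the Poisson map $f$ into this family $(\mathcal{Y},\Pi)$, constrained by the requirement that $\Psi\circ\Phi=\Upsilon|_{M'}$ over the base map $s$; third, verify that hypotheses (1)--(4) translate into the cohomological hypotheses (1)--(2) of Theorem \ref{ii80} for the appropriate pullback complexes, together with a surjectivity on characteristic maps that forces completeness; and finally invoke Theorem \ref{ii80} (or rather the completeness Theorem \ref{ii5}) to produce $s:M'\to N$ and the Poisson map $\Phi$.

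\textbf{Key steps in order.} First I would set up the cohomological bookkeeping. Since $g$ is non-degenerate, $0\to\Theta_Y^\bullet\xrightarrow{G}g^*\Theta_Z^\bullet\xrightarrow{P}\mathcal{N}_g^\bullet\to 0$ is exact, and by hypothesis (2) $f^*G:f^*\Theta_Y^\bullet\to h^*\Theta_Z^\bullet$ is injective, so $0\to f^*\Theta_Y^\bullet\xrightarrow{f^*G}h^*\Theta_Z^\bullet\xrightarrow{f^*P}f^*\mathcal{N}_g^\bullet\to 0$ is also exact; this gives a long exact hypercohomology sequence relating $\mathbb{H}^\bullet(X,f^*\Theta_Y^\bullet)$, $\mathbb{H}^\bullet(X,h^*\Theta_Z^\bullet)$, and $\mathbb{H}^\bullet(X,f^*\mathcal{N}_g^\bullet)$. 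Second, I would define on $X$ the object $PD_{(X,\Lambda_0)/(\mathcal{Y},\Pi)}$ attached to $f:(X,\Lambda_0)\to(Y,\Pi_0)$ deformed inside the family $(\mathcal{Y},\Pi,q,N)$ as in section \ref{section3}, and observe that the composition with $\Psi$ induces a map $PD_{(X,\Lambda_0)/(\mathcal{Y},\Pi)}\to PD_{(X,\Lambda_0)/(Z\times M,\Omega_0)}$, sending the characteristic map of a sought family for $\Phi$ to the characteristic map of the given $(\mathcal{X},\Lambda,\Upsilon,p,M)$ at $0$. Third, I would exhibit a family of holomorphic Poisson maps of $f$ into $(\mathcal{Y},\Pi,q,N)$ which is complete at the point lying over $0'$: using hypothesis (4), that $f^*\circ\tau:T_{0'}(N)\to\mathbb{H}^0(X,f^*\mathcal{N}_g^\bullet)$ is surjective, combined with the long exact sequence above and the fact (non-degeneracy of $g$) that $PD_{(X,\Lambda_0)/(Y,\Pi_0)}\cong\mathbb{H}^0(X,f^*\mathcal{N}_f^\bullet)$ — here the key is that the characteristic map of the family $(\mathcal{Y},\Pi,\Psi,q,N)$ pushed to $X$ surjects onto the relevant hyper-$\mathbb{H}^0$, which is precisely the completeness criterion of Theorem \ref{ii5}. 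Fourth, apply Theorem \ref{ii5} (completeness for deformations of Poisson maps into a family) to the trivial deformation $(X\times M,\Lambda_0)\to(\mathcal{Y},\Pi)$ determined over $M$ by composing $\Upsilon$ with a local lift, obtaining $s:M'\to N$ with $s(0)=0'$ and $\Phi:(\mathcal{X}|_{M'},\Lambda|_{M'})\to(\mathcal{Y},\Pi)$ over $s$ with $\Psi\circ\Phi=\Upsilon|_{M'}$ and $\Phi_0=f$. Finally I would check that the resulting square of Poisson maps commutes, which is immediate from the construction since each arrow was built to be compatible over the base maps $s$ and $\mathrm{id}_N$.

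\textbf{Main obstacle.} The crux — as in all the theorems of this paper — is not the formal/cohomological architecture, which transcribes Horikawa's argument essentially verbatim once the Poisson complexes $\Theta_X^\bullet$, $f^*\Theta_Y^\bullet$, $\mathcal{N}_f^\bullet$ and their hypercohomology are in place; rather, it is the convergence of the formal power series produced by Theorem \ref{ii80}. Since this theorem (and Theorem \ref{ii5}) is stated conditionally on convergence, Theorem \ref{ll17} inherits that conditional status, and I would state it as such. A secondary technical point needing care is checking that the exact sequence $0\to f^*\Theta_Y^\bullet\to h^*\Theta_Z^\bullet\to f^*\mathcal{N}_g^\bullet\to 0$ really does follow from hypothesis (2) together with the non-degeneracy of $g$ — i.e. that pullback along $f$ preserves exactness here — and that the identification $PD_{(Y,\Pi_0)/(Z,\Omega_0)}\cong\mathbb{H}^0(Y,\mathcal{N}_g^\bullet)$ (Corollary for non-degenerate maps) is compatible with the pullback $f^*$ appearing in hypothesis (4); these are the places where the Poisson decorations could in principle cause trouble, but the commuting ladder of complexes in Appendix \ref{appendix1} should make them routine.
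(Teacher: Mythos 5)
Your plan — reduce Theorem \ref{ll17} to the existence/completeness theorems for deformations of Poisson maps into the family $(\mathcal{Y},\Pi,q,N)$ — is not the paper's route and, more importantly, does not go through with the stated hypotheses. Theorem \ref{ii80} requires that $\mathbb{H}^1(X,f^*\Theta_Y^\bullet)$ be generated by the images of $F$ and of $f^*\circ\rho'$, and that $F:\mathbb{H}^2(X,\Theta_X^\bullet)\to\mathbb{H}^2(X,f^*\Theta_Y^\bullet)$ be injective; nothing in hypotheses (1)--(4) of Theorem \ref{ll17} gives either condition, since (4) only controls $\mathbb{H}^0(X,f^*\mathcal{N}_g^\bullet)$. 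Likewise Theorem \ref{ii5} presupposes that a family of Poisson maps of $(\mathcal{X},\Lambda)$ into $(\mathcal{Y},\Pi,q,N)$ is already given and that its characteristic map onto $PD_{(X,\Lambda_0)/(\mathcal{Y},\Pi)}$ is surjective; here no such family exists (producing $\Phi$ with $\Psi\circ\Phi=\Upsilon|_{M'}$ is exactly the goal), your ``trivial deformation $(X\times M,\Lambda_0)$'' is not the given $(\mathcal{X},\Lambda)$, and completeness would in any case say nothing about the constraint $\Psi\circ\Phi=\Upsilon|_{M'}$. The paper instead proves the theorem directly, following Horikawa p.662--664: one constructs the power series $s(t)$ and $\Phi_i(z_i,t)$ satisfying $(\ref{mm20})$--$(\ref{mm23})$ by induction on degree, defines at each step the obstruction cochains $\Gamma_{ij|\mu}\in\Gamma(U_{ij},f^*\Theta_Y)$, $\gamma_{i|\mu}\in\Gamma(U_i,h^*\Theta_Z)$, $\lambda_{i|\mu}\in\Gamma(U_i,\wedge^2 f^*\Theta_Y)$, proves the cocycle identities of Lemma \ref{qq60}, uses the exact sequence $0\to f^*\Theta_Y^\bullet\xrightarrow{f^*G}h^*\Theta_Z^\bullet\to f^*\mathcal{N}_g^\bullet\to 0$ (this part of your bookkeeping does match the paper) so that $\{\gamma_{i|\mu}\}$ defines a class in $\mathbb{H}^0(X,f^*\mathcal{N}_g^\bullet)$, and then invokes hypothesis (4) to choose $s_\mu$ and $\Phi_{i|\mu}$ solving $(\ref{cc1})$--$(\ref{cc3})$, the injectivity of $f^*G$ delivering the Poisson compatibility $(\ref{cc3})$.

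A second, related error is your claim that the result must be stated conditionally on convergence. The unknowns here are holomorphic Čech-level data ($\Phi_i$ and $s$) over a fixed covering, not $(0,1)$-forms built via the operators $L$ and $L_\pi$, so the convergence is handled by the same elementary majorant estimates as in the proof of the completeness Theorem \ref{ii1}; this is why Theorem \ref{ll17} (like Theorems \ref{ii1} and \ref{ii5}, which are \emph{not} conditional) is stated unconditionally, in contrast to the existence Theorems \ref{hh1} and \ref{ii80} and the stability Theorem \ref{stability}. Routing the proof through Theorem \ref{ii80} would thus both require hypotheses you do not have and needlessly weaken the conclusion.
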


\begin{proof}
We extend the arguments in \cite{Hor74} p.662-664 in the context of holomorphic Poisson deformations. We tried to keep notational consistency with \cite{Hor74}. We may assume the following:
\begin{enumerate}
\item $M=\{t\in \mathbb{C}^r||t|<\epsilon \},0=(0,...,0),N=\{s\in \mathbb{C}^{r'}||s|<1\}$, and $0'=(0,...,0)$, where $\epsilon>0$ is a sufficiently small number.
\item $\mathcal{X}$ (resp. $\mathcal{Y}$) is covered by a finite number of coordinate neighborhoods $\mathcal{U}_i$ (resp. $\mathcal{V}_i)$ and each $\mathcal{U}_i$ (resp. $\mathcal{V}_i$) is covered by a system of coordinates $(z_i,t)$ (resp. $(w_i,s)$) such that $p(z_i,t)=t$ (resp. $q(w_i,s)=s)$. Moreover, each $\mathcal{U}_i$ is a polydisc $\{(z_i,t)||z_i|<1,|t|<\epsilon\}$. We set $U_i=\mathcal{U}_i\cap X$ and $V_i=\mathcal{V}_i\cap Y$.
\item $I\subset J$, and $f(U_i)$ is contained in $V_i$ for each $i\in I$. $f$ is given by $w_i=f_i(z_i)$ on $U_i$.
\item for each $i\in J$, there exists a coordinate neighborhood $W_i$ on $Z$ such that $\Upsilon (\mathcal{U}_i)\in W_i\times M$ for $i\in I$ and $\Psi(\mathcal{V}_i)\subset W_i\times N$ for $i\in J$.
\item Each $W_i$ is covered by a system of coordinates $y_i$.
\item $\Upsilon $ and $\Psi$, respectively, given by $y_i=\Upsilon_i(z_i,t)$, and $y_i=\Psi_i(w_i,s)$.
\item $(z_i,t)\in \mathcal{U}_i$ and $(w_i,s)\in \mathcal{V}_i$ coincides with $(z_j,t)\in \mathcal{U}_j$ and $(w_j,s)\in \mathcal{V}_j$, respectively, if and only if $z_i=\phi_{ij}(z_k,t)$ and $w_i=\psi_{ij}(w_j,s)$.
\item On $\mathcal{U}_i$, $\Lambda$ is given by $\sum_{\alpha,\beta=1}^n \Lambda_{\alpha\beta}^i(z_i,t)\frac{\partial}{\partial z_i^\alpha}\wedge \frac{\partial}{\partial z_i^\beta}$ with $\Lambda_{\alpha\beta}^i(z_i,t)=-\Lambda_{\beta\alpha}^i(z_i,t)$.
\item On $\mathcal{V}_i$, $\Pi$ is given by $\sum_{\alpha,\beta=1}^m \Pi_{\alpha\beta}^i(w_i,s)\frac{\partial}{\partial w_i^\alpha}\wedge \frac{\partial}{\partial w_i^\beta}$ with $\Pi_{\alpha\beta}^i(w_i,s)=-\Pi_{\alpha\beta}^i(w_i,s)$.
\item $y_i\in W_i$ coincides with $y_j\in W_j$ if and only if $y_i=e_{ij}(y_j)$.
\item On $W_i$, $\Omega$ is given by $\sum_{\alpha,\beta=1}^l \Omega_{\alpha\beta}^i (y_i)\frac{\partial}{\partial y_i^\alpha}\wedge \frac{\partial}{\partial y_i^\beta}$ with $\Omega_{\alpha\beta}^i(y_i)=-\Omega_{\beta\alpha}^i(y_i)$.
\end{enumerate} 
We set $g_i(w_i)=\Psi_i(w_i,0)$, $h(z_i)=\Upsilon_i(z_i,0)$, $b_{ij}(z_j)=\phi_{ij}(z_j,0)$, and $c_{ij}(w_j)=\psi_{ij}(w_j,0)$.

We shall construct holomorphic functions
\begin{align*}
s=(s^v):M\to \mathbb{C}^{r'},\,\,\,\,\,\Phi_i:\mathcal{U}_i\to \mathbb{C}^m,\,\,\,\,\,(i\in I)
\end{align*}
such that
\begin{align}
s(0)=0,\,\,\,\,\,&\Phi_i(z_i,0)=f_i(z_i),\,\,\,\,\,\text{on $U_i\,\,\,(i\in I)$}, \label{mm20}\\
\Phi_i(\phi_{ij}(z_j,t),t)&=\psi_{ij}(\Phi_j(z_j,t),s(t)),\,\,\,\,\,\text{on $\mathcal{U}_{ij}\,\,\,(i,j\in I)$}, \label{mm21}\\
\Upsilon_i (z_i,t)&=\Psi_i(\Phi_i(z_i,t),s(t)),\,\,\,\,\,\text{on $\mathcal{U}_i\,\,\,(i\in I)$}, \label{mm22}\\
\Pi_{pq}^i(\Phi_i(z_i,t),s(t))&=\sum_{\alpha,\beta=1}^n \Lambda_{\alpha\beta}^i(z_i, t )\frac{\partial \Phi_i^{p}(z_i,t)}{\partial z_i^\alpha}\frac{\partial \Phi_i^{q}(z_i,t)}{\partial z_i^\beta},\,\,\,\,\,\text{on $\mathcal{U}_i\,\,\,(i\in I)$}. \label{mm23}
\end{align}

\subsubsection{ Existence of formal solutions}\

We will prove the existence of formal  solutions of $s(t)$ and $\Phi_i(z_i,t)$ satisfying $(\ref{mm20})-(\ref{mm23})$ as power series in $t$. Let $s(t)=\sum_{\mu=0}^\infty s_\mu(t)$, and $\Phi_i(z_i,t)=\sum_{\mu=0}^\infty \Phi_{i|\mu}(z_i,t)$, where $s_\mu(t)$ and $\Phi_{i|\mu}(z_i,t)$ are homogenous in $t$ of degree $\mu$, and let $s^\mu(t)=s_0(t)+s_1(t)+\cdots+s_\mu(t)$, and $\Phi_i^{\mu}(z_i,t)=\Phi_{i|0}(z_i,t)+\Phi_{i|1}(z_i,t)+\cdots+\Phi_{i|\mu}(z_i,t)$. We note that $(\ref{mm20})-(\ref{mm23})$ are equivalent to the following congruences:
\begin{align}
\Phi_i^\mu(\phi_{ij}(z_j,t),t)&\equiv_\mu\psi_{ij}(\Phi_j^\mu(z_j,t),s^\mu(t)),\,\,\,\,\,\text{on $\mathcal{U}_{ij}(i,j\in I)$},\label{cc10}\\
\Upsilon (z_i,t)&\equiv_\mu \Psi_i(\Phi_i^\mu(z_i,t),s^\mu(t)),\,\,\,\,\,\text{on $\mathcal{U}_i(i\in I)$},\label{cc11}\\
\Pi_{pq}^i(\Phi_i^{\mu-1}(z_i,t),s)&\equiv_\mu\sum_{\alpha,\beta=1}^n \Lambda_{\alpha\beta}^i(z_i,t)\frac{\partial \Phi_i^{\mu p}(z_i,t)}{\partial z_i^\alpha}\frac{\partial \Phi_i^{\mu q}(z_i,t)}{\partial z_i^\beta},\,\,\,\,\,\text{on $\mathcal{U}_i(i\in I)$}.\label{cc12}
\end{align}
for $\mu=0,1,2,\cdots$.

We shall construct formal solutions of $s(t)$ and $\Phi_i(z_i,t)$ satisfying $(\ref{mm20})-(\ref{mm23})$ by induction on $\mu$. We set $s_0=0$, and $\Phi_{i|0}=f_i(z_i)$. Then the induction holds for $\mu=0$.

Suppose that we have already determined $s^{\mu-1}$ and $\Phi_i^\mu$ satisfying $(\ref{cc10})_{\mu-1}-(\ref{cc12})_{\mu-1}$. We define homogenous polynomials $\Gamma_{ij|\mu}\in \Gamma(U_{ij},f^*\Theta_Y),\gamma_{i|\mu}\in \Gamma(U_i, h^*\Theta_Z)$ and $\lambda_{i|\mu}\in \Gamma(U_i, \wedge^2 f^*\Theta_Y)$ of degree $\mu$ by the following congruences:
\begin{align}
\Gamma_{ij|\mu}&\equiv_\mu \sum_{\alpha=1}^m (\Phi_i^{(\mu-1)\alpha}(\phi_{ij},t)-\psi_{ij}^\alpha(\Phi_j^{\mu-1},s^{\mu-1})\frac{\partial}{\partial w_i^\alpha}\\
-\gamma_{i|\mu}&\equiv_\mu \sum_{\beta=1}^l(\Upsilon_i^\beta-\Psi_i^\beta(\Phi_i^{\mu-1},s^{\mu-1}))\frac{\partial}{\partial y_i^\beta}\\
\lambda_{i|\mu}&\equiv_\mu \sum_{p,q=1}^m \lambda_{i|\mu}^{p,q}\frac{\partial}{\partial w_i^p}\wedge \frac{\partial}{\partial w_i^q}\\
&\equiv_\mu \sum_{p,q=1}^m \left( -\Pi_{pq}^i(\Phi_i^{\mu-1}(z_i,t),s^{\mu-1})+ \sum_{\alpha,\beta=1}^n \Lambda_{\alpha\beta}^i(z_i,t)\frac{\partial \Phi_i^{(\mu-1) p}(z_i,t)}{\partial z_i^\alpha}\frac{\partial \Phi_i^{(\mu-1) q}(z_i,t)}{\partial z_i^\beta}   \right)\frac{\partial}{\partial w_i^p}\wedge \frac{\partial}{\partial w_i^q}. \notag
\end{align}
\begin{lemma}\label{qq60}
\begin{align}
\Gamma_{ij|\mu}-\Gamma_{ik|\mu}+\Gamma_{ij|\mu}&=0, \label{cc4}\\
\pi_f(\lambda_{i|\mu})&=0\label{cc5}\\
\lambda_{j|\mu}-\lambda_{i|\mu}+\pi_f(\Gamma_{ij|\mu})&=0 \label{cc6}\\
\gamma_{i|\mu}-\gamma_{j|\mu}&=(f^*G) \Gamma_{ij|\mu} \label{cc7}\\
\pi_h(\gamma_{i|\mu})&=(f^* G)\lambda_{i|\mu}\label{cc8}
\end{align}
\end{lemma}
\begin{proof}
For $(\ref{cc4})$ and $(\ref{cc7})$, see \cite{Hor74} p.663. We show $(\ref{cc5})$. It is sufficient to show  that $\pi_f(\lambda_{i|\mu})(w_i^a,w_i^b,w_i^c)=0$ for any $a,b,c$. Indeed, since $[\Lambda_i,\Lambda_i]=0$, and $[\Pi_i,\Pi_i]=0$, we have

\begin{align*}
&\pi_f(\lambda_{i|\mu})(w_i^a,w_i^b,w_i^c)\\
&=\Lambda_0(\lambda_{i|\mu}(w_i^a,w_i^b), f_i^c(z_i))-\Lambda_0(\lambda_{i|\mu}(w_i^a,w_i^c), f_i^b(z))+\Lambda_0(\lambda_{i|\mu}(w_i^b,w_i^c), f_i^a(z))\\
&+\lambda_{i|\mu}(\Pi_0(w_i^a,w_i^b),w_i^c)-\lambda_{i|\mu}(\Pi_0(w_i^a,w_i^c),w_i^b)+\lambda_{i|\mu}(\Lambda_0(w_i^b,w_i^c),w_i^a)\\
&\equiv_\mu -2\Lambda_i(\Pi_{ab}^i(\Phi_i^{\mu-1},s^{\mu-1}),\Phi_i^{(\mu-1)c})+\Lambda_i(\Lambda_i(\Phi_i^{(\mu-1)a},\Phi_i^{(\mu-1)b}),\Phi_i^{(\mu-1)c})\\
&+2\Lambda_i(\Pi_{ac}^i(\Phi_i^{\mu-1},s^{\mu-1}),\Phi_i^{(\mu-1)b})-\Lambda_i(\Lambda_i(\Phi_i^{(\mu-1)a},\Phi_i^{(\mu-1)c}),\Phi_i^{(\mu-1)b})\\
&-2\Lambda_i(\Pi_{bc}^i(\Phi_i^{\mu-1},s^{\mu-1}),\Phi_i^{(\mu-1)a})+\Lambda_i(\Lambda_i(\Phi_i^{(\mu-1)b},\Phi_i^{(\mu-1)c}),\Phi_i^{(\mu-1)a})\\
&+2\lambda_{i|\mu}(\Pi_{ab}^i(w_i,s),w_i^c)-2\lambda_{i|\mu}(\Pi_{ac}^i(w_i,s),w_i^b)+2\lambda_{i|\mu}(\Pi_{bc}^i(w_i,s),w_i^a)\\
&\equiv_\mu -2\Lambda_i(\Pi_{ab}^i(\Phi_i^{\mu-1},s^{\mu-1}),\Phi_i^{(\mu-1)c})
+2\Lambda_i(\Pi_{ac}^i(\Phi_i^{\mu-1},s^{\mu-1}),\Phi_i^{(\mu-1)b})
-2\Lambda_i(\Pi_{bc}^i(\Phi_i^{\mu-1},s^{\mu-1}),\Phi_i^{(\mu-1)a})\\
&+4\sum_{p=1}^n \left( -\Pi_{pc}^i(\Phi_i^{\mu-1},s^{\mu-1})+\sum_{\alpha,\beta=1}^n \Lambda_{\alpha\beta}^i(z_i,t)\frac{\partial \Phi_i^{(\mu-1)p}}{\partial z_i^\alpha}\frac{\partial \Phi_i^{(\mu-1)c}}{\partial z_i^\beta}         \right)\frac{\partial \Pi_{ab}^i}{\partial w_i^p}(\Phi_i^{\mu-1},s^{\mu-1})\\
&-4\sum_{p=1}^n \left( -\Pi_{pb}^i(\Phi_i^{\mu-1}, s^{\mu-1})+\sum_{\alpha,\beta=1}^n \Lambda_{\alpha\beta}^i(z_i,t)\frac{\partial \Phi_i^{(\mu-1)p}}{\partial z_i^\alpha}\frac{\partial \Phi_i^{(\mu-1)b}}{\partial z_i^\beta}         \right)\frac{\partial \Pi_{ac}^i}{\partial w_i^p}(\Phi_i^{\mu-1},s^{\mu-1})\\
&+4\sum_{p=1}^n \left( -\Pi_{pa}^i(\Phi_i^{\mu-1},s^{\mu-1})+\sum_{\alpha,\beta=1}^n \Lambda_{\alpha\beta}^i(z_i,t)\frac{\partial \Phi_i^{(\mu-1)p}}{\partial z_i^\alpha}\frac{\partial \Phi_i^{(\mu-1)a}}{\partial z_i^\beta}         \right)\frac{\partial \Pi_{bc}^i}{\partial w_i^p}(\Phi_i^{\mu-1}, s^{\mu-1})\\
&\equiv_\mu -2\Lambda_i(\Pi_{ab}^i(\Phi_i^{\mu-1},s^{\mu-1}),\Phi_i^{(\mu-1)c})
+2\Lambda_i(\Pi_{ac}^i(\Phi_i^{\mu-1},s^{\mu-1}),\Phi_i^{(\mu-1)b})
-2\Lambda_i(\Pi_{bc}^i(\Phi_i^{\mu-1},s^{\mu-1}),\Phi_i^{(\mu-1)a})\\
&-4\sum_{p=1}^n \left(   \Pi_{pc}^i(\Phi_i^{\mu-1},s^{\mu-1})  \frac{\partial \Pi_{ab}^i}{\partial w_i^p}(\Phi_i^{\mu-1}, s^{\mu-1})  - \Pi_{pb}^i(\Phi_i^{\mu-1},s^{\mu-1})  \frac{\partial \Pi_{ac}^i}{\partial w_i^p}(\Phi_i^{\mu-1},s^{\mu-1})  + \Pi_{pa}^i(\Phi_i^{\mu-1},s^{\mu-1})  \frac{\partial \Pi_{bc}^i}{\partial w_i^p}(\Phi_i^{\mu-1},s^{\mu-1})            \right)\\
&+4\sum_{\alpha,\beta=1}^n\Lambda_{\alpha\beta}^i(z_i,t)\frac{\partial \Pi_{ab}^i(\Phi_i^{\mu-1},s^{\mu-1})}{\partial z_i^\alpha}\frac{\partial \Phi_i^{(\mu-1)c}}{\partial z_i^\beta}-4\sum_{\alpha,\beta=1}^n\Lambda_{\alpha\beta}^i(z_i,t)\frac{\partial \Pi_{ac}^i(\Phi_i^{\mu-1}, s^{\mu-1})}{\partial z_i^\alpha}\frac{\partial \Phi_i^{(\mu-1)b}}{\partial z_i^\beta}\\
&+4\sum_{\alpha,\beta=1}^n\Lambda_{\alpha\beta}^i(z_i,t)\frac{\partial \Pi_{bc}^i(\Phi_i^{\mu-1}, s^{\mu-1})}{\partial z_i^\alpha}\frac{\partial \Phi_i^{(\mu-1)a}}{\partial z_i^\beta}\\
&=0
\end{align*}

Next we show $(\ref{cc6})$. It is sufficient to show that $\lambda_{j|\mu}(w_i^p,w_i^q)-\lambda_{i|\mu}(w_i^p,w_i^q)+\pi_f(\Gamma_{ij|\mu})(w_i^p,w_i^q)=0$ for any $p,q$. We note that $\Pi_{pq}^i(\psi_{ij}(w_j,s),s)=\sum_{r,s=1}^m \Pi_{rs}^j(w_j,s) \frac{\partial \psi_{ij}^p}{\partial w_j^r}\frac{\partial \psi_{ij}^q}{\partial w_j^s} $. Then

\begin{align}\label{mm31}
&\lambda_{j|\mu}(w_i^p,w_i^q)\equiv_\mu \lambda_{j|\mu}(\psi_{ij}^p,\psi_{ij}^q)\\
&\equiv_\mu2\sum_{r,s=1}^m \left( -\Pi_{rs}^j(\Phi_j^{\mu-1}(z_j,t),s^{\mu-1})+ \sum_{\alpha,\beta=1}^n \Lambda_{\alpha\beta}^j(z_j,t)\frac{\partial \Phi_j^{(\mu-1) r}(z_j,t)}{\partial z_j^\alpha}\frac{\partial \Phi_j^{(\mu-1) s}(z_j,t)}{\partial z_j^\beta}   \right)\frac{\partial \psi_{ij}^p}{\partial w_j^r}(\Phi_j^{\mu-1},s^{\mu-1} )\frac{\partial \psi_{ij}^q}{\partial w_j^s}(\Phi_j^{\mu-1},s^{\mu-1})\notag \\
&\equiv_\mu -2\Pi_{pq}^i(\psi_{ij}(\Phi_j^{\mu-1}(z_j,t),s^{\mu-1}),s^{\mu-1})+2\sum_{\alpha,\beta=1}^n \Lambda_{\alpha\beta}^i(\phi_{ij}(z_j,t),t)\frac{\partial \psi_{ij}^p(\Phi_j^{(\mu-1)},s^{\mu-1})}{\partial z_i^\alpha}\frac{\partial \psi_{ij}^q(\Phi_j^{(\mu-1)},s^{\mu-1})}{\partial z_i^\beta}\notag\\
&\equiv_\mu -2\Pi_{pq}^i(\Phi_i^{\mu-1}(\phi_{ij}(z_j,t))-\Gamma_{ij|\mu},s^{\mu-1}) \notag\\
&+2\sum_{\alpha,\beta=1}^n \Lambda_{\alpha\beta}^i(\phi_{ij}(z_j,t),t)\frac{\partial (\Phi_i^{(\mu-1)p}(\phi_{ij}(z_j,t),t)-\Gamma_{ij|\mu}^p)}{\partial z_i^\alpha}\frac{\partial (\Phi_i^{(\mu-1)q} (\phi_{ij}(z_j,t),t)-\Gamma_{ij|\mu}^q)}{\partial z_i^\beta}\notag\\
&\equiv_\mu -2\Pi_{pq}^i(\Phi_i^{\mu-1}(\phi_{ij}(z_j,t),s^{\mu-1})+2\sum_{\gamma=1}^m \Gamma_{ij}^\gamma\frac{\partial \Pi_{pq}^i}{\partial w_i^\gamma}(\Phi_i^{\mu-1},s^{\mu-1})\notag\\
&+2 \sum_{\alpha,\beta=1}^n \Lambda_{\alpha\beta}^i(\phi_{ij}(z_j,t),t)\frac{\partial \Phi_i^{(\mu-1) p}(\phi_{ij}(z_j,t),t)}{\partial z_i^\alpha}\frac{\partial \Phi_i^{(\mu-1) q}(\phi_{ij}(z_j,t),t)}{\partial z_i^\beta}-2\sum_{\alpha,\beta=1}^n\Lambda_{\alpha\beta}^i(z_i,t)\frac{\partial \Gamma_{ij|\mu}^p}{\partial z_i^\alpha}\frac{\partial \Phi_i^{(\mu-1)q}}{\partial z_i^\beta}\notag\\
&-2\sum_{\alpha,\beta=1}^n\Lambda_{\alpha\beta}^i(z_i,t)\frac{\partial \Phi_i^{(\mu-1)p}}{\partial z_i^\alpha}\frac{\partial \Gamma_{ij|\mu}^q}{\partial z_i^\beta}\notag
\end{align}

\begin{align}\label{mm32}
-\lambda_{i|\mu}(w_i^p,w_i^q)\equiv_\mu 2\Pi_{pq}^i(\Phi_i^{\mu-1}(\phi_{ij}(z_j,t),t),s^{\mu-1})-2\sum_{\alpha,\beta=1}^n \Lambda_{\alpha\beta}^i(\phi_{ij}(z_j,t),t)\frac{\partial \Phi_i^{(\mu-1) p}(\phi_{ij}(z_j,t),t)}{\partial z_i^\alpha}\frac{\partial \Phi_i^{(\mu-1) q}(\phi_{ij}(z_j,t),t)}{\partial z_i^\beta}
\end{align}

\begin{align}\label{mm33}
&\pi_f(\Gamma_{ij|\mu})(w_i^p,w_i^q)=\Lambda_0(\Gamma_{ij|\mu}^p,f_i^q)-\Lambda_0(\Gamma_{ij|\mu}^q,f_i^p)-\Gamma_{ij|\mu}(\Pi_0(w_i^p,w_i^q))\\
&\equiv_\mu2\sum_{\alpha,\beta=1}^n\Lambda_{\alpha\beta}^i(z_i,t)\frac{\partial \Gamma_{ij|\mu}^p}{\partial z_i^\alpha}\frac{\partial \Phi_i^{(\mu-1)q}}{\partial z_i^\beta}-2\sum_{\alpha,\beta=1}^n\Lambda_{\alpha\beta}^i(z_i,t)\frac{\partial \Gamma_{ij|\mu}^q}{\partial z_i^\alpha}\frac{\partial \Phi_i^{(\mu-1)p}}{\partial z_i^\beta}-2\sum_{\gamma=1}^m\Gamma_{ij|\mu}^\gamma \frac{\partial \Pi_{pq}^i}{\partial w_i^\gamma}(\Phi_i^{\mu-1},s^{\mu-1}) \notag
\end{align}

Then from $(\ref{mm31}),(\ref{mm32})$, and $(\ref{mm33})$, we get $(\ref{cc6})$.

Lastly we show $(\ref{cc8})$. It is sufficient to show that $\pi_g(\gamma_{i|\mu})(y_i^p,y_i^q)=(f^*G)\lambda_{i|\mu}(y_i^p,y_i^q)$ for any $p,q$. We note that $\Omega_{pq}^i(\Psi_i(w_i,s))=\sum_{\alpha\beta=1}^n \Pi_{\alpha\beta}^i(w_i,s)\frac{\partial \Psi_i^p}{\partial w_i^\alpha}\frac{\partial \Psi_i^q}{\partial w_i^\beta}$, and $\Omega_{pq}^i(\Upsilon_i(z_i,t))=\sum_{\alpha,\beta=1}^n \Lambda_{\alpha\beta}^i(z_i,t)\frac{\partial \Upsilon_i^p}{\partial z_i^\alpha}\frac{\partial \Upsilon_i^q}{\partial z_i^\beta}$.
\begin{align*}
&\pi_h(\gamma_{i|\mu})(y_i^p,y_i^q)=\Lambda_0(\gamma_{i|\mu}(y_i^p),(g_i\circ f_i)^q)-\Lambda_0(\gamma_{i|\mu}(y_i^q),(g_i\circ f_i)^p)-\gamma_{i|\mu}(\Omega_0(y_i^p,y_i^q))\\
&\equiv_\mu -\Lambda_i( \Upsilon_i^p-\Psi_i^p(\Phi_i^{\mu-1},s^{\mu-1}),\Psi_i^q(\Phi_i^{\mu-1},s^{\mu-1}))+\Lambda_i(\Upsilon_i^q-\Psi_i^q(\Phi_i^{\mu-1},s^{\mu-1}),\Psi^p(\Phi_i^{\mu-1},s^{\mu-1}))\\
&-2\sum_{\beta=1}^l (\Upsilon_i^\beta-\Psi_i^\beta(\Phi_i^{\mu-1},s^{\mu-1}))\frac{\partial \Omega_{pq}^i }{\partial y_i^\beta }(\Psi_i(\Phi_i^{\mu-1},s^{\mu-1}))\\
&\equiv_\mu -\Lambda_i( \Upsilon_i^p-\Psi_i^p(\Phi_i^{\mu-1},s^{\mu-1}),\Psi_i^q(\Phi_i^{\mu-1},s^{\mu-1})-\Upsilon_i^q+\Upsilon_i^q)+\Lambda_i(\Upsilon_i^q-\Psi_i^q(\Phi_i^{\mu-1},s^{\mu-1}),\Psi^p(\Phi_i^{\mu-1},s^{\mu-1}))\\
&+2\sum_{\beta=1}^l (\Upsilon_i^\beta-\Psi_i^\beta(\Phi_i^{\mu-1},s^{\mu-1}))\frac{\partial \Omega_{pq}^i }{\partial y_i^\beta }(\Psi_i(\Phi_i^{\mu-1},s^{\mu-1}))\\
&\equiv_\mu -\Lambda_i(\Upsilon_i^p,\Upsilon_i^q)-\Lambda_i(\Psi_i^q(\Phi_i^{\mu-1},s^{\mu-1}),\Psi_i^p(\Phi_i^{\mu-1},s^{\mu-1}))+2\sum_{\beta=1}^l (\Upsilon_i^\beta-\Psi_i^\beta(\Phi_i^{\mu-1},s^{\mu-1}))\frac{\partial \Omega_{pq}^i }{\partial y_i^\beta }(\Psi_i(\Phi_i^{\mu-1},s^{\mu-1}))
\end{align*}
On the other hand, we have
\begin{align*}
&(f^* G (\lambda_{i|\mu}))(y_i^a,y_i^b)=\lambda_{i|\mu}(g_i^p,g_i^q)\equiv_\mu \lambda_{i|\mu}( \Psi_i^p,\Psi_i^q)\\
&\equiv_\mu 2\sum_{r,s=1}^l-\Pi_{rs}^i(\Phi_i^{\mu-1},s^{\mu-1})\frac{\partial \Psi_i^p}{\partial w_i^r}(\Phi_i^{\mu-1},s^{\mu-1})\frac{\partial \Psi_i^q}{\partial w_i^s} (\Phi_i^{\mu-1},s^{\mu-1})+2\sum_{r,s=1}^l\sum_{\alpha,\beta=1}^n \Lambda_{\alpha\beta}^i(z_i,t)\frac{\partial \Phi_i^{(\mu-1)r}}{\partial z_i^\alpha}\frac{\partial \Phi_i^{(\mu-1)s}}{\partial z_i^\beta}\frac{\partial \Psi_i^p}{\partial w_i^r}\frac{\partial \Psi_i^q}{\partial w_i^s}\\
&\equiv_\mu -2\Omega_{pq}^i(\Psi_i(\Phi_i^{\mu-1},s^{\mu-1}))+\Lambda_i(\Psi_i^p(\Phi_i^{\mu-1},s^{\mu-1}),\Psi_i^q(\Phi_i^{\mu-1},s^{\mu-1}))\\
&\equiv_\mu -2\Omega_{pq}^i(\Psi_i(\Phi_i^{\mu-1},s^{\mu-1})-\Upsilon_i+\Upsilon_i)+\Lambda_i(\Psi_i^p(\Phi_i^{\mu-1},s^{\mu-1}),\Psi_i^q(\Phi_i^{\mu-1},s^{\mu-1}))\\
&\equiv_\mu -2\Omega_{pq}^i(\Gamma_i)-2\sum_{\beta=1}^l (\Psi_i^\beta(\Phi_i^{\mu-1},s^{\mu-1})-\Upsilon_i^\beta)\frac{\partial \Omega_{pq}^i}{\partial y_i^\beta}(\Psi_i(\Phi_i^{\mu-1},s^{\mu-1}))+\Lambda_i(\Psi_i^p(\Phi_i^{\mu-1},s^{\mu-1}),\Psi_i^q(\Phi_i^{\mu-1},s^{\mu-1}))\\
&\equiv_\mu -2\sum_{\alpha,\beta=1}^n \Lambda_{\alpha\beta}^i(z_i,t)\frac{\partial \Upsilon_i^p}{\partial z_i^\alpha}\frac{\partial \Upsilon_i^q}{\partial z_i^\beta} - 2\sum_{\beta=1}^l (\Psi_i^\beta(\Phi_i^{\mu-1},s^{\mu-1})-\Upsilon_i^\beta)\frac{\partial \Omega_{pq}^i}{\partial y_i^\beta}(\Psi_i(\Phi_i^{\mu-1},s^{\mu-1}))+\Lambda_i(\Psi_i^p(\Phi_i^{\mu-1},s^{\mu-1}),\Psi_i^q(\Phi_i^{\mu-1},s^{\mu-1})
\end{align*}
Hence we get the claim.
\end{proof}

We set $\rho_{vij}'=\sum_\lambda \frac{\partial \psi_{ij}^\lambda}{\partial s^v}|_{s=0} \frac{\partial}{\partial w_i^\lambda}\in \Gamma(V_{ij},\Theta_Y)$, and $\Pi_{vi}'=\sum \frac{\partial \Pi_{\alpha\beta}^i(w_i,s)}{\partial s^v}\frac{\partial}{\partial w_i^\alpha}\wedge \frac{\partial}{\partial w_i^\beta}$, and $-\tau_{vi}'=\sum \frac{\partial \Psi_i^\alpha}{\partial s_i^v}|_{s=0} \frac{\partial}{\partial y_i^\alpha}\in \Gamma(V_i, g^*\Theta_Z)$. Then we have $\tau_{vi}'-\tau_{vj}'=G \rho_{vij}'$ on $V_{ij}$, and $\pi_g(\tau_{vi}')=G \Pi_{vi}'$. Then we show that

\begin{lemma}
$(\ref{cc10})_\mu,(\ref{cc11})_\mu$ and $(\ref{cc12})_\mu$ are equivalent to the following:
\begin{align}
\Gamma_{ij|\mu}&=\Phi_{j|\mu}-\Phi_{i|\mu}+\sum_{v=1}^{r'} s_\mu^v f^*\rho_{vij}' \label{cc1}\\
-\gamma_{i|\mu}&=(f^* G)\Phi_{i|\mu}-\sum_{v=1}^{r'} s_\mu^v f^*\tau_{vi}' \label{cc2}\\
-\lambda_{i|\mu}&=\pi_f(\Phi_{i|\mu})-\sum_{v=1}^{r'} s_\mu^v  f^*\Pi_{vi}'\label{cc3}
\end{align}
where $\Phi_{i|\mu}=\sum_{\alpha=1}^m \Phi_{i|\mu}^\alpha \frac{\partial}{\partial w_i^\alpha}$.
\end{lemma}

\begin{proof}
For $(\ref{cc1})$, see \cite{Hor74} p.663. Let us show $(\ref{cc2})$. Rewrite $(\ref{cc11})_\mu$ in the following: for any $\beta$,
\begin{align*}
\Upsilon_i^\beta\equiv_\mu \Psi_i^\beta(\Phi_i^{\mu-1}+\Phi_{i|\mu}, s^{\mu-1}+s_\mu)
\end{align*}
Then we have
\begin{align*}
\Upsilon_i^\beta\equiv_\mu \Psi_i^\beta(\Phi_i^{\mu-1},s^{\mu-1})+\sum_{\alpha=1}^m \Phi_{i|\mu}^\alpha\frac{\partial \Psi_i^\beta}{\partial w_i^\alpha}(\Phi_i^{\mu-1},s^{\mu-1})+\sum_{v=1}^{r'}s_\mu^v\frac{\partial \Psi_i^\beta }{\partial s_i^v }(\Phi_i^{\mu-1},s^{\mu-1})\\
\iff -\gamma_{i|\mu}^\beta \equiv_\mu \sum_{\alpha=1}^m \Phi_{i|\mu}^\alpha\frac{\partial g_i^\beta}{\partial w_i^\alpha}(f_i)+\sum_{v=1}^{r'}s_\mu^v\frac{\partial \Psi_i^\beta}{\partial s_i^v}(f_i,0)
\end{align*}
which are equivalent to $(\ref{cc2})$.

Next let us show $(\ref{cc3})$. Rewrite $(\ref{cc12})_\mu$ in the following.
\begin{align*}
\Pi_{pq}^i(\Phi_i^{\mu-1}+\Phi_{i|\mu},s^{\mu-1}+s_\mu)\equiv_\mu \sum_{\alpha,\beta=1}^n \Lambda_{\alpha\beta}^i(z_i,t)\frac{\partial ( \Phi_i^{(\mu-1)p}+\Phi_{i|\mu}^p ) }{\partial z_i^\alpha}\frac{\partial (\Phi_i^{(\mu-1)q}+\Phi_{i|\mu}^q )}{\partial z_i^\beta}
\end{align*}
Then we have
\begin{align*}
& \sum_{\alpha=1}^m \Phi_{i|\mu}^\alpha \frac{\partial \Pi_{pq}^i}{\partial w_i^\alpha}(\Phi_i^{\mu-1},s^{\mu-1})+\sum_{v=1}^{r'} s_\mu^v\frac{\partial \Pi_{pq}^i}{\partial s^v}(\Phi_i^{\mu-1},s^{\mu-1})\\
&\equiv_\mu \lambda_{i|\mu}^{p,q}+\sum_{\alpha,\beta=1}^n\Lambda_{\alpha\beta}^i(z_i,t)\frac{\partial \Phi_i^{(\mu-1)p}}{\partial z_i^\alpha}\frac{\partial \Phi_{i|\mu}^q}{\partial z_i^\beta}+\sum_{\alpha,\beta=1}^n \Lambda_{\alpha\beta}^i(z_i,t)\frac{\partial \Phi_{i|\mu}^p}{\partial z_i^\alpha}\frac{\partial \Phi_i^{(\mu-1)q}}{\partial z_i^\beta}
\end{align*}
which are equivalent to
\begin{align}\label{mm40}
 \sum_{\alpha=1}^m \Phi_{i|\mu}^\alpha \frac{\partial \Pi_{pq}^i}{\partial w_i^\alpha}(f_i,0)+\sum_{v=1}^{r'} s_\mu^v\frac{\partial \Pi_{pq}^i}{\partial s^v}(f_i,0)
\equiv_\mu \lambda_{i|\mu}^{p,q}+\sum_{\alpha,\beta=1}^n\Lambda_{\alpha\beta}^i(z_i)\frac{\partial f_i^p}{\partial z_i^\alpha}\frac{\partial \Phi_{i|\mu}^q}{\partial z_i^\beta}+\sum_{\alpha,\beta=1}^n \Lambda_{\alpha\beta}^i(z_i)\frac{\partial \Phi_{i|\mu}^p}{\partial z_i^\alpha}\frac{\partial f_i^q}{\partial z_i^\beta}
\end{align}
On the other hand, we have
\begin{align}
\pi_f(\Phi_{i|\mu})(w_i^p,w_i^q)=\Lambda_0(\Phi_{i|\mu}^p, f_i^q)-\Lambda_0(\Phi_{i|\mu}^q,f_i^p)-2\sum_{\alpha=1}^m\Phi_{i|\mu}^\alpha\frac{\partial \Pi_{pq}^i}{\partial w_i^\alpha}(f_i)
\end{align}\label{mm41}
From $(\ref{mm40})$ and $(\ref{mm41})$, we get $(\ref{cc3})$.
\end{proof}

\begin{lemma}
Under the hypothesis of Theorem $\ref{ll17}$, we can find $\Phi_{i|\mu}\in \Gamma(U_i,f^*\Theta_Y)$ and $s_\mu=(s_\mu^v)\in \mathbb{C}^{r'}$ which satisfy $(\ref{cc1}),(\ref{cc2})$, and $(\ref{cc3})$.
\end{lemma}

\begin{proof}
Since we have an exact sequence $0\to f^*\Theta_Y^\bullet\xrightarrow{f^* G} h^* \Theta_Z\to f^*\mathcal{N}_g^\bullet\to 0$, from $(\ref{cc7})$ and $(\ref{cc8})$, the collection $\{\gamma_{i|\mu}\}$ represents an element of $\mathbb{H}^0(X, f^*\mathcal{N}_g^\bullet)$. Since $f^*\circ\tau:T_{0'}(N)\to \mathbb{H}^0(X, f^*\mathcal{N}_g^\bullet)$ is surjective, we can find $s_\mu=(s_\mu^v)$ such that $\{\gamma_{i|\mu}\}$ and $\{\sum_{v=1}^{r'}s_\mu^vf^* \tau_{v_i}'\}$ represent the same element in $\mathbb{H}^0(X, f^*\mathcal{N}_g^\bullet)$. Hence there exist $\Phi_{i|\mu}\in \Gamma(U_i, f^* \Theta_Y)$ such that $-\gamma_{i|\mu}=(f^* G)\Phi_{i|\mu}-\sum_v s_\mu^v f^*\tau_{vi}'$, which proves $(\ref{cc2})$. on the other hand, from $(\ref{cc8})$, we have
\begin{align*}
&(f^* G)(-\lambda_{i|\mu})=\pi_h(-\gamma_{i|\mu})=\pi_h(f^*G(\Phi_{i|\mu}))-\sum_v s_\mu^v\pi_h  f^*\tau_{vi'}=f^*G(\pi_f(\Phi_{i|\mu}))-\sum_v s_\mu^vf^* \pi_g\tau_{vi'}\\
&=f^*G(\pi_f(\Phi_{i|\mu}))-\sum_v s_\mu^vf^* (G\Pi_{iv}')=f^*G(\pi_f(\Phi_{i|\mu}))-\sum_{v=1}^{r'} s_\mu^v(f^*G)f^* \Pi_{iv}'=f^*G\left(\pi_f(\Phi_{i|\mu})-\sum_{v=1}^{r'} s_\mu^v f^*\Pi_{iv}' \right).
\end{align*}
Hence since $f^*G: f^*\Theta_Y^\bullet \to h^*\Theta_Z^\bullet$ is injective, we get $(\ref{cc3})$. For $(\ref{cc1})$, we note that 
\begin{align*}
(f^*G)\Gamma_{ij|\mu}&=\gamma_{i|\mu}-\gamma_{j|\mu}=-(f^*G)\Phi_{i|\mu}+\sum_{v=1}^{r'} s_\mu^v f^*\tau_{vi}'+(f^*G)\Phi_{j|\mu}-\sum_{v=1}^{r'}s_\mu^v f^*\tau_{vj}'\\
&=f^*G(\Phi_{j|\mu}-\Phi_{i|\mu})+\sum_{v=1}^{r'} f^* (G\rho_{vij}')=f^*G(\Phi_{j|\mu}-\Phi_{i|\mu}+\sum_{v=1}^{r'} f^*\rho_{vij}')
\end{align*}
Hence we get $(\ref{cc1})$.
\end{proof}

Hence induction holds for $\mu$ so that we have formal power series $s(t)$ and $\Phi_i(z_i,t)$ satisfying $(\ref{mm20})-(\ref{mm23})$.

\subsubsection{Proof of convergence}\

By a similar argument as in the proof of Theorem \ref{ii1}, we can choose solutions $\Phi_{i|\mu}$ and $s_\mu$ in each inductive step so that $\Phi_i$ and $s$ converge absolutely and uniformly for $|t|<\epsilon$ for a sufficiently small number $\epsilon>0$. The completes the proof of Theorem \ref{ll17}.

\end{proof}

\begin{theorem}\label{ll18}
Let $f:(X,\Lambda_0)\to (Y, \Pi_0), g:(Y,\Pi_0)\to (Z, \Omega_0)$, and $h=g\circ f$ be holomorphic Poisson maps of holomorphic Poisson manifolds. Let $p:(\mathcal{X},\Lambda)\to M, q:(\mathcal{Y},\Pi)\to M$ and $\pi:(\mathcal{Z},\Omega)\to M$ be families of holomorphic Poisson manifolds such that $(X,\Lambda_0)=p^{-1}(0), (Y,\Pi_0)=q^{-1}(0)$ and $(Z, \Omega_0)=\pi^{-1}(0)$ for some point $0\in M$. Let $\Phi:(\mathcal{X},\Lambda)\to (\mathcal{Y}, \Pi)$ and $\Upsilon:(\mathcal{X}, \Lambda)\to (\mathcal{Z}, \Omega)$ be holomorphic Poisson maps over $M$ which induces $f$ and $h$ over $0\in M$, respectively. Assume that
\begin{enumerate}
\item $p$ and $q$ are proper.
\item $f^*:\mathbb{H}^0(Y, g^*\Theta_Z^\bullet)\to \mathbb{H}^0(X,h^*\Theta_Z^\bullet)$ is surjective.
\item $f^*:\mathbb{H}^1(Y, g^*\Theta_Z^\bullet)\to \mathbb{H}^1(X, h^*\Theta_Z^\bullet)$ is injective.
\end{enumerate}
Then there exists an open neighborhood $N$ of $0$, and a holomorphic Poisson map $\Psi:(\mathcal{Y}|_N, \Pi|_N)\to (\mathcal{Z}, \Omega)$ over $N$ such that $\Upsilon|_N=\Psi\circ (\Phi|_N)$.
\end{theorem}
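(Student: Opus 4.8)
The plan is to mimic Horikawa's proof of Theorem \ref{ll16} (see \cite{Hor74} p.664-665) in the holomorphic Poisson category, constructing $\Psi$ order by order in a power series expansion in the parameter $t$ and then invoking convergence. First I would choose, as in the proof of Theorem \ref{ll17}, systems of Stein coordinate neighborhoods $\mathcal{U}_i$ on $\mathcal{X}$, $\mathcal{V}_i$ on $\mathcal{Y}$, $\mathcal{W}_i$ on $\mathcal{Z}$, all fibred over $M=\{t\in\mathbb{C}^r\mid |t|<\epsilon\}$, with $\Phi$ given by $w_i=\Phi_i(z_i,t)$ and $\Upsilon$ given by $y_i=\Upsilon_i(z_i,t)$; on the central fiber these induce $w_i=f_i(z_i)$, $y_i=h_i(z_i)$, and the gluing data $b_{ij},c_{ij},e_{ij}$ plus the Poisson tensors $\Lambda^i_{\alpha\beta}(z_i,t)$, $\Pi^i_{\alpha\beta}(w_i,t)$, $\Omega^i_{\alpha\beta}(y_i,t)$. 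The goal is to produce holomorphic functions $\Psi_i:\mathcal{V}_i\to\mathbb{C}^l$ with $\Psi_i(w_i,0)=g_i(w_i)$, compatible with the gluings $\Psi_i(\psi_{ij}(w_j,t),t)=e_{ij}(\Psi_j(w_j,t))$, satisfying $\Upsilon_i(z_i,t)=\Psi_i(\Phi_i(z_i,t),t)$, and Poisson-compatible, i.e. $\Omega^i_{pq}(\Psi_i(w_i,t),t)=\sum_{\alpha,\beta}\Pi^i_{\alpha\beta}(w_i,t)\frac{\partial\Psi_i^p}{\partial w_i^\alpha}\frac{\partial\Psi_i^q}{\partial w_i^\beta}$. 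Writing $\Psi_i=\sum_\mu\Psi_{i|\mu}$, these four conditions become a system of congruences $(\cdot)_\mu$ to be solved inductively on $\mu$.

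At the inductive step, assuming $\Psi_i^{\mu-1}$ constructed, I would define obstruction cochains: $\Gamma_{ij|\mu}\in\Gamma(V_{ij},g^*\Theta_Z)$ measuring the failure of the gluing compatibility, $\gamma_{i|\mu}\in\Gamma(U_i,h^*\Theta_Z)$ measuring the failure of $\Upsilon=\Psi\circ\Phi$ (this is the one pulled back to $X$ via $f^*$), and $\lambda_{i|\mu}\in\Gamma(V_i,\wedge^2 g^*\Theta_Z)$ measuring the failure of Poisson-compatibility. The analogue of Horikawa's Lemma would assert $\delta\{\Gamma_{ij|\mu}\}=0$, $f^*\Gamma_{ij|\mu}=\gamma_{i|\mu}-\gamma_{j|\mu}$, $\pi_g(\Gamma_{ij|\mu})=\lambda_{i|\mu}-\lambda_{j|\mu}$, $\pi_g(\gamma_{i|\mu})=f^*(\text{something})$, and $\pi_g(\lambda_{i|\mu})=0$ together with the cocycle identity on $\lambda$ — so that $(\{\Gamma_{ij|\mu}\},\{\lambda_{i|\mu}\})$ represents a class in $\mathbb{H}^1(Y,g^*\Theta_Z^\bullet)$, and its pullback under $f^*:\mathbb{H}^1(Y,g^*\Theta_Z^\bullet)\to\mathbb{H}^1(X,h^*\Theta_Z^\bullet)$ is the class of $\{(\gamma_{i|\mu},\ldots)\}$, which vanishes because $\gamma_{i|\mu}$ is visibly a coboundary on $X$ (the obstruction $(\ref{cc11})$-type term is exact by construction coming from $\Upsilon$ already being globally defined over $M$). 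By the injectivity hypothesis (3), the class in $\mathbb{H}^1(Y,g^*\Theta_Z^\bullet)$ itself vanishes, so we can write $\Gamma_{ij|\mu}=\Psi_{j|\mu}^{(0)}-\Psi_{i|\mu}^{(0)}$ and $\lambda_{i|\mu}=\pi_g(\Psi_{i|\mu}^{(0)})$ after modification; then using surjectivity of $f^*:\mathbb{H}^0(Y,g^*\Theta_Z^\bullet)\to\mathbb{H}^0(X,h^*\Theta_Z^\bullet)$ we adjust $\Psi_{i|\mu}^{(0)}$ by a global section of $g^*\Theta_Z^\bullet$ to simultaneously kill $\gamma_{i|\mu}$, yielding $\Psi_{i|\mu}$ solving all of $(\cdot)_\mu$. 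This parallels exactly Horikawa p.665, with the Poisson bracket terms handled by the same bivector computations as in Lemma \ref{qq60} above.

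The main obstacle, as everywhere in this paper, is convergence of the resulting formal power series $\Psi_i(w_i,t)=\sum_\mu\Psi_{i|\mu}(w_i,t)$; however, I expect that — exactly as in the proof of Theorem \ref{ll16} in \cite{Hor74} — convergence here is actually unproblematic, because there is no deformation of the base or of $\mathcal{X},\mathcal{Y},\mathcal{Z}$ to control: all the families are given in advance, only the map $\Psi$ is being constructed, so one can run the standard majorant argument (as in the proof of convergence for Theorem \ref{ll17}, which in turn follows \cite{Kod05} and \cite{Hor73} Lemma 2.3) to choose $\Psi_{i|\mu}$ with $\Psi_i^\mu(w_i,t)-g_i(w_i)\ll A(t)$ for a suitable majorant series $A(t)$. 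The only genuinely Poisson-specific verifications are (i) that the bivector obstruction $\lambda_{i|\mu}$ behaves functorially under $f^*$, $\pi_g$, and the gluings — these are the computations of $\pi_f$, $\pi_h$, $\pi_g$ displayed in Lemma \ref{qq60}, repeated here with $Y$ and $Z$ in place of $X$ and $Y$ — and (ii) that the complexes $g^*\Theta_Z^\bullet$, $h^*\Theta_Z^\bullet$ and the homomorphism $f^*$ between them are the correct objects, which is established in Appendix \ref{appendix1}. Once these are in place the proof of Theorem \ref{ll18} runs in complete formal parallel to \cite{Hor74} p.664-665.

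\begin{proof}
We extend the arguments in \cite{Hor74} p.664-665 in the context of holomorphic Poisson deformations. The details are entirely analogous to the proofs of Theorem \ref{ll17} above and Theorem \ref{ll16} in \cite{Hor74}: one constructs $\Psi_i(w_i,t)$ as a formal power series in $t$ by induction on degree, the obstruction at each step $\mu$ being represented by a cocycle $(\{\Gamma_{ij|\mu}\},\{\lambda_{i|\mu}\})$ whose class in $\mathbb{H}^1(Y,g^*\Theta_Z^\bullet)$ maps to zero in $\mathbb{H}^1(X,h^*\Theta_Z^\bullet)$ and hence vanishes by hypothesis (3), after which hypothesis (2) allows one to solve $(\ref{mm22})$-type and $(\ref{mm23})$-type congruences simultaneously by adjusting the chosen lift by a global section of $g^*\Theta_Z^\bullet$; the Poisson-bracket identities needed are exactly those of Lemma \ref{qq60} with $(Y,Z)$ in place of $(X,Y)$. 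Convergence follows from the majorant method as in the proof of convergence for Theorem \ref{ll17}, since no family is being deformed. This completes the proof.
\end{proof}
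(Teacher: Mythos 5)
Your proposal follows essentially the same route as the paper's proof: the same inductive construction of the formal series $\Psi_i(w_i,t)$, the same obstruction cochains $(\Gamma_{ij|\mu},\gamma_{i|\mu},\lambda_{i|\mu})$ with the compatibility identities of Lemma \ref{qq60} type (the paper's (\ref{mm63})--(\ref{mm67}), where your ``$\pi_g(\gamma_{i|\mu})=f^*(\text{something})$'' is precisely $\pi_h(\gamma_{i|\mu})=f^*\lambda_{i|\mu}$), the same use of injectivity of $f^*$ on $\mathbb{H}^1(Y,g^*\Theta_Z^\bullet)$ to trivialize $(\{\Gamma_{ij|\mu}\},\{\lambda_{i|\mu}\})$ followed by surjectivity of $f^*$ on $\mathbb{H}^0$ to correct by a global Poisson section $\chi_\mu$, and the same majorant-type convergence argument. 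No essential difference from the paper's argument.
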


\begin{proof}
We extend arguments in \cite{Hor74} p.664-p.665 in the context of holomorphic Poisson deformations. We tried to maintain notational consistency with \cite{Hor74}.
We may assume the following:
\begin{enumerate}
\item $M=\{t\in \mathbb{C}^r||t|<\epsilon\}$ for a sufficiently small number $\epsilon >0$, and $0=(0,...,0)$.
\item $\mathcal{X}$ (resp. $\mathcal{Y}$) is covered by a finite number of coordinate neighborhoods $\mathcal{U}_i(i\in I)$ (resp. $\mathcal{V}_i (i\in J)$) and each $\mathcal{U}_i$ (resp. $\mathcal{V}_i$) is covered by a system of coordinates $(z_i,t)$ (resp. $(w_i,t))$ such that $p(z_i,t)=t$ (resp. $q(w_i,t)=t)$. Moreover, each $\mathcal{V}_i$ is a polydisc $\{(w_i,t)||w_i|<1,|t|<\epsilon\}$. We set $U_i=\mathcal{U}_i\cap X$ and $V_i=\mathcal{V}_i\cap Y$.
\item $I\subset J$, and $\Phi(\mathcal{U}_i)$ is contained in $\mathcal{V}_i$ for each $i\in I$. $\Phi$ is given by $w_i=\Phi_i(z_i,t)$ and we set $f_i(z_i)=\Phi_i(z_i,0)$.
\item For each $i\in J$, there exists a coordinate neighborhood $\mathcal{W}_i$ on $\mathcal{Z}$ such that $\Phi(\mathcal{U}_i)\subset \mathcal{W}_i$ for $i\in I$, and $g(V_i)\subset \mathcal{W}_i\cap Z$ for $i\in J$.
\item Each $\mathcal{W}_i$ is covered by a system of coordinate $(y_i,t)$ such that $\pi(y_i,t)=t$.
\item $\Upsilon$ and $g$ are given, respectively, by $y_i=\Upsilon_i(z_i,t)$ and $y_i=g_i(w_i)$. We set $h_i(z_i)=\Upsilon_i(z_i,0)$.
\item $(z_i,t)\in \mathcal{U}_i, (w_i,t)\in \mathcal{V}_i$, and $(y_i,t)\in \mathcal{W}_i$ coincide with $(z_j,t)\in \mathcal{U}_j, (w_j,t)\in \mathcal{V}_j$, and $(y_j,t)\in \mathcal{W}_j$, respectively, if and only if $z_i=\phi_{ij}(z_j,t), w_i=\psi_{ij}(w_j,t)$, and $y_i=\theta_{ij}(y_j,t)$. We set $e_{ij}(y_j)=\theta_{ij}(y_j,0)$.\\
\item On $\mathcal{U}_i$, $\Lambda$ is given by $\sum_{\alpha,\beta=1}^n \Lambda_{\alpha\beta}^i(z_i,t)\frac{\partial}{\partial z_i^\alpha}\wedge \frac{\partial}{\partial z_i^\beta}$ with $\Lambda_{\alpha\beta}^i(z_i,t)=-\Lambda_{\beta\alpha}^i(z_i,t)$.
\item On $\mathcal{V}_i$, $\Pi$ is given by $\sum_{\alpha,\beta=1}^m \Pi_{\alpha\beta}^i(w_i,t)\frac{\partial}{\partial w_i^\alpha}\wedge \frac{\partial}{\partial w_i^\beta}$ with $\Pi_{\alpha\beta}^i(w_i,t)=-\Pi_{\alpha\beta}^i(w_i,t)$.
\item On $\mathcal{W}_i$, $\Omega$ is given by $\sum_{\alpha,\beta=1}^l \Omega_{\alpha\beta}^i (y_i,t)\frac{\partial}{\partial y_i^\alpha}\wedge \frac{\partial}{\partial y_i^\beta}$ with $\Omega_{\alpha\beta}^i(y_i,t)=-\Omega_{\beta\alpha}^i(y_i,t)$.
\end{enumerate}
We shall construct holomorphic functions $\Psi_i:\mathcal{V}_i\to \mathbb{C}^l$ such that
\begin{align}
\Psi_i(w_i,0)&=g_i(z_i),\,\,\,\,\,\text{on $V_i$, for $i\in I$}, \label{mm50}\\
\Psi_i(\psi_{ij},t)&=\theta_{ij}(\Psi_j,t),\,\,\,\,\,\text{on $\mathcal{V}_{ij}$, for $i,j\in J$}, \label{mm51}\\
\Upsilon_i(z_i,t)&=\Psi_i(\Phi_i,t),\,\,\,\,\,\text{on $\mathcal{U}_i$, for $i\in I$}, \label{mm52}\\
\Omega_{pq}^i(\Psi_i(w_i,t),t)&=\sum_{\alpha,\beta=1}^m \Pi_i(w_i,t)\frac{\partial \Psi_i^p}{\partial w_i^\alpha}\frac{\partial \Psi_i^q}{\partial w_i^\beta}. \label{mm75}
\end{align}

\subsubsection{Existence of formal solutions}\

We will prove the existence of formal solutions of $\Psi_i(z_i,t)$ satisfying $(\ref{mm50})-(\ref{mm52})$ as power series in $t$. Let $\Psi_i(w_i,t)=\sum_{\mu=0}^\infty\Psi_{i|\mu}(w_i,t)$, where $\Psi_{i|\mu}(w_i,t)$ is a homogenous polynomial in $t$ of degree  $\mu$, and let $\Psi_i^\mu(w_i,t)=\Psi_{i|0}(w_i,t)+\Psi_{i|1}(w_i,t)+\cdots+\Psi_{i|\mu}(w_i,t)$. We note that $(\ref{mm50})-(\ref{mm52})$ are equivalent to the following system of congruences:
\begin{align}
\Psi_i^\mu(\psi_{ij},t)&\equiv_\mu\theta_{ij}(\Psi_j^\mu,t),\,\,\,\,\,\text{on $\mathcal{V}_{ij}$, for $i,j\in J$}, \label{mm53}\\
\Upsilon_i(z_i,t)&\equiv_\mu\Psi_i^\mu(\Phi_i,t),\,\,\,\,\,\text{on $\mathcal{U}_i$, for $i\in I$}, \label{mm54}\\
\Omega_{pq}^i(\Psi_i^\mu(w_i,t),t)&\equiv_\mu\sum_{\alpha,\beta=1}^m \Pi_{\alpha\beta}^i(w_i,t)\frac{\partial \Psi_i^{\mu p}}{\partial w_i^\alpha}\frac{\partial \Psi_i^{\mu q}}{\partial w_i^\beta}, \label{mm55}
\end{align}
for $\mu=0,1,2,\cdots$.

We shall construct formal solutions of $\Psi_i(w_i,t)$ satisfying $(\ref{mm53})-(\ref{mm55})$ by induction on $\mu$. We set $\Psi_{i|0}=g_i(w_i)$. Then  the induction holds for $\mu=0$.

Suppose that we have already determined $\Psi^{\mu-1}$ satisfying $(\ref{mm53})_{\mu-1}-(\ref{mm55})_{\mu-1}$. We define homogeneous polynomials $\Gamma_{ij|\mu}\in \Gamma(V_{ij},g^*\Theta_Z)(i,j\in J)$, $\gamma_{i|\mu}\in \Gamma(U_i, h^*\Theta_Z)(i\in I )$ and $\lambda_{i|\mu}\in \Gamma(V_i, \wedge^2 g^*\Theta_Z)(i\in J)$ of degree $\mu$ by the following congruences:
\begin{align}
\Gamma_{ij|\mu}&\equiv_\mu \sum_{\alpha=1}^l (\Psi_i^{(\mu-1)\alpha}(\psi_{ij},t)-\theta_{ij}^\alpha(\Psi_j^{\mu-1},t))\frac{\partial}{\partial y_i^\alpha} \label{mm60}\\
-\gamma_{i|\mu}&\equiv_\mu \sum_{\beta=1}^l (\Upsilon_i^\beta-\Psi_i^{(\mu-1)\beta}(\Phi_i,t))\frac{\partial}{\partial y_i^\beta} \label{mm61}\\
\lambda_{i|\mu}&\equiv_\mu \sum_{p,q=1}^l\left( - \Omega_{pq}^i(\Psi_i^\mu(w_i,t),t)+ \sum_{\alpha,\beta=1}^m \Pi_{\alpha\beta}^i(w_i,t)\frac{\partial \Psi_i^{\mu p}}{\partial w_i^\alpha}\frac{\partial \Psi_i^{\mu q}}{\partial w_i^\beta}    \right)\frac{\partial}{\partial y_i^p}\wedge \frac{\partial}{\partial y_i^q} \label{mm62}
\end{align}

\begin{lemma}
\begin{align}
\Gamma_{jk|\mu}-\Gamma_{ik|\mu}+\Gamma_{ij|\mu}&=0,\,\,\,\,\,\,\,\,\,\,\,\,\,\,\,\,\,\,\,\,\,\,\,\,\,\,\,\,\,\,\,\,\text{on $V_{ijk}$, for $i,j,k\in J$}, \label{mm63}\\
\lambda_{j|\mu}-\lambda_{i|\mu}+\pi_g( \Gamma_{ij|\mu})&=0,\,\,\,\,\,\,\,\,\,\,\,\,\,\,\,\,\,\,\,\,\,\,\,\,\,\,\,\,\,\,\,\,\text{on $V_{ij}$, for $i,j\in J$} \label{mm64}\\
\pi_g( \lambda_{i|\mu})&=0,\,\,\,\,\,\,\,\,\,\,\,\,\,\,\,\,\,\,\,\,\,\,\,\,\,\,\,\,\,\,\,\,\text{on $V_{i}$, for $i\in J$} \label{mm65}\\
\gamma_{i|\mu}-\gamma_{j|\mu}&=f^* \Gamma_{ij|\mu},\,\,\,\,\,\,\,\,\,\,\,\,\,\,\,\,\,\text{on $U_{ij}$, for $i,j\in I$}, \label{mm66}\\
\pi_h(\gamma_{i|\mu})&=f^* \lambda_{i|\mu},\,\,\,\,\,\,\,\,\,\,\,\,\,\,\,\,\,\,\,\text{on $U_i$, for  $i\in I$}\label{mm67}
\end{align}
\end{lemma}

\begin{proof}
For $(\ref{mm63}),(\ref{mm64})$, and $(\ref{mm65})$, see Lemma \ref{qq60}. For $(\ref{mm66})$, see \cite{Hor74} p.665. Let us show $(\ref{mm67})$. It is sufficient to show that $\pi_h(\lambda_{i|\mu})(y_i^p,y_i^q)=F \lambda_{i|\mu}(y_i^p,y_i^q)$ for any $p,q$. We note that $\Omega_{pq}^i(\Upsilon_i(z_i,t),t)=\sum_{\alpha,\beta=1}^n \Lambda_{\alpha\beta}^i(z_i,t)\frac{\partial \Upsilon_i^p}{\partial z_i^\alpha}\frac{\partial \Upsilon_i^q}{\partial z_i^\beta}$ and $\Pi_{pq}^i(\Phi_i(z_i,t),t)=\sum_{\alpha,\beta=1}^n \Lambda_{\alpha\beta}^i(z_i,t)\frac{\partial \Phi_i^p}{\partial z_i^\alpha}\frac{\partial \Phi_i^q}{\partial z_i^\beta}$. Then we have
\begin{align*}
&\pi_h(\gamma_{i|\mu})(y_i^p,y_i^q)=\Lambda_0(\gamma_{i|\mu}(y_i^p), h_i^q)-\Lambda_0(\gamma_{i|\mu}(y_i^q),h_i^p)-\gamma_{i|\mu}(\Omega_0(y_i^p,y_i^q))\\
&\equiv_\mu\Lambda_i(\Psi_i^{(\mu-1)p}(\Phi_i,t)-\Upsilon_i^p,\Psi_i^{(\mu-1)q }(\Phi_i,t))-\Lambda_i(\Psi_i^{(\mu-1)q}-\Upsilon_i^q,\Psi_i^{(\mu-1)p}(\Phi_i,t)-\Upsilon_i^p+\Upsilon_i^p)\\
&-2\sum_{\beta=1}^l(\Psi_i^{(\mu-1)\beta}(\Phi_i,t)-\Upsilon_i^\beta)\frac{\partial \Omega_{pq}^i}{\partial y_i^\beta}(\Psi_i^{\mu-1}(\Phi_i,t),t)\\
&\equiv_\mu \Lambda_i(\Psi_i^{(\mu-1)p}(\Phi_i,t),\Psi_i^{(\mu-1)q}(\Phi_i,t))-\Lambda_i(\Upsilon_i^p,\Upsilon_i^q)-2\sum_{\beta=1}^l(\Psi_i^{(\mu-1)\beta}(\Phi_i,t)-\Upsilon_i^\beta)\frac{\partial \Omega_{pq}^i}{\partial y_i^\beta}(\Psi_i^{\mu-1}(\Phi_i,t),t)
\end{align*}
On the other hand, we have
\begin{align*}
&f^*\lambda_{i|\mu}(y_i^p,y_i^q)\equiv_\mu -2\Omega_{pq}^i(\Psi_i^{\mu-1}(f_i(z_i),t),t)+2 \sum_{\alpha,\beta=1}^m \Pi_{\alpha\beta}^i(f_i(z_i),t)\frac{\partial \Psi_i^{(\mu-1) p}}{\partial w_i^\alpha} (f_i(z_i),t) \frac{\partial \Psi_i^{(\mu-1) q}}{\partial w_i^\beta}(f_i(z_i),t)\\
&\equiv_\mu -2\Omega_{pq}^i(\Psi_i^{\mu-1}(\Phi_i,t),t)+2 \sum_{\alpha,\beta=1}^m \Pi_{\alpha\beta}^i(\Phi_i,t)\frac{\partial \Psi_i^{(\mu-1) p}}{\partial w_i^\alpha} (\Phi_i,t) \frac{\partial \Psi_i^{(\mu-1) q}}{\partial w_i^\beta}(\Phi_i,t)\\
&\equiv_\mu-2\Omega_{pq}^i(\Psi_i^{\mu-1}(\Phi_i,t)-\Upsilon_i+\Upsilon_i,t)+2\sum_{\alpha,\beta=1}^n \Lambda_{\alpha\beta}^i(z_i,t)\frac{\partial \Psi_i^{(\mu-1)p}(\Phi_i,t)}{\partial z_i^\alpha}\frac{\partial \Psi_i^{(\mu-1)q}(\Phi_i,t)}{\partial z_i^\beta}\\
&\equiv_\mu-2\Omega_{pq}^i(\Upsilon_i,t)-2\sum_{\beta=1}^l (\Psi_i^{(\mu-1)\beta}(\Phi_i,t)-\Upsilon_i^\beta)\frac{\partial \Omega_{pq}^i}{\partial y_i^\beta}(\Psi_i^{\mu-1}(\Phi_i,t),t)+\Lambda_i(\Psi_i^{(\mu-1)p}(\Phi_i,t),\Psi_i^{(\mu-1)q}(\Phi_i,t))\\
&\equiv_\mu -\Lambda_i(\Upsilon_i^p,\Upsilon_i^q)-2\sum_{\beta=1}^l (\Psi_i^{(\mu-1)\beta}(\Phi_i,t)-\Upsilon_i^\beta)\frac{\partial \Omega_{pq}^i}{\partial y_i^\beta}(\Psi_i^{\mu-1}(\Phi_i,t),t)+\Lambda_i(\Psi_i^{(\mu-1)p}(\Phi_i,t),\Psi_i^{(\mu-1)q}(\Phi_i,t))
\end{align*}
Hence we get $(\ref{mm67})$.
\end{proof}

\begin{lemma}
$(\ref{mm53})_{\mu},(\ref{mm54})_{\mu}$, and $(\ref{mm55})_\mu$ are equivalent to the following:
\begin{align}
\Gamma_{ij|\mu}&=\Psi_{j|\mu}-\Psi_{i|\mu},\,\,\,\,\,\text{on $V_{ij}$, for $i,j\in J$}, \label{mm68}\\
-\gamma_{i|\mu}&=f^* \Psi_{i|\mu},\,\,\,\,\,\,\,\,\,\,\,\,\,\,\,\,\,\,\text{on $U_i$, for $i\in I$}, \label{mm69}\\
-\lambda_{i|\mu}&=\pi_g(\Psi_{i|\mu}),\,\,\,\,\,\,\,\,\,\,\,\,\,\text{on $V_i$, for $i\in J$}, \label{mm70}
\end{align}
where $\Psi_{i|\mu}=\sum_{\alpha=1}^l \Psi_{i|\mu}^\alpha\frac{\partial}{\partial y_i^\alpha}\in \Gamma(U_i, g^*\Theta_Z)$.
\end{lemma}

\begin{proof}
For $(\ref{mm68})$ and $(\ref{mm69})$, see \cite{Hor74} p.665. Let us show that $(\ref{mm55})_\mu$ is equivalent to $(\ref{mm70})$. $(\ref{mm55})_\mu$ is equivalent to
\begin{align*}
&\Omega_{pq}^i(\Psi_i^{\mu-1}(w_i,t)+\Psi_{i|\mu},t)\equiv_\mu\sum_{\alpha,\beta=1}^m \Pi_{\alpha\beta}^i(w_i,t)\frac{\partial (\Psi_i^{(\mu-1)p}+\Psi_{i|\mu}^p)}{\partial w_i^\alpha}\frac{\partial (\Psi_i^{(\mu-1) q}+\Psi_{i|\mu}^q ) }{\partial w_i^\beta}\\
&\iff \sum_{\alpha=1}^l \Psi_{i|\mu}^\alpha\frac{\partial \Omega_{pq}^i}{\partial y_i^\alpha}(g_i,0)\equiv_\mu\lambda_{i|\mu}^{p,q}+\sum_{\alpha,\beta=1}^m \Pi_{\alpha\beta}^i(w_i)\frac{\partial \Psi_{i|\mu}^p}{\partial w_i^\alpha}\frac{\partial g_i^q}{\partial w_i^\beta}+\sum_{\alpha,\beta=1}^m  \Pi_{\alpha\beta}^i(w_i)\frac{\partial g_i^p}{\partial w_i^\alpha}\frac{\partial \Psi_{i|\mu}^q}{\partial w_i^\beta}
\end{align*}
On the other hand,
\begin{align*}
\pi_g(\Psi_{i|\mu})(w_i^p,w_i^q)=\Pi_0(\Psi_{i|\mu}^p,g_i^q)-\Pi_0(\Psi_{i|\mu}^q,g_i^p)-2\Psi_{i|\mu}(\Omega_{pq}^i(y_i))
\end{align*}
Hence we get $(\ref{mm70})$.
\end{proof}

\begin{lemma}
Under the hypothesis of Theorem $\ref{ll18}$, we can find $\Psi_{i|\mu}\in \Gamma(V_i,g^*\Theta_Z)$ which satisfy $(\ref{mm68}),(\ref{mm69})$, and $(\ref{mm70})$.
\end{lemma}

\begin{proof}
Since $f^*:\mathbb{H}^1(Y,g^*\Theta_Z^\bullet)\to\mathbb{H}^1(X, h^*\Theta_Z)$ is injective and $(\{f^*\Gamma_{ij|\mu}\},\{f^*\lambda_{i|\mu}\})$ is cohomologous to zero by $(\ref{mm66})$ and $(\ref{mm67})$, there exist $\Psi_{i|\mu}'\in \Gamma(V_i,g^*\Theta_Z)$ such that
\begin{align*}
\Gamma_{ij|\mu}=\Psi_{j|\mu}'-\Psi_{i|\mu}',\,\,\,\,\,\,\,-\lambda_{i|\mu}=\pi_g( \Psi_{i|\mu}').
\end{align*}
Then $\{-\gamma_{i|\mu}-f^*\Psi_{i|\mu}'\}$ defines a global section in $H^0(X, h^*\Theta_Z)$ since $\gamma_{i|\mu}+f^* \Psi_{i|\mu}'-\gamma_{j|\mu}-f^* \Psi_{j|\mu}' =f^*\Gamma_{ij}-f^* \Gamma_{ij}=0$. On the other hand, $\pi_h(-\gamma_{i|\mu}-f^*\Psi_{i|\mu}')=-f^*\lambda_{i|\mu}-\pi_h f^*\Psi_{i|\mu}'=f^*\pi_g(\Psi_{i|\mu}')-f^*\pi_g(\Psi_{i|\mu}')=0$ so that $\{\gamma_{i|\mu}-f^*\Psi_{i|\mu}'\}$ represents a homogenous polynomial with coefficients in $\mathbb{H}^0(X, h^*\Theta_Z)$. Since $f^*:\mathbb{H}^0(Y,  g^*\Theta_Z)\to \mathbb{H}^0(X, h^*\Theta_Z)$ is surjective, there exists $\chi_\mu\in \mathbb{H}^0(Y, g^*\Theta_Z)$ with $\pi_g(\chi_\mu)=0$ such that $-\gamma_{i|\mu}-f^*\Psi_{i|\mu}'=f^*\chi_\mu$ on $U_i$ for $i\in I$. We set $\Psi_{i|\mu}:=\Psi_{i|\mu}'+\chi_\mu$. Then $\Gamma_{ij}=\Psi_{j|\mu}-\Psi_{i|\mu}$, $f^*\Psi_{i|\mu}=f^*\Psi_{i|\mu}'+f^*\chi_\mu=-\gamma_{i|\mu}$, and $\pi_g(\Psi_{i|\mu})=\pi_g(\Psi_{i|\mu}')+\pi_g(\chi_\mu)=-\lambda_{i|\mu}$.
\end{proof}

Hence induction holds for $\mu$ so that we have a formal power series $\Psi_i(w_i,t)$ satisfying $(\ref{mm50})-(\ref{mm75})$.

\subsubsection{Proof of convergence}\

By a similar argument as in the proof of Theorem \ref{ii1}, we can choose solutions $\Psi_{i|\mu}$ in each inductive step so that $\Psi_i$ converges absolutely and uniformly for $|t|<\epsilon$ for a sufficiently small number $\epsilon>0$. This completes the proof of Theorem \ref{ll18}.

\end{proof}

\appendix

\section{Complex associated with a Poisson map}\label{appendix1}
Let $k$ be an algebraically closed field with characteristic $0$.
\begin{definition}
Let $(B,\Lambda_0)$ and $(C,\Pi_0)$ be a Poisson $k$-algebra with $\Lambda_0\in Hom_B(\wedge^2 \Omega_{B/k},B)$ and $\Pi_0\in Hom_C(\wedge^2 \Omega_{C/k}^1,C)$. Let $f:(C,\Pi_0)\to (B, \Lambda_0)$ be a Poisson $k$-homomorphism. We define a complex
\begin{align*}
Hom_C(\Omega_{C/k}^1,B)\xrightarrow{\pi=\pi_f} Hom_C(\wedge^2 \Omega_{C/k}^1,B)\xrightarrow{\pi} Hom_C(\wedge^3, \Omega_{C/k}^1,B)\xrightarrow{\pi} \cdots
\end{align*}
in the following way$:$ for $Q\in Hom_C(\wedge^q\Omega_{C/k},B)$ and $a_1,\cdots, a_{q+1}\in C$,
\begin{align*}
\pi(Q)(a_1, \cdots, a_{q+1})=&\sum_{\sigma\in S_{q,1}} sgn(\sigma) \Lambda_0( Q(a_{\sigma(1)}, \cdots ,a_{\sigma(q)}),  f(a_{\sigma(q+1)}))\\
&-(-1)^{q-1}\sum_{\sigma\in S_{2,q-1}}sgn(\sigma) Q(\Pi_0(a_{\sigma(1)},a_{\sigma(2)}),a_{\sigma(3)}, \cdots ,a_{\sigma(q+1)})
\end{align*}
\end{definition}

\begin{remark}
Let $f:(C,\Pi_0)\to (B,\Lambda_0)$ be a Poisson $k$-homomorphism. We have the following commutative diagram.
\begin{center}
$\begin{CD}
Hom_B(\Omega_{B/k}^1,B)@>[\Lambda_0,-]>> Hom_B(\wedge^2 \Omega_{B/k}^1,B)@>[\Lambda_0,-]>> Hom_B(\wedge^3 \Lambda_{B/k}^1,B)@>[\Lambda_0,-]>>\cdots\\
@VF VV @VF VV @VF VV\\
Hom_C(\Omega_{C/k}^1,B)@>\pi>> Hom_C(\wedge^2 \Omega_{C/k}^1,B)@>\pi>> Hom_C(\wedge^3, \Omega_{C/k}^1,B)@>\pi>> \cdots
\end{CD}$
\end{center}
where $F$ is defined in the following way$:$ for $P\in Hom_B(\wedge^q\Omega_{B/k}^1, B)$, and $a_1,...,a_q\in C$,
\begin{align*}
F(P)(a_1,...,a_q):=P(f(a_1),...,f(a_q))
\end{align*}
\end{remark}

\begin{remark}
Let $f:(C,\Pi_0)\to (B,\Lambda_0)$ be a Poisson $k$-homomorphism. Then we have the following commutative diagram
\begin{center}
$\begin{CD}
Hom_C(\Omega_{C/k}^1,C)@>[\Pi_0,-]>> Hom_C(\wedge^2 \Omega_{C/k}^1,C)@>[\Pi_0,-]>> Hom_C(\wedge^3 \Omega_{C/k}^1,C)@>[\Pi_0,-]>>\cdots\\
@Vf^* VV @Vf^* VV @Vf^* VV\\
Hom_C(\Omega_{C/k}^1,B)@>\pi>> Hom_C(\wedge^2 \Omega_{C/k}^1,B)@>\pi>> Hom_C(\wedge^3, \Omega_{C/k}^1,B)@>\pi>> \cdots
\end{CD}$
\end{center}
where $f^*$ is defined in the following way$:$ for $Q\in Hom_C(\wedge^q \Omega_{C/k}^1,C)$, and $a_1,...,a_q\in C$,
\begin{align*}
f^*(Q)(a_1,...,a_q):=f(Q(a_1,...,a_q))
\end{align*}
\end{remark}

\begin{remark}
Let $f:(C,\Pi_0)\to (B,\Lambda_0)$ and $g:(D,\Omega_0)\to (C, \Pi_0)$ be two Poisson $k$-homomorphisms. Let $h=f\circ g$. Then
\begin{center}
$\begin{CD}
Hom_C(\Omega_{C/k}^1,C)@>[\Pi_0,-]>> Hom_C(\wedge^2 \Omega_{C/k}^1,C)@>[\Pi_0,-]>> Hom_C(\wedge^3 \Omega_{C/k}^1,C)@>[\Pi_0,-]>>\cdots\\
@VG VV @VG VV @VG VV\\
Hom_D(\Omega_{D/k}^1,C)@>\pi_g>> Hom_D(\wedge^2 \Omega_{D/k}^1,C)@>\pi_g>> Hom_D(\wedge^3, \Omega_{D/k}^1,C)@>\pi_g>> \cdots
\end{CD}$
\end{center}
induces
\begin{center}
$\begin{CD}
Hom_C(\Omega_{C/k}^1,B)@>\pi_f>> Hom_C(\wedge^2 \Omega_{C/k}^1,B)@>\pi_f>> Hom_C(\wedge^3 \Lambda_{C/k}^1,B)@>\pi_f>>\cdots\\
@Vf^* G VV @Vf^* G VV @Vf^* G VV\\
Hom_D(\Omega_{D/k}^1,B)@>\pi_h>> Hom_D(\wedge^2 \Omega_{D/k}^1,B)@>\pi_h>> Hom_D(\wedge^3, \Omega_{D/k}^1,B)@>\pi_h>> \cdots
\end{CD}$
\end{center}
where $f^* G$ is defined in the following way$:$ for $P\in Hom_C(\wedge^q \Omega_{C/k}^1,B)$, and $b_1,...,b_q\in D$,
\begin{align*}
f^*G(P)(b_1,...,b_q)=P(g(b_1),...,g(b_q))
\end{align*}
\end{remark}

\begin{remark}
Let $f:(C,\Pi_0)\to (B,\Lambda_0)$ and $g:(D,\Omega_0)\to (C, \Pi_0)$ be two Poisson $k$-homomorphisms. Let $h=f\circ g$. We have the following commutative diagram.
\begin{center}
$\begin{CD}
Hom_D(\Omega_{D/k}^1,C)@>\pi_g>> Hom_D(\wedge^2 \Omega_{D/k}^1,C)@>\pi_g>> Hom_C(\wedge^3 \Omega_{D/k}^1,C)@>\pi_g>>\cdots\\
@Vf^* VV @Vf^* VV @Vf^* VV\\
Hom_D(\Omega_{D/k}^1,B)@>\pi_h>> Hom_D(\wedge^2 \Omega_{D/k}^1,B)@>\pi_h>> Hom_D(\wedge^3, \Omega_{D/k}^1,B)@>\pi_h>> \cdots
\end{CD}$
\end{center}
where $f^*$ is defined in the following way$:$ for $P\in Hom_D(\wedge^q\Omega_{D/k}^1,C)$, and $b_1,...,b_q\in D$,
\begin{align*}
f^*(P)(b_1,...,b_q)=f(P(b_1,...,b_q)).
\end{align*}
\end{remark}

The above arguments are translated in the context of a holomorphic Poisson map between two holomorphic Poisson maps and a Poisson map between two nonsingular Poisson varieties. For a nonsingular Poisson variety or a holomorphic Poisson manifold $X$, we denote the tangent sheaf of $X$ by $T_X$. We remark that we use the notation $\Theta_X$ for a holomorphic Poisson manifold $X$ instead of $T_X$ in the main body of the paper to keep notational consistency with \cite{Hor73} and \cite{Hor74}.

\begin{definition}
Let $f:(X,\Lambda_0)\to (Y, \Pi_0)$ be a holomorphic Poisson map between holomorphic Poisson manifolds or a Poisson map between nonsingular Poisson varieties. Then we have a complex of sheaves
\begin{align*}
f^* T_Y^\bullet :f^* T_Y\xrightarrow{\pi} \wedge^2 f^* T_Y \xrightarrow{\pi}\wedge^3 f^* T_Y\xrightarrow{\pi}\cdots
\end{align*}
We will denote the $i$-th hypercohomology group by $\mathbb{H}^i(X, f^* T_Y^\bullet)$.
\end{definition}

\begin{remark}
Let $f:(X,\Lambda_0)\to (Y,\Pi_0)$ be a holomorphic Poisson map between holomorphic Poisson manifolds or a Poisson map between nonsingular Poisson varieties. Then we have the following commutative diagram.
\begin{center}
$\begin{CD}
T_X^\bullet  :@. T_X@>[\Lambda_0,-]>> \wedge^2 T_X@>[\Lambda_0,-]>> \wedge^3 T_X@>[\Lambda_0,-]>> \cdots \\
@.@VFVV @VFVV @VFVV\\
f^*T_Y^\bullet : @. f^* T_Y@>\pi >> \wedge^2 f^* T_Y@>\pi>> \wedge^3 f^* T_Y@>\pi>>\cdots
\end{CD}$ 
\end{center}
This induces 
\begin{align*}
F:\mathbb{H}^i(X, T_X^\bullet)\to \mathbb{H}^i(X, f^* T_Y^\bullet).
\end{align*}
\end{remark}

\begin{remark}
Let $f:(X,\Lambda_0)\to (Y,\Pi_0)$ be a holomorphic Poisson map between holomorphic Poisson manifolds or a Poisson map between nonsingular Poisson varieties. Then we have the homomorphism
\begin{align*}
f^*:\mathbb{H}^i(Y, T_Y^\bullet)\to \mathbb{H}^i(X, f^* T_Y^\bullet).
\end{align*}

\end{remark}

\begin{remark}
Let $f:(X,\Lambda_0)\to (Y,\Pi_0)$ and $g:(Y,\Pi_0)\to (Z, \Omega_0)$ be two holomorphic Poisson maps between holomorphic Poisson manifolds or two Poisson maps between nonsingular Poisson varieties. Let $h=g\circ f$. Then the canonical homomorphism $G: T_Y^\bullet \to g^* T_Z^\bullet$ induces $f^*G:f^* T_Y^\bullet \to h^* T_Z^\bullet$ so that we have a homomorphism
\begin{align*}
f^* G:\mathbb{H}^i(X, f^*T_Y^\bullet)\to \mathbb{H}^i(X, h^* T_Z^\bullet).
\end{align*}
\end{remark}

\begin{remark}
Let $f:(X,\Lambda_0)\to (Y,\Pi_0)$ and $g:(Y,\Pi_0)\to (Z, \Omega_0)$ be two holomorphic Poisson maps between holomorphic Poisson manifolds or two Poisson maps between nonsingular Poisson varieties. Let $h=g\circ f$. Then we have the natural homomorphism 
\begin{align*}
f^*:\mathbb{H}^i(Y, g^* T_Z^\bullet)\to \mathbb{H}^i(X, h^* T_Z^\bullet).
\end{align*}
\end{remark}

\section{Deformations of Poisson morphisms}\label{appendix2}

We denote by $\bold{Art}$ the category of local artinian $k$-algebras with residue field $k$, where $k$ is a algebraically closed filed with characteristic $0$.

\begin{definition}[general definition]\label{ll30}
Let $f:(X,\Lambda_0)\to (Y,\Pi_0)$ be a Poisson morphism of nonsingular Poisson varieties. Let $A\in \bold{Art}$. An infinitesimal deformation of $f$ over $Spec(A)$ is a cartesian diagram
\begin{center}
$\xi:\begin{CD}
(X,\Lambda_0)@>>> (\mathcal{X},\Lambda)\\
@VfVV @VV\Psi V\\
(Y,\Pi_0)@>>>(\mathcal{Y},\Pi)\\
@VVV @VVq V\\
Spec(k)@>>> Spec(A)
\end{CD}$
\end{center}
where $(\mathcal{X},\Lambda)$, $(\mathcal{Y},\Pi)$ are Poisson schemes over $Spec(A)$ and $\Psi$ is a morphism of Poisson schemes over $Spec(A)$ via $q$, $\Psi$ and $ q\circ \Psi$ are flat such that $(\mathcal{X},\Lambda)\times_{Spec(A)} Spec(k)\cong (X,\Lambda_0)$ via $q\circ \Psi$ and $(\mathcal{Y},\Pi)\times_{Spec(A)} Spec(k)=(Y,\Pi_0)$ via $q$ so that $(\mathcal{X},\Lambda)$ is a flat Poisson deformation of $(X,\Lambda_0)$ over $Spec(A)$, and $(\mathcal{Y},\Pi)$ is a flat Poisson deformation of $(Y,\Pi_0)$ over $Spec(A)$.

Let $\xi'$ be another infinitesimal deformation of $f$ which is given by $\Psi':(\mathcal{X}',\Lambda)\to (\mathcal{Y}',\Lambda)$. Then we say that $\xi$ is isomorphic to $\xi'$ if there exist Poisson isomorphisms $r:(\mathcal{X},\Lambda)\to (\mathcal{X}',\Lambda')$ and $s:(\mathcal{Y},\Pi)\to (\mathcal{Y}',\Pi')$ such that the following diagram commutes.
\begin{center}
$\begin{CD}
(\mathcal{X},\Lambda)@>r>> (\mathcal{X}',\Lambda')\\
@V\Psi VV @VV\Psi' V\\
(\mathcal{Y},\Pi)@>s>> (\mathcal{Y}',\Pi')
\end{CD}$
\end{center}
Then we can define a functor of Artin rings of deformations of $f$:
\begin{align*}
Def_f:\bold{Art}&\to (Sets)\\
A &\mapsto \{\text{infinitesimal deformations of $f$ over $A$}\}/\text{isomorphisms}
\end{align*}
\end{definition}

\begin{definition}
Given an infinitesimal deformation $\xi$ of $f:(X,\Lambda_0)\to (Y,\Pi_0)$ over $Spec(A)$ as in Definition $\ref{ll30}$ and a small extension $e:0\to (t)\to \tilde{A}\to A\to 0$ in $\bold{Art}$, a lifting of $\xi$ to $\tilde{A}$ is an infinitesimal deformation $\tilde{\Psi}:(\tilde{\mathcal{X}},\tilde{\Lambda})\to (\tilde{\mathcal{Y}},\tilde{\Pi})$ of $f$ over $Spec(\tilde{A})$ which induces $\Psi$ so that we have the following cartesian diagram
\begin{center}
$\begin{CD}
(X,\Lambda_0)@>>> (\mathcal{X},\Lambda)@>>> (\tilde{\mathcal{X}},\tilde{\Lambda})\\
@VVV @V\Psi VV @V\tilde{\Psi}VV\\
(Y,\Pi_0)@>>> (\mathcal{Y},\Pi)@>>> (\tilde{\mathcal{Y}},\tilde{\Pi})\\
@VVV @VqVV @V\tilde{q}VV\\
Spec(k) @>>> Spec(A)@>>> Spec(\tilde{A})
\end{CD}$
\end{center}
\end{definition}

\subsection{Deformations of a Poisson morphism leaving domain and target fixed}\

Let $f:(X,\Lambda_0)\to (Y,\Lambda_0)$ be a Poisson morphism of nonsingular Poisson varieties.  An infinitesimal deformation $\xi$ of $f$ over $A\in \bold{Art}$ leaving domain and target fixed is a cartesian diagram of the form
\begin{center}
$\xi:\begin{CD}
(X,\Lambda_0)@>>> (\mathcal{X}=X\times Spec(A),\Lambda_0)\\
@VfVV @VV\Psi V\\
(Y,\Pi_0)@>>>(\mathcal{Y}=Y\times Spec(A),\Pi_0)\\
@VVV @VVpr_2V\\
Spec(k)@>>> Spec(A)
\end{CD}$
\end{center}
where $\Psi$ is a Poisson morphism inducing $f$ and $pr_2$ is the projection. We note that $(\mathcal{X},\Lambda_0)$ and $(\mathcal{Y},\Pi_0)$ is a trivial Poisson deformation of $(X,\Lambda_0)$ and $(Y,\Pi_0)$, respectively in Definition \ref{ll30}. Then we can define a functor of Artin rings which is a subfunctor of $Def_f$ 
\begin{align*}
Def_{(X,\Lambda_0)/f/(Y,\Pi_0)}:\bold{Art}&\to (Sets)\\
A &\mapsto\{\text{infinitesimal deformations of $f$ over $A$ with fixed domain and target}\}
\end{align*}

\begin{remark}[Notation]\label{notation12}
Let $f:C\to B$ be a $k$-homomorphism. Let $P\in Hom_C(\Omega_{C/k}^1,C)$. We define $f P\in Hom_C(\wedge^2, \Omega_{C/k}^1,B)$ by the rule $P(a)=f(P(a))$ for $a\in C$.

\end{remark}

\begin{remark}[Notation]\label{notation11}
Let $f:C\to B$ be a $k$-homomorphism. Let $\Pi_0\in Hom_C(\wedge^2 \Omega_{C/k}^1,C)$. We define $f^* \Pi_0\in Hom_C(\wedge^2 \Omega_{C/k}^1,B)$ by the rule $f^*\Pi_0(a,b):=f(\Pi_0(a,b))$ for $a,b\in C$. On the other hand, let $\Lambda_0\in Hom_B(\wedge^2 \Omega_{B/k}^1, B)$. We define $f \Lambda_0\in Hom_C(\wedge^2 \Omega_{C/k}^1,B)$ by the rule $f\Lambda_0(a,b):=\Lambda_0(f(a),f(b))$. We note that when $f^*\Pi_0=f\Lambda_0$, $f:(C,\Pi_0)\to (B, \Lambda_0)$ is biderivation-preserving, in other words, $f(\Pi_0(a,b))=\Lambda_0(f(a),f(b))$.
\end{remark}

\begin{proposition}[compare \cite{Ser06} Proposition 3.4.2 p.158]
$f:(X,\Lambda_0)\to (Y,\Pi_0)$ be a Poisson morphism of nonsingular Poisson varieties. Then
\begin{enumerate}
\item There is a natural identification 
\begin{align*}
Def_{(X,\Lambda_0)/f/(Y,\Pi_0)}(k[\epsilon])\cong \mathbb{H}^0(X,f^*T_Y^\bullet)
\end{align*}
\item Given an infinitesimal deformation of $f$ over $A\in \bold{Art} $ leaving domain and target fixed, and a small extension $e:0\to (t)\to \tilde{A}\to A\to 0$, we can associate an element $o_\xi(e)\in \mathbb{H}^1(X, f^* T_Y^\bullet)$, which is zero if and only if there is a lifting of $\xi$ to $\tilde{A}$.
\end{enumerate}
\end{proposition}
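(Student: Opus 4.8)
The plan is to mimic the classical Čech-theoretic proof (the analogue of \cite{Ser06} Proposition 3.4.2) but carried out inside the complex $f^*T_Y^\bullet$, where the Schouten bracket contributions on $X$ and on $Y$ are recorded by the extra ``$\wedge^2$'' terms. First I would fix an affine cover $\mathcal{U}=\{U_i\}$ of $X$, with $U_i$ mapping into an affine $V_i$ of $Y$, coordinatize $f$ by $w_i=f_i(z_i)$, and write $\Lambda_0$, $\Pi_0$ in these coordinates as in subsection \ref{jj1}. A deformation $\xi$ over $k[\epsilon]$ leaving domain and target fixed is then a Poisson morphism $\Psi:(X\times\mathrm{Spec}\,k[\epsilon],\Lambda_0)\to(Y\times\mathrm{Spec}\,k[\epsilon],\Pi_0)$ inducing $f$; since the source and target Poisson structures are held fixed (no $\rho$, no $\Lambda'$ deformation terms), $\Psi$ is given locally by $w_i=f_i(z_i)+\epsilon\,\tau_i(z_i)$ with $\tau_i\in\Gamma(U_i,f^*T_Y)$. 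The condition that the $w_i$'s glue (no change of transition functions on $X$ or $Y$) forces $\tau_i=\tau_j$ on $U_{ij}$, i.e.\ $\{\tau_i\}$ is a global section of $f^*T_Y$; and the condition that $\Psi$ be a Poisson morphism, i.e.\ $\sum\Lambda_{\alpha\beta}^i\,\partial_\alpha\Phi_i^p\,\partial_\beta\Phi_i^q=\Pi_{pq}^i(\Phi_i)$ to first order in $\epsilon$, is precisely $\pi(\tau)=0$ in $\wedge^2 f^*T_Y$ — this is the linearization computed in equations \eqref{bb10}--\eqref{bb12}. Hence first-order deformations are exactly $\ker(\pi\colon H^0(f^*T_Y)\to H^0(\wedge^2 f^*T_Y))$, and since the hypercohomology of the two-step-truncated complex in degree $0$ is this kernel (the spectral sequence has no room for higher differentials into degree $0$), we get $Def_{(X,\Lambda_0)/f/(Y,\Pi_0)}(k[\epsilon])\cong\mathbb{H}^0(X,f^*T_Y^\bullet)$. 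I would also check additivity in $\epsilon$ and independence of the cover to make this a genuine natural identification.

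For part (2): given $\xi$ over $A$, written locally as $w_i=\Phi_i(z_i)$ (coefficients in $A$) with $\Phi_i\equiv f_i$ mod $\mathfrak{m}_A$, and a small extension $0\to(t)\to\tilde A\to A\to 0$, I would choose arbitrary set-theoretic lifts $\tilde\Phi_i$ of $\Phi_i$ to $\tilde A$ (possible because $U_i$, $V_i$ are affine and the trivial families are just base changes, so local liftings always exist). Then measure the two failures: on $U_{ij}$ put
\begin{align*}
\Gamma_{ij}:=\tilde\Phi_i(\phi_{ij})-\psi_{ij}(\tilde\Phi_j)\in t\cdot\Gamma(U_{ij},f^*T_Y),
\end{align*}
recording that the lifted $w_i$'s no longer glue over $Y$, and on $U_i$ put
\begin{align*}
\lambda_i:=-\Pi^i_{pq}(\tilde\Phi_i)+\sum_{\alpha,\beta}\Lambda^i_{\alpha\beta}\,\partial_\alpha\tilde\Phi_i^p\,\partial_\beta\tilde\Phi_i^q\in t\cdot\Gamma(U_i,\wedge^2 f^*T_Y),
\end{align*}
recording the failure of $\tilde\Psi$ to be Poisson. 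Identifying $t\cdot(-)$ with $(-)$ over $k$, the pair $(\{\Gamma_{ij}\},\{\lambda_i\})$ is a $1$-cochain in the total complex of $\check C^\bullet(\mathcal{U},f^*T_Y^\bullet)$, and the cocycle identities $\delta\Gamma=0$, $\pi(\Gamma_{ij})=\lambda_i-\lambda_j$, $\pi(\lambda_i)=0$ follow by exactly the computations in Lemma \ref{qq60} (equations \eqref{cc4}--\eqref{cc6}, with the $g$-tower dropped): $\delta\Gamma=0$ is the usual cocycle check on $Y$, $\pi(\Gamma)=\lambda_i-\lambda_j$ is the ``$\lambda_{j}-\lambda_{i}+\pi_f(\Gamma_{ij})=0$'' identity, and $\pi(\lambda_i)=0$ is ``$\pi_f(\lambda_i)=0$'', each a consequence of $[\Lambda_0,\Lambda_0]=0$ and $[\Pi_0,\Pi_0]=0$ together with the hypothesis that the given $\Phi_i$ (mod $t$) \emph{do} define a Poisson morphism over $A$. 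Set $o_\xi(e):=[(\{\Gamma_{ij}\},\{\lambda_i\})]\in\mathbb{H}^1(X,f^*T_Y^\bullet)$.

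It remains to prove $o_\xi(e)$ is well defined and is the obstruction. Changing the local lifts $\tilde\Phi_i\rightsquigarrow\tilde\Phi_i+t\,g_i$ with $g_i\in\Gamma(U_i,f^*T_Y)$ changes $\Gamma_{ij}$ by $g_i-g_j$ and $\lambda_i$ by $-\pi(g_i)$ (again by the first-order computation \eqref{bb10}--\eqref{bb12}), i.e.\ by the total coboundary of $\{g_i\}$; so the class is independent of choices, and one checks it is unchanged under isomorphisms of $\xi$ in the same way. If $o_\xi(e)=0$, then after correcting the lifts by a suitable $\{g_i\}$ we may assume $\Gamma_{ij}=0$ and $\lambda_i=0$, meaning the $\tilde\Phi_i$ glue to a global Poisson morphism $\tilde\Psi$ over $\tilde A$ inducing $\Psi$ — a lifting of $\xi$. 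Conversely a lifting visibly gives a choice of lifts with $\Gamma=\lambda=0$, so $o_\xi(e)=0$. The main obstacle I anticipate is purely bookkeeping: setting up the total complex $\check C^\bullet(\mathcal{U},f^*T_Y^\bullet)$ and its hypercohomology so that ``$1$-cocycle in the total complex'' unambiguously means the triple of conditions above, and verifying that the three cocycle/coboundary identities are exactly the mod-$t^2$ truncations of the identities already established in Lemma \ref{qq60} and in subsection \ref{jj1} — there is nothing conceptually new, but one must be careful that the Poisson-failure term $\lambda_i$ lives in $\wedge^2 f^*T_Y$ and not $\wedge^2 T_X$, and that $\pi$ (not $[\Lambda_0,-]$) is the relevant differential on that term.
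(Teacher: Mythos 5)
Your proposal takes essentially the same route as the paper's proof: first-order deformations are written locally as $f+\epsilon v$ with $v$ a global section of $f^*T_Y$, the linearized Poisson condition forces $\pi(v)=0$, giving $\mathbb{H}^0(X,f^*T_Y^\bullet)$; and the obstruction is built from arbitrary local liftings by recording the failure to glue (a Čech $1$-cochain in $f^*T_Y$) together with the failure of Poisson compatibility (a $0$-cochain in $\wedge^2 f^*T_Y$), verified to be a hypercocycle using $[\Lambda_0,\Lambda_0]=[\Pi_0,\Pi_0]=0$, shown independent of the lifts up to total coboundary, and vanishing exactly when a lifting exists. The only cosmetic difference is that the paper carries out these computations directly in the algebraic ring-homomorphism setting of the appendix rather than invoking the analytic identities of Lemma \ref{qq60}.
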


\begin{proof}
Consider a first-order deformation $\Psi:(X\times Spec(k[\epsilon]),\Lambda_0)\to (Y\times Spec(k[\epsilon]),\Pi_0)$ of $f$. Choose an affine open covering $\mathcal{U}=\{U_i=Spec(B_i)\}_{i\in I}$ of $X$ such that for each $i$, $f(U_i)$ is contained in an affine open set $V_i=Spec(C_i)$ of $Y$ and $\Psi$ is locally given as
\begin{align*}
\Psi_i:(U_i\times Spec(k[\epsilon]), \Lambda_0)\to (V_i\times Spec(k[\epsilon]),\Pi_0)
\end{align*}
which corresponds
\begin{align*}
\Phi_i:(C_i\otimes_k k[\epsilon],\Pi_0)  \to (B_i\otimes_k k[\epsilon], \Lambda_0)
\end{align*}
which induces $f|_{U_i}$ by modulo $\epsilon$ so that we have an element $v_i\in Der_k(C_i,B_i)=\Gamma(U_i, f^*T_Y)$ such that $\Phi_i=f+\epsilon v_i$. Since $v_i=v_j$ on $U_{ij}$, $v=\{v_i\}$ defines an element in $H^0(X, f^* T_Y)$.  On the other hand, we claim that $\pi(v_i)=0$. Indeed, since $\Phi_i=f+\epsilon v_i$, we have, for any $x,y\in C_i$,
\begin{align*}
&\Phi_i(\Pi_0(x,y))=\Lambda_0(\Phi_ix, \Phi_iy)\\
&f(\Pi_0(x,y))+tv_i(\Pi_0(x,y))=\Lambda_0(fx+tv_ix,fy+tv_iy))\\
&f(\Pi_0(x,y))+tv_i(\Pi_0(x,y))=\Lambda_0(fx,fy)+t\Lambda_0(v_ix,fy)+t\Lambda_0(fx,v_iy)
\end{align*}
so that we have $v_i(\Pi_0(x,y))-\Lambda_0(v_ix,fy)-\Lambda_0(fx,v_iy)=0$ which is equivalent to $\pi(v_i)=0$. Hence $v=\{v_i\}$ defines an element in $\mathbb{H}^0(X, f^* T_Y^\bullet)$ in the following \u{C}ech resolution of $f^* T_Y^\bullet$.
\begin{center}
$\begin{CD}
C^0(\mathcal{U}, \wedge^3 f^* T_Y)\\
@A\pi AA \\
C^0(\mathcal{U}, \wedge^2 f^* T_Y)@>\delta>> C^1(\mathcal{U}, \wedge^2 f^* T_Y)\\
@A\pi AA @A\pi AA\\
C^0(\mathcal{U},f^* T_Y) @>-\delta >> C^1(\mathcal{U}, f^* T_Y)@>\delta >> C^2(\mathcal{U}, f^* T_Y)
\end{CD}$
\end{center}

Next we identify obstructions. Consider a small extension $e:0\to (t)\to \tilde{A}\to A\to 0$, and an infinitesimal deformation $\xi=(\Psi:(X\times Spec(A),\Lambda_0)\to (Y\times Spec(A),\Pi_0))$ of $f$ over $Spec(A)$ with the domain and the target fixed. Choose an affine open cover $\mathcal{U}=\{U_i=Spec(B_i)\}_{i\in I}$ of $X$ such that for each $i$, $f(U_i)$ is contained in an affine open set $V_i=Spec(C_i)$ of $Y$, $\Psi$ is locally represented as
\begin{align*}
\Psi_i:(U_i\times Spec(A),\Lambda_0) \to (V_i\times Spec(A),\Pi_0)
\end{align*}
which corresponds to
\begin{align*}
\Phi_i:(C_i\otimes_k A,\Pi_0)\to (B_i\otimes_k A,\Lambda_0)
\end{align*}
Let $\tilde{\Phi}_i$ be a lifting of $\Phi_i$ to $\tilde{A}$ inducing $\Phi_i$
\begin{align*}
\tilde{\Phi}_i:(C_i\otimes_k \tilde{A},\Pi_0)\to (B_i\otimes_k \tilde{A},\Lambda_0)
\end{align*}
Then since $\Phi_i=\Phi_j$ on $U_{ij}$, we have $\tilde{\Phi}_j- \tilde{\Phi}_i=t\xi_{ij}$ for some $\xi_{ij}\in \Gamma(U_{ij}, f^* T_Y)$. Then 
\begin{align*}
t(\xi_{ij}-\xi_{ik}+\xi_{jk})=\tilde{\Phi}_j-\tilde{\Phi}_i-\tilde{\Phi}_k+\tilde{\Phi}_i+\tilde{\Phi}_k-\tilde{\Phi}_j=0
\end{align*}
Since $\Phi_i^*\Pi_0-\Phi_i \Lambda_0=0$, we have $\tilde{\Phi}_{i}^*\Pi_0-\tilde{\Phi}_i \Lambda_0=tP_i$ for some $P_i\in Hom_{C_i} (\wedge^2 \Omega_{C_i}^1,B_i)$. We recall Notation \ref{notation11}. Then for any $a,b,c\in C_i$, since $[\Lambda_0,\Lambda_0]=0$, and $[\Pi_0,\Pi_0]=0$, we have
\begin{align*}
&t\pi(P_i)(a,b,c)\\
&=t\Lambda_0(P_i(a,b), f(c))-t\Lambda_0(P_i(a,c),f(b))+t\Lambda_0(P_i(b,c),f(a))
+tP_i(\Pi_0(a, b),c)-tP_i(\Pi_0(a, c),b)+tP_i(\Pi_0(b, c), a)\\
&=\Lambda_0((\tilde{\Phi}_{i}^*\Pi_0-\tilde{\Phi}_i \Lambda_0)(a,b),\tilde{\Phi}_i(c))-\Lambda_0((\tilde{\Phi}_{i}^*\Pi_0-\tilde{\Phi}_i \Lambda_0)(a,c),\tilde{\Phi}(b))+\Lambda_0((\tilde{\Phi}_i^*\Pi_0-\tilde{\Phi}_i \Lambda_0)(b,c),\tilde{\Phi}_i(a))\\
&+(\tilde{\Phi}_{i}^*\Pi_0-\tilde{\Phi}_i \Lambda_0)(\Pi_0(a,b),c)-(\tilde{\Phi}_{i}^*\Pi_0-\tilde{\Phi}_i \Lambda_0)(\Pi_0(a,c),b)+(\tilde{\Phi}_{i}^*\Pi_0-\tilde{\Phi}_i \Lambda_0)(\Pi_0(b,c),a)\\
&=\Lambda_0(\tilde{\Phi}_i(\Pi_0(a,b)),\tilde{\Phi}_i(c))-\Lambda_0(\Lambda_0(\tilde{\Phi}_i(a),\tilde{\Phi}_i(b)),\tilde{\Phi}_i(c))-\Lambda_0(\tilde{\Phi}_i(\Pi_0(a,c)),\tilde{\Phi}_i(b))+\Lambda_0(\Lambda_0(\tilde{\Phi}_i(a),\tilde{\Phi}_i(c)),\tilde{\Phi}_i(b))\\
&+\Lambda_0(\tilde{\Phi}_i(\Pi_0(b,c)),\tilde{\Phi}_i(a))-\Lambda_0(\Lambda_0(\tilde{\Phi}_i(b),\tilde{\Phi}_i(c)),\tilde{\Phi}_i(a))\\
&+\tilde{\Phi}_i(\Pi_0(\Pi_0(a,b),c))-\Lambda_0(\tilde{\Phi}_i(\Pi_0(a,b)),\tilde{\Phi}_i(c))-\tilde{\Phi}_i(\Pi_0(\Pi_0(a,c),b))+\Lambda_0(\tilde{\Phi}_i(\Pi_0(a,c)),\tilde{\Phi}_i(b))\\
&+\tilde{\Phi}_i(\Pi_0(\Pi_0(b,c),a))-\Lambda_0(\tilde{\Phi}_i(\Pi_0(b,c)),\tilde{\Phi}_i(a))=0
\end{align*}
Hence we have 
\begin{align}
\pi(P_i)=0
\end{align}
On the other hand, for $a,b\in \Gamma(V_{ij}, \mathcal{O}_Y)$,
\begin{align*}
&t(P_i-P_j)(a,b)=\tilde{\Phi}_{i}^*\Pi_0(a,b)-\tilde{\Phi}_i\Lambda_0(a,b)-\tilde{\Phi}_j^*\Pi_0(a,b)+\tilde{\Phi}_j\Lambda_0(a,b)\\
&=\tilde{\Phi}_i(\Pi_0(a,b))-\Lambda_0(\tilde{\Phi}_i(a),\tilde{\Phi}_i(b))-\tilde{\Phi}_j(\Pi_0(a,b))+\Lambda_0(\tilde{\Phi}_j(a),\tilde{\Phi}_j(b))\\
&=\tilde{\Phi}_i(\Pi_0(a,b))-\Lambda_0(\tilde{\Phi}_i(a),\tilde{\Phi}_i(b))-\tilde{\Phi}_j(\Pi_0(a,b))+\Lambda_0(\tilde{\Phi}_i(a)+t\xi_{ij}(a),\tilde{\Phi}_i(b)+t\xi_{ij}(b))\\
&=\tilde{\Phi}_i(\Pi_0(a,b))-\tilde{\Phi}_j(\Pi_0(a,b))+\Lambda_0(t\xi_{ij}(a),\tilde{\Phi}_i(b))+\Lambda_0(\tilde{\Phi}_i(a),t\xi_{ij}(b))\\
&=-t\xi_{ij}(\Pi_0(a,b))+t\Lambda_0(\xi_{ij}(a),f(b))+t\Lambda_0(f(a),\xi_{ij}(b))
\end{align*}
Hence we have
\begin{align}
P_j-P_i+\pi(\xi_{ij})=0
\end{align}
From, $\alpha:=(\{P_i\},\{\xi_{ij}\})\in C^0(\mathcal{U}, \wedge^2 f^* T_Y)\oplus C^1(\mathcal{U}, f^* T_Y)$ defines an element in $\mathbb{H}^1(X, f^* T_Y^\bullet)$.

Let $\tilde{\Phi}_i'$ be an another arbitrary lifting of $\Phi_i$. Let $\beta:=(\{P_i'\},\{\xi_{ij}'\})$ be the associated $1$-cocycle. Then $\tilde{\Phi}_i'-\tilde{\Phi}_i=t Q_i$ for some $Q_i\in Hom_{C_i}(\wedge^2 \Omega_{C_i}^1, B_i)$. Then
\begin{align*}
(tP_i'-tP_i)(a,b)&= \tilde{\Phi}'^*_{i}\Pi_0(a,b)-\tilde{\Phi}_i'\Lambda_0(a,b)-\Phi_{i}^*\Pi(a,b)+\tilde{\Phi}_i'\Lambda_(a,b)\\
&=\tilde{\Phi}'_i(\Pi_0(a,b))-\Lambda_0(\tilde{\Phi}_i'(a),\tilde{\Phi}_i'(b))-\tilde{\Phi}_i(\Pi_0(a,b))+\Lambda_0(\tilde{\Phi}_i(a),\tilde{\Phi}_i(b))\\
&=tQ_i(\Pi_0(a,b))-\Lambda_0(f(a), tQ_i(b))-\Lambda_0(tQ_i(a),f(b))=-\pi(Q_i)(a,b)
\end{align*}
Hence we have
\begin{align}
P_i'-P_i=\pi(-Q_i)
\end{align}
On the other hand, $t\xi_{ij}'-t\xi_{ij}=\tilde{\Phi}_j'-\tilde{\Phi}_i'-\tilde{\Phi}_j+\tilde{\Phi}_i=tQ_j-tQ_i$. Hence we have
\begin{align}
\xi_{ij}'-\xi_{ij}=Q_j-Q_i
\end{align}
Then $\alpha-\beta=(\pi(\{Q_i\},-\delta(\{Q_i\}))$ so that $\alpha$ is cohomologous to $\beta$. Hence given a small extension $e:0\to (t)\to \tilde{A}\to A\to 0$, we can associate an element $o_\xi(e):=$ the cohomology class of $\alpha\in \mathbb{H}^0(X, f^* T_Y^\bullet)$. We note that $o_\xi(e)=0$ if and only if there exists collections $P_i=0$ and $\xi_{ij}=0$ if and only if there is a Poisson morphism $\tilde{\Psi}:(X\times Spec(\tilde{A}),\Lambda_0)\to (Y\times Spec(\tilde{A}),\Pi_0)$ inducing $\Psi$ if and only if there is a lifting of $\xi$ to $\tilde{A}$.
\end{proof}

\subsection{Deformations of a Poisson morphism leaving the target fixed}\

Let $f:(X,\Lambda_0)\to (Y,\Pi_0)$ be a Poisson morphism of nonsingular Poisson varieties. An infinitesimal deformation of $f$ over $A\in \bold{Art}$ leaving the target fixed is a cartesian diagram of the form
\begin{center}
$\xi:\begin{CD}
(X,\Lambda_0)@>>> (\mathcal{X},\Lambda)\\
@VfVV @VV\Psi V\\
(Y,\Pi_0)@>>>(\mathcal{Y}=Y\times Spec(A),\Pi_0)\\
@VVV @VVs V\\
Spec(k)@>>> Spec(A)
\end{CD}$
\end{center}
We note that $(\mathcal{Y},\Pi_0)$ is a trivial Poisson deformation of $(Y,\Lambda_0)$ in Definition \ref{ll30}. Then we can define a functor of Artin rings which is a subfunctor of $Def_f$
\begin{align*}
Def_{f/(Y,\Pi_0)}:\bold{Art}&\to (Sets)\\
A &\mapsto \{\text{infinitesimal deformations of $f$ over $A$ with leaving the target fixed}\}/\text{isomorphisms}
\end{align*}

Let $f:(X,\Lambda_0)\to (Y,\Pi_0)$ be a Poisson morphism of nonsingular Poisson varieties. Let us consider an exact sequence of a complex of coherent sheaves on $X$
\begin{center}
$\begin{CD}
@. \cdots @.\cdots @. \cdots @. \cdots\\
@. @AAA @A[\Lambda_0,-]AA @A\pi AA @AAA@.\\
0@>>> T_{X/Y}^3@>J>> \wedge^3 T_X@>F>> \wedge^3 f^* T_Y@>P>> N_f^2@>>> 0\\
@.@AAA @A[\Lambda_0,-]AA @A\pi AA @AAA @.\\
0@>>> T_{X/Y}^2@>J>>\wedge^2 T_X @>F>>\wedge^2 f^* T_Y @>P>> N_f^1@>>>0\\
@. @AAA @A[\Lambda_0,-]AA @A\pi AA @AAA @.\\
0@>>> T_{X/Y}^1@>J>> T_X @>F>> f^*T_Y@>P>> N_f@>>> 0
\end{CD}$
\end{center}
where $T_{X/Y}^\bullet:=Ker(T_X^\bullet \xrightarrow{F} f^*T_Y^\bullet)$ and $N_f^\bullet:=Coker(T_X^\bullet \xrightarrow{F} f^* T_Y^\bullet)$. $f$ is called non-degenerate when $F$ is injective so that $T_{X/Y}^\bullet=0$. When $f$ is smooth, $F$ is surjective so that $\mathcal{N}_f^\bullet=0$.

\begin{definition}
Let $f:(X,\Lambda_0)\to (Y,\Pi_0)$ be a Poisson morphism between two nonsingular projective Poisson varieties. Let $\mathcal{U}=\{U_i\}_{i\in I}$ be an affine open cover of $X$ and define
\begin{align*}
PD_{(X,\Lambda_0)/(Y,\Pi_0)}=\frac{\{(v,t,\lambda)\in C^0(\mathcal{U}, f^*T_Y)\oplus C^1(\mathcal{U},T_X)\oplus C^0(\mathcal{U},\wedge^2 T_X)|\frac{-\delta v=F(t), \pi(v)=F(\lambda),}{  \delta t=0, \delta \lambda+[\Lambda_0,t]=0,[\Lambda_0,\lambda]=0  } \}}{\{ (F(w),-\delta(w),[\Lambda_0,w])|w\in C^0(\mathcal{U},T_X)\}}
\end{align*}
\small{\begin{align*}
&PD_{(X,\Lambda_0)/(Y,\Pi_0)}^1=\\
&\frac{\{(\xi, \eta, s, r,w)\in C^1(\mathcal{U},f^*T_Y)\oplus C^0(\mathcal{U},\wedge^2 f^*T_Y)\oplus C^2(\mathcal{U},T_X)\oplus C^1(\mathcal{U},\wedge^2 T_X)\oplus C^0(\mathcal{U},\wedge^3 T_X)| \frac{\delta \xi =F(s),\delta \eta+\pi(\xi)=F( r),\pi(\eta)=F(w),}{ [\Lambda_0,w]=0, -\delta(w)+[\Lambda_0,r]=0, -\delta(r)+[\Lambda_0,s]=0,\delta s=0}\}}{\{(F(u)-\delta(a),F(c)+\pi(a),\delta(u),\delta(c)+[\Lambda_0,u] ,[\Lambda_0,c] ) |a\in C^0(\mathcal{U},f^* T_Y),u\in C^1(\mathcal{U},T_X), c\in C^0(\mathcal{U},\wedge^2 T_X)\}}
\end{align*}}
\end{definition}

\begin{lemma}\label{ll31}\

\begin{enumerate}
\item $PD_{(X,\Lambda_0)/(Y,\Pi_0)}$ and $PD_{(X,\Lambda_0)/(Y,\Pi_0)}^1$ do not depend on the choice of the affine cover $\mathcal{U}$ of $X$.
\item We have the following exact sequences
\begin{enumerate}
\item $\mathbb{H}^0(X,T_X^\bullet)\to \mathbb{H}^0(X, f^* T_Y^\bullet)\to PD_{(X,\Lambda_0)/(Y,\Pi_0)}\to \mathbb{H}^1(X,T_X^\bullet)\to \mathbb{H}^1(X, f^* T_Y^\bullet)$.
\item $0\to \mathbb{H}^1(X, T_{X/Y}^\bullet)\to PD_{(X,\Lambda_0)/(Y,\Pi_0)}\to \mathbb{H}^0(X,\mathcal{N}_f^\bullet)\to \mathbb{H}^2(X, T_{X/Y}^\bullet)$.
\item $\mathbb{H}^1(X,T_X^\bullet)\to \mathbb{H}^1(X, f^*T_Y^\bullet)\to PD_{(X,\Lambda_0)/(Y,\Pi_0)}^1\to \mathbb{H}^2(X, T_X^\bullet)\to \mathbb{H}^2(X,f^*T_Y^\bullet)$.
\item $0\to \mathbb{H}^2(X, T_{X/Y}^\bullet)\to PD_{(X,\Lambda_0)/(Y,\Pi_0)}^1\to \mathbb{H}^1(X, \mathcal{N}_f^\bullet)\to \mathbb{H}^3(X, T_{X/Y}^\bullet)$.
\end{enumerate}
\item If $f$ is non-degenerate, then $PD_{(X,\Lambda_0)/(Y,\Pi_0)}\cong \mathbb{H}^0(X, \mathcal{N}_f^\bullet)$ and $PD_{X/Y}^1\cong \mathbb{H}^1(X, \mathcal{N}_f^\bullet)$.
\item if $f$ is smooth, $PD_{(X,\Lambda_0)/(Y,\Pi_0)}\cong \mathbb{H}^1(X, T_{X/Y}^\bullet)$ and $PD_{(X,\Lambda_0)/(Y,\Pi_0)}^1\cong \mathbb{H}^2(X, T_{X/Y}^\bullet)$.
\end{enumerate}
\end{lemma}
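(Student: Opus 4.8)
The plan is to realize both $PD_{(X,\Lambda_0)/(Y,\Pi_0)}$ and $PD^1_{(X,\Lambda_0)/(Y,\Pi_0)}$ as hypercohomology of a single complex of coherent sheaves on $X$, namely the mapping cone $\mathcal{C}^\bullet:=\mathrm{Cone}(F\colon T_X^\bullet\to f^*T_Y^\bullet)$, whose terms are $\mathcal{C}^{-1}=T_X$, $\mathcal{C}^0=\wedge^2 T_X\oplus f^*T_Y$, $\mathcal{C}^1=\wedge^3 T_X\oplus\wedge^2 f^*T_Y$, and in general $\mathcal{C}^n=\wedge^{n+2}T_X\oplus\wedge^{n+1}f^*T_Y$, the differential being assembled from $[\Lambda_0,-]$, $\pi$ and $\pm F$. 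The target of the first and main step is the pair of identifications $PD\cong\mathbb{H}^0(X,\mathcal{C}^\bullet)$ and $PD^1\cong\mathbb{H}^1(X,\mathcal{C}^\bullet)$; granting these, everything else in the lemma is formal.

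To establish those identifications I would compute $\mathbb{H}^\bullet(X,\mathcal{C}^\bullet)$ by the \v{C}ech double complex $C^p(\mathcal{U},\mathcal{C}^q)$ of an affine open cover $\mathcal{U}$ of $X$ and its total complex $\mathrm{Tot}^n=\bigoplus_{p+q=n}C^p(\mathcal{U},\mathcal{C}^q)$. In degree $0$ this group is $C^0(\mathcal{U},\wedge^2 T_X)\oplus C^0(\mathcal{U},f^*T_Y)\oplus C^1(\mathcal{U},T_X)$, a triple $(\lambda,v,t)$, and in degree $-1$ it is $C^0(\mathcal{U},T_X)$, an element $w$; unwinding the total differential turns the cocycle equations in $\mathrm{Tot}^0$ into exactly $-\delta v=F(t)$, $\pi(v)=F(\lambda)$, $\delta t=0$, $\delta\lambda+[\Lambda_0,t]=0$, $[\Lambda_0,\lambda]=0$, and the image of $\mathrm{Tot}^{-1}$ into exactly $\{(F(w),-\delta w,[\Lambda_0,w])\}$, i.e.\ the numerator and denominator in the definition of $PD$. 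The same bookkeeping in degrees $0$ and $1$ matches the five-term description of $PD^1$ term by term. Since $\mathcal{U}$ is affine and all the sheaves in sight are coherent on the separated variety $X$, each column of the double complex is an acyclic resolution, so $\mathrm{Tot}^\bullet$ genuinely computes $\mathbb{H}^\bullet(X,\mathcal{C}^\bullet)$; cover-independence, which is claim (1), then follows by passing to a common refinement, or simply by invoking that \v{C}ech hypercohomology on an affine cover agrees with derived hypercohomology.

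With $PD\cong\mathbb{H}^0(X,\mathcal{C}^\bullet)$ and $PD^1\cong\mathbb{H}^1(X,\mathcal{C}^\bullet)$ in hand, claim (2) is just the long exact sequences of two distinguished triangles. The triangle $T_X^\bullet\xrightarrow{F}f^*T_Y^\bullet\to\mathcal{C}^\bullet\to T_X^\bullet[1]$ gives $\cdots\to\mathbb{H}^n(T_X^\bullet)\xrightarrow{F}\mathbb{H}^n(f^*T_Y^\bullet)\to\mathbb{H}^n(\mathcal{C}^\bullet)\to\mathbb{H}^{n+1}(T_X^\bullet)\xrightarrow{F}\cdots$, whose segments at $n=0$ and $n=1$ are precisely (a) and (c). Factoring $F$ through $\mathrm{im}(F)$ and applying the octahedral axiom to the degreewise short exact sequences $0\to T_{X/Y}^\bullet\to T_X^\bullet\to\mathrm{im}(F)\to0$ and $0\to\mathrm{im}(F)\to f^*T_Y^\bullet\to N_f^\bullet\to0$ produces a triangle $T_{X/Y}^\bullet[1]\to\mathcal{C}^\bullet\to N_f^\bullet\xrightarrow{+1}$, with long exact sequence $\cdots\to\mathbb{H}^{n-1}(N_f^\bullet)\to\mathbb{H}^{n+1}(T_{X/Y}^\bullet)\to\mathbb{H}^n(\mathcal{C}^\bullet)\to\mathbb{H}^n(N_f^\bullet)\to\mathbb{H}^{n+2}(T_{X/Y}^\bullet)\to\cdots$; since $N_f^\bullet$ is concentrated in degrees $\ge 0$, $\mathbb{H}^{-1}(N_f^\bullet)=0$, which yields the left-exact sequence (b), and its continuation one step further to the right is (d). Claims (3) and (4) then drop out of the same picture: if $f$ is non-degenerate then $F$ is a degreewise monomorphism of complexes, so $\mathcal{C}^\bullet$ is quasi-isomorphic to $N_f^\bullet$ and hence $PD\cong\mathbb{H}^0(N_f^\bullet)$, $PD^1\cong\mathbb{H}^1(N_f^\bullet)$; if $f$ is smooth then $F$ is a degreewise epimorphism, $N_f^\bullet=0$, $\mathcal{C}^\bullet\simeq T_{X/Y}^\bullet[1]$, and so $PD\cong\mathbb{H}^1(T_{X/Y}^\bullet)$, $PD^1\cong\mathbb{H}^2(T_{X/Y}^\bullet)$.

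The only genuine work, and the point I expect to be the main obstacle, is sign bookkeeping: one must fix the sign conventions in the internal differential of $\mathcal{C}^\bullet$ and in the total differential of the \v{C}ech double complex so that the total (co)cycle and (co)boundary conditions reproduce the equations in the definitions of $PD$ and $PD^1$ verbatim (the naive cone differential, for instance, yields $-[\Lambda_0,w]$ where the definition wants $+[\Lambda_0,w]$, which is corrected by an obvious sign normalization that changes neither the cohomology nor any connecting map), and one must check that the octahedral triangle's connecting morphisms are the ones already occurring in (b), so that (b) and (d) are indeed the two low-degree portions of one long exact sequence. None of this is conceptual, so I would confine all the sign-chasing to the \v{C}ech model where it is mechanical, and keep the rest of the proof at the level of triangles and their long exact sequences.
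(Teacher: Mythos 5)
Your identification of $PD_{(X,\Lambda_0)/(Y,\Pi_0)}$ and $PD^1_{(X,\Lambda_0)/(Y,\Pi_0)}$ with $\mathbb{H}^0$ and $\mathbb{H}^1$ of the \v{C}ech total complex of $\mathcal{C}^\bullet=\mathrm{Cone}(F)$ is correct (with $T_X$ in degree $0$ the cochain groups, cocycle equations and coboundaries match the paper's definitions verbatim up to your sign normalization), and from it parts (1), (2)(a), (2)(c), (3), (4) and the exactness of (2)(b) follow as you describe. This is a genuinely different organization from the paper's proof: the paper never forms the cone, proves all four sequences of (2) by writing every map down at the cochain level and chasing, and then deduces (1) from (2) via the five lemma and (3), (4) by specializing (2) to $T_{X/Y}^\bullet=0$ resp.\ $\mathcal{N}_f^\bullet=0$; your route replaces most of the chasing by the two distinguished triangles, at the price of the sign bookkeeping and of identifying the octahedron's connecting morphism with the paper's explicit map $\mathbb{H}^0(X,\mathcal{N}_f^\bullet)\to\mathbb{H}^2(X,T_{X/Y}^\bullet)$, which you rightly flag as a \v{C}ech-level check.

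The one genuine gap is in (2)(d). The long exact sequence of your triangle $T_{X/Y}^\bullet[1]\to\mathcal{C}^\bullet\to\mathcal{N}_f^\bullet$ continues (b) as
\[
\mathbb{H}^0(X,\mathcal{N}_f^\bullet)\xrightarrow{\ \partial\ }\mathbb{H}^2(X,T_{X/Y}^\bullet)\to PD^1_{(X,\Lambda_0)/(Y,\Pi_0)}\to\mathbb{H}^1(X,\mathcal{N}_f^\bullet)\to\mathbb{H}^3(X,T_{X/Y}^\bullet),
\]
so exactness at $\mathbb{H}^2(X,T_{X/Y}^\bullet)$ only says that the kernel of $\mathbb{H}^2(X,T_{X/Y}^\bullet)\to PD^1_{(X,\Lambda_0)/(Y,\Pi_0)}$ equals the image of the connecting map $\partial$, i.e.\ of the last arrow of (b); it does not give the injectivity asserted by the initial ``$0\to$'' in (d). Your sentence that (d) is ``its continuation one step further to the right'' therefore overstates what the triangle yields: to get (d) as stated you would need $\partial=0$, which is not automatic. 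Indeed, a direct cochain computation shows the kernel of the paper's map $(\psi,\phi,\theta)\mapsto(0,0,J\psi,J\phi,J\theta)$ is exactly $\mathrm{im}\,\partial$, so the difficulty is intrinsic to the statement rather than to your method (the paper's own proof of (d) asserts exactness without checking this injectivity). To close the gap you must either prove the vanishing of $\partial$ in the situations where (d) is invoked, or record (d) without the leading zero, as the continuation of the single long exact sequence your construction produces; note that this does not affect (3) or (4), which you obtain directly from the quasi-isomorphisms $\mathcal{C}^\bullet\simeq\mathcal{N}_f^\bullet$ (when $F$ is injective) and $\mathcal{C}^\bullet\simeq T_{X/Y}^\bullet[1]$ (when $F$ is surjective).
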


\begin{proof}

By five lemma, $(2)$ implies $(1)$. $(2)$ implies $(3)$ since $T_{X/Y}^\bullet=0$. $(2)$ implies $(4)$ since $N_f^\bullet=0$. Hence it is sufficient to show $(2)$. 

Let us show that $(a)$ is exact. We define $\mathbb{H}^0(X, f^* T_Y^\bullet)\to PD_{(X,\Lambda_0)/(Y,\Pi_0)}$ by $u\mapsto (u,0,0)$ and define $PD_{(X,\Lambda_0)/(Y,\Pi_0)}\to \mathbb{H}^1(X, T_X^\bullet)$ by $(v,t,\lambda)\mapsto (t,\lambda)$. Then we can check that $(a)$ is exact.

Let us show that $(b)$ is exact. We define $\mathbb{H}^1(X, T_{X/Y}^\bullet)\to PD_{(X,\Lambda_0)/(Y,\Pi_0)}$ by
\begin{align*}
(\psi,\phi)\in C^1(\mathcal{U}, T_{X/Y}^1)\oplus C^0(\mathcal{U}, T_{X/Y}^2)\mapsto (0,J\psi, J\phi)\in PD_{(X,\Lambda_0)/(Y,\Pi_0)}
\end{align*}
We define $PD_{(X,\Lambda_0)/(Y,\Pi_0)}\to \mathbb{H}^0(X, \mathcal{N}_f^\bullet)$ by $(v,t,\lambda)\to Pv$. Let us define $\mathbb{H}^0(X,\mathcal{N}_f^\bullet)\to\mathbb{H}^2(X, T_{X/Y}^\bullet)$. Every element of $\mathbb{H}^0(X, \mathcal{N}_f^\bullet)$ is represented by some $v\in C^0(\mathcal{U}, f^*T_Y)$ such that $-\delta v=F(t)$ for some $t\in C^1(\mathcal{U}, T_X)$ and $\pi(v)=F(r)$ for some $r\in C^0(\mathcal{U},\wedge^2 T_X)$. Then $\delta t\in C^1(\mathcal{U}, T_{X/Y}^2)$ since $F(\delta t)=\delta F(t)=-\delta\delta v=0$, $[\Lambda_0,t]+\delta r\in \mathcal{C}^1(\mathcal{U}, T_{X/Y}^2)$ since $F([\Lambda,t]_0+\delta r)=\pi(F(t))+\delta F(r)=-\pi(\delta v)+\delta(\pi(v))=0$, and $[\Lambda_0, r]\in C^0(\mathcal{U}, T_{X/Y}^1)$ since $F([\Lambda_0,r])=\pi(F(r))=\pi(\pi(r))=0$. Then we define $\mathbb{H}^0(X, \mathcal{N}_f^\bullet)\to \mathbb{H}^2(X, T_{X/Y}^\bullet)$ by $v\mapsto (\delta t,[\Lambda_0,t]+\delta r, [\Lambda_0,r])$ which define a $2$-cocycle of $T_{X/Y}^\bullet$. Then we can check that $(b)$ is exact.

Let us show that $(c)$ is exact. We define $\mathbb{H}^1(X, f^* T_Y^\bullet)\to PD^1_{(X,\Lambda_0)/(Y,\Pi_0)}$ by $(\xi, \eta)\mapsto(\xi,\eta,0,0)$ and define $PD^1_{(X,\Lambda_0)/(Y,\Pi_0)}\to \mathbb{H}^2(X, T_X^\bullet)$ by $(\xi,\eta,s,r,w)\mapsto (s,r,w)$. Then we can check that $(c)$ is exact.

Let us show that $(d)$ is exact. We define $\mathbb{H}^2(X, T_{X/Y}^\bullet)\to PD^1_{(X,\Lambda_0)/(Y,\Pi_0)}$ by 
\begin{align*}
(\psi, \phi, \theta)\in C^2(\mathcal{U}, T_{X/Y}^1)\oplus C^1(\mathcal{U}, T_{X/Y}^2)\oplus C^0(\mathcal{U}, T_{X/Y}^3)\mapsto (0,0, J\psi,J\phi,J\theta)\in PD^1_{(X,\Lambda_0)/(Y,\Pi_0)}.
\end{align*}
We define $PD^1_{(X,\Lambda_0)/(Y,\Pi_0)}\to \mathbb{H}^1(X, \mathcal{N}_f^\bullet)$ by $(\xi,\eta, s, r, w)\mapsto (P\xi, P \eta)$. Let us define $\mathbb{H}^1(X, \mathcal{N}_f^\bullet)\to\mathbb{H}^3(X, T_{X/Y}^\bullet)$. Every element of $\mathbb{H}^1(X, \mathcal{N}_f^\bullet)$ is represented by some $(\xi, \eta)\in C^1(\mathcal{U}, f^*T_Y)\oplus C^0(\mathcal{U}, \wedge^2 f^*T_Y)$ such that $\delta \xi=F(a), \pi(\xi)+\delta(\eta)=F(b)$, and $\pi(\eta)=F(c)$ for some $a\in C^2(\mathcal{U}, T_X), b\in C^1(\mathcal{U}, \wedge^2 T_X)$, and $c\in C^0(\mathcal{U}, \wedge^3 T_X)$. Then $-\delta a\in C^3(\mathcal{U}, T_{X/Y}^1)$ since $F(-\delta a)=-\delta \delta \xi=0$, $[\Lambda_0,a]-\delta(b)\in C^2(\mathcal{U}, T_{X/Y}^2)$ since $F([\Lambda_0,a]-\delta(b))=\pi(F(a))-\delta(F(b))=\pi(\delta(\xi))-\delta(\pi(\xi))-\delta(\delta (\eta))=0$, $[\Lambda_0,b]-\delta(c)\in C^1(\mathcal{U}, T_{X/Y}^3)$ since $F([\Lambda_0,b]-\delta(c))=\pi(F(b))-\delta(F(c))=\pi(\delta(\eta))-\delta(\pi(\eta))=0$, and $[\Lambda_0,c]\in C^0(\mathcal{U}, T_{X/Y}^4)$ since $F([\Lambda_0,c])=\pi(F(c))=\pi(\pi(\eta))=0$. Then we define $\mathbb{H}^1(X, \mathcal{N}_f^\bullet)\to\mathbb{H}^3(X, T_{X/Y}^\bullet)$ by $(\eta,\xi)\mapsto(-\delta a,[\Lambda_0,a]-\delta(b),[\Lambda_0,b]-\delta(c),[\Lambda_0,c])$ which is $3$-cocycle of $T_{X/Y}^\bullet$. Then we can check that $(d)$ is exact.
\end{proof}

\begin{theorem}[compare \cite{Ser06} Theorem 3.4.8 p.164]
Let $f:(X,\Lambda_0)\to (Y, \Pi_0)$ be a Poisson morphism of nonsingular Poisson varieties. Then
\begin{enumerate}
\item There is a natural identification
\begin{align*}
Def_{f/(Y,\Pi_0)}(k[\epsilon])\cong PD_{(X,\Lambda_0)/(Y,\Pi_0)}
\end{align*}
\item Given an infinitesimal deformation $\xi$ of $f$ over $A\in \bold{Art}$ leaving the target fixed and a small extension $0\to (t)\to \tilde{A}\to A\to 0$, we can associate an element $o_\xi(e)\in PD^1_{(X,\Lambda_0)/(Y,\Pi_0)}$, which is zero if and only if there is a lifting of $\xi$ to $\tilde{A}$.
\end{enumerate}
\end{theorem}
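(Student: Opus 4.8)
The statement parallels Serre's Theorem 3.4.8 (p.164) exactly as the previous proposition parallels Serre's Proposition 3.4.2, so the plan is to adapt the \v{C}ech-theoretic argument of that proposition, now carrying along the \emph{two} pieces of local data that a deformation leaving the target fixed produces: the local lift of $f$ (landing in $f^*T_Y$), \emph{and} the local deformation of the Poisson structure on $X$ (landing in $\wedge^2 T_X$), the latter being glued by a \v{C}ech $1$-cochain in $T_X$ since $\mathcal{X}\to \operatorname{Spec}A$ need not be trivial. Concretely, for the first-order part: given $\Psi\colon(\mathcal{X},\Lambda)\to (Y\times\operatorname{Spec}k[\epsilon],\Pi_0)$, pick an affine cover $\mathcal{U}=\{U_i=\operatorname{Spec}B_i\}$ with $f(U_i)\subset V_i=\operatorname{Spec}C_i$, trivialize $\mathcal{X}|_{U_i}\cong U_i\times\operatorname{Spec}k[\epsilon]$ and write the gluing of these trivializations as $\operatorname{id}+\epsilon\, t_{ij}$ with $t_{ij}\in\Gamma(U_{ij},T_X)$ ($\delta t=0$), write $\Lambda=\Lambda_0+\epsilon\,\lambda_i$ with $\lambda_i\in\Gamma(U_i,\wedge^2 T_X)$, and write $\Phi_i=f+\epsilon\,v_i$ with $v_i\in\Gamma(U_i,f^*T_Y)$. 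The compatibility of $\Psi_i$ with the gluings gives $-\delta v=F(t)$; the closedness $[\Lambda,\Lambda]=0$ and the gluing of $\Lambda$ give $[\Lambda_0,\lambda]=0$ and $\delta\lambda+[\Lambda_0,t]=0$ (this is the computation already recorded in \cite{Kim15}); and the biderivation-preserving condition $\Phi_i^*\Pi_0=\Phi_i\Lambda_0$, expanded to first order in $\epsilon$ exactly as in the previous proposition, yields $\pi(v_i)=F(\lambda_i)$. Changing the trivializations by $w=\{w_i\}\in C^0(\mathcal{U},T_X)$ changes $(v,t,\lambda)$ by $(F(w),-\delta w,[\Lambda_0,w])$, which is precisely the denominator in the definition of $PD_{(X,\Lambda_0)/(Y,\Pi_0)}$; conversely any such triple is realized. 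This gives the natural bijection in $(1)$.

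For the obstruction statement $(2)$: fix a small extension $e\colon 0\to(t)\to\tilde A\to A\to 0$ and an infinitesimal deformation $\xi$ over $A$. Over each $U_i$ choose a lift of the trivialized deformation to $\tilde A$: lift $\Lambda$ to $\tilde\Lambda_i=\Lambda_i+t\,r_i$ with $r_i\in\Gamma(U_i,\wedge^2 T_X)$ (possible since $\wedge^2 T_X$ is coherent on the affine $U_i$), lift $\Phi_i$ to $\tilde\Phi_i$, so that $\tilde\Phi_j-\tilde\Phi_i=t\,\xi_{ij}$, $\tilde\Lambda_j-\tilde\Lambda_i=t\,s_{ij}$ on overlaps (modulo the gluing cocycle $t$), and record the three failures-to-close as $\tilde\Phi_i^*\Pi_0-\tilde\Phi_i\tilde\Lambda_i=t\,\eta_i$, $\tfrac12[\tilde\Lambda_i,\tilde\Lambda_i]=t\,w_i$, and the cocycle defect $\tilde\Phi_i(\tilde{}\,)$-gluings giving the remaining component. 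The plan is then to verify, by the same style of bracket manipulations used in the proof of Lemma~\ref{ii76} and in the previous proposition, that $\alpha:=(\{\xi_{ij}\},\{\eta_i\},\{s_{ij}\},\{r_i\}\text{-defect},\{w_i\})$ satisfies the defining equations of $PD^1_{(X,\Lambda_0)/(Y,\Pi_0)}$ (the relations $\pi(\eta)=F(w)$, $\delta\eta+\pi(\xi)=F(r)$, $\delta\xi=F(s)$, and the three $[\Lambda_0,-]$/$\delta$ identities for $(s,r,w)$ coming from $[\tilde\Lambda,\tilde\Lambda]=0$ and the cocycle condition), and that a different choice of local lifts changes $\alpha$ by an element of the subspace of coboundaries $(F(a)-\delta u,\, F(c)+\pi(a),\,\delta u,\,\delta c+[\Lambda_0,u],\,[\Lambda_0,c])$. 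This defines $o_\xi(e)\in PD^1_{(X,\Lambda_0)/(Y,\Pi_0)}$ independently of choices, and $o_\xi(e)=0$ iff the local lifts can be adjusted to glue into an actual Poisson morphism $\tilde\Psi$ over $\tilde A$ with $Y\times\operatorname{Spec}\tilde A$ fixed, i.e. iff a lifting of $\xi$ exists.

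\textbf{Main obstacle.} The conceptual structure is straightforward and dictated by the analogy with Serre's proof; the real work — and the step I expect to absorb most of the effort — is the bookkeeping in $(2)$ showing that the five-tuple $\alpha$ lands in the correct cocycle set of $PD^1_{(X,\Lambda_0)/(Y,\Pi_0)}$ and, especially, that its class is well-defined. This requires expanding $\tilde\Phi_i^*\Pi_0-\tilde\Phi_i\tilde\Lambda_i$ and $[\tilde\Lambda_i,\tilde\Lambda_i]$ to order $t$ while simultaneously keeping track of the nontrivial gluing cochain $\{t_{ij}\}$ (absent in the ``fixed domain and target'' case), and then performing the Jacobi-identity cancellations of the type displayed in the computations for $(\ref{cc5})$, $(\ref{cc6})$, and $(\ref{gg13})$ above to see that the mixed terms vanish. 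Since all the needed identities for the $T_X^\bullet$-part are available from \cite{Kim15} and all the $f^*T_Y^\bullet$-part identities mirror the previous proposition, the argument is routine in principle but calculation-heavy, and I would present it by citing those sources for the repetitive pieces and writing out only the genuinely new mixed terms. One should also note at the outset that when $f$ is smooth $PD^1\cong\mathbb{H}^2(X,T_{X/Y}^\bullet)$ recovers the classical obstruction space, and when $f$ is non-degenerate $PD^1\cong\mathbb{H}^1(X,\mathcal{N}_f^\bullet)$, consistent with the remarks already in the excerpt.
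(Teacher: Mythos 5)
Your proposal follows essentially the same route as the paper's proof: local Poisson trivializations $\theta_i$ of $(\mathcal{X},\Lambda)$ giving the triple $(\{v_i\},\{p_{ij}\},\{-\Lambda_i\})$ for the first-order identification, and, for the obstruction, the defects of arbitrary local liftings $(\tilde\Phi_i,\tilde r_{ij},\tilde\Lambda_i)$ assembled into a five-tuple representing a class in $PD^1_{(X,\Lambda_0)/(Y,\Pi_0)}$, with well-definedness checked by comparing two systems of liftings and vanishing equivalent to the existence of a lifting of $\xi$. The only point to tighten is that the transition isomorphisms of $\mathcal{X}$ must themselves be lifted and carried explicitly (the paper sets $\tilde r_{ij}\tilde\Phi_j-\tilde\Phi_i=t\xi_{ij}$ and takes the $C^1(\mathcal{U},\wedge^2 T_X)$ component to be the failure of $\tilde r_{ij}$ to intertwine $\tilde\Lambda_j$ and $\tilde\Lambda_i$), which you gesture at but leave vague; the bracket computations you defer are exactly those the paper writes out.
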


\begin{proof}
Consider a first-order deformation and induced first-order deformation of $(X,\Lambda_0)$,
\begin{center}
$\begin{CD}
(X,\Lambda_0)@>>> (\mathcal{X}, \Lambda)\\
@VfVV @VV\Psi V\\
(Y,\Pi_0)@>>> (Y\times Spec(k[\epsilon]),\Pi_0)\\
@VVV @VVs V\\
Spec(k)@>>> Spec(k[\epsilon])
\end{CD}
\,\,\,\,\,\,\,\,\,\,
\begin{CD}
(X,\Lambda_0)@>>> (\mathcal{X},\Lambda)\\
@VVV @VVs \Psi V\\
Spec(k)@>>> Spec(k[\epsilon])
\end{CD}$
\end{center}
Choose an affine open cover $\mathcal{U}=\{U_i=Spec(B_i)\}_{i\in I}$ of $X$ such that for each $i$, $f(U_i)$ is contained in an affine open set $V_i=Spec(C_i)$ of  $Y$, $\Psi$ is locally represented as
\begin{align*}
\Psi:(\mathcal{X}|_{U_i},\Lambda|_{U_i})\to (V_i\times Spec(k[\epsilon]),\Pi_0)
\end{align*}
and we have a Poisson isomorphism $\theta_i:(U_i\times Spec(k[\epsilon]),\Lambda_0+\epsilon \Lambda_i)\to (\mathcal{X}|_{U_i},\Lambda|_{U_i})$ for some $\Lambda_i\in \Gamma(U_i, \wedge^2 T_X)$. Then for each $i,j\in I$, $\theta_{ij}:=\theta_j^{-1}\theta_i:(U_{ij}\times Spec(k),\Lambda_0+\epsilon \Lambda_j)\to (U_{ij}\times Spec(k),\Lambda_0+\epsilon \Lambda_i)$ is a Poisson isomorphism inducing the identity on $(U_{ij},\Lambda_0)$ modulo by $\epsilon$. Then $\theta_{ij}$ corresponds to $Id+\epsilon p_{ij}:(\Gamma(U_{ij},\mathcal{O}_X)\otimes k[\epsilon], \Lambda_0+\epsilon \Lambda_j)\to (\Gamma(U_{ij},\mathcal{O}_X)\otimes k[\epsilon],\Lambda_0+\epsilon \Lambda_i)$ where $p_{ij}\in \Gamma(U_i, T_X)$ and $p_{ij}+p_{jk}-p_{ik}=0, \Lambda_j-\Lambda_i-[\Lambda_0, p_{ij}]=0,[\Lambda_0, \Lambda_i]=0$ (see \cite{Kim15}). Hence we have $\delta(\{p_{ij}\})=0, \delta(\{-\Lambda_i\})+[\Lambda_0, \{p_{ij}\}]=0, [\Lambda_0,\{-\Lambda_i\}]=0$. We set
\begin{align*}
\Psi_i:=\Psi\theta_i:(U_i\times Spec(k[\epsilon]),\Lambda_0+\epsilon \Lambda_i)\to (V_i\times Spec(k[\epsilon]),\Pi_0)\subset (Y\times Spec(k[\epsilon]),\Pi_0)
\end{align*}
which corresponds to a Poisson $k[\epsilon]$-homomorphism
\begin{align*}
\Phi_i:(C_i\otimes_k k[\epsilon],\Pi_0)\to (B_i\otimes_k k[\epsilon], \Lambda_0+\epsilon \Lambda_i)
\end{align*}
which induces $f|_{U_i}$ by modulo $\epsilon$ so that we have an element $v_i\in \Gamma(U_i, f^* T_Y)$ such that $\Phi_i=f+\epsilon v_i$ and for any $x,y\in C_i$,
\begin{align*}
&\Phi_i(\Pi_0(x,y))=(\Lambda_0+\epsilon \Lambda_i)(\Phi_i x, \Phi_i y)\\
&f(\Pi_0(x,y))+\epsilon v_i(\Pi_0(x,y))=(\Lambda_0+\epsilon \Lambda_i)(fx+\epsilon v_ix,fy+\epsilon v_iy)\\
&f(\Pi_0(x,y))+\epsilon v_i(\Pi_0(x,y))=\Lambda_0(fx,fy)+\epsilon\Lambda_0(v_ix,fy)+\epsilon \Lambda_0(fx,v_iy)+\epsilon \Lambda_i(fx,fy)\\
&\iff \pi(v_i)+F\Lambda_i=0 \iff \pi(v_i)=F(-\Lambda_i).
\end{align*}

On the other hand, restricting to $U_{ij}$, we have $\Psi_j|_{U_{ij}} \theta_j^{-1}\theta_i=\Psi_i|_{U_{ij}}$which corresponds to $(1+\epsilon p_{ij})\Phi_j=\Phi_i$, equivalently, $(1+\epsilon p_{ij})(f+\epsilon v_j)=1+\epsilon v_i$ so that we have $p_{ij}\circ f+v_j=v_i$, and so $v_i-v_j=F(p_{ij}) (-\delta \{v_i\}=F(\{p_{ij}\}))$. Hence $(\{v_{ij}\},\{p_{ij}\},\{-\Lambda_i\})\in C^0(\mathcal{U},f^* T_Y)\oplus C^1(\mathcal{U},T_X)\oplus C^0(\mathcal{U}, \wedge^2 T_X)$ defines an element in $PD_{(X,\Lambda_0)/(Y,\Pi_0)}$.

Next we identify obstructions. Consider a small extension $e:0\to(t)\to\tilde{A}\to A\to 0$, and an infinitesimal deformation $\xi=(\Psi: (\mathcal{X},\Lambda)\to (Y\times_k Spec(A),\Pi_0))$ of $f$ over $Spec(A)$ leaving the target fixed.
Choose an affine open cover $\mathcal{U}=\{U_i=Spec(B_i)\}_{i\in I}$ of $X$ such that for each $i$, $f(U_i)$ is contained in an affine open set $V_i=Spec(C_i)$ of $Y$, $\Psi$ is locally represented as
\begin{align*}
\Psi:(\mathcal{X}|_{U_i},\Lambda |_{U_i})\to (V_i\times Spec(A),\Pi_0)
\end{align*}
and we have a Poisson isomorphism $\theta_i:(U_i\times Spec(A),\Lambda_i)\to (\mathcal{X}|_{U_i},\Lambda |_{U_i})$, where $\Lambda_i\in \Gamma(U_i, \wedge^2 T_X)\otimes_k A$. Then for each $i,j\in I$, $\theta_{ij}:=\theta_j^{-1}\theta_i:(U_{ij}\times Spec(A),\Lambda_j)\to (U_{ij}\times Spec(A), \Lambda_i)$ is a Poisson isomorphism inducing the identity on $(U_{ij},\Lambda_0)$, which corresponds to a Poisson $A$-isomorphism $r_{ij}:(\Gamma(U_{ij},\mathcal{O}_X)\otimes_k A,\Lambda_j)\to (\Gamma(U_{ij},\mathcal{O}_X)\otimes_k A, \Lambda_i)$. We set
\begin{align*}
\Psi_i:=\Psi\theta_i:(U_i\times Spec(A),\Lambda_i)\to (V_i\times Spec(A),\Pi_0)
\end{align*}
which corresponds to a Poisson $A$-homomorphism
\begin{align*}
\Phi_i:(C_i\otimes_k A,\Pi_0)\to (B_i\otimes_k A,\Lambda_i)
\end{align*}

Now given a small extension $e:0\to (t)\to \tilde{A}\to A\to 0$, we associated an element $o_\eta(e)\in PD_{(X,\Lambda_0)/(Y,\Pi_0)}^1$.

Let $\tilde{\Phi}_i:C_i\otimes_k \tilde{A}\to B_i\otimes_k \tilde{A}$ be an arbitrary lifting of $\Phi_i$ to $\tilde{A}$ inducing $\Phi_i$, $\tilde{\Lambda}_i\in \Gamma(U_i, \wedge^2 T_X)\otimes_k \tilde{A}$ be an arbitrary lifting of $\Lambda_i$ to $\tilde{A}$ inducing $\Lambda_i$, and $\tilde{r}_{ij}:\Gamma(U_{ij},\mathcal{O}_X)\otimes_k \tilde{A} \to \Gamma(U_{ij},\mathcal{O}_X)\otimes_k \tilde{A}$ be an arbitrary lifting of $r_{ij}$ to $\tilde{A}$ inducing $r_{ij}$. Then $\tilde{r}_{ij}\tilde{r}_{jk}\tilde{r}_{ki}=Id+t\tilde{d}_{ijk}$ for some $\tilde{d}_{ijk}\in \Gamma(U_{ijk}, \mathcal{O}_X)$, $[\tilde{\Lambda}_i,\tilde{\Lambda}_i]=tq_i$ for some $q_i\in \Gamma(U_i,\wedge^3 T_X)$, and there exist $\Lambda_{ij}'\in \Gamma(U_{ij},\wedge^2 T_X)$ such that $\tilde{r}_{ij}(\tilde{\Lambda}_j(c,d))=(\tilde{\Lambda}_i+t\Lambda_{ij}')(\tilde{r}_{ij}(c),\tilde{r}_{ij}(d))$ for $c,d\in \Gamma(U_{ij},\mathcal{O}_X)\otimes_k \tilde{A}$. Then $[\Lambda_0, \{\frac{1}{2}q_i\}]=0, -\delta(\{\frac{1}{2}q_i\})+[\Lambda_0,\{ \Lambda_{ij}' \}]=0, -\delta(\{\Lambda_{ij}'\})+[\Lambda_0, \{-\tilde{d}_{ijk}\}]=0, - \delta(\{-\tilde{d}_{ijk}\})=0$ (see \cite{Kim16}).

Since $r_{ij}\Phi_j=\Phi_i$, we have $\tilde{r}_{ij}\tilde{\Phi}_j-\tilde{\Phi}_i=t\xi_{ij}$ for some $\xi_{ij}\in \Gamma(U_{ij},f^* T_Y)$. Then since $\tilde{r}_{ij}\tilde{r}_{jk}=\tilde{r}_{ik}+t\tilde{d}_{ijk}\tilde{r}_{ik}$, we have $t(\xi_{ij}-\xi_{ik}+\xi_{jk})=t(\xi_{ij}-\xi_{ik}+r_{ij}\xi_{jk})=\tilde{r}_{ij}\tilde{\Phi}_j-\tilde{\Phi}_i-\tilde{r}_{ik}\tilde{\Phi}_k+\tilde{\Phi}_i+\tilde{r}_{ij}\tilde{r}_{jk}\tilde{\Phi}_k-\tilde{r}_{ij}\tilde{\Phi}_j=-\tilde{r}_{ik}\tilde{\Phi}_k+\tilde{r}_{ij}\tilde{r}_{jk}\tilde{\Phi}_k=t\tilde{d}_{ijk}f$. Hence we have
\begin{align}
\xi_{ij}-\xi_{ik}+\xi_{jk}=F\tilde{d}_{ijk}
\end{align}

Since $\Phi_i^* \Pi_0-\Phi_i \Lambda_i=0$ (recall Notation \ref{notation11}), we have $\tilde{\Phi}_{i}^*\Pi_0-\tilde{\Phi}_i\tilde{\Lambda}_i=tP_i$ for some $P_i\in Hom_{C_i}(\wedge^2 \Omega_{C_i}^1,B_i)$. Then for $a,b,c\in C_i$, since $[\Pi_0,\Pi_0]=0$, we have
\begin{align*}
&t\pi(P_i)(a,b,c)\\
&=t\Lambda_0(P_i(a, b), f(c))-t\Lambda_0(P_i(a, c)\wedge f(b))+t\Lambda_0(P_i(b, c),f(a))+tP_i(\Pi_0(a,b),c)-tP_i(\Pi_0(a, c),b)+tP_i(\Pi_0(b,c),a)\\
&=\tilde{\Lambda}_i((\tilde{\Phi}_i^*\Pi_0-\tilde{\Phi}_i\tilde{\Lambda})(a,b),\tilde{\Phi}_i(c))-\tilde{\Lambda}_i((\tilde{\Phi}_i^*\Pi_0-\tilde{\Phi}_i\tilde{\Lambda}_i)(a,c),\tilde{\Phi}_i(b))+\tilde{\Lambda}_i((\tilde{\Phi}_{i}^*\Pi_0-\tilde{\Phi}_i \tilde{\Lambda}_i)(b,c),\tilde{\Phi}_i(a))\\
&+(\tilde{\Phi}_{i}^*\Pi_0-\tilde{\Phi}_i \tilde{\Lambda}_i)(\Pi_0(a,b),c)-(\tilde{\Phi}_i^*\Pi_0-\tilde{\Phi}_i\tilde{\Lambda}_i)(\Pi_0(a,c),b)+(\tilde{\Phi}_{i}^*\Pi_0-\tilde{\Phi}_i \tilde{\Lambda}_i)(\Pi_0(b,c),a)\\
&=\tilde{\Lambda}_i(\tilde{\Phi}_i(\Pi_0(a,b)),\tilde{\Phi}_i(c))-\tilde{\Lambda}_i(\tilde{\Lambda}_i(\tilde{\Phi}_i(a),\tilde{\Phi}_i(b)),\tilde{\Phi}_i(c))-\tilde{\Lambda}_i(\tilde{\Phi}_i(\Pi_0(a,c)),\tilde{\Phi}_i(b))+\tilde{\Lambda}_i(\tilde{\Lambda}_i(\tilde{\Phi}_i(a),\tilde{\Phi}_i(c)),\tilde{\Phi}_i(b))\\
&+\tilde{\Lambda}_i(\tilde{\Phi}_i(\Pi_0(b,c)),\tilde{\Phi}_i(a))-\tilde{\Lambda}_i(\tilde{\Lambda}_i(\tilde{\Phi}_i(b),\tilde{\Phi}_i(c)),\tilde{\Phi}_i(a))\\
&+\tilde{\Phi}_i(\Pi_0(\Pi_0(a,b),c))-\tilde{\Lambda}_i(\tilde{\Phi}_i(\Pi_0(a,b)),\tilde{\Phi}_i(c))-\tilde{\Phi}_i(\Pi_0(\Pi_0(a,c),b))+\tilde{\Lambda}_i(\tilde{\Phi}_i(\Pi_0(a,c)),\tilde{\Phi}_i(b))\\
&+\tilde{\Phi}_i(\Pi_0(\Pi_0(b,c),a))-\tilde{\Lambda}_i(\tilde{\Phi}_i(\Pi_0(b,c)),\tilde{\Phi}_i(a))\\
&=-\tilde{\Lambda}_i(\tilde{\Lambda}_i(\tilde{\Phi}_i(a),\tilde{\Phi}_i(b)),\tilde{\Phi}_i(c))+\tilde{\Lambda}_i(\tilde{\Lambda}_i(\tilde{\Phi}_i(a),\tilde{\Phi}_i(c)),\tilde{\Phi}_i(b))-\tilde{\Lambda}_i(\tilde{\Lambda}_i(\tilde{\Phi}_i(b),\tilde{\Phi}_i(c)),\tilde{\Phi}_i(a))
\end{align*}

On the other hand, since $[\tilde{\Lambda}_i,\tilde{\Lambda}_i]=t q_i$
\begin{align*}
tF(q_i)(a,b,c)&=tq_i(f(a),f(b),f(c))=[\tilde{\Lambda}_i,\tilde{\Lambda}_i](\tilde{\Phi}_i(a),\tilde{\Phi}_i(b),\tilde{\Phi}_i(c))\\
&=2\tilde{\Lambda}_i(\tilde{\Lambda}_i(\tilde{\Phi}_i(a),\tilde{\Phi}_i(b)),\tilde{\Phi}_i(c))-2\tilde{\Lambda}_i(\tilde{\Lambda}_i(\tilde{\Phi}_i(a),\tilde{\Phi}_i(c)),\tilde{\Phi}_i(b))+2\tilde{\Lambda}_i(\tilde{\Lambda}_i(\tilde{\Phi}_i(b),\tilde{\Phi}_i(c)),\tilde{\Phi}_i(a))
\end{align*}
Hence we have
\begin{align}
\pi(P_i)=-\frac{1}{2}Fq_i
\end{align}

On the other hand, let $\tilde{r}_{ij}(\tilde{\Lambda}_j(c,d))=(\tilde{\Lambda}_i+t\Lambda_{ij}')(\tilde{r}_{ij}(c),\tilde{r}_{ij}(d))$ for $c,d\in \Gamma(U_{ij},\mathcal{O}_X)$. We recall Notation \ref{notation12}. Then for $a,b\in \Gamma(V_{ij}, \mathcal{O}_Y)$, 
\begin{align*}
t(P_i-P_j)&=t(P_i-r_{ij}^*P_j)(a,b)=\tilde{\Phi}_{i}^*\Pi_0(a,b)-\tilde{\Phi}_i\tilde{\Lambda}_i(a,b)- \tilde{r}_{ij}(\tilde{\Phi}_{j}^*\Pi_0(a,b))+\tilde{r}_{ij}(\tilde{\Phi}_j \tilde{\Lambda}_j(a,b))\\
&=\tilde{\Phi}_i(\Pi_0(a,b))-\tilde{\Lambda}_i(\tilde{\Phi}_i(a),\tilde{\Phi}_i(b))-\tilde{r}_{ij}(\tilde{\Phi}_j(\Pi_0(a,b)))+\tilde{r}_{ij}(\tilde{\Lambda}_j(\tilde{\Phi}_j(a),\tilde{\Phi}_j(b)))\\
&=\tilde{\Phi}_i(\Pi_0(a,b))-\tilde{\Lambda}_i(\tilde{\Phi}_i(a),\tilde{\Phi}_i(b))-\tilde{r}_{ij}(\tilde{\Phi}_j(\Pi_0(a,b)))+(\tilde{\Lambda}_i+t\Lambda_{ij}')(\tilde{r}_{ij}\tilde{\Phi}_j(a),\tilde{r}_{ij}\tilde{\Phi}_j(b)))\\
&=\tilde{\Phi}_i(\Pi_0(a,b))-\tilde{\Lambda}_i(\tilde{\Phi}_i(a),\tilde{\Phi}_i(b))-\tilde{r}_{ij}(\tilde{\Phi}_j(\Pi_0(a,b)))+(\tilde{\Lambda}_i+t\Lambda_{ij}')(\tilde{r}_{ij}\tilde{\Phi}_j(a),\tilde{r}_{ij}\tilde{\Phi}_j(b)))\\
&=\tilde{\Phi}_i(\Pi_0(a,b))-\tilde{\Lambda}_i(\tilde{\Phi}_i(a),\tilde{\Phi}_i(b))-\tilde{r}_{ij}(\tilde{\Phi}_j(\Pi_0(a,b)))+(\tilde{\Lambda}_i+t\Lambda_{ij}')(\tilde{\Phi}_i(a)+t\xi_{ij}(a),\tilde{\Phi}_i(b)+t\xi_{ij}(b))\\
&=\tilde{\Phi}_i(\Pi_0(a,b))-\tilde{r}_{ij}(\tilde{\Phi}_j(\Pi_0(a,b)))+\tilde{\Lambda}_i(t\xi_{ij}(a),\tilde{\Phi}_i(b))+\tilde{\Lambda}_i(\tilde{\Phi}_i(a),t\xi_{ij}(b))+t\Lambda_{ij}'(\tilde{\Phi}_i(a),\tilde{\Phi}_i(b))\\
&=-t\xi_{ij}(\Pi_0(a,b))+t\Lambda_0(\xi_{ij}(a),f(b))+t\Lambda_0(f(a),\xi_{ij}(b))+t\Lambda_{ij}'(f(a),f(b))
\end{align*}

Hence we have
\begin{align}
\pi(\xi_{ij})=P_i-P_j-F\Lambda_{ij}'
\end{align}

Then from , $\alpha:=(\{-\xi_{ij}\},\{-P_i\},\{-\tilde{d}_{ijk}\},\{\Lambda_{ij}'\},\{\frac{1}{2}q_i\})\in \mathcal{C}^1(\mathcal{U}, f^* T_Y)\oplus C^0(\mathcal{U}, \wedge^2 f^* T_Y)\oplus C^2(\mathcal{U},T_X)\oplus C^1(\mathcal{U},\wedge^2 T_X)\oplus C^0(\mathcal{U},\wedge^2 T_X)$ defines an element $PD^1_{(X,\Lambda_0)/(Y,\Pi_0)}$.

Let $\tilde{\Phi}_i'$ be an another arbitrary lifting of $\Phi_i$, $\tilde{r}_{ij}'$ be an another arbitrary lifting of $r_{ij}$, $\tilde{\Lambda}_i'$ be an another arbitrary lifting of $\Lambda_i$. Let $\beta=(\{-\xi_{ij}'\},\{-P_i'\},\{-\tilde{d}_{ijk}'\},\{\Lambda_{ij}''\},\{\frac{1}{2}q_i'\})\in \mathcal{C}^1(\mathcal{U}, f^* T_Y)\oplus C^0(\mathcal{U}, \wedge^2 f^* T_Y)\oplus C^2(\mathcal{U},T_X)\oplus C^1(\mathcal{U},\wedge^2 T_X)\oplus C^0(\mathcal{U},\wedge^2 T_X)$ be the associated element in  $PD^1_{(X,\Lambda_0)/(Y,\Pi_0)}$. Then $\tilde{\Phi}_i'=\tilde{\Phi}_i+t Q_i$ for some $Q_i\in \Gamma(U_i, f^* T_Y)$, $\tilde{r}_{ij}'=\tilde{r}_{ij}+tp_{ij}$ for some $p_{ij}\in \Gamma(U_{ij}, T_X)$ and $\tilde{\Lambda}_i'=\tilde{\Lambda}_i+t\Lambda_i'$ for some $\Lambda_i'\in \Gamma(U_i,\wedge^2 T_X)$. $\{-\tilde{d}_{ijk}+\tilde{d}_{ijk}'\}=\delta(\{p_{ij}\}),\Lambda_{ij}'-\Lambda_{ij}''=\delta(-\Lambda_i')+[\Lambda_0, p_{ij}]=0, \frac{1}{2}q_i-\frac{1}{2}q_i'=[\Lambda_0, -\Lambda_i']$ .

On the other hand, for $a,b\in C_i$,
\begin{align*}
(tP_i'-tP_i)(a,b)&= \tilde{\Phi}'^*_{i}\Pi_0(a,b)-\tilde{\Phi}_i'\tilde{\Lambda}_i'(a,b)-\tilde{\Phi}^*_{i}\Pi_0(a,b)+\tilde{\Phi}_i\tilde{\Lambda}_i(a,b)\\
&=\tilde{\Phi}'_i((\Pi_0(a,b))-\tilde{\Lambda}_i'(\tilde{\Phi}_i'(a),\tilde{\Phi}_i'(b))-\tilde{\Phi}_i(\Pi_0(a,b))+\tilde{\Lambda}_i(\tilde{\Phi}_i(a),\tilde{\Phi}_i(b))\\
&=(\tilde{\Phi}_i+tQ_i)((\Pi_0(a,b))-(\tilde{\Lambda}_i+t\Lambda_i')(\tilde{\Phi}_i(a)+tQ_i(a),\tilde{\Phi}_i(b)+tQ_i(b))-\tilde{\Phi}_i(\Pi_0(a,b))+\tilde{\Lambda}_i(\tilde{\Phi}_i(a),\tilde{\Phi}_i(b))\\
&=tQ_i(\Pi_0(a,b))-\Lambda_0(f(a), tQ_i(b))-\Lambda_0(tQ_i(a),f(b))-t\Lambda_i'(f(a),f(b))\\
&=-\pi(Q_i)(a,b)-\Lambda_i'(f(a),f(b)).
\end{align*}

Hence we have
\begin{align}
P_i'-P_i=\pi(-Q_i)+F(- \Lambda_i')
\end{align}

On the other hand, 
\begin{align*}
t\xi_{ij}'(a)-t\xi_{ij}(a)&=\tilde{r}_{ij}'\tilde{\Phi}_j'(a)-\tilde{\Phi}_i'(a)-\tilde{r}_{ij}\tilde{\Phi}_j(a)+\tilde{\Phi}_i(a)\\
&=(\tilde{r}_{ij}+tp_{ij})(\tilde{\Phi}_j+tQ_j)(a)-(\tilde{\Phi}_i+tQ_i)(a)-\tilde{r}_{ij}\tilde{\Phi}_j(a)+\tilde{\Phi}_i(a)\\
&=tp_{ij}f(a)+tQ_j(a)-tQ_i(a)
\end{align*}

Hence we have
\begin{align}
\xi_{ij}'-\xi_{ij}=F p_{ij}+Q_j-Q_i
\end{align}
Therefore $\alpha-\beta=(F(\{p_{ij}\})-\delta(\{-Q_i\}),\pi(\{-Q_i\})+F(\{-\Lambda_i' \}),\delta(\{p_{ij}\}),\delta(\{p_{ij}\}),\delta(\{-\Lambda_i'\})+[\Lambda_0, \{p_{ij}\}, [\Lambda_0, \{-\Lambda_i'\}])$ so that $\alpha,\beta$ define the same class in $PD_{(X,\Lambda_0)/(Y,\Pi_0)}^1$. Hence given a small extension $e:0\to (t)\to \tilde{A}\to A\to 0$, we can associate an element $o_\xi(e):=$ the cohomology class of $\alpha \in PD_{(X,\Lambda_0)/(Y,\Pi_0)}$. We note that $o_\xi(e)=0$ if and only if there exist collections $\xi_{ij}=0,P_i=0,\tilde{d}_{ijk}=0,\Lambda_{ij}'=0$, and $q_i=0$:
\begin{enumerate}
\item if $\tilde{d}_{ijk}=0,\Lambda_{ij}'=0$, and $q_i=0$, then we have a flat Poisson deformation $(\tilde{\mathcal{X}},\tilde{\Lambda})$ of $(X,\Lambda_0)$ over $Spec(\tilde{A})$ inducing $(\mathcal{X},\Lambda)$.
\item if $\xi_{ij}=0$, and $P_i=0$, we have a Poisson morphism $\tilde{\Psi}:(\tilde{\mathcal{X}},\tilde{\Lambda})\to (Y\times Spec(\tilde{A}),\Pi_0)$ inducing $\Psi$.
\end{enumerate}
Hence $o_\xi(e)=0$ if and only if there is a lifting of $\xi$ to $\tilde{A}$.
\end{proof}

\section{Stability and Costability}\label{appendix3}

Let $f:(X,\Lambda_0)\to (Y,\Pi_0)$ be a Poisson morphism of nonsingular Poisson varieties with $X$ projective. Let us consider a deformation of $f$ over $A\in \bold{Art}$ in the sense of Definition \ref{ll30} so that we have a Poisson morphism $\Phi:(\mathcal{X},\Lambda)\to (\mathcal{Y},\Pi)$ over $A$ which induces $f$. Then we have morphisms of functors of Artin rings which are defined by forgetful morphisms
\begin{align}\label{ll70}
f_X:Def_f\to Def_{(X,\Lambda_0)},\,\,\,\,\,\,\,\,\,\, f_Y:Def_f\to Def_{(Y,\Pi_0)}
\end{align}
where $Def_{(X,\Lambda_0)}$ is the functor of Artin rings of flat Poisson deformations of $(X,\Lambda_0)$, and $Def_{(Y,\Pi_0)}$ is the functor of Artin rings of flat Poisson deformations of $(Y,\Pi_0)$ (see \cite{Kim16}).

\begin{theorem}[stability]\label{mm1}
Let $f:(X,\Lambda_0)\to (Y,\Pi_0)$ be a Poisson morphism of nonsingular Poisson varieties with $X$ projective. Assume that
\begin{enumerate}
\item $F:\mathbb{H}^1(X, T_X^\bullet)\to \mathbb{H}^1(X, f^*T_Y^\bullet)$ is surjective.
\item $F:\mathbb{H}^2(X, T_X^\bullet)\to \mathbb{H}^2(X, f^* T_Y^\bullet)$ is injective.
\end{enumerate}
Then the functor $f_Y$ in $(\ref{ll70})$ is smooth. In other words, for any Poisson flat deformation  $q:(\mathcal{Y}, \Pi)\to Spec(A)$ of $(Y,\Pi_0)$ over $Spec(A)$ for a local artinian $k$-algebra $A$ with residue $k$,  there exist
\begin{enumerate}
\item a falt Poisson deformation $p:(\mathcal{X},\Lambda)\to Spec(A)$ of $(X,\Lambda_0)$ over $Spec(A)$,
\item a Poisson morphism $\Phi:(\mathcal{X},\Lambda)\to (\mathcal{Y}, \Pi)$ over $Spec(A)$ which induces $f$.
\end{enumerate}

\end{theorem}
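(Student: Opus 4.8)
Theorem \ref{mm1} is the algebraic (functor of Artin rings) counterpart of the stability theorem \ref{stability}, and the plan is to prove it by the standard obstruction-theoretic "smoothness of a morphism of functors" criterion rather than by constructing convergent power series. Recall that a morphism of functors of Artin rings is smooth if and only if it has the infinitesimal lifting property: given a small extension $e: 0\to (t)\to \tilde A\to A\to 0$ in $\bold{Art}$, an object $\eta\in Def_f(A)$, and a lift $\bar q\in Def_{(Y,\Pi_0)}(\tilde A)$ of $f_Y(\eta)\in Def_{(Y,\Pi_0)}(A)$, there exists $\tilde\eta\in Def_f(\tilde A)$ lifting $\eta$ with $f_Y(\tilde\eta)=\bar q$. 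So I would fix such data: $\eta$ corresponds to a Poisson morphism $\Psi:(\mathcal X,\Lambda)\to(\mathcal Y,\Pi)$ over $Spec(A)$ inducing $f$, and $\bar q$ is a flat Poisson deformation $(\tilde{\mathcal Y},\tilde\Pi)\to Spec(\tilde A)$ of $(Y,\Pi_0)$ restricting to $(\mathcal Y,\Pi)$ over $A$.

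The key step is to run the local-to-global obstruction analysis exactly as in the proof of the "Deformations of a Poisson morphism leaving the target fixed" theorem of Appendix \ref{appendix2}, but now with the target also being deformed along $\bar q$. Choose an affine open cover $\mathcal U=\{U_i=Spec(B_i)\}$ of $X$ with $f(U_i)\subset V_i=Spec(C_i)$, trivialize $(\mathcal X,\Lambda)$ over each $U_i$ via $\theta_i$ and $(\tilde{\mathcal Y},\tilde\Pi)$ over each $V_i$ (these trivializations and their transition automorphisms exist since affine Poisson schemes are unobstructed and liftable). The morphism $\Psi$ is then a collection $\Phi_i:(C_i\otimes A,\Pi^{(i)})\to(B_i\otimes A,\Lambda^{(i)})$ of Poisson $A$-homomorphisms. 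Pick arbitrary $\tilde A$-linear lifts $\tilde\Phi_i$ of $\Phi_i$, arbitrary lifts $\tilde\Lambda_i$ of $\Lambda^{(i)}$, and arbitrary lifts $\tilde r_{ij}$ of the gluing automorphisms on the $X$-side (the $Y$-side gluing data are already fixed by $\bar q$). Measuring the failure of (i) the cocycle condition for $\tilde r_{ij}$, (ii) $[\tilde\Lambda_i,\tilde\Lambda_i]=0$, (iii) $\tilde r_{ij}$ being Poisson, (iv) $\tilde r_{ij}\tilde\Phi_j=\tilde\Phi_i$, and (v) $\tilde\Phi_i$ being Poisson, produces — by the same bracket computations already carried out in Appendix \ref{appendix2} — a cocycle $\alpha=(\{-\xi_{ij}\},\{-P_i\},\{-\tilde d_{ijk}\},\{\Lambda_{ij}'\},\{\tfrac12 q_i\})$ representing a class $o(e)\in PD^1_{(X,\Lambda_0)/(Y,\Pi_0)}$, which vanishes exactly when a simultaneous lift $\tilde\eta$ exists.

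To conclude I would push this obstruction class into cohomology groups where the hypotheses of the theorem apply. By Lemma \ref{ll31}, $PD^1_{(X,\Lambda_0)/(Y,\Pi_0)}$ sits in the exact sequence
\begin{align*}
\mathbb H^1(X,T_X^\bullet)\to \mathbb H^1(X, f^*T_Y^\bullet)\to PD^1_{(X,\Lambda_0)/(Y,\Pi_0)}\to \mathbb H^2(X, T_X^\bullet)\xrightarrow{F}\mathbb H^2(X,f^*T_Y^\bullet).
\end{align*}
The image of $o(e)$ in $\mathbb H^2(X,T_X^\bullet)$ is the obstruction to lifting the $X$-side deformation alone, while its further image $F$ of that in $\mathbb H^2(X,f^*T_Y^\bullet)$ equals the obstruction coming from $\bar q$ — which is zero because $\bar q$ exists by hypothesis (this compatibility is exactly the content of the remark after Lemma \ref{kk22} and its algebraic analogue). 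By hypothesis (2), $F$ is injective on $\mathbb H^2$, so the image of $o(e)$ in $\mathbb H^2(X,T_X^\bullet)$ already vanishes; hence $o(e)$ lifts to a class in $\mathbb H^1(X,f^*T_Y^\bullet)$. By hypothesis (1), $F:\mathbb H^1(X,T_X^\bullet)\to\mathbb H^1(X,f^*T_Y^\bullet)$ is surjective, so $o(e)$ is killed by modifying the chosen lifts by a class in $\mathbb H^1(X,T_X^\bullet)$ (concretely, by altering the $\tilde\Lambda_i$ and $\tilde r_{ij}$); thus $o(e)=0$ and the required lift $\tilde\eta$ exists. This gives the infinitesimal lifting property for $f_Y$, hence smoothness; applying smoothness to the deformation $q:(\mathcal Y,\Pi)\to Spec(A)$ (built up through a chain of small extensions from $Spec(k)$) produces the desired $(\mathcal X,\Lambda)\to Spec(A)$ and $\Phi:(\mathcal X,\Lambda)\to(\mathcal Y,\Pi)$ inducing $f$. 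The main obstacle I anticipate is bookkeeping: correctly identifying the image of the composite obstruction class in $\mathbb H^2(X,f^*T_Y^\bullet)$ with the (vanishing) obstruction of $\bar q$, i.e. making precise the diagram chase that separates the "$X$-part" from the "$Y$-part" of $o(e)$ in $PD^1$; once that identification is nailed down, everything else is the two already-established exactness facts plus the bracket computations transplanted verbatim from Appendix \ref{appendix2}.
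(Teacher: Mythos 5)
Your proposal is correct in substance and its computational core coincides with the paper's proof; the difference is packaging. The paper proves Theorem \ref{mm1} by a direct induction on $\dim_k A$ over the small extension $0\to(t)\to A\to A/(t)\to 0$: it lifts the local data $\Phi_i$, $r_{ij}$, $\Lambda_i$ arbitrarily (the $Y$-side data being fixed by the given deformation), derives the identities $F(\{\eta_{ijk}\})=\delta(\{E_{ij}\})$, $\pi(\{P_i\})=F(-\tfrac12\{Q_i\})$, $\delta(\{P_i\})+\pi(\{E_{ij}\})=F(-\{\Lambda'_{ij}\})$ ((\ref{ll80})--(\ref{ll82})), uses injectivity of $F$ on $\mathbb H^2$ to kill the $T_X^\bullet$ $2$-cocycle $(\{-\tfrac12 Q_i\},\{-\Lambda'_{ij}\},\{\eta_{ijk}\})$, and then uses surjectivity of $F$ on $\mathbb H^1$ to correct $r_{ij}$, $\Lambda_i$ and $\Phi_i$ so that the morphism lifts; it never names $PD^1_{(X,\Lambda_0)/(Y,\Pi_0)}$. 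You instead invoke the smoothness criterion for morphisms of functors and assemble the total discrepancy into a single class in $PD^1_{(X,\Lambda_0)/(Y,\Pi_0)}$, killed via the exact sequence of Lemma \ref{ll31}(2)(c) — in fact your two hypotheses force $PD^1_{(X,\Lambda_0)/(Y,\Pi_0)}=0$ outright, so the appeal to the vanishing obstruction of $\bar q$ is not even needed once the class is known to live there. What your route buys is conceptual economy; what it costs is exactly the point you flag as "bookkeeping": Appendix \ref{appendix2} establishes the $PD^1$ obstruction theory only for the trivial (product) target, so you must verify that with the target deformed along $\bar q$ the discrepancy cocycle still satisfies the defining relations of $PD^1$ and that its class is independent of the chosen liftings. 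That verification is precisely the bracket computations the paper performs inline in the proof of Theorem \ref{mm1}, so the gap is fillable by the argument you sketch, but as written your proposal leans on a relative statement the appendix does not literally prove; also, your parenthetical identification with "the remark after Lemma \ref{kk22}" is off target (that remark concerns the analytic family setting), though nothing in your argument depends on it.
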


\begin{proof}
We will prove by the induction on the dimension of $A$. When $\dim_k A=1$, there is nothing to prove. Assume that the theorem holds for $\dim_k A\leq n-1$. Let $A$ be a local artinian $k$-algebra with $\dim_k A=n$. Assume that the maximal ideal $\mathfrak{m}$ of $A$ satisfies $\mathfrak{m}^{p-1}\ne 0$ and $\mathfrak{m}^p= 0$. Choose $t\in \mathfrak{m}^{p-1}$. Then $A/(t)\in \bold{Art}$ with $\dim_k A/(t)\leq n-1$ and $0\to (t)\to A\to A/(t)\to 0$ is a small extension.

Let $q:(\mathcal{Y},\Pi)\to Spec(A)$ be a Poisson deformation of $(Y,\Pi_0)$ over $Spec(A)$. Let $\bar{q}:(\bar{Y},\bar{\Pi}):=(\mathcal{Y},\Pi)\times_{Spec(A)} Spec(A/(t))\to Spec(A/(t))$ be the induced deformation over $Spec(A/(t))$. Then by the induction hypothesis, there exists a deformation $\bar{p}:(\bar{\mathcal{X}},\bar{\Lambda})\to Spec(A/(t))$ of $(X,\Lambda_0)$ and $\bar{\Psi}: (\bar{\mathcal{X}},\bar{\Lambda})\to (\bar{\mathcal{Y}},\bar{\Pi})$ which induces $f$.

Let $\mathcal{U}=\{U_i=Spec(B_i)\}_{i\in I}$ be an affine open cover of $X$ and $\mathcal{V}=\{V_i=Spec(C_i)\}_{i \in I}$ be an affine open cover of $Y$ such that $f(U_i)\subset V_i$. We have a Poisson isomorphism  $\bar{\theta}_i:(U_i\times Spec(A/(t)),\bar{\Lambda}_i)\to (\bar{\mathcal{X}}|_{U_i},\bar{\Lambda}|_{U_i})$, where $\bar{\Lambda}_i\in \Gamma(U_i,\wedge^2 T_X)\otimes_k A/(t)$. Then for each $i,j\in I$, $\bar{\theta}_{ij}:=\bar{\theta}_j^{-1}\bar{\theta}_i:(U_{ij}\times Spec(A/(t)),\bar{\Lambda}_j)\to (U_{ij}\times Spec(A/(t)), \bar{\Lambda}_i)$ is a Poisson isomorphism inducing the identity on $(U_{ij},\Lambda_0)$, which corresponds to a Poisson $A/(t)$-isomorphism $\bar{r}_{ij}:(\Gamma(U_{ij},\mathcal{O}_X)\otimes_k A/(t), \bar{\Lambda}_j)\to(\Gamma(U_{ij},\mathcal{O}_X)\otimes_k A/(t),\bar{\Lambda}_i)$. On the other hand, similarly we have a Poisson isomorphism $\bar{e}_i:(V_i\times Spec(A/(t)),\bar{\Pi}_i)\to (\bar{\mathcal{Y}}|_{V_i},\bar{\Pi}|_{V_i})$, where $\bar{\Pi}_i \in \Gamma(V_i, \wedge^2 T_Y)\otimes_k A/(t)$. Then for each $i,j\in I$, $\bar{e}_{ij}:=\bar{e}_j^{-1}\bar{e}_i:(V_{ij}\times Spec(A/(t)),\bar{\Pi}_i)\to (V_{ij}\times Spec(A/(t)),\bar{\Pi}_i)$ is a Poisson isomorphism inducing the identity on $(V_{ij},\Pi_0)$, which corresponds to a Poisson $A/(t)$-isomorphism $\bar{g}_{ij}:(\Gamma(V_{ij},\mathcal{O}_Y)\otimes_k A/(t),\bar{\Pi}_j)\to (\Gamma(V_{ij},\mathcal{O}_Y)\otimes_k A/(t),\bar{\Pi}_i)$. We set $\bar{\Psi}_i=e_i^{-1} \Psi\theta_i:(U_i\times Spec(A/(t)),\bar{\Lambda}_i)\to (V_i\times Spec(A/(t)) ,\bar{\Pi}_i)$ which corresponds to $\bar{\Phi}_i:(C_i\otimes_k A/(t),\bar{\Pi}_i)\to (B_i\otimes_k A/(t), \bar{\Lambda}_i)$.

Let $\bar{\Phi}_i:C_i\otimes A\to B_i\otimes A$ be a lifting of $\bar{\Phi}_i$, $r_{ij}:\Gamma(U_{ij},\mathcal{O}_X)\otimes A\to \Gamma(U_{ij},\mathcal{O}_X)\otimes A$ be a lifting of $\bar{r}_{ij}$, and $\Lambda_i\in \Gamma(U_i,\wedge^2 T_X)\otimes_k A$ be a lifting of $\bar{\Lambda}_i$. On the other hand, Let $g_{ij}:(\Gamma(V_{ij},\mathcal{O}_Y)\otimes_k A, \Pi_j)\to ( \Gamma(V_{ij},\mathcal{O}_Y)\otimes_k A, \Pi_i)$ be a lifting of $\bar{g}_{ij}$ and $\bar{\Pi}_i$ defining $(\mathcal{Y},\Pi)$. Then $r_{ij}r_{jk}r_{ki}=Id+t\eta_{ijk}$ for some $\eta_{ijk}\in \Gamma(U_{ijk},T_X)$, $[\Lambda_i,\Lambda_i]=tQ_i$ for some $Q_i\in \Gamma(U_i,\wedge^2 T_X)$, and there exist $\Lambda_{ij}'\in \Gamma(U_{ij}, \wedge^2 T_X)$ such that
$r_{ij}(\Lambda_j(a,b))=(\Lambda_i+t\Lambda_{ij}')(r_{ij}(a),r_{ij}(b))$ for $a,b\in \Gamma(U_{ij},\mathcal{O}_X)\otimes A$ (for the detail, see \cite{Kim16}). Conisider the following diagram (which is not necessarily commutative nor Poisson except for $g_{ij}$)

\begin{center}
$\begin{CD}
(\Gamma(V_{ij},\mathcal{O}_Y)\otimes_k A, \Pi_j)@> g_{ij}>> (\Gamma(V_{ij} , \mathcal{O}_Y)\otimes_k A,\Pi_i)\\
@V\Phi_j VV @VV\Phi_i V\\
(\Gamma(U_{ij},\mathcal{O}_X)\otimes_k A, \Lambda_j) @>r_{ij}>> (\Gamma(U_{ij},\mathcal{O}_X)\otimes_k A,\Lambda_i)
\end{CD}$
\end{center}
Then since $\bar{r}_{ij} \bar{\Phi}_j-\bar{\Phi}_i \bar{g}_{ij}=0$, we have $r_{ij} \Phi_j-\Phi_i g_{ij}=t E_{ij}$ for some $E_{ij}\in \Gamma(U_{ij},f^* T_Y)$. Then
\begin{align*}
t(E_{ij}-E_{ik}+E_{jk})&=t(E_{ij}g_{jk}-E_{ik}+r_{ij} E_{jk})=r_{ij} \Phi_jg_{jk}-\Phi_ig_{ij}g_{jk}-r_{ik} \Phi_k+\Phi_i g_{ik}+r_{ij}r_{jk}\Phi_k-r_{ij} \Phi_j g_{jk}\\
&= -r_{ik} \Phi_k+r_{ij}r_{jk}\Phi_k=-r_{ik} \Phi_k+(r_{ik}+t\eta_{ijk}r_{ik} )\Phi_k=t\eta_{ijk}f
\end{align*}
Hence  we have
\begin{align}\label{ll80}
F(\{\eta_{ijk}\})=\delta(\{E_{ij}\}).
\end{align}

On the other hand, we recall Notation \ref{notation11}. Then since $\bar{\Phi}_i^* \bar{\Pi}_i-\bar{\Phi}_i\bar{\Lambda}_i=0$, we have $\Phi_{i}^*\Pi_i-\Phi_i^*\Lambda_i=tP_i$ for some $P_i\in Hom_{C_i}(\wedge^2 \Omega_{C_i},B_i)=\Gamma(U_i, \wedge^2 f^*T_Y)$. Then for $a,b,c\in C_i$, since $[\Pi_i,\Pi_i]=0$, we have
\begin{align*}
&t\pi(P_i)(a,b,c)\\
&=t\Lambda_0(P_i(a,b),f(c))-t\Lambda_0(P_i(a,c),f(b))+t\Lambda_0(P_i(b,c),f(a))+tP_i(\Pi_0(a,b),c)-tP_i(\Pi_0(a,c),b)+tP_i(\Pi_0(b,c),a)\\
&=\Lambda_i((\Phi_{i}^*\Pi_i-\Phi_i\Lambda_i)(a,b),\Phi_i(c))-\Lambda_i((\Phi_{i}^*\Pi_i-\Phi_i\Lambda_i)(a,c),\Phi_i(b))+\Lambda_i((\Phi_{i}^*\Pi_i-\Phi_i\Lambda_i)(b,c),\Phi_i(a))\\
&+(\Phi_{i}^*\Pi_i-\Phi_i\Lambda_i)(\Pi_i(a,b),c)-(\Phi_{i}^*\Pi_i-\Phi_i\Lambda_i)(\Pi_i(a,c),b)+(\Phi_{i}^*\Pi_i-\Phi_i\Lambda_i)(\Pi_i(b,c),a)\\
&=\Lambda_i(\Phi_i(\Pi_i(a,b)),\Phi_i(c))-\Lambda_i((\Lambda_i(\Phi_i(a),\Phi_i(b)),\Phi_i(c))-\Lambda_i(\Phi_i(\Pi_i(a,c)),\Phi_i(b))+\Lambda_i((\Lambda_i(\Phi_i(a),\Phi_i(c)),\Phi_i(b))\\
&+\Lambda_i(\Phi_i(\Pi_i(b,c)),\Phi_i(a))-\Lambda_i((\Lambda_i(\Phi_i(b),\Phi_i(c)),\Phi_i(a))\\
&+\Phi_i(\Pi_i(\Pi_i(a,b)),c)-\Lambda_i(\Phi_i(\Pi_i(a,b)),\Phi_i(c))-\Phi_i(\Pi_i(\Pi_i(a,c)),b)+\Lambda_i(\Phi_i(\Pi_i(a,c)),\Phi_i(b))\\
&+\Phi_i(\Pi_i(\Pi_i(b,c)),a)-\Lambda_i(\Phi_i(\Pi_i(b,c)),\Phi_i(a))\\
&=-(\frac{1}{2}[\Lambda_i,\Lambda_i](\Phi_i(a),\Phi_i(b),\Phi_i(c)))=tF(-\frac{1}{2}Q_i)(a,b,c)
\end{align*}
Hence we have 
\begin{align}\label{ll81}
\pi(\{P_i\})=F(-\frac{1}{2} \{Q_i\}).
\end{align}
Lastly, we have, for $a,b\in \Gamma(V_{ij},\mathcal{O}_Y)$,
\begin{align*}
&t(P_i-P_j)(a,b)=t(g_{ij}P_i-r_{ij}^*P_j)(a,b)=(\Phi_{i}^*\Pi_i-\Phi_i\Lambda_i)(g_{ij}(a),g_{ij}(b))-r_{ij}(\Phi_{j}^*\Pi_j-\Phi_j\Lambda_j)(a,b)\\
&=\Phi_i(\Pi_i(g_{ij}(a),g_{ij}(b)))-\Lambda_i(\Phi_i( g_{ij}(a)),\Phi_i ( g_{ij}(b)))-r_{ij}(\Phi_j(\Pi_j(a,b)))+r_{ij}(\Lambda_j(\Phi_j (a),\Phi_j(b)))\\
&=\Phi_i(g_{ij}(\Pi_j(a,b)))-\Lambda_i((r_{ij}\Phi_j-tE_{ij})(a),(r_{ij}\Phi_j-tE_{ij})(b))\\
&-\Phi_i (g_{ij}(\Pi_j(a,b)))- t E_{ij}(\Pi_j(a,b))+\Lambda_i(r_{ij}\Phi_j(a),r_{ij}\Phi_j(b))+t\Lambda_{ij}'(r_{ij}\Phi_j(a),r_{ij}\Phi_j(b))\\
&=t \Lambda_{ij}'(f(a),f(b))+t\Lambda_0(f(a),E_{ij}(b))+t\Lambda_0(E_{ij}(a), f(b))-t E_{ij}(\Pi_0(a,b))\\
&=tF(\Lambda_{ij}')(a,b)+t\pi(E_{ij})(a,b).
\end{align*}
Hence we have 
\begin{align}\label{ll82}
\delta(\{P_i\})+\pi(E_{ij})=F(-\Lambda_{ij}')
\end{align}

We note that $(\{-\frac{1}{2}Q_i\},\{ -\Lambda_{ij}'\},\{ \eta_{ijk}\})$ is a $2$-cocycle of $T_X^\bullet$ (see \cite{Kim16}). Since $\mathbb{H}^2(X,T_X^\bullet)\to  \mathbb{H}^2(X, f^* T_Y^\bullet)$ is injective and the image of $(\{-\frac{1}{2}Q_i\},\{ -\Lambda_{ij}'\},\{ \eta_{ijk}\})$ is zero from (\ref{ll80}),(\ref{ll81}), and (\ref{ll82}), there exist $d_{ij}\in C^1(\mathcal{U}, T_X)$ and $\lambda_i\in C^0(\mathcal{U}, \wedge^2 T_X)$ such that $d_{ki}+d_{jk}+d_{ij}=-\eta_{ijk}$, $\lambda_j -\lambda_i+[\Lambda_0,d_{ij}]=\Lambda_{ij}'$, and $[\Lambda_0,\lambda_i]=\frac{1}{2}Q_i$. Then we claim that $\{r_{ij}+td_{ij}\},\{\Lambda_i-t\lambda_i\}$ defines a flat Poisson deformation $(\mathcal{X}',\Lambda')$ of $(X,\Lambda_0)$ inducing $(\bar{\mathcal{X}},\bar{\Lambda})$. Indeed,
\begin{align} \label{mm3}
&(r_{ij}+td_{ij})(r_{jk}+td_{jk})(r_{ki}+td_{ki})=Id+t\eta_{ijk}+td_{ki}+td_{jk}+td_{ij}=Id.\\
&[\Lambda_i-t\lambda_i,\Lambda_i-t\lambda_i]=[\Lambda_i,\Lambda_i]-t 2[\Lambda_0, \lambda_i]=0 \notag\\
& (r_{ij}+td_{ij})((\Lambda_j-t\lambda_j)(a,b))-(\Lambda_i-t\lambda_i)((r_{ij}+td_{ij})(a),(r_{ij}+td_{ij})(b)) \notag \\
&=(\Lambda_i+t\Lambda_{ij}')(r_{ij}(a),r_{ij}(b))-t\lambda_j(a,b)+td_{ij}(\Lambda_0(a,b))-\Lambda_i(r_{ij}(a), r_{ij}(b))-t\Lambda_0(d_{ij}(a),b)-t\Lambda_0(a, d_{ij}(b))+t\lambda_i(a,b) \notag \\
&=t\Lambda_{ij}'(a,b)-t\lambda_j(a,b)-t[\Lambda, d_{ij}](a,b)+t\lambda_i(a,b)=0 \notag
\end{align}

Consider the following diagram (which is not necessarily commutative nor Poisson except for $g_{ij}$)
\begin{center}
$\begin{CD}
(\Gamma(V_{ij},\mathcal{O}_Y)\otimes_k A,  \Pi_j) @>g_{ij}>> ( \Gamma(V_{ij},\mathcal{O}_Y)\otimes_k A ,\Pi_i)\\
@V\Phi_j VV @VV\Phi_iV\\
(\Gamma(U_{ij},\mathcal{O}_X)\otimes_k A, \Lambda_j-t\lambda_j) @>r_{ij}+td_{ij}>> (\Gamma(U_{ij},\mathcal{O}_X)\otimes_k A,\Lambda_i-t\lambda_i)
\end{CD}$
\end{center}

Then $(r_{ij}+td_{ij}) \Phi_j-\Phi_ig_{ij}=tG_{ij}$ for some $G_{ij}\in \Gamma(U_{ij}, f^*T_Y)$ so that $G_{ij}= E_{ij}+td_{ij}f$. $G_{ij}-G_{ik}+G_{jk}=E_{ij}+td_{ij}f-E_{ik}-td_{ik}f+E_{jk}+td_{jk}f=F(\eta_{ijk})-F(\eta_{ijk})=0$. On the other hand, $\Phi_i^*\Pi_i-\Phi_i(\Lambda_i-t\lambda_i)=tL_i$ for some $L_i\in \Gamma(U_i,\wedge^2 f^* T_Y)$. Then $L_i=P_i+F(\lambda_i)$. We have $\pi(L_i)=\pi(\Pi_i)+F([\Lambda_0, \lambda_i])=F(-\frac{1}{2}Q_i)+F(\frac{1}{2}Q_i)=0$. Lastly we have $L_j-L_i+\pi(G_{ij})=P_j+F(\lambda_j)-P_i-F(\lambda_i)+\pi(E_{ij})+t\pi(F(d_{ij}))=F(-\Lambda_{ij}')+F(\lambda_j-\lambda_i+[\Lambda, d_{ij}])=0$. Hence $(\{G_{ij}\},\{L_i\})$ defines a $1$-cocycle of $f^*T_Y^\bullet$. Hence since $\mathbb{H}^1(X, T_X^\bullet)\to \mathbb{H}^1(X, f^*T_Y^\bullet)$ is surjective, there exist $1$-cocycle $(\{\chi_{ij}\},\{H_i\})\in C^1(\mathcal{U}, T_X)\oplus C^0(\mathcal{U},\wedge^2 T_X)$ of $T_X^\bullet$, and $\{\tau_i\}\in C^0(\mathcal{U},f^* T_Y)$ such that $G_{ij}-F(-\chi_{ij})=\tau_i-\tau_j$, and $L_i-F(-H_i)=\pi(\tau_i)$.

Then we claim that $\{r_{ij}+t(d_{ij}+\chi_{ij})\}$ and $\{\Lambda_i-t(\lambda_i+H_i)\}$ define a flat Poisson deformation $(\mathcal{X},\Lambda)$ of $(X,\Lambda_0)$ inducing $(\bar{\mathcal{X}},\bar{\Lambda})$. Indeed, from (\ref{mm3}), we have
\begin{align*}
&(r_{ij}+td_{ij}+t\chi_{ij})(r_{jk}+td_{jk}+t\chi_{jk})(r_{ki}+td_{ki}+\chi_{ki})=Id+t(\chi_{ij}+\chi_{jk}+\chi_{ki})=Id,\\
&[\Lambda_i-t\lambda_i-tH_i,\Lambda_i-t\lambda_i-tH_i]= -2t[\Lambda_0,H_i]=0\\
&(r_{ij}+td_{ij}+t\chi_{ij})((\Lambda_j-t\lambda_j-tH_j)(a,b))-(\Lambda_i-t\lambda_i-tH_i)((r_{ij}+td_{ij}+t\chi_{ij})(a),(r_{ij}+td_{ij}+t\chi_{ij})(b))\\
&=-tH_j(a,b)+t\chi_{ij}(\Lambda_0(a,b))-t\Lambda_0(\chi_{ij}(a),b)-t\Lambda_0(a,\chi_{ij}(b))+tH_i(a,b)=-t(H_j-H_i+[\Lambda, \chi_{ij}])=0
\end{align*}
We have the following commutative diagram
\begin{center}
$\begin{CD}
(\Gamma(V_{ij},\mathcal{O}_Y)\otimes_k A,  \Pi_j) @>g_{ij}>> ( \Gamma(V_{ij},\mathcal{O}_Y)\otimes_k A ,\Pi_i)\\
@V\Phi_j+t\tau_j VV @VV\Phi_i+t\tau_iV\\
(\Gamma(U_{ij},\mathcal{O}_X)\otimes_k A, \Lambda_j-t(\lambda_j+H_j)) @>r_{ij}+td_{ij}+t\chi_{ij}>> (\Gamma(U_{ij},\mathcal{O}_X)\otimes_k A,\Lambda_i-t(\lambda_i+H_i))
\end{CD}$
\end{center}
Indeed, $(r_{ij}+td_{ij}+t\chi_{ij})(\Phi_j(a)+t\tau_j(a))=\Phi_i(g_{ij}(a))+tG_{ij}(a)+t\chi_{ij}(f(a))+t\tau_j(a)=\Phi_i(g_{ij}(a))+t\tau_i(a)=(\Phi_i+t\tau_i)(g_{ij}(a))$. Lastly we claim that $\Phi_j+t\tau_j$ is a Poisson homomorphism. Indeed, 
\begin{align*}
&(\Phi_j+t\tau_j)(\Pi_j(a,b))-(\Lambda_j-t\lambda_j-tH_j)(\Phi_j(a)+t\tau_j(a),\Phi_j(b)+t\tau_j(b))\\
&=(\Phi_j(\Lambda_j-t\lambda_j)+tL_j)(a,b)+t\tau_j(\Pi_j(a,b))-(\Lambda_j-t\lambda_j)(\Phi_j(a),\Phi_j(a))-t\Lambda_0(f(a),\tau_j(b))-t\Lambda_0(\tau_j(a),b)+tH_j(f(a),f(b))\\
&=t(L_j-\pi(\tau_j)+F(H_j))(a,b)=0
\end{align*}
Hence there exists a flat Poisson deformation $(\mathcal{X},\Lambda)$ over $Spec(A)$ of $(X,\Lambda_0)$, and a Poisson morphism $\Phi:(\mathcal{X},\Lambda)\to (\mathcal{Y},\Pi)$ over $Spec(A)$ which induces $f$ so that induction holds for $n$. This completes the proof of Theorem \ref{mm1}.
\end{proof}

\begin{theorem}[Costability]\label{mm15}
Let $(X,\Lambda_0)$ and $(Y,\Pi_0)$ be two nonsingular projective Poisson varieties, $f:(X,\Lambda_0)\to (Y,\Pi_0)$ a Poisson morphism. Assume that
Assume that
\begin{enumerate}
\item $f^*:\mathbb{H}^1(Y,T_Y^\bullet)\to \mathbb{H}^1(X, f^*T_Y^\bullet)$ is surjective.
\item $f^*:\mathbb{H}^2(Y,T_Y^\bullet)\to \mathbb{H}^2(X, f^*T_Y^\bullet)$ is injective.
\end{enumerate}
Then the functor $f_X$ is smooth. In other words, for a flat Poisson deformation $p:(\mathcal{X},\Lambda)\to Spec(A)$ of $(X,\Lambda_0)$ over $Spec(A)$ for a local artinian $k$-algebra $A$ with the residue $k$, there exist a flat Poisson deformation  $q:(\mathcal{Y},\Pi)\to Spec( A)$ of $(Y,\Pi_0)$, and a Poisson map $\Phi:(\mathcal{X},\Lambda)\to (\mathcal{Y},\Pi)$ over $A$ which induces $f$.
\end{theorem}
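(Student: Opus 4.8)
The plan is to prove Theorem \ref{mm15} by induction on $\dim_k A$, in parallel with the proof of the stability theorem \ref{mm1}, but with the roles of $(X,\Lambda_0)$ and $(Y,\Pi_0)$ interchanged in the appropriate places. When $\dim_k A = 1$ there is nothing to prove. Assuming the statement holds for all $A$ with $\dim_k A \le n-1$, let $A$ have $\dim_k A = n$, choose $t \in \mathfrak{m}^{p-1}\setminus\{0\}$ with $\mathfrak{m}^p = 0$, so that $0 \to (t) \to A \to A/(t) \to 0$ is a small extension. Given a flat Poisson deformation $p:(\mathcal{X},\Lambda)\to \Spec(A)$ of $(X,\Lambda_0)$, restrict to obtain $\bar p:(\bar{\mathcal{X}},\bar\Lambda)\to \Spec(A/(t))$, apply the inductive hypothesis to get a flat Poisson deformation $\bar q:(\bar{\mathcal{Y}},\bar\Pi)\to \Spec(A/(t))$ of $(Y,\Pi_0)$ together with a Poisson morphism $\bar\Phi:(\bar{\mathcal{X}},\bar\Lambda)\to (\bar{\mathcal{Y}},\bar\Pi)$ inducing $f$, and the task is to lift all of this data over $A$.

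The first step is to set up affine covers $\mathcal{U} = \{U_i = \Spec(B_i)\}$ of $X$ and $\mathcal{V} = \{V_i = \Spec(C_i)\}$ of $Y$ with $f(U_i)\subset V_i$, and to record the local data: Poisson isomorphisms trivializing $\bar{\mathcal{X}}$ over $U_i$ (giving $\bar\Lambda_i$, transition automorphisms $\bar r_{ij}$), analogous data for $\bar{\mathcal{Y}}$ (giving $\bar\Pi_i$, $\bar g_{ij}$), and local representations $\bar\Phi_i:(C_i\otimes_k A/(t),\bar\Pi_i)\to(B_i\otimes_k A/(t),\bar\Lambda_i)$. Since $(\mathcal{X},\Lambda)$ is already given over $A$, the data $\bar\Lambda_i$, $\bar r_{ij}$ lift to genuine $\Lambda_i$, $r_{ij}$ with $[\Lambda_i,\Lambda_i]=0$, $r_{ij}r_{jk}r_{ki}=\mathrm{Id}$, $r_{ij}(\Lambda_j(a,b)) = \Lambda_i(r_{ij}(a),r_{ij}(b))$ exactly — this is where the asymmetry with Theorem \ref{mm1} enters, and it simplifies the $X$-side obstruction bookkeeping. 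Then choose arbitrary liftings $\Phi_i: C_i\otimes_k A \to B_i\otimes_k A$ of $\bar\Phi_i$, arbitrary liftings $g_{ij}$ of $\bar g_{ij}$, and arbitrary liftings $\Pi_i$ of $\bar\Pi_i$. The failures of compatibility produce cochains: $g_{ij}g_{jk}g_{ki} = \mathrm{Id} + t\,\eta_{ijk}$ with $\eta_{ijk}\in\Gamma(V_{ijk},T_Y)$; $[\Pi_i,\Pi_i] = t Q_i$ with $Q_i\in\Gamma(V_i,\wedge^3 T_Y)$; $g_{ij}(\Pi_j(a,b)) = (\Pi_i + t\,\Pi_{ij}')(g_{ij}(a),g_{ij}(b))$ with $\Pi_{ij}'\in\Gamma(V_{ij},\wedge^2 T_Y)$; the non-Poisson-ness $\Phi_i^*\Pi_i - \Phi_i\Lambda_i = tP_i$ with $P_i\in\Gamma(U_i,\wedge^2 f^*T_Y)$; and the non-commutativity $r_{ij}\Phi_j - \Phi_i g_{ij} = tE_{ij}$ with $E_{ij}\in\Gamma(U_{ij},f^*T_Y)$. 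As in the cited proofs one checks (using the Jacobi identities for $\Pi_0$, $\Lambda_0$ and $[\Lambda_i,\Lambda_i]=0$) the cocycle-type relations: $(\{-\tfrac12 Q_i\},\{-\Pi_{ij}'\},\{\eta_{ijk}\})$ is a $2$-cocycle of $T_Y^\bullet$; $\pi_h(\{P_i\}) = f^*(-\tfrac12\{Q_i\})$; $\delta(\{P_i\}) + \pi(\{E_{ij}\}) = f^*(-\{\Pi_{ij}'\})$; and $f^*(\{\eta_{ijk}\}) = \delta(\{E_{ij}\})$ — these say the image of the $2$-cocycle under $f^*:\mathbb{H}^2(Y,T_Y^\bullet)\to\mathbb{H}^2(X,f^*T_Y^\bullet)$ is a coboundary.

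Now I would invoke the two hypotheses. By injectivity of $f^*$ on $\mathbb{H}^2$, the $2$-cocycle $(\{-\tfrac12 Q_i\},\{-\Pi_{ij}'\},\{\eta_{ijk}\})$ is itself a coboundary in $T_Y^\bullet$: there exist $d_{ij}\in C^1(\mathcal{V},T_Y)$ and $\lambda_i\in C^0(\mathcal{V},\wedge^2 T_Y)$ with $d_{ki}+d_{jk}+d_{ij} = -\eta_{ijk}$, $\lambda_j - \lambda_i + [\Pi_0,d_{ij}] = \Pi_{ij}'$, $[\Pi_0,\lambda_i] = \tfrac12 Q_i$. Then $\{g_{ij} + t d_{ij}\}$ and $\{\Pi_i - t\lambda_i\}$ glue to a flat Poisson deformation $(\mathcal{Y}',\Pi')$ of $(Y,\Pi_0)$ over $A$ inducing $(\bar{\mathcal{Y}},\bar\Pi)$; this is verified by the same three identities as in the display \eqref{mm3}. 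With this corrected target, the morphism failures become a $1$-cocycle $(\{G_{ij}\},\{L_i\})$ of $f^*T_Y^\bullet$ (where $G_{ij} = E_{ij} + t d_{ij} f$ and $L_i = P_i + f^*\lambda_i$, using $r_{ij}\Phi_j - \Phi_i(g_{ij}+td_{ij}) = tG_{ij}$ and $\Phi_i^*\Pi_i' - \Phi_i\Lambda_i = tL_i$); one checks $\delta G = 0$ in the twisted sense and $\pi(L_i) = 0$, $L_j - L_i + \pi(G_{ij}) = 0$. By surjectivity of $f^*$ on $\mathbb{H}^1(Y,T_Y^\bullet)\to\mathbb{H}^1(X,f^*T_Y^\bullet)$, there is a $1$-cocycle $(\{\chi_{ij}\},\{H_i\})$ of $T_Y^\bullet$ and a $0$-cochain $\{\tau_i\}\in C^0(\mathcal{U},f^*T_Y)$ with $G_{ij} - f^*(-\chi_{ij}) = \tau_i - \tau_j$ and $L_i - f^*(-H_i) = \pi(\tau_i)$. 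Then $\{g_{ij} + t(d_{ij}+\chi_{ij})\}$, $\{\Pi_i - t(\lambda_i + H_i)\}$ define a flat Poisson deformation $(\mathcal{Y},\Pi)$ of $(Y,\Pi_0)$ over $A$ lifting $(\bar{\mathcal{Y}},\bar\Pi)$, and $\{\Phi_i + t\tau_i\}$ defines a Poisson morphism $\Phi:(\mathcal{X},\Lambda)\to(\mathcal{Y},\Pi)$ over $A$ inducing $f$ — again verified by exactly the computations appearing at the end of the proof of Theorem \ref{mm1}, now with the $X$-data held fixed. This closes the induction.

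The main obstacle is bookkeeping rather than a genuinely new idea: one must carefully reuse the cochain-level identities from \cite{Kim16} (for the pure Poisson-deformation cocycle conditions on $Y$) and from the proof of Theorem \ref{mm1} (for the mixed identities coupling $\Phi_i$, $r_{ij}$, $g_{ij}$, $\Pi_i$, $\Lambda_i$), and verify that the direction of each arrow $f^*$ vs. $F$ is the one actually needed. The subtle point is that, unlike the stability case, here the deformation of the \emph{domain} is prescribed, so the local trivializations of $(\mathcal{X},\Lambda)$ can be chosen once and for all with $[\Lambda_i,\Lambda_i]=0$ on the nose; this is what makes the obstruction to lifting the \emph{target} land in $\mathbb{H}^2(Y,T_Y^\bullet)$ and its image in $\mathbb{H}^2(X,f^*T_Y^\bullet)$, so that the stated hypotheses (surjectivity on $\mathbb{H}^1$, injectivity on $\mathbb{H}^2$ for $f^*$) are precisely what is required. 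One should also double-check that the glued $(\mathcal{Y},\Pi)$ is flat over $A$ — this follows because locally it is $C_i\otimes_k A$ with a Poisson structure, hence free, and the transition maps are isomorphisms compatible modulo $t$ — and that $\Phi$ is flat and $q\circ\Phi$ is flat, which is automatic from the corresponding properties on the $X$-side and the constructed $(\mathcal{Y},\Pi)$.
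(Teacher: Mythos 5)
Your proposal follows essentially the same route as the paper's own proof: induction on $\dim_k A$ via a small extension, lifting the local data $(g_{ij},\Pi_i,\Phi_i)$ arbitrarily, packaging the failures into a $2$-cocycle of $T_Y^\bullet$ whose image under $f^*$ is exhibited as a coboundary, killing it by injectivity of $f^*$ on $\mathbb{H}^2$, and then absorbing the residual $1$-cocycle of $f^*T_Y^\bullet$ using surjectivity of $f^*$ on $\mathbb{H}^1$, exactly as in the paper. Apart from minor sign/typo slips (e.g.\ with your defining equations one gets $G_{ij}=E_{ij}-f\,d_{ij}$ and $L_i=P_i-f^*\lambda_i$), the argument is the paper's argument.
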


\begin{proof}
We will prove by induction on the dimension $n$ of $A$. When $\dim_k A=1$, there is noting to prove. Now assume that the theorem holds for $\dim_k A \leq n-1$. Let $A$ be a local artinian $k$-algebra with $\dim_k A=n$. Assume that the maximal ideal $\mathfrak{m}$ of $A$ satisfy $\mathfrak{m}^{p-1}\ne 0$ and $\mathfrak{m}^p=0$. Choose $t\ne 0\in \mathfrak{m}^p$. Then $A/(t)\in \bold{Art}$ with $\dim_k A/(t)\leq n-1$ and $0\to (t)\to A\to A/(t)\to 0$ is a small extension.

Let $p:(\mathcal{X},\Lambda)\to Spec(A)$ be a flat Poisson deformation of $(X,\Lambda_0)$ over $Spec(A)$. Let $\mathcal{U}_i=\{U_i=Spec(B_i)\}$ be an affine open cover of $X$. Let $\bar{p}:(\bar{\mathcal{X}},\bar{\Pi}):=(\mathcal{X},\Lambda)\times_{Spec(A)} Spec(A/(t))\to Spec(A/(t))$ be the induced deformation over $Spec(A/(t))$. Then by the induction hypothesis, there exists a deformation $\bar{q}:(\bar{\mathcal{Y}},\bar{\Pi})\to Spec(A/(t))$ of $(Y,\Pi_0)$ and $\bar{\Psi}:(\bar{\mathcal{X}},\bar{\Lambda})\to(\bar{\mathcal{Y}},\bar{\Pi})$ which induces $f$.

Let $\mathcal{U}=\{U_i=Spec(B_i)\}_{i\in I}$ be an affine open cover of $X$ and $\mathcal{V}=\{V_i=Spec(C_i)\}_{i \in I}$ be an affine open cover of $Y$ such that $f(U_i)\subset V_i$. We have a Poisson isomorphism  $\bar{\theta}_i:(U_i\times Spec(A/(t)),\bar{\Lambda}_i)\to (\bar{\mathcal{X}}|_{U_i},\bar{\Lambda}|_{U_i})$, where $\bar{\Lambda}_i\in \Gamma(U_i,\wedge^2 T_X)\otimes_k A/(t)$. Then for each $i,j\in I$, $\bar{\theta}_{ij}:=\bar{\theta}_j^{-1}\bar{\theta}_i:(U_{ij}\times Spec(A/(t)),\bar{\Lambda}_j)\to (U_{ij}\times Spec(A/(t)), \bar{\Lambda}_i)$ is a Poisson isomorphism inducing the identity on $(U_{ij},\Lambda_0)$, which corresponds to a Poisson $A/(t)$-isomorphism $\bar{r}_{ij}:(\Gamma(U_{ij},\mathcal{O}_X)\otimes_k A/(t), \bar{\Lambda}_j)\to(\Gamma(U_{ij},\mathcal{O}_X)\otimes_k A/(t),\bar{\Lambda}_i)$. On the other hand, similarly we have a Poisson isomorphism $\bar{e}_i:(V_i\times Spec(A/(t)),\bar{\Pi}_i)\to (\bar{\mathcal{Y}}|_{V_i},\bar{\Pi}|_{V_i})$, where $\bar{\Pi}_i \in \Gamma(V_i, \wedge^2 T_Y)\otimes_k A/(t)$. Then for each $i,j\in I$, $\bar{e}_{ij}:=\bar{e}_j^{-1}\bar{e}_i:(V_{ij}\times Spec(A/(t)),\bar{\Pi}_i)\to (V_{ij}\times Spec(A/(t)),\bar{\Pi}_i)$ is a Poisson isomorphism inducing the identity on $(V_{ij},\Pi_0)$, which corresponds to a Poisson $A/(t)$-isomorphism $\bar{g}_{ij}:(\Gamma(V_{ij},\mathcal{O}_Y)\otimes_k A/(t),\bar{\Pi}_j)\to (\Gamma(V_{ij},\mathcal{O}_Y)\otimes_k A/(t),\bar{\Pi}_i)$. We set $\bar{\Psi}_i=e_i^{-1} \Psi\theta_i:(U_i\times Spec(A/(t)),\bar{\Lambda}_i)\to (V_i\times Spec(A/(t)) ,\bar{\Pi}_i)$ which corresponds to $\bar{\Phi}_i:(C_i\otimes_k A/(t),\bar{\Pi}_i)\to (B_i\otimes_k A/(t), \bar{\Lambda}_i)$.

Let $\Phi_i:C_i\otimes A\to B_i\otimes A$ be a lifting of $\bar{\Phi}_i$, and $g_{ij}:\Gamma(V_{ij},\mathcal{O}_Y)\otimes_k A \to \Gamma(V_{ij},\mathcal{O}_Y)\otimes_k A$ be a lifting of $\bar{g}_{ij}$, and $\Pi_i\in \Gamma(V_i,\wedge^2 T_Y)\otimes_k A$ be a lifting of $\bar{\Pi}_i$. On the other hand, let $r_{ij}:(\Gamma(U_{ij},\mathcal{O}_X)\otimes_k A,\Lambda_j)\to (\Gamma(U_{ij},\mathcal{O}_X)\otimes_k A, \Lambda_i)$ be a lifting of $\bar{r}_{ij}$ and $\bar{\Lambda}_i$ defining $(\mathcal{X},\Lambda)$. Then $g_{ij}g_{jk}g_{ki}=Id+t \xi_{ijk}$ for some $\xi_{ijk}\in \Gamma(V_{ijk},T_Y)$, $[\Pi_i,\Pi_i]=t S_i$ for some $S_i\in \Gamma(V_i,\wedge^2 T_Y)$, and there exists $\Pi_{ij}'$ such that $g_{ij}(\Pi_j(a,b))=(\Pi_i+t\Pi_{ij}')(g_{ij}(a),g_{ij}(b))$ for $a,b\in \Gamma(V_{ij},\mathcal{O}_Y)\otimes_k A$ (for the detail, see \cite{Kim16}). Consider the following diagram (which is not necessarily commutative nor Poisson except for $r_{ij}$)
\begin{center}
$\begin{CD}
(\Gamma(V_{ij},\mathcal{O}_Y)\otimes_k A,\Pi_j) @>g_{ij}>> ( \Gamma(V_{ij},\mathcal{O}_Y)\otimes_k A,\Pi_i)\\
@V\Phi_j VV @VV\Phi_i V\\
(\Gamma(U_{ij},\mathcal{O}_X)\otimes_k A \Lambda_j)@>r_{ij}>> (\Gamma(U_{ij},\mathcal{O}_X)\otimes_k A, \Lambda_i)\\
\end{CD}$
\end{center}
Then since $\bar{r}_{ij}\bar{\Phi}_j-\bar{\Phi}_i \bar{g}_{ij}=0$, we have $r_{ij}\Phi_j-\Phi_i g_{ij}= t E_{ij} $ for some $E_{ij}\in \Gamma(U_{ij},f^* T_Y)$. 
\begin{align*}
t(E_{ij}-E_{ik}+E_{jk})&=t(E_{ij}g_{jk}-E_{ik}+r_{ij}E_{jk})=r_{ij}\Phi_jg_{jk}-\Phi_ig_{ij}g_{jk}-r_{ik}\Phi_k+\Phi_i g_{ik}+r_{ij}r_{jk}\Phi_k-r_{ij}\Phi_j g_{jk}\\
&=-\Phi_i g_{ij}g_{jk}+\Phi_i g_{ik}=-\Phi_i(g_{ik}+t\xi_{ijk})+\Phi_i g_{ik}=-tf \xi_{ijk}
\end{align*}
Hence we have 
\begin{align}\label{mm5}
f^*(\{-\xi_{ijk}\})=\delta(\{E_{ij}\}).
\end{align}

On the other hand, we recall Notation \ref{notation12}.  Then since $\bar{\Phi}_i^*\bar{\Pi}_i-\bar{\Phi}_i\Lambda_i=0$, we have $\Phi_{i}^* \Pi_i -\Phi_i \Lambda_i=tP_i$ for some $P_i\in \Gamma(U_i, \wedge^2 f^* T_Y)$. Then for $a,b,c\in C$, since $[\Lambda_i,\Lambda_i]=0$, we have
\begin{align*}
&t\pi(P_i)(a,b,c)\\
&=t\Lambda_0(P_i(a,b),f(c))-t\Lambda_0(P_i(a,c),f(b))+t\Lambda_0(P_i(b,c),f(a))+tP_i(\Pi_0(a,b),c)-tP_i(\Pi_0(a,c),b)+tP_i(\Pi_0(b,c),a)\\
&=\Lambda_i((\Phi_{i}^*\Pi_i-\Phi_i\Lambda_i)(a,b),\Phi_i(c))-\Lambda_i((\Phi_{i}^*\Pi_i-\Phi_i\Lambda_i)(a,c),\Phi_i(b))+\Lambda_i((\Phi_{i}^*\Pi_i-\Phi_i\Lambda_i)(b,c),\Phi_i(a))\\
&+(\Phi_{i}^*\Pi_i-\Phi_i^*\Lambda_i)(\Pi_i(a,b),c)-(\Phi_{i}^*\Pi_i-\Phi_i\Lambda_i)(\Pi_i(a,c),b)+(\Phi_{i}^*\Pi_i-\Phi_i\Lambda_i)(\Pi_i(b,c),a)\\
&=\Lambda_i(\Phi_i(\Pi_i(a,b)),\Phi_i(c))-\Lambda_i((\Lambda_i(\Phi_i(a),\Phi_i(b)),\Phi_i(c))-\Lambda_i(\Phi_i(\Pi_i(a,c)),\Phi_i(b))+\Lambda_i((\Lambda_i(\Phi_i(a),\Phi_i(c)),\Phi_i(b))\\
&+\Lambda_i(\Phi_i(\Pi_i(b,c)),\Phi_i(a))-\Lambda_i((\Lambda_i(\Phi_i(b),\Phi_i(c)),\Phi_i(a))\\
&+\Phi_i(\Pi_i(\Pi_i(a,b)),c)-\Lambda_i(\Phi_i(\Pi_i(a,b)),\Phi_i(c))-\Phi_i(\Pi_i(\Pi_i(a,c)),b)+\Lambda_i(\Phi_i(\Pi_i(a,c)),\Phi_i(b))\\
&+\Phi_i(\Pi_i(\Pi_i(b,c)),a)-\Lambda_i(\Phi_i(\Pi_i(b,c)),\Phi_i(a))\\
&=\Phi_i(\frac{1}{2}[\Pi_i,\Pi_i](a,b,c))=tf^*(\frac{1}{2}S_i)(a,b,c)
\end{align*}
Hence we have 
\begin{align}\label{mm6}
\pi(P_i)=f^*(\frac{1}{2}S_i).
\end{align}
Lastly, we have, for $a,b\in \Gamma(V_{ij},\mathcal{O}_Y)$,
\begin{align*}
&t(P_i-P_j)=t(g_{ij}P_i-r_{ij}^*P_j)(a,b)=(\Phi_{i}^*\Pi_i-\Phi_i\Lambda_i)(g_{ij}(a),g_{ij}(b))-r_{ij}(\Phi_{j}^*\Pi_j-\Phi_j \Lambda_j)(a,b)\\
&=\Phi_i(\Pi_i(g_{ij}(a),g_{ij}(b)))-\Lambda_i(\Phi_i (g_{ij}(a)),\Phi_i ( g_{ij}(b)))-r_{ij}(\Phi_j(\Pi_j(a,b)))+r_{ij}(\Lambda_j(\Phi_j (a),\Phi_j(b)))\\
&=\Phi_i(g_{ij}(\Pi_j(a,b)))-\Phi_i(t \Pi_{ij}'(g_{ij}(a),g_{ij}(b)))-\Lambda_i((r_{ij}\Phi_j-tE_{ij})(a),(r_{ij}\Phi_j-tE_{ij})(b))\\
&-\Phi_i (g_{ij}(\Pi_j(a,b)))-t E_{ij}(\Pi_j(a,b))+\Lambda_i(r_{ij}(\Phi_j(a)),r_{ij}(\Phi_j(b)))\\
&=-t f(\Pi_{ij}'(a,b))+t\Lambda_0(f(a),E_{ij}(b))+t\Lambda_0(E_{ij}(a),f(b))-t E_{ij}(\Pi_0(a,b))\\
&=-tf(\Pi_{ij}'(a,b))+\pi(E_{ij})(a,b)
\end{align*}
Hence we have
\begin{align}\label{mm7}
\delta(\{P_i\})+\pi(\{E_{ij}\})=f^*(\Pi_{ij}')
\end{align}
We note that $(\{\frac{1}{2}S_i\}, \{\Pi_{ij}'\},\{ -\xi_{ijk}\})$ is a $2$-cocycle of $T_Y^\bullet$ (see \cite{Kim16}). Since $f^*:\mathbb{H}^2(X,T_X^\bullet)\to \mathbb{H}^2(Y, f^* T_Y^\bullet)$ is injective, from (\ref{mm5}),(\ref{mm6}), and (\ref{mm7}), there exists $\{d_{ij}\}\in C^1(\mathcal{U}, T_Y)$ and $\{\lambda_i\}\in C^0(\mathcal{U},\wedge^2 T_Y)$ such that $d_{ki}+ d_{jk}+d_{ij}=-\xi_{ijk}$, $\lambda_j -\lambda_i+[\Pi_0, d_{ij}]=\Pi_{ij}'$, and $[\Pi_0, \lambda_i]=\frac{1}{2}S_i$. Then we claim that $\{g_{ij}+td_{ij}\},\{\Pi_i-t\lambda_i\}$ defines a flat Poisson deformation of $(\mathcal{Y}',\Lambda')$ of $(Y,\Pi_0)$ inducing $(\bar{\mathcal{Y}},\bar{\Pi})$. Indeed,
 \begin{align} \label{mm8}
 &(g_{ij}+td_{ij})(g_{jk}+td_{jk})(g_{ki}+td_{ki})=Id+t\xi_{ijk}+td_{ik}+t d_{jk} +td_{ij}=Id\\
 &[\Pi_i- t\lambda_i,\Pi_i-t\lambda_i]=tS_i-t2[\Pi_0, \lambda_i]=0 \notag \\
 & (g_{ij}+td_{ij})((\Pi_j-t\lambda_j)(a,b))-(\Pi_i-t\lambda_i)((g_{ij}+td_{ij})(a),(g_{ij}+td_{ij})(b)) \notag \\
&=(\Pi_i+t\Lambda_{ij}')(g_{ij}(a),g_{ij}(b))-t\lambda_j(a,b)+td_{ij}(\Pi_0(a,b))-\Pi_i(g_{ij}(a), g_{ij}(b))-t\Pi_0(d_{ij}(a),b)-t\Pi_0(a, d_{ij}(b))+t\lambda_i(a,b) \notag \\
&=t\Pi_{ij}'(a,b)-t\lambda_j(a,b)-t[\Pi_0, d_{ij}](a,b)+t\lambda_i(a,b)=0 \notag
 \end{align}

Consider the following diagram (which is not necessarily commutative nor Poisson except for $r_{ij}$)
\begin{center}
$\begin{CD}
(\Gamma(V_{ij},\mathcal{O}_Y)\otimes_k A ,\Pi_j-t\lambda_j)@>g_{ij}+td_{ij}>> (\Gamma(V_{ij},\mathcal{O}_Y)\otimes_k A,\Pi_i-t\lambda_i)\\
@V\Phi_j VV @VV\Phi_i V\\
(\Gamma(U_{ij},\mathcal{O}_X)\otimes_k A,\Lambda_j)@>r_{ij}>>( \Gamma(U_{ij},\mathcal{O}_X)\otimes_k A,\Lambda_i)\\
\end{CD}$
\end{center}
Then $r_{ij}\Phi_j-\Phi_i (g_{ij}+t d_{ij})=tG_{ij}$ for some $G_{ij}\in \Gamma(U_{ij}, f^* T_Y)$ so that $G_{ij}= E_{ij}-f^*d_{ij}$. $G_{ij}-G_{ik}+G_{jk}= E_{ij}-f^*d_{ij}-E_{ik}+f^*d_{ik}+ E_{jk}-f^*d_{jk}=f^*(-\xi_{ijk})+f^*(\xi_{ijk})=0$. On the other hand, $\Phi_i^*(\Pi_i-t\lambda_i)-\Phi_i \Lambda_i=t L_i$ for some $L_i\in \Gamma(U_i \wedge^2 f^* T_Y)$. Then $P_i-f^*\lambda_i=L_i$ so that $\pi(L_i)=\pi(P_i)-f^*([\Pi_0, \lambda_i])=0$. Lastly, we have $\pi(G_{ij})+L_j-L_i=\pi(E_{ij})-f^*[\Pi_0, d_{ij}]+ P_j-f^*\lambda_j-P_i+f^*\lambda_i=0$. Hence $(\{G_{ij}\},\{L_i\})$ defines a $1$-cocycle of $f^* T_Y^\bullet$. Hence since $f^*:\mathbb{H}^1(Y, T_Y^\bullet) \to \mathbb{H}^1(X, f^* T_Y^\bullet)$ is surjective, there exist a $1$-cocycle
$(\{\chi_{ij}\},\{H_i\})\in C^1(\mathcal{V},T_Y)\oplus C^0(\mathcal{V},\wedge^2 T_Y)$ of $T_Y^\bullet$, and $\{\tau_i\}\in C^0(\mathcal{U},f^* T_Y)$ such that $G_{ij}-f^*(\chi_{ij})=\tau_i-\tau_j$, and $L_i-f^*(H_i)=\pi(\tau_i)$.

Then we claim that $\{g_{ij}+t(d_{ij}+\chi_{ij})\}$ and $\{\Pi_i-t(\lambda_i+H_i)\}$ define a flat Poisson deformation $(\mathcal{Y}, \Pi)$ of $(Y,\Pi_0)$ inducing $(\bar{\mathcal{Y}},\bar{\Pi})$. Indeed, from (\ref{mm8}), we have
\begin{align*}
&(g_{ij}+td_{ij}+t\chi_{ij})(g_{jk}+td_{jk}+t\chi_{jk})(g_{ki}+td_{ki}+\chi_{ki})=Id+t(\chi_{ij}+\chi_{jk}+\chi_{ki})=Id,\\
&[\Pi_i-t\lambda_i-tH_i,\Pi_i-t\lambda_i-tH_i]= -2t[\Pi_0,H_i]=0\\
&(g_{ij}+td_{ij}+t\chi_{ij})((\Pi_j-t\lambda_j-tH_j)(a,b))-(\Pi_i-t\lambda_i-tH_i)((g_{ij}+td_{ij}+t\chi_{ij})(a),(g_{ij}+td_{ij}+t\chi_{ij})(b))\\
&=-tH_j(a,b)+t\chi_{ij}(\Pi_0(a,b))-t\Pi_0(\chi_{ij}(a),b)-t\Pi_0(a,\chi_{ij}(b))+tH_i(a,b)=-t(H_j-H_i+[\Pi_0, \chi_{ij}])=0.
\end{align*}

We have the following commutative diagram
\begin{center}
$\begin{CD}
(\Gamma(V_{ij},\mathcal{O}_Y)\otimes_k A,  \Pi_j-t(\lambda_j+H_j)) @>g_{ij}+td_{ij}+t\chi_{ij}>> ( \Gamma(V_{ij},\mathcal{O}_Y)\otimes_k A ,\Pi_i-(t\lambda_i+H_i))\\
@V\Phi_j+t\tau_j VV @VV\Phi_i+t\tau_iV\\
(\Gamma(U_{ij},\mathcal{O}_X)\otimes_k A, \Lambda_j) @>r_{ij}>> (\Gamma(U_{ij},\mathcal{O}_X)\otimes_k A,\Lambda_i)
\end{CD}$
\end{center}
Indeed, $(\Phi_i+t\tau_i)(g_{ij}(a)+td_{ij}(a)+t\chi_{ij}(a))=r_{ij}(\Phi_j(a))-tG_{ij}(a)-tf(d_{ij}(a))+tf(d_{ij}(a))+tf(\chi_{ij}(a))+t\tau_i(a)=r_{ij}(\Phi_j(a)+t\tau_j(a))$. Lastly we claim that $\Phi_j+t\tau_j$ is a Poisson homomorphism. Indeed, 
\begin{align*}
&(\Phi_j+t\tau_j)(\Pi_j(a,b)-t\lambda_j(a,b)-tH_j(a,b))-\Lambda_j(\Phi_j(a)+t\tau_j(a),\Phi_j(b)+t\tau_j(b))\\
&=\Lambda_j(\Phi_j(a),\Phi_j(b))+tL_j(a,b)-tf(\lambda_j(a,b))-tf(H_j(a,b))+t\tau_j(\Pi_j(a,b))-\Lambda_j(\Phi_j(a),\Phi_j(b))-\Lambda_0(\tau_j(a),b)-\Lambda_0(a, \tau_j(b))\\
&=t(L_j-\pi(\tau_j)-f^*(H_j))(a,b)=0
\end{align*}
Hence there exists a flat Poisson deformation $(\mathcal{Y},\Pi)$ over $Spec(A)$ of $(Y,\Pi_0)$, and a Poisson morphism $\Phi:(\mathcal{X},\Lambda)\to (\mathcal{Y},\Pi)$ over $Spec(A)$ which induces $f$ so that induction holds for $n$. This completes the proof of Theorem \ref{mm15}.

\end{proof}

\begin{corollary} 
Let $(Y,\Pi_0)$ be a nonsingular projective Poisson variety, $(X,\Lambda_0)$ be a nonsingular Poisson subvariety of $(Y,\Pi_0)$, and let $p:(\mathcal{X},\Lambda)\to Spec(A)$ be a flat Poisson deformations of $(X,\Lambda_0)$ over $A\in\bold{Art}$. Assume that $\mathbb{H}^2(Y,T_Y(-X)^\bullet)=0$, where $ T_Y(-X)^\bullet=ker(T_Y^\bullet \to T_Y^\bullet|_X)$. Then there exist a flat Poisson deformation $q:(\mathcal{Y},\Pi)\to Spec(A)$ of deformations of $(Y,\Pi_0)$ over $A$, and an Poisson embedding $(\mathcal{X},\Lambda)\to (\mathcal{Y},\Pi)$ over $Spec(A)$ which induces the natural embedding $(X,\Lambda_0)\to (Y,\Pi_0)$ over $0\in N$.
\end{corollary}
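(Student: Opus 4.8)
The plan is to obtain this as a direct consequence of the Costability theorem (Theorem~\ref{mm15}) applied to the closed Poisson embedding $i:(X,\Lambda_0)\hookrightarrow (Y,\Pi_0)$; the hypothesis that $(X,\Lambda_0)$ is a nonsingular Poisson subvariety of $(Y,\Pi_0)$ says precisely that $i$ is a Poisson morphism, and $X$ is projective because it is a closed subvariety of the projective variety $Y$. The first step is to identify the relevant complex: for the closed immersion $i$ one has $i^{*}T_{Y}^{\bullet}=T_{Y}^{\bullet}|_{X}$ termwise, i.e. $\wedge^{q}i^{*}T_{Y}=(\wedge^{q}T_{Y})\otimes_{\mathcal{O}_{Y}}\mathcal{O}_{X}$, and by the commutative diagram of Appendix~\ref{appendix1} (with $C=\mathcal{O}_{Y}$, $B=\mathcal{O}_{X}$) the restriction map $T_{Y}^{\bullet}\to i_{*}(T_{Y}^{\bullet}|_{X})$ is a morphism of complexes intertwining $[\Pi_{0},-]$ and $\pi_{i}$. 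Since $i$ is a closed immersion, $\mathbb{H}^{\bullet}(X,i^{*}T_{Y}^{\bullet})$ may be computed on $Y$ as the hypercohomology of $T_{Y}^{\bullet}|_{X}$, and the map $i^{*}$ of Appendix~\ref{appendix1} is the natural restriction map on hypercohomology.

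Next I would invoke the short exact sequence of complexes of sheaves on $Y$
\begin{align*}
0\to T_{Y}(-X)^{\bullet}\to T_{Y}^{\bullet}\to T_{Y}^{\bullet}|_{X}\to 0,
\end{align*}
which is exact since each $\wedge^{q}T_{Y}$ is locally free, so in degree $q$ it reads $0\to I_{X}\cdot\wedge^{q}T_{Y}\to \wedge^{q}T_{Y}\to \wedge^{q}T_{Y}\otimes\mathcal{O}_{X}\to 0$. The associated long exact hypercohomology sequence, together with the assumption $\mathbb{H}^{2}(Y,T_{Y}(-X)^{\bullet})=0$, gives on the one hand the exact piece $\mathbb{H}^{1}(Y,T_{Y}^{\bullet})\to \mathbb{H}^{1}(X,i^{*}T_{Y}^{\bullet})\to \mathbb{H}^{2}(Y,T_{Y}(-X)^{\bullet})=0$, whence $i^{*}:\mathbb{H}^{1}(Y,T_{Y}^{\bullet})\to\mathbb{H}^{1}(X,i^{*}T_{Y}^{\bullet})$ is surjective, and on the other hand $0=\mathbb{H}^{2}(Y,T_{Y}(-X)^{\bullet})\to \mathbb{H}^{2}(Y,T_{Y}^{\bullet})\to \mathbb{H}^{2}(X,i^{*}T_{Y}^{\bullet})$, whence $i^{*}:\mathbb{H}^{2}(Y,T_{Y}^{\bullet})\to\mathbb{H}^{2}(X,i^{*}T_{Y}^{\bullet})$ is injective. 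Thus both hypotheses of Theorem~\ref{mm15} are satisfied for $f=i$.

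Applying Theorem~\ref{mm15} to the given flat Poisson deformation $p:(\mathcal{X},\Lambda)\to Spec(A)$ of $(X,\Lambda_{0})$ then yields a flat Poisson deformation $q:(\mathcal{Y},\Pi)\to Spec(A)$ of $(Y,\Pi_{0})$ together with a Poisson morphism $\Phi:(\mathcal{X},\Lambda)\to(\mathcal{Y},\Pi)$ over $Spec(A)$ inducing $i$. The last step, and the only point that is not purely formal (though it is standard), is to upgrade $\Phi$ to a closed Poisson embedding. Since $A$ is local artinian, $\mathcal{X}$ and $\mathcal{Y}$ are $A$-flat, and the central fibre $\Phi_{0}=i$ is a closed immersion: then $\Phi$ is finite, hence affine, in a neighbourhood of $X$, and $\Phi_{*}\mathcal{O}_{\mathcal{X}}$ is $A$-flat, so applying the filtered Nakayama lemma over $A$ to $\mathcal{O}_{\mathcal{Y}}\to\Phi_{*}\mathcal{O}_{\mathcal{X}}$ — whose reduction modulo $\mathfrak{m}_{A}$ is the surjection $\mathcal{O}_{Y}\to i_{*}\mathcal{O}_{X}$ — shows that $\mathcal{O}_{\mathcal{Y}}\to\Phi_{*}\mathcal{O}_{\mathcal{X}}$ is surjective, i.e. $\Phi$ is a closed immersion. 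Being a morphism of Poisson schemes over $Spec(A)$ that reduces over the closed point to the natural embedding $(X,\Lambda_{0})\hookrightarrow(Y,\Pi_{0})$, it is the desired Poisson embedding, which completes the proof.
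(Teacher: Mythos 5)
Your proposal is correct and follows essentially the route the paper intends: the corollary is exactly Theorem \ref{mm15} applied to the closed Poisson embedding $i:(X,\Lambda_0)\hookrightarrow (Y,\Pi_0)$, with the two hypotheses verified from the long exact hypercohomology sequence of $0\to T_Y(-X)^\bullet\to T_Y^\bullet\to T_Y^\bullet|_X\to 0$ together with the assumed vanishing of $\mathbb{H}^2(Y,T_Y(-X)^\bullet)$, using that $i^*T_Y^\bullet=T_Y^\bullet|_X$ and that hypercohomology of the restricted complex may be computed on $Y$. Your closing Nakayama argument (over the artinian base, the morphism produced by Theorem \ref{mm15} reducing to the closed immersion $i$ is itself a closed immersion) is a correct supplement to a point the paper leaves implicit in asserting a Poisson embedding.
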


\section{Comparison of two complexes of sheaves associated with the normal bundle }\label{appendix4}

Let $X$ be a holomorphic Poisson submanifold  of a holomorphic Poisson manifold $(Y,\Lambda_0)$, and let $i:X\to (Y,\Lambda_0)$ be a Poisson embedding. In \cite{Kim17}, we defined a complex of sheaves associated with the normal bundle
\begin{align*}
\mathcal{N}_{X/Y}^\bullet:\mathcal{N}_{X/Y}\xrightarrow{\nabla} \mathcal{N}_{X/Y}\otimes T_Y|_X\xrightarrow{\nabla} \mathcal{N}_{X/Y}\otimes \wedge^2 T_Y|_X\xrightarrow{\nabla} \cdots
\end{align*}

We have another complex of sheaves associated with the normal bundle $T_Y|_X$ induced by 
\begin{align*}
\mathcal{N}_i:=coker(T_X^\bullet\hookrightarrow T_Y|_X^\bullet)
\end{align*}
where
\begin{center}
$\begin{CD}
T_X^\bullet: @.T_X@>-[-,\Lambda_0|_X]>> \wedge^2 T_X @>-[-,\Lambda_0|_X]>>\wedge^3 T_X @> -[-,\Lambda_0|_X] >> \cdots \\
@.@V VV @VVV @VVV\\
T_Y^\bullet|_X:@. T_Y|_X@>\pi_i=-[-.\Lambda_0]|_X>>\wedge^2 T_Y|_X@>\pi_i=-[-,\Lambda_0]|_X>>\wedge^3T_Y|_X@>\pi_i=-[-,\Lambda_0]|_X>>\cdots
\end{CD}$
\end{center}

We will compare their $0$-th and first hypercohomology groups. We will define a map $\varphi:\mathcal{N}_i^\bullet \to\mathcal{N}_{X/Y}^\bullet$ extending the identity $\mathcal{N}_{X/Y}\to \mathcal{N}_{X/Y}$.
\begin{center}
$\begin{CD}
\mathcal{N}_i^\bullet: @.\mathcal{N}_i^0=\mathcal{N}_{X/Y}@>>> \mathcal{N}_i^1 @>>>\mathcal{N}_i^2@>>> \cdots \\
@.@V\phi_0=id VV @VV\phi_1V @V\phi_2VV\\
\mathcal{N}_{X/Y}^\bullet: @.\mathcal{N}_{X/Y}@>\nabla>>\mathcal{N}_{X/Y}\otimes T_Y|_X@>\nabla>> \mathcal{N}_{X/Y}\otimes \wedge^2 T_Y|_X@>\nabla>>\cdots
\end{CD}$
\end{center}
Let $\mathcal{U}=\{W_i\}$ be a open covering of $Y$ such that $W_i$ is a polycylinder with a local coordinate $(w_i,z_i)=(w_i^1,...,w_i^r,z_1^1,...,z_1^d)$ and $U_i:=X\cap W_i$ is determined by $w_i^1=\cdots=w_i^r=0$. On the intersection $W_i\cap W_k$, the transition functions are given by $w_i^\alpha=f_{ik}^\alpha(w_k,z_k),\alpha=1,...,r, z_i^\lambda=g_{ik}^\lambda(w_k,z_k),\lambda=1,...,d$. Since $f_{ik}^\alpha(0,z_k)=0$, $w_i^\alpha$ is of the form $w_i^\alpha=\sum_{\beta=1}^r w_k^\beta F_{ik\beta}^\alpha(w_k,z_k)$. Since $X$ is a holomorphic Poisson manifold of $Y$, $[\Lambda_0, w_i^\alpha]$ is of the form $[\Lambda_0,w_i^\alpha]=\sum_{\beta=1}^rw_i^\beta T_{i\alpha}^\beta(w_i,z_i)$ for some $T_{i\alpha}^\beta(w_i,z_i)\in \Gamma(W_i, T_Y)$.

\begin{lemma}\label{qq1}
Let $g\in \Gamma(U_i, \wedge^2 T_Y|_X)$. Then $g\in \Gamma(U_i, \wedge^2 T_X)$ if and only if $[g,w_i^\alpha]|_{w_i=0}=0$ for all $\alpha=1,...,r$.
\end{lemma}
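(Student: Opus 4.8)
The plan is to reduce the equivalence to a direct computation in the local coordinates $(w_i,z_i)=(w_i^1,\dots,w_i^r,z_i^1,\dots,z_i^d)$ fixed above, in which $U_i=X\cap W_i$ is $\{w_i^1=\dots=w_i^r=0\}$. Over $U_i$ the bundle $T_Y|_X$ is free on $\partial/\partial w_i^\alpha$ ($\alpha=1,\dots,r$) and $\partial/\partial z_i^\lambda$ ($\lambda=1,\dots,d$), and $T_X$ is the subbundle spanned by the $\partial/\partial z_i^\lambda$. First I would write a general $g\in\Gamma(U_i,\wedge^2 T_Y|_X)$ as
\[
g=\tfrac12\sum_{\alpha,\beta}a^{\alpha\beta}\,\tfrac{\partial}{\partial w_i^\alpha}\wedge\tfrac{\partial}{\partial w_i^\beta}
+\sum_{\alpha,\lambda}b^{\alpha}_{\lambda}\,\tfrac{\partial}{\partial w_i^\alpha}\wedge\tfrac{\partial}{\partial z_i^\lambda}
+\tfrac12\sum_{\lambda,\mu}c_{\lambda\mu}\,\tfrac{\partial}{\partial z_i^\lambda}\wedge\tfrac{\partial}{\partial z_i^\mu},
\]
with $a^{\alpha\beta}=-a^{\beta\alpha}$, $c_{\lambda\mu}=-c_{\mu\lambda}$, all coefficients holomorphic functions of $z_i$ alone; under the natural inclusion $\wedge^2 T_X\hookrightarrow\wedge^2 T_Y|_X$, membership $g\in\Gamma(U_i,\wedge^2 T_X)$ is exactly the vanishing of all $a^{\alpha\beta}$ and all $b^{\alpha}_{\lambda}$.

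Next I would make precise and evaluate $[g,w_i^\alpha]\big|_{w_i=0}$. Since $w_i^\alpha$ vanishes on $X$, its differential restricts to a section $dw_i^\alpha|_X$ of the conormal bundle $N^*_{X/Y}\subset\Omega^1_Y|_X$, and for a bivector the bracket with a function is the interior product, $[g,w_i^\alpha]=\iota_{dw_i^\alpha}g$ up to the sign fixed by the paper's Schouten convention; hence $[g,w_i^\alpha]\big|_{w_i=0}=\iota_{dw_i^\alpha|_X}g\in\Gamma(U_i,T_Y|_X)$ is well defined without choosing any extension of $g$ off $X$. Using that $\iota_{dw_i^\alpha}$ is an odd derivation with $\iota_{dw_i^\alpha}(\partial/\partial w_i^\beta)=\delta^\alpha_\beta$ and $\iota_{dw_i^\alpha}(\partial/\partial z_i^\lambda)=0$, and that the coefficients of $g$ involve no $w_i$, the computation yields
\[
[g,w_i^\alpha]\big|_{w_i=0}=\pm\Big(\sum_{\beta}a^{\alpha\beta}\,\tfrac{\partial}{\partial w_i^\beta}+\sum_{\lambda}b^{\alpha}_{\lambda}\,\tfrac{\partial}{\partial z_i^\lambda}\Big).
\]

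Reading off this formula finishes the proof: it vanishes for every $\alpha=1,\dots,r$ if and only if $a^{\alpha\beta}\equiv 0$ for all $\alpha,\beta$ and $b^{\alpha}_{\lambda}\equiv 0$ for all $\alpha,\lambda$, which by the first step is precisely $g\in\Gamma(U_i,\wedge^2 T_X)$. I do not expect a genuine obstacle here; the only points needing care are checking that the restricted bracket $[g,w_i^\alpha]\big|_{w_i=0}$ is independent of how (or whether) one extends $g$ to a neighbourhood in $W_i$ — handled by the conormal-bundle description above — and carrying the Schouten-bracket sign convention consistently, neither of which affects the stated vanishing criterion.
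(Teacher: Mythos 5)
Your proposal is correct and follows essentially the same route as the paper: decompose $g$ in the local frame $\partial/\partial w_i^\alpha,\ \partial/\partial z_i^\lambda$, compute $[g,w_i^\alpha]|_{w_i=0}$ (which is just contraction with $dw_i^\alpha|_X$, so no extension off $X$ is needed), and observe that its vanishing for all $\alpha$ kills exactly the mixed and purely normal components. The only difference from the paper's proof is your explicit remark on well-definedness via the conormal bundle, which the paper leaves implicit; the computation itself is identical up to normalization and sign conventions.
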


\begin{proof}
It is clear that $g\in \Gamma(U_i,\wedge^2 T_X)$ implies $[g,w_i^\alpha]|_{w_i=0}$ for all $\alpha=1,...,r$. Now assume that $[g,w_i^\alpha]|_{w_i=0}$ for all $\alpha=1,...,r$. Write $g$ in the following form
\begin{align*}
g=\sum_{r,s=1}^dG_{rs}\frac{\partial}{\partial z_i^r}\wedge \frac{\partial}{\partial z_i^s}+\sum_{p=1}^d\sum_{q=1}^rP_{pq}\frac{\partial}{\partial z_i^p}\wedge \frac{\partial}{\partial w_i^q}+\sum_{a,b=1}^rQ_{ab}\frac{\partial}{\partial w_i^a}\wedge \frac{\partial}{\partial w_i^b}
\end{align*}
where $G_{rs},P_{pq}, Q_{ab}\in \Gamma(U_i, \mathcal{O}_X)$ with $Q_{ab}=-Q_{ba}$. We claim that $P_{pq}=Q_{ab}=0$ for all $a,b,p,q$. Indeed, for any $\alpha$, 
\begin{align*}
[g,w_i^\alpha]|_{w_i=0}=\sum_{p=1}^d-P_{p\alpha}\frac{\partial}{\partial z_p}+2\sum_{a=1}^r Q_{\alpha b}\frac{\partial}{\partial w_b}=0
\end{align*}
Then $P_{p\alpha}=0$ and $Q_{\alpha b}=0$ for all $p,b,\alpha$. Hence we get the claim.
\end{proof}

First we will define a map $\tilde{\varphi}:T_Y^\bullet |_X  \to \mathcal{N}_{X/Y}^\bullet$. We note that $\Gamma(U_i, \mathcal{N}_{X/Y}\otimes \wedge^{p} T_Y|_X)\cong \oplus^r \Gamma(U_i, \wedge^{p}T_Y|_X)$. We define $\tilde{\varphi}$ locally in the following way:
\begin{align*}
 \Gamma(U_i,\wedge^{p+1} T_Y|_X)&\xrightarrow{\tilde{\varphi}_i} \oplus^r\Gamma(U_i, \wedge^pT_Y|_X)\\
  g &\mapsto ((-1)^p[g,w_1]|_X,...,(-1)^p[g,w_r]|_X)
 \end{align*}
 We show that $\tilde{\varphi}$ is well-defined. It is sufficient to show that the following diagram commutes.
 \[\xymatrix{
& \Gamma(U_k\cap U_i,\wedge^{p+1} T_Y|_X) \ar[dl]_{\tilde{\varphi_k}} \ar[dr]^{\tilde{\varphi_i}} \\
\oplus^r \Gamma(U_k \cap U_i,\wedge^p T_Y|_X) \ar[rr]^{\cong} & & \oplus^r \Gamma(U_i\cap U_k, \wedge^p T_Y|_X)
}\]
Indeed, for $g\in \Gamma(U_i,\wedge^{p+1}T_Y|_X)$, we have 
\begin{align*}
(-1)^p[g,w_i^\alpha]|_{w_i=0}=\sum_{\beta=1}^r(-1)^p[g,w_k^\beta F_{ik\beta}^\alpha(w_k,z_k)]|_{w_k=0}=\sum_{\beta=1}^r(-1)^p[g,w_k^\beta]|_{w_k=0}F_{ik\beta}^\alpha(0,z_k)
\end{align*}

Next we show that the following diagram commutes
\begin{center}
$\begin{CD}
\Gamma(U_i, \wedge^{p+2} T_Y|_X)@>\tilde{\varphi}_i>> \oplus^r \Gamma(U_i, \wedge^{p+1} T_Y|_X)\\
@A-[-,\Lambda_0]|_XAA @AA\nabla A \\
\Gamma(U_i, \wedge^{p+1} T_Y|_X) @>\tilde{\varphi}_i>> \oplus^r \Gamma(U_i, \wedge^{p} T_Y|_X)
\end{CD}$
\end{center}
For $g\in \Gamma(U_i, \wedge^{p+1} T_Y|_X)$, we have
\begin{align*}
\tilde{\varphi}_i(-[g,\Lambda_0]|_X)=\sum_{\alpha=1}^r (-(-1)^{p+1}[[g,\Lambda_0],w_i^\alpha]|_{w_i=0})e_i^\alpha
\end{align*}
We note that
\begin{align*}
[[g,\Lambda_0], w_i^\alpha]|_{w_i=0}&=[g, [\Lambda_0,w_i^\alpha]]|_{w_i=0}-(-1)^p[\Lambda_0,[g,w_i^\alpha]]=\sum_{\beta=1}^r[g, w_i^\beta T_{i\alpha}^\beta(w_i,z_i)]|_{w_i=0}-[[g,w_i^\alpha],\Lambda_0]|_{w_i=0}\\
&=\sum_{\beta=1}^r (-1)^p [g,w_i^\beta]\wedge T_{i\alpha}^\beta (0,z_i)-[[g,w_i^\alpha],\Lambda_0]|_{w_i=0}
\end{align*}
Hence we get
\begin{align*}
\tilde{\varphi}_i(-[g,\Lambda_0]|_{w_i=0})=\sum_{\alpha=1}^r\left((-1)^{p+1}[[g,w_i^\alpha],\Lambda_0]|_{w_i=0}+\sum_{\beta=1}^r [g,w_i^\beta]|_{w_i=0}\wedge T_{i\alpha}^\beta(0,z_i) \right)e_i^\alpha
\end{align*}

On the other hand,
\begin{align*}
\nabla(\tilde{\varphi}_i(g))=\nabla(\sum_{\alpha=1}^r(-1)^p[g,w_i^\alpha]e_i^\alpha)=\sum_{\alpha=1}^r \left( -(-1)^p[[g,w_i^\alpha],\Lambda_0]|_{w_i=0}+(\sum_{\beta=1}^r [g,w_i^\beta]|_{w_i=0}\wedge T_{i\alpha}^\beta(0,z_i)  \right)e_i^\alpha
\end{align*}
 
Lastly, since $[h,w_i]=0$ for any $h\in \Gamma(U_i,\wedge^{p+1}T_X)$. $\tilde{\varphi}$ induces $\varphi:\Gamma(U_i, \mathcal{N}_i^{p})\to \oplus^r \Gamma(U_i,\wedge^p T_Y|_X)$ so that we have $\varphi:\mathcal{N}_i^\bullet \to \mathcal{N}_{X/Y}^\bullet$.

\begin{lemma}\label{aa10}
$\varphi$ induces $\varphi^0=id:\mathbb{H}^0(X, \mathcal{N}_i^\bullet)\cong \mathbb{H}^0(X,\mathcal{N}_{X/Y}^\bullet)$ and an injection $\varphi^1:\mathbb{H}^1(X,\mathcal{N}_i^\bullet)\to \mathbb{H}^1(X, \mathcal{N}_{X/Y}^\bullet)$. In particular, if $\mathbb{H}^1(X, \mathcal{N}_{X/Y}^\bullet)=0$, then we have $\mathbb{H}^1(X, \mathcal{N}_i^\bullet)=0$.
\end{lemma}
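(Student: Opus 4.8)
�The plan is to unwind the definitions of the hypercohomology of the two complexes via \u{C}ech--hypercohomology on the covering $\mathcal{U}=\{W_i\}$ of $Y$ restricted to $X$, and to show directly that the map $\varphi^\bullet$ induced by $\varphi$ is an isomorphism in degree $0$ and injective in degree $1$. First I would set up the \u{C}ech double complexes $C^\bullet(\mathcal{U}|_X,\mathcal{N}_i^\bullet)$ and $C^\bullet(\mathcal{U}|_X,\mathcal{N}_{X/Y}^\bullet)$, so that a $0$-cocycle of $\mathcal{N}_i^\bullet$ is a global section $g=\{g_i\}$ of $\mathcal{N}_i=\mathcal{N}_{X/Y}$ with $\nabla_i(g_i)=0$ in the $\mathcal{N}_i^\bullet$ sense (i.e.\ $[g_i,\Lambda_0]|_X$ lands in the image of $\wedge^2 T_X$), while a $0$-cocycle of $\mathcal{N}_{X/Y}^\bullet$ is a global section with $\nabla(g)=0$. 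Since $\varphi_0=\mathrm{id}$ on $\mathcal{N}_{X/Y}$, the content of the degree-$0$ statement is that these two cocycle conditions agree; by the commutativity of the ladder defining $\varphi$ (established just above Lemma~\ref{aa10}) and Lemma~\ref{qq1}, $\nabla(g)=0$ in $\mathcal{N}_{X/Y}^\bullet$ is equivalent to $[g,w_i^\alpha]|_{w_i=0}=0$ for all $\alpha$, which by Lemma~\ref{qq1} is exactly the condition that $[g,\Lambda_0]$ be tangent to $X$, i.e.\ that $g$ be a $0$-cocycle of $\mathcal{N}_i^\bullet$. This gives $\varphi^0$ bijective.

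For $\varphi^1$, I would argue injectivity at the level of \u{C}ech representatives. A class in $\mathbb{H}^1(X,\mathcal{N}_i^\bullet)$ is represented by a pair $(\{g_{ij}\},\{h_i\})$ with $g_{ij}\in\Gamma(U_{ij},\mathcal{N}_{X/Y})$, $h_i\in\Gamma(U_i,\mathcal{N}_i^1)$ satisfying the cocycle relations ($\delta g=0$, $\delta h = \pm\nabla g$ modulo the relevant identifications). Suppose $\varphi^1$ of this class vanishes; then $(\{g_{ij}\},\{\varphi_1(h_i)\})$ is a coboundary in the total complex of $C^\bullet(\mathcal{U}|_X,\mathcal{N}_{X/Y}^\bullet)$, so there are $0$-cochains $\{a_i\}\subset\Gamma(U_i,\mathcal{N}_{X/Y})$ and $\{b_i\}$ with $g_{ij}=a_i-a_j$ and $\varphi_1(h_i)=\nabla a_i + (\delta b)_i$ (up to signs). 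The key point is then to show that the correcting data can be chosen to lie in the subcomplex coming from $\mathcal{N}_i^\bullet$ — concretely, that $h_i - \nabla_i(a_i)$ is a coboundary inside $C^\bullet(\mathcal{U}|_X,\mathcal{N}_i^\bullet)$. This reduces, after subtracting $\nabla_i(\{a_i\})$, to showing that $\varphi$ is \emph{injective} on the relevant cochain groups, i.e.\ that $\varphi_1:\mathcal{N}_i^1\to \mathcal{N}_{X/Y}\otimes T_Y|_X$ is injective, and that its image contains the component of $\{\varphi_1(h_i)\}$ produced by $\nabla$; injectivity of $\varphi_1$ follows again from Lemma~\ref{qq1} (a bracket with all $w_i^\alpha$ vanishing on $X$ forces tangency, so $\varphi_1$ has no kernel beyond $T_X$, which is exactly quotiented out in $\mathcal{N}_i^1$), and compatibility with $\nabla$ is the ladder commutativity already checked.

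Assembling these, I would conclude: $\varphi^0$ is the identity after the canonical identification $\mathcal{N}_i^0=\mathcal{N}_{X/Y}$, and $\varphi^1$ is injective because a representative killed by $\varphi^1$ is, via the pointwise injectivity of $\varphi_1$ and the commuting ladder, already a coboundary in $\mathcal{N}_i^\bullet$. The final sentence of the lemma ($\mathbb{H}^1(X,\mathcal{N}_{X/Y}^\bullet)=0 \Rightarrow \mathbb{H}^1(X,\mathcal{N}_i^\bullet)=0$) is then immediate from injectivity of $\varphi^1$.

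The main obstacle I anticipate is the degree-$1$ injectivity: the naive statement ``$\varphi_1$ injective $\Rightarrow$ $\varphi^1$ injective'' is not automatic for maps of complexes, so I will need to be careful to track the mixed \u{C}ech/complex bidegrees and verify that a total-complex coboundary witnessing $\varphi^1(\text{class})=0$ can be pulled back to a total-complex coboundary for $\mathcal{N}_i^\bullet$; this hinges on checking that the image of $\varphi$ is saturated under the two differentials in the bicomplex, which is where the explicit local formula $\tilde{\varphi}_i(g)=((-1)^p[g,w_i^\alpha]|_X)_\alpha$ and Lemma~\ref{qq1} do the real work. A secondary bookkeeping point is keeping the signs in $\nabla$ versus $-[-,\Lambda_0]|_X$ consistent with the ladder as drawn, but that is routine.
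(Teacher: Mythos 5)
Your proposal follows essentially the same route as the paper: in degree $0$ both arguments amount to showing, via Lemma \ref{qq1}, that the two cocycle conditions agree, and in degree $1$ both take a representative whose image bounds, lift the primitive $\{a_i\}$ (possible since $\varphi_0=\mathrm{id}$ on $\mathcal{N}_i^0=\mathcal{N}_{X/Y}$, e.g.\ $\tilde f_i=\sum_\gamma f_i^\gamma\,\partial/\partial w_i^\gamma$), subtract its coboundary, and conclude by the cochain-level injectivity of $\varphi_1$ from Lemma \ref{qq1} combined with the commuting ladder — which is exactly the explicit bracket computation the paper performs. The only slip is bookkeeping: for the truncation relevant to $\mathbb{H}^1$ a total-degree-$1$ coboundary is just $(\delta a,\nabla a)$, so your extra term $(\delta b)_i$ should not appear; taking $b=0$ your outline coincides with the paper's proof.
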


\begin{proof}
We show that $\mathbb{H}^0(X,\mathcal{N}^i)\cong \mathbb{H}^0(X,\mathcal{N}_{X/Y}^\bullet)$. Let $\mathcal{U}=\{U_i\}$ be the covering of $X$.  Every element of $\mathbb{H}^0(X, \mathcal{N}_i^\bullet)$ is represented by some $g=\{g_i\}\in \mathcal{C}^0(\mathcal{U}, T_Y|_X)$ with $g_i\in \Gamma(U_i,T_Y|_X)$. Then by Lemma \ref{qq1}, $-[g_i, \Lambda_0]|_{w_i=0}\in \Gamma(U_i,\wedge^2 T_X)$ if and only if $-[[g_i, \Lambda_0],w_i^\alpha]_{w_i=0}=0$ for all $\alpha=1,...,r$ if and only if
\begin{align*}
0=&[[ g_i,\Lambda_0], w_i^\alpha]|_{w_i=0}=[g_i,[\Lambda_0,w_i^\alpha]]|_{w_i=0}-[\Lambda_0, [g_i,w_i^\alpha]]|_{w_i=0}=[g_i, \sum_{\beta=1}^r w_i^\beta T_{i\alpha}^\beta(w_i,z_i)]|_{w_i=0}-[[g_i ,w_i^\alpha], \Lambda_0]_{w_i=0}\\
=&\sum_{\beta=1}^r[g_i ,w_i^\alpha]|_{w_i=0} T_{i\alpha}^\beta(0,z_i)-[[g_i ,w_i^\alpha],\Lambda_0]|_{w_i=0}
\end{align*}

Next let us show that $\varphi^1$ is injective. Every element of $\mathbb{H}^1(X,\mathcal{N}_i^\bullet)$ is represented by some $(h=\{h_{ij}\},g=\{g_i\})\in C^0(\mathcal{U}, T_Y|_X)\oplus C^1(\mathcal{U},\wedge^2 T_Y|_X)$. Assume that $([h_{ij},w_i^1]|_{w_i=0},...,([h_{ij},w_i^r]|_{w_i=0})\in \bigoplus^r\Gamma(U_{ij},\mathcal{O}_X)$, and  $(-[g_i,w_i^1]|_{w_i=0},...,-[g_i,w_i^r]|_{w_i=0})\in \bigoplus^r \Gamma(U_i, T_Y|_X)$ defines $0$ in $\mathbb{H}^1(X,\mathcal{N}_{X/Y}^\bullet)$ so that there exist $(f_i^1,...,f_i^r)\in\oplus^r \Gamma(U_i, \mathcal{O}_X)$ such that $-[g_i,w_i^\alpha]|_{w_i=0}=-[f_i^\alpha,\Lambda_0]|_{w_i=0}+\sum_{\beta=1}^r f_i^\beta T_{i\alpha}^\beta(0,z_i)$. Then we claim that $g_i-(-[\sum_{\gamma=1}^r f_i^\gamma\frac{\partial}{\partial w_i^\alpha},\Lambda_0]|_{w_i=0})\in \Gamma(U_i, \wedge^2 T_X)$, equivalently, by Lemma \ref{qq1},
\begin{align*}
[g_i,w_i^\alpha]|_{w_i=0}+[[\sum_{\gamma=1}^r f_i^\gamma \frac{\partial}{\partial w_i^\gamma},\Lambda_0],w_i^\alpha]|_{w_i=0}=0,\,\,\,\,\,\,\text{for all $\alpha=1,...,r$.}
\end{align*}
We note that $[\sum_{\gamma=1}^r f_i^\gamma\frac{\partial}{\partial w_i^\gamma},\Lambda_0]=\sum_{\gamma=1}^r [f_i^\gamma,\Lambda_0] \wedge \frac{\partial}{\partial w_i^\gamma}+\sum_{\alpha=1}^r f_i^\gamma [\frac{\partial}{\partial w_i^\gamma},\Lambda_0]$. Then we have
\begin{align*}
[[\sum_{\gamma=1}^r f_i^\gamma\frac{\partial}{\partial w_i^\gamma},\Lambda_0],w_i^\alpha]|_{w_i=0}&=\sum_{\gamma=1}^r [[f_i^\gamma,\Lambda_0],w_i^\alpha]|_{w_i=0}\wedge \frac{\partial}{\partial w_i^\gamma}-[f_i^\alpha,\Lambda_0]|_{w_i=0}+\sum_{\alpha=1}^r f_i^\gamma [[\frac{\partial}{\partial w_i^\gamma},\Lambda_0],w_i^\alpha]|_{w_i=0}\\
&= -[f_i^\alpha,\Lambda_0]|_{w_i=0}+\sum_{\alpha=1}^r f_i^\gamma [[\frac{\partial}{\partial w_i^\gamma},\Lambda_0],w_i^\alpha]|_{w_i=0}
=-[f_i^\alpha, \Lambda_0]|_{w_i=0}+\sum_{\alpha=1}^r f_i^\gamma T_{i\alpha}^\gamma(0,z_i)\\
&=-[g_i, w_i^\alpha]|_{w_i=0}.
\end{align*}
since $[[f_i^\gamma, \Lambda_0],w_i^\alpha]|_{w_i=0}=-[[\Lambda_0, w_i^\alpha], f_i^\gamma]|_{w_i=0}=0$, and $[[\frac{\partial}{\partial w_i^\gamma},\Lambda_0],w_i^\alpha]|_{w_i=0}=[\frac{\partial}{\partial w_i^\gamma},[\Lambda_0, w_i^\alpha]]|_{w_i=0}-[\Lambda_0,\frac{\partial w_i^\alpha}{\partial w_i^\gamma}]|_{w_i=0}=\sum_{\beta=1}^r [\frac{\partial}{\partial w_i^\gamma} , w_i^\beta T_{i\alpha}^\beta(w_i,z_i)]|_{w_i=0}=T_{i\alpha}^\gamma(0,z_i)$.
Hence we get the claim so that $(h,g)$ defines $0$ in $\mathbb{H}^1(X,\mathcal{N}_i^\bullet)$.

\end{proof}

\bibliographystyle{amsalpha}
\bibliography{References-Res2}

\end{document}